\documentclass[12pt]{amsart}
\usepackage[top=0.9in, bottom=0.8in, left=1.2in, right=1.2in]{geometry}
\usepackage{graphicx}
\usepackage{amssymb, amsmath, sidecap}
\usepackage{amsfonts}
\usepackage{float}
\usepackage{mathrsfs}
\usepackage{booktabs}
\usepackage{verbatim}
\usepackage[dvipsnames]{xcolor}
\usepackage{esint}
\usepackage{setspace}
\usepackage{hyperref}
\hypersetup{
  colorlinks=true,
  linkcolor=MidnightBlue,
  citecolor=MidnightBlue,
  urlcolor=MidnightBlue,
  linktoc=all,
  bookmarksopen=true,
  bookmarksnumbered=true,
  pdfdisplaydoctitle=true,
  pdftitle={Generalized Harmonic Measures: Synchronized Approximation and Positive Harmonic Representation},
  pdfauthor={Duchao Liu and Yunjie Wang}
}

\renewcommand{\baselinestretch}{1}

\def\bt{\begin{thm}}
\def\et{\end{thm}}
\def\bl{\begin{lem}}
\def\el{\end{lem}}
\def\bd{\begin{defi}}
\def\ed{\end{defi}}
\def\bc{\begin{cor}}
\def\ec{\end{cor}}
\def\bp{\begin{proof}}
\def\ep{\end{proof}}
\def\br{\begin{rem}}
\def\er{\end{rem}}

\def\d{\, \mathrm{d}}

\def\be{\begin{equation}}
\def\ee{\end{equation}}
\def\bes{\begin{equation*}}
\def\ees{\end{equation*}}
\def\bea{\begin{equation} \begin{aligned}}
\def\eea{\end{aligned} \end{equation}}
\def\beas{\begin{equation*} \begin{aligned}}
\def\eeas{\end{aligned} \end{equation*}}
\def\ba{\begin{align}}
\def\ea{\end{align}}
\def\bas{\begin{align*}}
\def\eas{\end{align*}}

\newtheorem{thm}{Theorem}[section]
\newtheorem{lem}[thm]{Lemma}
\newtheorem{defi}{Definition}[section]
\newtheorem{ex}{Example}[section]
\newtheorem{prop}[thm]{Proposition}

\newtheorem{rem}{Remark}[section]
\newtheorem{cor}[thm]{Corollary}

\numberwithin{equation}{section}
\numberwithin{figure}{section}

\begin{document}


\title[Generalized Harmonic Measures]{Generalized Harmonic Measures:\\Synchronized Approximation and\\Positive Harmonic Representation\footnotemark[1]}
\author[Liu]{Duchao Liu}
\address[Duchao Liu]{School of Mathematics and Statistics, Lanzhou
University, Lanzhou 730000, P. R. China\\
Tel.: +8613893289235, fax: +8609318912481}
\email{liudch@lzu.edu.cn}
\email{liuduchao@gmail.com}

\author[Wang]{Yunjie Wang}
\address[Yunjie Wang]{School of Mathematics and Statistics, Lanzhou
University, Lanzhou 730000, P. R. China} \email{220220934191@lzu.edu.cn}

\footnotetext[1]{
Research supported by the National Natural Science Foundation of
China (NSFC 12371096).}

\keywords{Harmonic measure; positive harmonic function; Martin boundary;
minimal harmonic function; Choquet representation; harmonic measure system.}

\makeatletter
\@namedef{subjclassname@2020}{\textup{2020} Mathematics Subject Classification}
\makeatother

\subjclass[2020]{Primary 28A75, 31A15; Secondary 31C05.}
\begin{abstract}
We study synchronized boundary and kernel approximation for compatible
harmonic measure systems.  Joint limits of inner-boundary measures and
kernels give a representation on a boundary retaining both position and
kernel information.  Finite decomposition gives concentration on minimal kernels, and
kernel concentration gives uniqueness of the function projection.  Boundary
concentration along one common subsequence realizes this representation
on a prescribed compact refinement.  A gluing map then gives a unique
representing measure on the minimal boundary.  A neighborhood nonvanishing
condition gives synchronized selection along the full sequence.  Classical harmonic
functions, a disk with a split boundary point, and finite weighted metric
graphs illustrate the distinction between these conclusions.
\end{abstract}

\maketitle

\begingroup
\renewcommand{\baselinestretch}{0.88}\normalsize
\tableofcontents
\endgroup

\section{Introduction}\label{Sec1}

\subsection{The classical harmonic measure}\label{sec1_1}
Harmonic measure admits several equivalent analytic constructions, one of
which arises from the generalized Dirichlet problem; see
\cite{Keni05,Helm69}.  Given a nonempty open set $D\subseteq\mathbb{R}^n$
and an extended real-valued function $f$ on $\partial D$, this problem
asks for a harmonic function on $D$ having boundary data $f$.

We write $\overline H_f$ and $\underline H_f$ for the upper and
lower Perron solutions.  A boundary function is resolutive when these
coincide in a finite harmonic function, denoted by $H_f$.

\begin{thm}[Wiener]\label{Wiener}
If $f$ is a continuous real-valued function on the boundary $\partial D$ of the bounded open set $D$, then $f$ is resolutive.
\end{thm}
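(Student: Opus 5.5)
The plan is the classical Perron--Wiener argument: first handle boundary data that is the restriction of a smooth function, then pass to arbitrary continuous $f$ by uniform approximation. I use the standard facts about Perron solutions: $\underline H_f\le \overline H_f$; both are monotone in $f$; $\overline H_{f+c}=\overline H_f+c$ for constants $c$; $\overline H_{f+g}\le \overline H_f+\overline H_g$; and $\overline H_f$, $\underline H_f$ are harmonic whenever they are finite. Since $\partial D$ is compact and $f$ is continuous, $f$ is bounded. Hence constants bound both Perron solutions, and they are finite harmonic functions. The task is to show they are equal.

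\emph{Step 1 (polynomial data).} First let $f=\phi|_{\partial D}$, where $\phi$ is a polynomial (or $C^2$) on $\mathbb{R}^n$. Put $M=\sup_{\overline D}|\Delta\phi|$ and $\psi(x)=\frac{M}{2n}|x|^2$, so that $\Delta\psi=M$. Then $\phi+\psi$ is subharmonic and $\phi-\psi$ is superharmonic on a neighbourhood of $\overline D$. Next write $\psi|_{\partial D}$ in two ways:
\[
\psi|_{\partial D}=(\phi+\psi)|_{\partial D}-\phi|_{\partial D},
\qquad
\psi|_{\partial D}=\phi|_{\partial D}-(\phi-\psi)|_{\partial D}.
\]
This expresses $\phi|_{\partial D}$ as a difference of boundary values of subharmonic functions that are continuous up to $\overline D$. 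For a subharmonic $s$ continuous on $\overline D$, the function $s$ itself is an admissible lower function for the data $s|_{\partial D}$. Moreover, any upper function $u$ for this data satisfies $u\ge s$ by the maximum principle, applied to $u-s$ with $\liminf(u-s)\ge0$ at every boundary point. Hence $s\le\underline H_{s}\le\overline H_s$.

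The key claim in this step is that for such $s$ one also has $\overline H_s\le h$ for every lower-type bound, which gives $\underline H_s=\overline H_s$. I plan to obtain this by showing $\overline H_s=\underline H_s$ via the Poisson modification (harmonic majorant). Concretely, take the least harmonic majorant on exhausting smooth subdomains $D_k\uparrow D$, defined by the Poisson integrals of $s$ on balls. Each such majorant is squeezed between lower and upper functions that differ by at most $\sup_{\partial D_k\cap\text{near }\partial D}$ oscillation. That oscillation tends to $0$ by the uniform continuity of $s$ on $\overline D$. Linearity then yields resolutivity of $\phi|_{\partial D}$, because $\overline H_{s_1-s_2}\le \overline H_{s_1}-\underline H_{s_2}=\underline H_{s_1}-\overline H_{s_2}\le\underline H_{s_1-s_2}$.

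\emph{Step 2 (approximation).} Extend $f$ continuously to $\mathbb{R}^n$ by Tietze's theorem. By Stone--Weierstrass, for each $\varepsilon>0$ there is a polynomial $\phi$ with $|f-\phi|<\varepsilon$ on $\partial D$. Monotonicity and constant shifts then give
\[
\overline H_f\le \overline H_\phi+\varepsilon=\underline H_\phi+\varepsilon\le\underline H_f+2\varepsilon .
\]
Letting $\varepsilon\to0$ yields $\overline H_f=\underline H_f$. This common value is finite and harmonic, so $f$ is resolutive.

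The main obstacle is Step 1, namely showing that the boundary values of a subharmonic function continuous on $\overline D$ are resolutive. No regularity of $\partial D$ is available, so I cannot simply solve a Dirichlet problem. My first attempt will be to exhaust $D$ by finite unions of balls or cubes, where Dirichlet problems are solvable classically. On each $D_k$ I replace $s$ by its harmonic modification, and I check that the resulting increasing limit is both an upper bound for $\underline H_s$ and below every upper function. The continuity of $s$ up to $\partial D$ is exactly what makes the boundary error vanish.
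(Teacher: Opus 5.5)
The paper gives no proof of this statement. It quotes Wiener's theorem in the introduction as a classical result with references, so your argument has to stand on its own.

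Most of your outline is the standard one and is correct:
\begin{itemize}
\item density of polynomials in $C(\partial D)$;
\item the splitting $\phi=(\phi+\psi)-\psi$ into two functions subharmonic in $D$ and continuous on $\overline D$;
\item the chain $\overline H_{s_1-s_2}\le\overline H_{s_1}-\underline H_{s_2}=\underline H_{s_1}-\overline H_{s_2}\le\underline H_{s_1-s_2}$;
\item the $2\varepsilon$ squeeze in Step~2.
\end{itemize}
Your displayed ``two ways'' of writing $\psi|_{\partial D}$ is muddled, but all you need is $\phi=(\phi+\psi)-\psi$.

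\textbf{The gap.} It lies in the step you yourself flag as the main obstacle: that $s|_{\partial D}$ is resolutive when $s$ is subharmonic in $D$ and continuous on $\overline D$. This step is not actually proved.
\begin{itemize}
\item The ``squeeze between lower and upper functions differing by an oscillation'' is never specified.
\item The check you do state points the wrong way. You show that the increasing limit $h$ is an upper bound for $\underline H_s$ and lies below every upper function. That only gives $\underline H_s\le h\le\overline H_s$, which is consistent with, but adds nothing to, $\underline H_s\le\overline H_s$.
\item What resolutivity requires is the opposite pair: $\overline H_s\le h\le\underline H_s$.
\end{itemize}

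\textbf{The fix needs no exhaustion.} You have already shown $s\le\underline H_s$, and you assume $\underline H_s$ is harmonic (it is bounded because $s$ is). So $\underline H_s$ is superharmonic and bounded below by $\min_{\overline D}s$. By continuity of $s$ on $\overline D$, for every $\zeta\in\partial D$,
\[
 \liminf_{x\to\zeta}\underline H_s(x)\ \ge\ \lim_{x\to\zeta}s(x)=s(\zeta).
\]
Thus $\underline H_s$ is itself an upper function for $s|_{\partial D}$. Hence $\overline H_s\le\underline H_s$, and $s|_{\partial D}$ is resolutive.

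\textbf{If you keep the exhaustion route.} Use regular $D_k$, such as finite unions of grid cubes, with Dirichlet solutions $h_k$ for the data $s$. Then verify the following two facts.
\begin{itemize}
\item Let $v_k$ equal $h_k$ on $D_k$ and $s$ on $D\setminus D_k$. It is subharmonic, because $h_k\ge s$ on $D_k$ and $h_k$ attains $s$ continuously on $\partial D_k$. It agrees with $s$ near $\partial D$, so it is a lower function. This gives $h_k\le\underline H_s$, hence $h\le\underline H_s$.
\item Since $h\ge s$, the limit $h$ is an upper function. This gives $\overline H_s\le h$.
\end{itemize}
With either repair, the rest of your proof goes through unchanged.
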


Suppose $D$ is bounded and open.  For $x\in D$, define
$L_x(f):=H_f(x)$ on $C(\partial D)$.  Then $L_x$ is a positive
linear functional.  By the Riesz representation theorem, there is a unique
positive Borel measure $\omega_x$ on $\partial D$, satisfying $\omega_x(\partial D)=1$,
such that
\begin{equation*}
 H_f(x)=L_x(f)=\int_{\partial D}f\,\d\omega_x .
\end{equation*}
This measure is the \emph{harmonic measure} for $D$ with reference point $x$.
For classical results concerning Poisson integrals and positive harmonic
functions on Lipschitz domains, see \cite{Dahl79,Hunt70,Stei70}.

\subsection{Aim and relation to existing theory}
\leavevmode\par\nobreak
\medskip
\noindent\textbf{Definition (Exhaustion).}
An \emph{increasing exhaustion} of a domain $D$ is a sequence of domains
$(D_n)$ such that
\begin{equation*}
 D_n\subseteq D_{n+1}\subseteq D\quad(n\geq1),\qquad
 D=\bigcup_{n\geq1}D_n.
\end{equation*}
In this paper, a \emph{regular exhaustion} means that, in addition,
each $D_n$ is regular and
\begin{equation*}
 D_n\Subset D_{n+1}\quad(n\geq1),
\end{equation*}
where $D_n\Subset D_{n+1}$ means that $\overline{D_n}$ is compact
and contained in $D_{n+1}$.
\medskip

We study boundary kernels on an increasing family of inner domains and
their relation to a prescribed compactification.  Two questions arise.
First, given a normalized minimal positive harmonic function $h_\xi$ and
a boundary point $\xi$, when can one choose $\xi_n\in\partial D_n$
for every sufficiently large $n$ such that
\begin{equation*}
 \xi_n\to\xi,\qquad K_{D_n}(\cdot,\xi_n)\to h_\xi
 \quad\text{locally uniformly in }D?
\end{equation*}
Second, can the inner-boundary kernels and measures give an integral
representation on a boundary retaining two kinds of boundary information,
and can this representation then be passed through a gluing map to a
boundary parametrizing the minimal harmonic functions?

Our starting data are compatible harmonic measures and their normalized
relative boundary kernels.  For a given sequence of inner domains, we
study synchronized boundary and kernel approximation and the realization
of representing measures as inner-boundary limits on a prescribed common
refinement.  The two principal conclusions can be
summarized as follows; the standing measure framework and precise
analytic assumptions (A1)--(A2) are stated in Section \ref{Sec4}.

\medskip
\noindent\textbf{Synchronized selection (Theorem
\ref{thm:synchronized-kernel}).}
\emph{Let $(D_n)$ be a regular exhaustion whose normalized relative
kernels are jointly continuous.  Let $h$ be minimal positive harmonic,
$h(x_0)=1$, and let $\xi$ be a boundary point of a prescribed compact
metrizable compactification.  If
\begin{equation*}
 \liminf_{n\to\infty}
 \int_{\partial D_n\cap V}h(y)\,d\omega_{x_0}^{D_n}(y)>0
\end{equation*}
for every open neighborhood $V$ of $\xi$, then one can choose
$\xi_n\in\partial D_n$ for all sufficiently large $n$ with
$\xi_n\to\xi$ and $K_{D_n}(\cdot,\xi_n)\to h$ locally uniformly.}
This gives selection along the full sequence of sufficiently late inner
domains.  The condition links convergence of kernels to the prescribed
boundary point; a Borel assignment of a kernel to that point alone does
not provide this link, as Example \ref{ex:failure-boundary-compatibility}
shows.

\medskip
\noindent\textbf{Refined representation and gluing (Theorem
\ref{prop:direct-refined-representation} and Corollary
\ref{thm:common-refinement-representation}).}
\emph{Let $D^K$ carry the boundary parametrization (B), and suppose
there is one increasing sequence $(n_j)$ such that
\begin{equation*}
 K_D(\cdot,\eta)\,\omega_{x_0}^{D_{n_j}}\Longrightarrow\delta_\eta
 \quad\text{on }D^K,\qquad \eta\in\partial_{\mathrm m}^KD.
\end{equation*}
For a second compact metrizable compactification $D^G$, form
$\widehat D=D^G\vee D^K$ and let $p_K:\widehat D\to D^K$ be its
coordinate projection.  For every positive harmonic function $u$, the
measures $u\,\omega_{x_0}^{D_{n_j}}$ have weakly convergent subsequences
on $\widehat D$.  Every limit $\widehat\mu_u$ is carried by
$\widehat\partial_{\mathrm m}D=p_K^{-1}(\partial_{\mathrm m}^KD)$ and
gives
\begin{equation*}
 u(x)=\int_{\widehat\partial_{\mathrm m}D}
 K_D(x,p_K(\widehat\eta))\,d\widehat\mu_u(\widehat\eta).
\end{equation*}
At $\widehat\mu_u$-almost every refined boundary point, inner-boundary
points and their kernels admit simultaneous approximation along a
subsequence.  The push-forward $(p_K)_\#\widehat\mu_u$ is the unique
representing measure on $\partial_{\mathrm m}^KD$.}
The almost-everywhere subsequence assertion on the refined boundary and
the full-sequence selection at a prescribed point are separate
conclusions.  Refined representing measures need not be unique.  The
full-sequence version follows when concentration holds for the original
exhaustion.  Remark \ref{rem:common-subsequence-concentration} also
explains why that stronger concentration is necessary for the
canonical-limit conclusion along the full sequence.
\medskip

The proof first derives compactness from the inner kernels.  One joint-limit
construction for boundary positions and kernels is used both for synchronized
approximation and for representation.  Finite decompositions
and comparison of integrals of continuous convex functions show that the
function projection is carried by normalized minimal harmonic functions.
Kernel concentration at each minimal function then proves uniqueness
and convergence of the full sequence of kernel measures.  This argument uses the classical maximal-measure
criterion from Choquet theory \cite{Phelps01}; it constructs the measures
as limits of inner-boundary measures without invoking the Choquet
representation existence theorem.  The boundary concentration condition
identifies the position and function coordinates, yielding the refined
representation before gluing.  Condition (M),
$\partial_{\mathrm m}D_n=\partial D_n$, is needed only to make the
approximating inner kernels minimal.  Concentration of the limiting
measure on minimal kernels is proved separately by finite decomposition.
The lattice structure is an additional structural result, proved after
the refined representation and before the gluing step.
The common-subsequence condition concerns realization on the prescribed
boundary, rather than intrinsic representation.  Full-sequence boundary
concentration is required only for the corresponding conclusions along
the original exhaustion.  The classical Laplacian and finite metric
graph examples verify this stronger condition.

Axiomatic potential theory and integral representations of positive
harmonic functions have a classical foundation in
\cite{Brelot62,Herve62,Helm69,EffrosKazdan70,Phelps01}.
Loeb's constructions \cite{Loeb76,Loeb82} and the account in
\cite[Section 6]{Loeb19} provide direct precedents for constructing
representing measures from inner-domain harmonic measures and boundary
partitions.  That account also recalls a standard weak-limit
construction.  Thus the use of harmonic measures and the absence of a
prescribed Green kernel have precedents in the representation theory.
Here the question is to control both the position of boundary points in
a prescribed compactification and the normalized boundary kernels on a
given sequence of varying inner domains.  Neighborhood nonvanishing
and boundary concentration give explicit conditions for this control.
The resulting measures are realized as limits of the specified
inner-boundary measures on a common refinement, and their projections
give the unique representation on the boundary parametrizing minimal
harmonic functions.

The abstract framework uses mean-value identities and nested
compatibility without assuming all the axioms of a harmonic sheaf.
As Example \ref{ex:framework-local-to-global} shows, applying a
Brelot--Bauer representation theorem would require additional
verification of those axioms.  This distinction concerns the hypotheses;
it does not establish that the representation conclusion lies beyond
existing abstract theories.  In the present argument, the concentration
condition identifies the intrinsic representation with the prescribed
boundary; intrinsic representation and abstract measure lifting are
recorded separately in Remark
\ref{rem:representation-without-concentration}.

The formulation in terms of harmonic measures also provides a direct
way to use existing results for PDEs.  When Dirichlet solutions supply
normalized harmonic measures satisfying the standing measure
assumptions, uniqueness can be used to verify nested compatibility,
while regularity results for the relative boundary kernels provide the
analytic input.  Within this framework, under (A1)--(A2), strict
positivity and harmonicity of the kernels, local comparison estimates,
equicontinuity, and compactness of the normalized positive harmonic
functions are derived rather than imposed as separate assumptions;
see Proposition \ref{prop:relative-kernel-harmonic} and Lemma
\ref{lem:derived-kernel-compactness}.  This organization reduces repeated
verification when the required Dirichlet and kernel results are already
available.  Its usefulness depends on the equation and the chosen
domains; it does not imply that the assumptions are uniformly weaker
than those of the Brelot framework.  In each application, harmonicity
defined by the mean-value identities must also be identified with the
solutions of the equation.  The boundary nonvanishing and concentration
conditions are verified separately for the prescribed compactifications.

Projections between compactifications have an established role in
boundary problems.  Bj\"orn, Bj\"orn, and Sj\"odin
\cite{BjornBjornSjodin18} study $p$-harmonic Dirichlet problems,
Perron solutions, and invariance on arbitrary compactifications.
Bj\"orn \cite{Bjorn19} studies boundary regularity under projections,
including boundary identifications and splitting.  These works provide
background for comparing compactifications.  The linear representation
considered here also keeps track of the normalized minimal harmonic
functions.  A common refinement retains both geometric position and
minimal-kernel information.  Representing measures on that refinement
can be nonunique, while their projections to the minimal boundary are
unique.  A geometric quotient can carry pushed-forward harmonic measures
even when it no longer distinguishes minimal kernels, as Example
\ref{ex:punctured-circle} illustrates.

Working on a refined boundary and subsequently applying gluing maps is
a main reason for the topological formulation of harmonic measure in
Section \ref{Sec2}.  Even when the original domain is Euclidean, the
common refinement and its quotient boundaries need not coincide with
the Euclidean boundary.  The refinement retains the information carried
by the two boundary spaces, and its projections recover those spaces
by identifying points in the corresponding fibers.  A common
topological and measure-theoretic framework therefore allows the
refined representation and its images under these projections to be
treated together.  Compact metrizable compactifications suffice for
the joint-limit construction and synchronized approximation in
Section \ref{Sec4}.

The passage through a gluing map has two distinct parts.  Corollary
\ref{cor:refined-harmonic-measures} first constructs harmonic measures
on the refined boundary from a representation of the constant function
$1$.  Proposition \ref{prop:harmonic-measure-gluing} then pushes these
measures forward under the gluing map.  Passing the integral
representation through the same map additionally requires the refined
kernel to have equal values at points identified by the map and to
induce a measurable kernel on the quotient, as specified in Corollary
\ref{cor:representation-under-gluing}.  Under this factorization, the
push-forward integration identity gives the quotient representation.
Uniqueness on the boundary parametrizing minimal functions follows
from the preceding kernel representation argument.  Thus the
topological formulation directly supports the passage from
inner-boundary approximation to refined representation and then to
representation on the glued boundary.

The examples specify the scope of the conclusions.  For the classical
Laplacian, known Martin boundary identification and convergence results
verify the boundary parametrization and concentration hypotheses.
The abstract results then recover Martin representation through the
refined boundary and give the stated synchronized approximation and
measure-limit conclusions.  The split-disk example replaces one
boundary point by an interval and distinguishes three properties:
synchronized selection is possible at every point of the interval;
the fixed canonical inner-boundary measures converge to normalized
length measure there; and other measures still represent the same
minimal function on that interval.  Thus pointwise synchronized
selection does not make every representing measure a limit of the
fixed canonical sequence.  Finite weighted metric graphs provide a
direct verification of the hypotheses and explicit representing
coefficients, and illustrate loss of minimal-kernel information under
geometric gluing.  The degenerate elliptic example verifies only the
stated harmonic-measure properties and records the additional
assumptions needed for approximation.

Section \ref{Sec2} gives the measure assumptions, nested compatibility,
maximum principles, and Harnack results used below.  Section \ref{Sec3}
establishes the harmonic-function properties and preliminary representation
results needed for the main argument, including the starlike case.  Section \ref{Sec4} is organized into three parts: approximation of
refined boundary kernels by inner-boundary kernels, positive harmonic
representation on the common refinement, and gluing to the boundary
parametrizing minimal functions.  The classical Martin identification
is made in Section \ref{Sec5}.  Section \ref{Sec5} verifies the
hypotheses and develops the examples just described.

\section{The generalized harmonic measure}\label{Sec2}

In this section, we give the basic axiomatization of harmonic measure
and establish the compatibility and comparison results used in the
approximation and representation arguments.
Subsection \ref{subsec2_2} records the measure-theoretic notation used
throughout the paper, and the main discussion starts in Subsection
\ref{subsec2_3}.

\subsection{Measure theory}\label{subsec2_2}
We use $A\Subset U$ to mean that the closure of $A$ is compact and
contained in $U$.  The spaces $C(Y)$, $C_b(Y)$, and $C_c(Y)$
consist of real-valued continuous functions, bounded continuous functions,
and continuous functions with compact support, respectively.
For a function $f$ on $Y$, $f|_A$ is its restriction to $A$, and
$\|f\|_A=\sup_{x\in A}|f(x)|$.  Locally uniform convergence means
uniform convergence on every compact subset.

Unless otherwise stated, measures are countably additive positive Borel
measures, completed when
integrable functions are considered; see \cite{Evan91,Royd88}.  On a locally
compact Hausdorff space, Radon measures are locally finite and inner regular.
We write $\mu(f)=\int f\d\mu$, and $L^1(\mu)$ denotes absolute
integrability.  Mutual absolute continuity means equality of null sets,
not equality of the corresponding $L^1$ classes.  We write
$\nu\ll\mu$ for absolute continuity: $\mu(A)=0$ implies
$\nu(A)=0$.  The Radon--Nikodym derivative $\d\nu/\d\mu$ is the
density satisfying $\nu(A)=\int_A(\d\nu/\d\mu)\d\mu$.
The notation $\chi_A$ denotes the indicator of $A$, and
$\delta_y$ denotes the Dirac measure at $y$, characterized by
$\int f\,d\delta_y=f(y)$.

For a measurable map $p$, the push-forward is
$p_\#\mu(A)=\mu(p^{-1}(A))$; hence
\begin{equation}\label{eq:pushforward-integration}
 \int f\,d(p_\#\mu)=\int f\circ p\d\mu
\end{equation}
for nonnegative measurable or integrable $f$.  On a compact space,
$\mu_n\Longrightarrow\mu$ denotes weak convergence, that is,
$\int f\,d\mu_n\to\int f\d\mu$ for every continuous $f$.  On a
noncompact locally compact space, convergence against $C_c$ is called
vague convergence; it does not by itself imply $\mu_n(Y)\to\mu(Y)$ on the underlying space $Y$.

We use the Radon--Nikodym theorem, Tonelli's theorem, monotone convergence,
and the Riesz--Markov theorem in their standard forms
\cite{Evan91,Royd88}.  We also use sequential weak compactness of finite
positive measures $\mu_n$ on compact metric spaces $Y$ with $\sup_n\mu_n(Y)<\infty$, and vague
sequential compactness under uniform bounds on compact subsets of
$\mathbb{R}^n$.

\subsection{The generalized harmonic measure and maximum principle}\label{subsec2_3}
Let $X$ be first-countable and Hausdorff.  A domain means a nonempty
connected open set.  Every domain equipped with harmonic measures below has nonempty boundary.  For a disconnected open set in a
locally connected space, the statements are interpreted separately on
its components.  No equivalence of measures or constancy across distinct
components is asserted.  First countability allows boundary limits to
be formulated using sequences.

\begin{defi}[Pre-harmonic measure]\label{pre_HM}
A pre-harmonic measure on a domain $D$ is a family of Borel measures
$\omega_\cdot^D=\{\omega_x^D:x\in D\}$ on $\partial D$ such that
$\omega_x^D(\partial D)=1$ and
$\omega_x^D\ll\omega_y^D$ for every $x,y\in D$.
\end{defi}

\begin{prop}
Suppose $\omega_\cdot^D$ is a pre-harmonic measure on a domain $D$.
Then, for any $x,y\in D$ and any Borel set $A\subseteq\partial D$,
\begin{equation*}
 \omega_x^D(A)=0\quad\Longleftrightarrow\quad\omega_y^D(A)=0.
\end{equation*}
Consequently, the completed Borel sigma-algebras of $\omega_x^D$ and
$\omega_y^D$ coincide.
\end{prop}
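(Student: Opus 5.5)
The equivalence follows directly from the two-sided absolute continuity in Definition \ref{pre_HM}; the only work is to spell out what ``completed Borel sigma-algebra'' means and check that it depends only on the null sets.

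\textbf{Step 1: equivalence of null sets.} Fix $x,y\in D$ and a Borel set $A\subseteq\partial D$. The definition gives $\omega_x^D\ll\omega_y^D$ for every ordered pair of points of $D$. Applying it to the pair $(x,y)$, $\omega_y^D(A)=0$ implies $\omega_x^D(A)=0$. Applying it to the pair $(y,x)$ gives the converse. So the two measures have exactly the same Borel null sets.

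\textbf{Step 2: the completion depends only on the null sets.} Recall the standard description of the completion $\mathcal B_\mu$ of the Borel sigma-algebra $\mathcal B(\partial D)$ with respect to a measure $\mu$. It is the family of sets $E$ for which there are Borel sets $B_1\subseteq E\subseteq B_2$ with $\mu(B_2\setminus B_1)=0$. An equivalent description is the family of sets $E=B\cup N$, where $B$ is Borel and $N$ is contained in a Borel $\mu$-null set. Either description uses $\mu$ only through the collection of Borel $\mu$-null sets. By Step 1, $\omega_x^D$ and $\omega_y^D$ have the same such collection, so $\mathcal B_{\omega_x^D}=\mathcal B_{\omega_y^D}$.

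One can also note that the completed measures share the same null sets. A set is null for the completion exactly when it is contained in a Borel null set, and Step 1 makes this condition identical for the two measures.

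\textbf{Main obstacle.} There is no real difficulty here. The only point needing care is to use the characterization of the completion through Borel null sets, rather than through the measure values themselves. Those values differ for different reference points, but the sigma-algebra does not.
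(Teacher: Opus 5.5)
Your proposal is correct and follows the paper's proof: absolute continuity applied in both directions gives the same Borel null sets, and the completion is obtained by adjoining subsets of those common null sets. Your Step 2 spells out this last point in more detail than the paper's one-line justification, but it is the same argument.
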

\begin{proof}
Apply absolute continuity in both directions.  The completions are
obtained by adjoining subsets of the common Borel null sets.
\end{proof}

\begin{defi}[Harmonic measure]\label{H_m}
We call the completed family a harmonic measure and put
\begin{equation*}
 L(\partial D;\omega_\cdot^D)
 :=\bigcap_{x\in D}L^1(\partial D,\omega_x^D),\qquad
 \omega_x^D(f):=\int_{\partial D}f(y)\,d\omega_x^D(y).
\end{equation*}
It is \emph{continuous} if $x\mapsto\omega_x^D(f)$ is continuous on $D$ for every $f$ in this
intersection.  A point $\zeta\in\partial D$ is \emph{regular} if
\begin{equation}\label{s1}
 \omega_x^D(f)\longrightarrow f(\zeta)\quad(D\ni x\to\zeta)
\end{equation}
for every bounded Borel function $f$ continuous at $\zeta$.
The domain is regular if every boundary point is regular.
For values on the boundary we set $\omega_y^D(f)=f(y)$, $y\in\partial D$.
This convention does not assert a boundary limit at irregular points.
\end{defi}
Unbounded integrable data are allowed in the interior definition, but
boundary limits for such data require additional control and are not
part of regularity.

\begin{prop}\label{R_point}
For every harmonic measure on a domain:
\begin{enumerate}
\item $C_b(\partial D)\subset L(\partial D;\omega_\cdot^D)$;
if $\partial D$ is compact, this includes $C(\partial D)$.
\item If $c\omega_y^D\le\omega_x^D\le C\omega_y^D$, with
$0<c\le C<\infty$, then
\begin{equation*}
 L^1(\partial D,\omega_x^D)=L^1(\partial D,\omega_y^D).
\end{equation*}
Mutual absolute continuity alone does not suffice.
\item At a regular point $\zeta$, for every $f\in C_b(\partial D)$,
\begin{equation}\label{w1}
 \lim_{D\ni x\to\zeta}\omega_x^D(f)=f(\zeta).
\end{equation}
\item If $V$ is a Borel boundary neighborhood of a regular point $\zeta$,
then
\begin{equation*}
 \lim_{D\ni x\to\zeta}\omega_x^D(V)=1.
\end{equation*}
\end{enumerate}
\end{prop}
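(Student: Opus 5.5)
Each item follows directly from the definitions; I would treat the four parts in order.

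For (1), take $f\in C_b(\partial D)$. It is Borel measurable and $|f|\le\|f\|_{\partial D}<\infty$. Since $\omega_x^D(\partial D)=1$, we get $\int|f|\,d\omega_x^D\le\|f\|_{\partial D}$ for every $x\in D$, so $f$ lies in the intersection $L(\partial D;\omega_\cdot^D)$. If $\partial D$ is compact, every continuous real function on it is bounded, so $C(\partial D)=C_b(\partial D)$.

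For (2), the inequalities $c\,\omega_y^D\le\omega_x^D\le C\,\omega_y^D$ hold on Borel sets. First they give equality of null sets, so the two completions coincide and the measurable functions agree. Next I would pass to nonnegative simple functions and then, by monotone convergence, to nonnegative measurable $g$, obtaining
\[
 c\int g\,d\omega_y^D\le\int g\,d\omega_x^D\le C\int g\,d\omega_y^D .
\]
Applying this with $g=|f|$ gives the equality of the two $L^1$ spaces. For the remark that mutual absolute continuity alone does not suffice, I would give a counterexample with $\partial D\supseteq[0,1]$:
\[
 \omega_y=\text{Lebesgue measure},\qquad d\omega_x=\rho\,dy,\qquad \rho(t)=\tfrac12 t^{-1/2} .
\]
These measures are mutually absolutely continuous, but $f(t)=t^{-1/2}$ is in $L^1(\omega_y)$ and not in $L^1(\omega_x)$. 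This only needs to be stated as a measure-level remark, since the claim concerns the implication and not a particular harmonic measure.

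For (3), I would simply specialize the definition of regularity. Every $f\in C_b(\partial D)$ is a bounded Borel function continuous at $\zeta$, so \eqref{s1} gives \eqref{w1} directly.

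For (4), I would apply regularity to $f=\chi_V$, which is bounded and Borel. The one point needing care, and the only step I expect to be a (mild) obstacle, is the continuity of $\chi_V$ at $\zeta$. Because $V$ is a boundary neighborhood of $\zeta$, it contains a relatively open set $U\ni\zeta$, and $\chi_V\equiv1=\chi_V(\zeta)$ on $U$, so $\chi_V$ is continuous at $\zeta$. Regularity then yields $\omega_x^D(V)=\omega_x^D(\chi_V)\to1$ as $D\ni x\to\zeta$.
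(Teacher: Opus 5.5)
Your proof is correct and follows the paper's argument item by item: normalization bounds $\int|f|\,d\omega_x^D$ for (1), the measure inequalities are transferred to $\int|f|$ for (2), regularity is specialized directly for (3), and $\chi_V$ is continuous at $\zeta$ via a relatively open subneighborhood for (4). Your explicit density $\rho(t)=\tfrac12 t^{-1/2}$, which shows that mutual absolute continuity alone does not suffice, is a correct addition; the paper states that remark without giving an example.
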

\begin{proof}
If $f\in C_b(\partial D)$, then for each $x\in D$,
\begin{equation*}
 \int_{\partial D}|f|\,d\omega_x^D
 \le\|f\|_\infty\omega_x^D(\partial D)=\|f\|_\infty<\infty.
\end{equation*}
Thus $f$ belongs to the intersection defining
$L(\partial D;\omega_\cdot^D)$.  When $\partial D$ is compact,
every continuous function on it is bounded.
For the second assertion, the assumed inequalities between measures give
\begin{equation*}
 c\int_{\partial D}|f|\,d\omega_y^D
 \le\int_{\partial D}|f|\,d\omega_x^D
 \le C\int_{\partial D}|f|\,d\omega_y^D.
\end{equation*}
Consequently either integral is finite if and only if the other is finite.

The third assertion is the defining boundary limit at a regular point,
applied to the bounded continuous function $f$.
For the fourth, let $V$ be a Borel boundary neighborhood of $\zeta$.
There is a relatively open neighborhood $W$ of $\zeta$ with
$W\subseteq V$; hence $\chi_V=1$ near $\zeta$ and is continuous
at that point.  Regularity therefore yields
\begin{equation*}
 \lim_{D\ni x\to\zeta}\omega_x^D(V)
 =\lim_{D\ni x\to\zeta}\omega_x^D(\chi_V)
 =\chi_V(\zeta)=1.
\end{equation*}
\end{proof}

\begin{prop}\label{H_A}
Suppose $\omega_\cdot^D$ is a harmonic measure on a domain $D$.
For any nonempty relatively open set $A\subseteq\partial D$ containing
a regular point,
\begin{equation*}
 \omega_x^D(A)>0,\qquad x\in D.
\end{equation*}
In particular, if $D$ is regular, then
\begin{equation*}
 \operatorname{supp}\omega_x^D=\partial D,\qquad x\in D.
\end{equation*}
\end{prop}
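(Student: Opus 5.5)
The plan is to combine the boundary limit at a regular point, Proposition \ref{R_point}(4), with the mutual absolute continuity built into Definition \ref{pre_HM}.

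Let $A\subseteq\partial D$ be nonempty and relatively open, and let $\zeta\in A$ be a regular point. Since $A$ is open in $\partial D$, it is a Borel boundary neighborhood of $\zeta$. Proposition \ref{R_point}(4) therefore gives
\begin{equation*}
 \lim_{D\ni x\to\zeta}\omega_x^D(A)=1 .
\end{equation*}
To use this limit we need points of $D$ accumulating at $\zeta$. This holds because $\zeta\in\partial D=\overline D\setminus D$. By first countability there is a sequence $x_k\in D$ with $x_k\to\zeta$, so $\omega_{x_k}^D(A)\to1$. In particular there exists some $y\in D$ with $\omega_y^D(A)>0$.

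Next we transfer positivity from $y$ to an arbitrary point. Fix any $x\in D$. By the preceding proposition on null sets (or directly from $\omega_y^D\ll\omega_x^D$), $\omega_x^D(A)=0$ would force $\omega_y^D(A)=0$, which is a contradiction. Hence $\omega_x^D(A)>0$ for every $x\in D$.

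For the support statement, assume $D$ is regular and let $\zeta\in\partial D$. Every relatively open neighborhood $A$ of $\zeta$ contains the regular point $\zeta$, so the first part gives $\omega_x^D(A)>0$. By the definition of support, this puts $\zeta\in\operatorname{supp}\omega_x^D$, so $\partial D\subseteq\operatorname{supp}\omega_x^D$. The reverse inclusion is automatic because the measure lives on $\partial D$. The support is taken in the sense of the closed set of points all of whose neighborhoods have positive measure, so no inner regularity is needed.

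The main obstacle is purely bookkeeping: we must make sure the limit in Proposition \ref{R_point}(4) is not vacuous, that is, that $\zeta$ is actually approached from $D$. This is handled by $\zeta\in\partial D$ together with first countability.
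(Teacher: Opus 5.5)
Your proposal is correct and follows essentially the same argument as the paper: the boundary limit $\omega_x^D(A)\to1$ at a regular point $\zeta\in A$, combined with mutual absolute continuity, rules out $\omega_x^D(A)=0$ at any interior point. Regularity of every boundary point then gives full support. You also check explicitly, via first countability and $\zeta\in\overline D$, that the limit is not vacuous; the paper leaves this implicit here and only spells it out in the theorem that follows.
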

\begin{proof}
Let $A\subseteq\partial D$ be relatively open and let
$\zeta\in A$ be regular.  The function $\chi_A$ is bounded,
Borel, and continuous at $\zeta$, with value one.  Hence
$\omega_x^D(A)\to1$ as $D\ni x\to\zeta$.
Suppose that $\omega_{x_0}^D(A)=0$ at some interior point $x_0\in D$.
For every $x\in D$, absolute continuity
$\omega_x^D\ll\omega_{x_0}^D$ then gives $\omega_x^D(A)=0$,
contradicting this boundary limit.  Thus $\omega_x^D(A)>0$ at every
interior point.  If every boundary point is regular, every nonempty relatively
open boundary set contains such a point.  This is precisely full support.
\end{proof}

The following elementary consequences isolate the uses of boundary
regularity and equivalence of measures.
\begin{thm}[Strong maximum principle for $\chi_A$ on a regular open set]
Suppose $D$ is regular and $A\subseteq\partial D$ is nonempty and
relatively open.  Then
\begin{equation*}
 \sup_{x\in D}\omega_x^D(A)=1.
\end{equation*}
If $\omega_{x_0}^D(A)=1$ for some $x_0\in D$, then
\begin{equation*}
 \omega_x^D(A)=1,\qquad x\in D.
\end{equation*}
\end{thm}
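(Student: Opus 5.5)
The theorem has two parts, and both reduce to earlier propositions.

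\emph{First assertion.} Since $\omega_x^D(\partial D)=1$, we have $\omega_x^D(A)\le1$ for every $x$, so $\sup_{x\in D}\omega_x^D(A)\le1$. For the reverse inequality, pick any $\zeta\in A$; it exists because $A$ is nonempty. Since $D$ is regular, $\zeta$ is a regular point. Because $A$ is relatively open in $\partial D$, it is a Borel boundary neighborhood of $\zeta$. Proposition \ref{R_point}(4) then gives $\omega_x^D(A)\to1$ as $D\ni x\to\zeta$. One small point must be checked: points of $D$ actually accumulate at $\zeta$, so the limit is not vacuous. This holds because $\zeta\in\partial D$ lies in the closure of $D$, and first countability supplies a sequence $x_k\in D$ with $x_k\to\zeta$. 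Along this sequence $\omega_{x_k}^D(A)\to1$, so the supremum equals $1$.

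\emph{Second assertion.} Suppose $\omega_{x_0}^D(A)=1$. Then the complement $B:=\partial D\setminus A$ is Borel and satisfies $\omega_{x_0}^D(B)=1-1=0$. The earlier proposition on equivalence of null sets for a pre-harmonic measure (a consequence of the mutual absolute continuity $\omega_x^D\ll\omega_{x_0}^D$) gives $\omega_x^D(B)=0$ for every $x\in D$. Hence
\begin{equation*}
 \omega_x^D(A)=\omega_x^D(\partial D)-\omega_x^D(B)=1,\qquad x\in D.
\end{equation*}
This step does not use regularity at all; only the first assertion does.

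There is no real obstacle here. The only care needed is in the first part: making sure the boundary limit at a regular point is realized along an actual sequence of interior points, and noting that a relatively open $A$ qualifies as a Borel neighborhood in Proposition \ref{R_point}(4).
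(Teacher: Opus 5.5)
Your proof is correct and follows the paper's argument essentially step for step. The paper likewise bounds the supremum by normalization and realizes the boundary limit at some $\zeta\in A$ along a sequence given by first countability; it uses continuity of $\chi_A$ at $\zeta$ directly, which is exactly what Proposition~\ref{R_point}(4) packages. It then proves the second assertion by transferring the null set $\partial D\setminus A$ to every $\omega_x^D$ through absolute continuity.
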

\begin{proof}
Since $\omega_x^D(\partial D)=1$,
$0\le\omega_x^D(A)\le1$ for every $x\in D$.  Choose
$\zeta\in A$.  Relative openness makes $\chi_A$ continuous at
$\zeta$, and regularity gives $\omega_x^D(A)\to1$ as
$D\ni x\to\zeta$.  More explicitly, first countability and
$\zeta\in\overline D$ give a sequence $x_j\in D$ tending to
$\zeta$, for which $\omega_{x_j}^D(A)\to1$.  Thus the supremum
is at least one and hence equals one.

If it is attained at $x_0\in D$, then
\begin{equation*}
 \omega_{x_0}^D(\partial D\setminus A)
 =1-\omega_{x_0}^D(A)=0.
\end{equation*}
Absolute continuity gives $\omega_x^D(\partial D\setminus A)=0$
for every $x\in D$.  Using $\omega_x^D(\partial D)=1$ proves
$\omega_x^D(A)=1$ throughout $D$.
\end{proof}

\begin{thm}[Weak maximum principle for regular harmonic measure]\label{WMPr}
Suppose $D$ is regular and $f\in C_b(\partial D)$.  Then
\begin{equation*}
 \sup_{x\in D}\omega_x^D(f)=\sup_{y\in\partial D}f(y).
\end{equation*}
In particular, the right-hand side is a maximum when $\partial D$
is compact.
\end{thm}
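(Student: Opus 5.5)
We prove the two inequalities separately. Set $M:=\sup_{y\in\partial D}f(y)$, which is finite because $f\in C_b(\partial D)$. By Proposition \ref{R_point}(1), $f\in L(\partial D;\omega_\cdot^D)$, so $\omega_x^D(f)$ is defined for every $x\in D$.

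For the upper bound, use that $\omega_x^D$ is a positive measure of total mass one and that $f\le M$ pointwise on $\partial D$:
\begin{equation*}
 \omega_x^D(f)=\int_{\partial D}f\,d\omega_x^D\le M\,\omega_x^D(\partial D)=M,\qquad x\in D.
\end{equation*}
Taking the supremum over $x$ gives $\sup_{x\in D}\omega_x^D(f)\le M$. Regularity plays no role in this step.

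For the lower bound, fix $\varepsilon>0$ and choose $\zeta\in\partial D$ with $f(\zeta)>M-\varepsilon$. Every boundary point is regular because $D$ is regular. Since $\zeta\in\overline D$ and $X$ is first countable, there is a sequence $x_j\in D$ with $x_j\to\zeta$, exactly as in the proof of the strong maximum principle above. Proposition \ref{R_point}(3), applied to $f\in C_b(\partial D)$, then gives $\omega_{x_j}^D(f)\to f(\zeta)$. Hence
\begin{equation*}
 \sup_{x\in D}\omega_x^D(f)\ge f(\zeta)>M-\varepsilon .
\end{equation*}
Letting $\varepsilon\to0$ yields the reverse inequality, so the two suprema are equal.

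For the final assertion, if $\partial D$ is compact, the continuous function $f$ attains its supremum at some point of $\partial D$. The right-hand side is then a maximum.

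None of these steps is hard. The only point that needs care is the existence of the approximating sequence $x_j\to\zeta$ inside $D$. This uses $\zeta\in\partial D\subseteq\overline D$ together with first countability, which is already part of the standing assumptions.
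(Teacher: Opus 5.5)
Your proof is correct and follows the paper's argument step for step. The upper bound comes from positivity and normalization, the lower bound from choosing $\zeta$ with $f(\zeta)>M-\varepsilon$ and approaching it by a sequence in $D$ using regularity and first countability, and the maximum is attained when the boundary is compact. One small point: the finiteness of $M$ also uses $\partial D\neq\varnothing$, which is forced by $\omega_x^D(\partial D)=1$ and which the paper states explicitly.
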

\begin{proof}
Put $M=\sup_{y\in\partial D}f(y)$.  Since $f$ is bounded and
$\partial D\ne\varnothing$, $M$ is finite.  Positivity and
normalization give
\begin{equation*}
 \omega_x^D(f)=\int_{\partial D}f(y)\,d\omega_x^D(y)
 \le\int_{\partial D}M\,d\omega_x^D=M,
 \qquad x\in D.
\end{equation*}
Consequently $\sup_{x\in D}\omega_x^D(f)\le M$.
For $\varepsilon>0$, choose $\zeta\in\partial D$ with
$f(\zeta)>M-\varepsilon$.  Regularity and continuity of $f$ at
$\zeta$ give
\begin{equation*}
 \omega_x^D(f)\longrightarrow f(\zeta)
 \quad(D\ni x\to\zeta).
\end{equation*}
Taking a sequence in $D$ converging to $\zeta$, we obtain
\begin{equation*}
 \sup_{x\in D}\omega_x^D(f)\ge f(\zeta)>M-\varepsilon.
\end{equation*}
Letting $\varepsilon\downarrow0$ proves the reverse inequality.
If the boundary is compact, continuity of $f$ makes its supremum a
maximum.  No boundary attainment is needed in the general case.
\end{proof}

\begin{thm}[Boundary strong maximum principle]\label{BSMP}
Suppose $f\in L(\partial D;\omega_\cdot^D)$ and
$M:=\operatorname*{ess\,sup}_{\partial D}f<\infty$.
If $\omega_{x_0}^D(f)=M$ for some $x_0\in D$, then
\begin{equation*}
 \omega_x^D(f)=M,\qquad x\in D.
\end{equation*}
\end{thm}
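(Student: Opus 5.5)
The plan is to apply the same absolute-continuity argument used in the strong maximum principle for $\chi_A$ to the nonnegative function $M-f$. The essential supremum is taken with respect to the common null sets, which the proposition following Definition \ref{pre_HM} shows are the same for every $\omega_x^D$. Hence $f\le M$ holds $\omega_x^D$-almost everywhere for every $x\in D$.

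First, $M$ is finite. The upper bound is assumed. For the lower bound, $f$ is integrable with respect to the probability measure $\omega_{x_0}^D$, so $f$ is finite almost everywhere. If $M=-\infty$, then $f=-\infty$ almost everywhere, which is impossible.

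Next, put $g:=M-f\ge0$, which holds almost everywhere for every $\omega_x^D$. Since $\omega_x^D(\partial D)=1$ and $f\in L^1(\omega_x^D)$, we have $g\in L^1(\omega_x^D)$ and
\begin{equation*}
 \omega_x^D(g)=M-\omega_x^D(f),\qquad x\in D.
\end{equation*}
The hypothesis $\omega_{x_0}^D(f)=M$ gives $\int g\,d\omega_{x_0}^D=0$. Because $g\ge0$ almost everywhere, this forces $g=0$ almost everywhere with respect to $\omega_{x_0}^D$. In other words, the Borel set $\{f\ne M\}$, or more precisely $\{f<M\}$ after discarding the common null set $\{f>M\}$, is $\omega_{x_0}^D$-null.

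Absolute continuity, $\omega_x^D\ll\omega_{x_0}^D$ from Definition \ref{pre_HM}, makes this set $\omega_x^D$-null for every $x\in D$. Hence $f=M$ holds $\omega_x^D$-almost everywhere, and $\omega_x^D(f)=M\,\omega_x^D(\partial D)=M$.

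The only delicate point is measurability after completion. Here $f$ is measurable for the completed sigma-algebra, so it agrees with a Borel function off a common null set. I would pass to such a Borel representative so that the absolute-continuity hypothesis, which is stated for Borel sets, applies directly. The common completion from the earlier proposition makes this representative independent of $x$. No regularity or continuity of the harmonic measure is needed.
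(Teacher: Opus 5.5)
Your proof is correct and is essentially the paper's argument. Both use the common null sets to get $f\le M$ almost everywhere, deduce from $\int(M-f)\,d\omega_{x_0}^D=0$ that $f=M$ almost everywhere, and transfer this to every $\omega_x^D$ by absolute continuity; the paper makes the middle step explicit with the sets $E_m=\{f\le M-1/m\}$, and your added remarks on the finiteness of $M$ and on Borel representatives are harmless refinements.
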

\begin{proof}
The essential supremum is the same for every interior point, because the
measures have the same null sets.  Thus $f\le M$ almost everywhere
for each of them.  At the specified interior point, normalization gives
\begin{equation*}
 \int_{\partial D}(M-f)\,d\omega_{x_0}^D
 =M-\omega_{x_0}^D(f)=0.
\end{equation*}
For each integer $m\ge1$, let
$E_m=\{y\in\partial D:f(y)\le M-1/m\}$.  Nonnegativity of
$M-f$ almost everywhere implies
\begin{equation*}
 0\ge\frac1m\omega_{x_0}^D(E_m)\ge0.
\end{equation*}
Hence every $E_m$ is null.  Since $\{f<M\}=\bigcup_{m\ge1}E_m$
and $\{f>M\}$ is null by the definition of $M$, we have
$f=M$ almost everywhere for $\omega_{x_0}^D$.
Absolute continuity transfers this equality to $\omega_x^D$ for
any $x\in D$.  Finally,
\begin{equation*}
 \omega_x^D(f)=\int_{\partial D}M\,d\omega_x^D=M.
\end{equation*}
No regularity of boundary points is needed in this argument.
\end{proof}

\begin{thm}[Strong maximum principle for regular harmonic measure]\label{SMPr}
Under the assumptions of Theorem \ref{WMPr}, suppose that
\begin{equation*}
 \omega_{x_0}^D(f)=\sup_{x\in D}\omega_x^D(f)
 \quad\text{for some }x_0\in D.
\end{equation*}
Then $f$ is constant on $\partial D$, and
\begin{equation*}
 \omega_x^D(f)=\max_{y\in\partial D}f(y),\qquad x\in D.
\end{equation*}
\end{thm}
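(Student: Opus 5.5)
By Theorem \ref{WMPr}, $\sup_{x\in D}\omega_x^D(f)=M:=\sup_{\partial D}f$. The hypothesis therefore says that $\omega_{x_0}^D(f)=M$ at an interior point. The plan is to turn this into $f=M$ almost everywhere, and then use regularity to get $f=M$ at every boundary point.

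First, since $f\in C_b(\partial D)$, Proposition \ref{R_point}(1) places $f$ in $L(\partial D;\omega_\cdot^D)$. Its essential supremum $M'$ with respect to the common null sets satisfies $M'\le M$. From
\begin{equation*}
 M=\omega_{x_0}^D(f)\le M'\omega_{x_0}^D(\partial D)=M'
\end{equation*}
we get $M'=M$. Theorem \ref{BSMP} then applies. Its proof gives $f=M$ $\omega_x^D$-almost everywhere for every $x\in D$, and $\omega_x^D(f)=M$ throughout $D$. This is already the second assertion, once we know $M$ is a maximum.

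Next, we upgrade almost-everywhere equality to equality everywhere. Suppose $f(\zeta)<M$ for some $\zeta\in\partial D$. By continuity of $f$, the set $A=\{y\in\partial D: f(y)<M\}$ is a nonempty relatively open subset of $\partial D$. Since $D$ is regular, $\zeta$ is a regular point, and Proposition \ref{H_A} gives $\omega_x^D(A)>0$. This contradicts $f=M$ almost everywhere. Alternatively, one can argue directly: $\omega_x^D(f)=M$ for all $x$, while regularity at $\zeta$ forces $\omega_x^D(f)\to f(\zeta)<M$ as $x\to\zeta$, which is again a contradiction. Hence $f\equiv M$ on $\partial D$. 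The supremum is then attained, so it is a maximum, and $\omega_x^D(f)=\max_{\partial D}f$ for all $x\in D$.

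The only delicate point is the identification of the essential supremum with the pointwise supremum. This is needed to invoke Theorem \ref{BSMP}, and it is settled by the one-line inequality above. No compactness of $\partial D$ is needed.
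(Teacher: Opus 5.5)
Your proof is correct and is essentially the paper's argument: both use that the nonnegative function $M-f$ has zero $\omega_{x_0}^D$-integral, and both use Proposition \ref{H_A} to rule out a nonempty relatively open set where $f<M$. The paper skips your detour through the essential supremum and Theorem \ref{BSMP}: it applies Proposition \ref{H_A} directly to a neighborhood on which $M-f\ge\varepsilon$, and then uses $\omega_x^D(\partial D)=1$ to get $\omega_x^D(f)=M$ for every $x\in D$.
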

\begin{proof}
Let $x_0\in D$ be a point where the supremum is attained.
By Theorem \ref{WMPr},
\begin{equation*}
 \omega_{x_0}^D(f)=\sup_{x\in D}\omega_x^D(f)
 =M:=\sup_{\partial D}f.
\end{equation*}
The function $g=M-f$ is nonnegative and continuous on $\partial D$,
and its integral at $x_0$ is zero.  If $g(y_0)>0$ at some boundary
point, continuity supplies a nonempty relatively open neighborhood
$A$ of $y_0$ and $\varepsilon>0$ such that $g\ge\varepsilon$
on $A$.  Proposition \ref{H_A} gives $\omega_{x_0}^D(A)>0$, so
\begin{equation*}
 0=\int_{\partial D}g\,d\omega_{x_0}^D
 \ge\varepsilon\omega_{x_0}^D(A)>0,
\end{equation*}
a contradiction.  Hence $f=M$ on the entire boundary.  Since the
measures satisfy $\omega_x^D(\partial D)=1$, $\omega_x^D(f)=M$ for every $x\in D$.
This proves constancy and also identifies the boundary values.
\end{proof}

\begin{defi}[Harmonic measure system]\label{def:harmonic-measure-system}\label{Def_Hms}
A harmonic measure system is a family of continuous harmonic measures
on domains whose boundary extensions are compatible as follows.
For domains $D,\widetilde D$ in the system with nonempty intersection, suppose that
$f\in L(\partial D;\omega_\cdot^D)$ and
$\widetilde f\in L(\partial\widetilde D;\omega_\cdot^{\widetilde D})$
satisfy
\begin{equation*}
 \omega_\cdot^{\widetilde D}(\widetilde f)=f
 \text{ on }\widetilde D\cap\partial D,
 \qquad \omega_\cdot^D(f)=\widetilde f
 \text{ on }\partial\widetilde D\cap\overline D.
\end{equation*}
Then $\omega_\cdot^D(f)=\omega_\cdot^{\widetilde D}(\widetilde f)$ on the intersection.
The system is \emph{regular closed} if every domain in the system admits an
increasing exhaustion by regular domains in the same system, each relatively
compact in the domain being exhausted.
\end{defi}

\begin{prop}[Nested compatibility]\label{prop:nested-compatibility}\label{special_compatible}
If $V\Subset D$ are domains in the system, then for every
$f\in L(\partial D;\omega_\cdot^D)$,
\begin{equation*}
 \omega_x^D(f)=\int_{\partial V}\omega_y^D(f)\,d\omega_x^V(y),\qquad x\in V.
\end{equation*}
For a general inclusion $V\subset D$, the same identity holds whenever
the function $y\mapsto\omega_y^D(f)$ on $\partial V$, with
$\omega_y^D(f)=f(y)$ when $y\in\partial D$, belongs to
$L(\partial V;\omega_\cdot^V)$.
\end{prop}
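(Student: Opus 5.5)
The plan is to apply the compatibility axiom of Definition \ref{def:harmonic-measure-system} with the pair $(V,D)$ in the roles $(D,\widetilde D)$. We cannot take $\widetilde f=f$ directly, because $\partial V$ and $\partial D$ are different sets. Instead we build boundary data on $\partial V$ from $f$. Put
\begin{equation*}
 g(y):=\omega_y^D(f),\qquad y\in\partial V,
\end{equation*}
with the convention $\omega_y^D(f)=f(y)$ at points $y\in\partial V\cap\partial D$. When $V\Subset D$, this intersection is empty, so $g$ is simply the restriction to $\partial V$ of the continuous function $x\mapsto\omega_x^D(f)$ on $D$; here we use that harmonic measures in the system are continuous. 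Then define $u:=\omega_\cdot^V(g)$ on $V$, extended to $\partial V$ by the boundary convention, so that $u=g$ on $\partial V$.

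The first step is to check $g\in L(\partial V;\omega_\cdot^V)$. In the case $V\Subset D$, the set $\partial V$ is a closed subset of the compact set $\overline V\subset D$, so it is compact. Since $g$ is continuous there, it is bounded, and Proposition \ref{R_point}(1) gives integrability for every $\omega_x^V$. In the general case this membership is exactly the stated hypothesis.

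The second step is to verify the two matching conditions of the compatibility axiom, taking $D\leftarrow V$, $f\leftarrow g$, $\widetilde D\leftarrow D$ and $\widetilde f\leftarrow f$.
\begin{itemize}
\item[(i)] We need $\omega_\cdot^D(f)=g$ on $D\cap\partial V$. This holds by the definition of $g$.
\item[(ii)] We need $\omega_\cdot^V(g)=f$ on $\partial D\cap\overline V$. When $V\Subset D$, this set is empty, so the condition is vacuous. In the general case, $\partial D\cap\overline V\subseteq\partial V$ because $V\subset D$ is open. There $\omega_y^V(g)=g(y)=f(y)$ by the boundary conventions for both families.
\end{itemize}
With both conditions in place, the axiom yields $\omega_\cdot^V(g)=\omega_\cdot^D(f)$ on $V\cap D=V$. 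Written out, this is the claimed identity $\omega_x^D(f)=\int_{\partial V}\omega_y^D(f)\,d\omega_x^V(y)$ for $x\in V$.

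The step I expect to need the most care is the bookkeeping of the boundary conventions in the general inclusion case. One must read the compatibility hypotheses as holding pointwise under the convention $\omega_y(h)=h(y)$ on boundaries. One must also confirm that $g$ is well defined, and Borel measurable, on all of $\partial V$. Interior points of $\partial V$ in $D$ take the harmonic extension, and points of $\partial V\cap\partial D$ take $f$ itself. The relatively compact case avoids this subtlety, since there $g$ is continuous and $\partial V\cap\partial D=\varnothing$.
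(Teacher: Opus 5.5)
Your proposal is correct and is essentially the paper's proof: you define $g=\omega_\cdot^D(f)$ on $\partial V$ with the boundary convention, obtain $g\in L(\partial V;\omega_\cdot^V)$ from continuity and compactness of $\partial V$ (or by hypothesis in the general case), and invoke the compatibility axiom. The only difference is the order of the two domains in the axiom: the paper takes $(D,\widetilde D)=(D,V)$ with $\widetilde f=g$, so the condition on $V\cap\partial D=\varnothing$ is vacuous even for a general inclusion and the condition on $\partial V\cap\overline D=\partial V$ is exactly the definition of $g$, while your reversed order splits the same check between $D\cap\partial V$ and $\partial D\cap\overline V$, which is equally valid.
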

\begin{proof}
Let $f\in L(\partial D;\omega_\cdot^D)$ and define
$g(y)=\omega_y^D(f)$ on $\partial V$, with the stated boundary
convention when $y\in\partial D$.  If $V\Subset D$, continuity
of $y\mapsto\omega_y^D(f)$ and compactness of $\partial V$ give
\begin{equation*}
 \int_{\partial V}|g|\,d\omega_x^V
 \le\max_{y\in\partial V}|\omega_y^D(f)|<\infty,
 \qquad x\in V.
\end{equation*}
For the general inclusion this integrability is assumed.
The matching condition on $\partial V\cap\overline D=\partial V$
is exactly the definition of $g$, while $V\cap\partial D$ is empty.
Thus compatibility of the two measures gives
\begin{equation*}
 \omega_x^D(f)=\omega_x^V(g)
 =\int_{\partial V}\omega_y^D(f)\,d\omega_x^V(y),\qquad x\in V.
\end{equation*}
This also explains why integrability on $\partial V$ is needed when
compact containment is not available.
\end{proof}

\begin{thm}\label{R_Comp}
A translation-closed and translation-invariant system in $\mathbb{R}^n$
is regular closed if its regular domains are closed under finite unions
having connected union, and it contains nonempty regular domains of
arbitrarily small diameter.
\end{thm}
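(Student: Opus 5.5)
Given a domain $D$ in the system, the plan is to construct an increasing exhaustion $D=\bigcup_n D_n$ by regular domains of the system with $D_n\Subset D_{n+1}$ and $D_n\Subset D$. I read ``translation-closed and translation-invariant'' as saying that every translate $\tau+W$ of a domain $W$ in the system is again in the system, and that its harmonic measure is the transported one, $\omega^{\tau+W}_{x+\tau}=(\tau)_\#\omega^W_x$. In particular, translates of regular domains are regular, since the boundary limit \eqref{s1} is transported by the translation. I will build the $D_n$ as finite connected unions of small translates of regular domains, and then use the closure hypothesis to conclude that each $D_n$ is a regular domain in the system.

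\textbf{Step 1: the compact pieces.} Fix $x_0\in D$ and the compact sets
\begin{equation*}
 K_n=\{x\in D:|x|\le n,\ \operatorname{dist}(x,\partial D)\ge 1/n\}.
\end{equation*}
(If $D=\mathbb{R}^n$, the distance condition is dropped.) Since $D$ is open and connected, it is path connected, so for each $n$ there is a compact connected set $L_n\subset D$ containing $K_n\cup\{x_0\}$. To obtain it, join finitely many points of a finite cover of $K_n$ by small balls inside $D$ to $x_0$ by paths. After enlarging, I may assume $L_{n}\subset L_{n+1}$.

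\textbf{Step 2: covering by translates.} Put $r_n=\tfrac13\operatorname{dist}(L_n,\partial D)>0$. Choose a nonempty regular domain $W_n$ in the system with $\operatorname{diam}W_n<r_n$, and fix $w_n\in W_n$. The translates $\tau+W_n$ with $\tau\in L_n-w_n$ cover $L_n$ by open sets, so compactness gives finitely many translates covering $L_n$. I would add to these a finite cover of $\overline{D_{n-1}}$, which is compact in $D$; this is what yields $D_{n-1}\Subset D_n$ once $D_n\supseteq L_n\cup \overline{D_{n-1}}$. All chosen sets meet $L_n$ or $\overline{D_{n-1}}$, and they have diameter $<r_n$, so their closures lie in $D$.

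\textbf{Step 3: connectedness and regularity.} To make the union connected, I use the connectedness of $L_n$ and add chains of overlapping translates. Covering a path in $D$ by small translates whose consecutive members overlap is always possible, because the diameter bound keeps them inside $D$. The union $D_n$ is then a connected finite union of regular domains in the system, so by hypothesis it is a regular domain in the system. Moreover $\overline{D_n}$ is compact and contained in $D$, and since $\bigcup_n K_n=D$ we get $D=\bigcup_n D_n$. This step, ensuring connectedness of the union, is the main obstacle.
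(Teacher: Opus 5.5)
Your route is essentially the paper's: cover increasing compact connected sets by finitely many small translates of a small regular domain, then apply the union hypothesis. Two steps need repair, and the first is the one you single out as the main obstacle.

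Connectedness needs no chains. Each translate $\tau+W_n$ is connected. Each translate you chose meets the set $C_n:=L_n\cup\overline{D_{n-1}}$, and together they cover it. The set $C_n$ is connected: $\overline{D_{n-1}}$ is the closure of a connected set, and both pieces contain $x_0$. A family of connected sets, each meeting a connected set $C$ and together covering $C$, has union $C\cup\bigcup_i(\tau_i+W_n)$, which is connected. This is exactly the paper's one-line remark that "otherwise the cover would separate $K_l$." Your chain construction, as sketched, does not say which components are joined or why the chain members stay compactly inside $D$. It should simply be replaced by this observation.

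The radius does not control the re-covering of $\overline{D_{n-1}}$. With $r_n=\frac13\operatorname{dist}(L_n,\partial D)$, only translates meeting $L_n$ are guaranteed to have closures in $D$. A translate meeting $\overline{D_{n-1}}$ would need $r_n<\operatorname{dist}(\overline{D_{n-1}},\partial D)$, and your construction does not ensure this. Each re-covering can push the set outward by almost $r_k$, and these increments accumulate. For instance, if $\operatorname{dist}(L_k,\partial D)$ is constant for a few consecutive $k$, the margin $3r_j$ around $L_j$ can be used up. So the claim "their closures lie in $D$" is unjustified for that part of the cover. There are two fixes:
\begin{enumerate}
\item Take $r_n=\frac13\operatorname{dist}(L_n\cup\overline{D_{n-1}},\partial D)$. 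This is positive because $\overline{D_{n-1}}$ is compact in $D$ by induction.
\item Do as the paper does: cover only $L_n$ to get $U_n$, and set $D_n=U_1\cup\cdots\cup U_n$. This is connected because every $U_k\supseteq L_k\supseteq L_1\ni x_0$.
\end{enumerate}
The definition of regular closed only asks for an increasing exhaustion with each $D_n\Subset D$. The extra property $D_{n-1}\Subset D_n$ that forced you to re-cover $\overline{D_{n-1}}$ is not needed.
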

\begin{proof}
Fix a domain $D$ in the system and increasing compact connected sets $K_l\subset D$
whose union is $D$.  Such sets are obtained as finite unions of
closed balls compactly contained in $D$, joined by finitely many paths.  Small translates of regular
domains in the system give a finite cover of $K_l$ by domains compactly contained in
$D$, each meeting $K_l$.  Their union $U_l$ is connected: otherwise
the cover would separate $K_l$.  It is regular by hypothesis.
The finite unions $D_l=U_1\cup\cdots\cup U_l$ are connected, regular,
relatively compact in $D$, and exhaust it.
\end{proof}

\begin{thm}[Strong maximum principle for an open set]\label{Str_MP_G}
Suppose the harmonic measure system is regular closed,
$f\in L(\partial D;\omega_\cdot^D)$, and
\begin{equation*}
 \omega_{x_0}^D(f)=M:=\sup_{x\in D}\omega_x^D(f)<\infty
 \quad\text{for some }x_0\in D.
\end{equation*}
Then
\begin{equation*}
 \omega_x^D(f)=M,\qquad x\in D.
\end{equation*}
\end{thm}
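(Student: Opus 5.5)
The plan is the classical open--closed connectedness argument.  Put $u(x)=\omega_x^D(f)$ and $E=\{x\in D:u(x)=M\}$.  By hypothesis $x_0\in E$, so $E$ is nonempty.  Since the harmonic measure is continuous, $u$ is continuous on $D$, and therefore $E$ is relatively closed in $D$.  As $D$ is connected, it then suffices to show that $E$ is open.

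To prove openness, fix $z\in E$.  Regular closedness gives an increasing exhaustion of $D$ by regular domains $D_l$ of the system, each relatively compact in $D$.  Choose $l$ with $z\in V:=D_l$, so that $V\Subset D$ and $V$ is regular.  By Proposition \ref{prop:nested-compatibility},
\begin{equation*}
 u(x)=\int_{\partial V}g\,d\omega_x^V,\qquad g:=u|_{\partial V},\quad x\in V.
\end{equation*}
Here $g$ is continuous on the compact set $\partial V\subset D$, because $u$ is continuous on $D$.  Hence $g\in C_b(\partial V)$, and $g\le M$ because $M$ is the supremum of $u$ over $D$.

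We now apply Theorem \ref{SMPr} on the regular domain $V$ with boundary data $g$.  By Theorem \ref{WMPr}, $\sup_V\omega_\cdot^V(g)=\sup_{\partial V}g\le M$.  At $z$ we have $\omega_z^V(g)=u(z)=M$.  So the supremum of $\omega_\cdot^V(g)$ over $V$ is attained at the interior point $z$, and Theorem \ref{SMPr} gives $\omega_x^V(g)=M$ for all $x\in V$, that is, $u\equiv M$ on $V$.  (Alternatively, Theorem \ref{BSMP} applies directly, since $\operatorname{ess\,sup}g\le M$ and $\omega_z^V(g)=M$.)  Thus $V$ is an open neighbourhood of $z$ contained in $E$, so $E$ is open.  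Hence $E=D$.

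The main obstacle is producing a regular domain of the system that contains the given point $z$ and is compactly contained in $D$, so that the nested compatibility identity holds without extra integrability hypotheses.  An exhaustion provides such a domain for each point, but it need not be a small neighbourhood of $z$.  That is harmless, since any open set containing $z$ on which $u\equiv M$ suffices for openness.

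If the exhaustion given by regular closedness is only increasing with $D_l$ relatively compact in $D$, the argument above still works.  The other technical points are that $\partial V\subset D$ (from $V\Subset D$), which makes $g$ finite and continuous, and the convention $\omega_y^D(f)=f(y)$, which is irrelevant here because $\partial V\cap\partial D=\varnothing$.
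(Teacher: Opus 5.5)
Your proof is correct and uses the same core step as the paper: you pass to a regular exhaustion domain $V\Subset D$ via nested compatibility, observe that $g=\omega_\cdot^D(f)|_{\partial V}$ is continuous on the compact set $\partial V$, and apply Theorem \ref{SMPr} on $V$. The paper omits your open--closed connectedness layer, because the exhaustion is increasing and so a single $V$ contains both the maximizing point and an arbitrary $x\in D$, which gives $\omega_x^D(f)=M$ directly; in exchange, your version would work with any cover of $D$ by such domains, not only an increasing one.
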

\begin{proof}
Let $x_*\in D$ satisfy
\begin{equation*}
 M:=\omega_{x_*}^D(f)=\sup_{x\in D}\omega_x^D(f)<\infty.
\end{equation*}
Fix an arbitrary $x\in D$.  Regular closedness supplies an increasing
exhaustion of $D$ by regular domains in the system relatively compact in $D$.
Choose one such domain $V$ containing both $x_*$ and $x$.
Define the boundary function
\begin{equation*}
 g(y)=\omega_y^D(f),\qquad y\in\partial V.
\end{equation*}
Continuity of the harmonic measure system and compactness of
$\partial V\subset D$ imply $g\in C(\partial V)$; in particular,
$g$ is bounded and attains its maximum.  Nested compatibility gives
\begin{equation*}
 \omega_z^V(g)=\omega_z^D(f),\qquad z\in V.
\end{equation*}
The right-hand side is at most $M$ on $V$ and equals $M$ at
$x_*$.  Therefore the left-hand side attains its supremum at an
interior point of the regular domain $V$.  Theorem \ref{SMPr},
applied to $V$ and $g$, shows that it is constant there.  In
particular $\omega_x^D(f)=\omega_{x_*}^D(f)=M$.
As $x\in D$ was arbitrary, the function is constant throughout $D$.
Only the restriction $g=\omega_\cdot^D(f)|_{\partial V}$ was required to be bounded; no boundedness
assumption on the original integrable boundary function $f$ was used.
\end{proof}

\begin{thm}[A local exhaustion suffices]
Suppose $D$ admits an increasing exhaustion $(V_n)$ by regular
domains in the harmonic measure system, with $V_n\Subset D$.
If $f\in L(\partial D;\omega_\cdot^D)$ and
\begin{equation*}
 \omega_{x_0}^D(f)=M:=\sup_{x\in D}\omega_x^D(f)<\infty
 \quad\text{for some }x_0\in D,
\end{equation*}
then
\begin{equation*}
 \omega_x^D(f)=M,\qquad x\in D.
\end{equation*}
\end{thm}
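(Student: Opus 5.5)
The argument repeats the proof of Theorem \ref{Str_MP_G}, with the given exhaustion $(V_n)$ taking the place of the one supplied by regular closedness. The point is that Theorem \ref{Str_MP_G} used regular closedness only to obtain one regular domain $V\Subset D$ in the system containing two prescribed points. Fix $x_*\in D$ with $\omega_{x_*}^D(f)=M$, and fix an arbitrary $x\in D$. Since the $V_n$ increase and their union is $D$, we have $x_*\in V_{n_1}$ and $x\in V_{n_2}$ for some indices. Setting $n=\max(n_1,n_2)$, both points lie in $V:=V_n$.

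Next define $g(y)=\omega_y^D(f)$ for $y\in\partial V$. Because $V\Subset D$, the boundary $\partial V$ is a compact subset of $D$. Continuity of the harmonic measure $\omega_\cdot^D$, which is part of the definition of a harmonic measure system, then gives $g\in C(\partial V)$; in particular $g$ is bounded. Since $V$ and $D$ both belong to the system, nested compatibility (Proposition \ref{prop:nested-compatibility}) applies and yields
\begin{equation*}
 \omega_z^V(g)=\omega_z^D(f),\qquad z\in V.
\end{equation*}
The right-hand side is at most $M$ on $V$ and equals $M$ at the interior point $x_*$. Hence $z\mapsto\omega_z^V(g)$ attains its supremum at an interior point of the regular domain $V$.

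Theorem \ref{SMPr}, applied on $V$ with $g\in C_b(\partial V)$, now shows that $g$ is constant and that $\omega_z^V(g)=M$ for every $z\in V$. In particular $\omega_x^D(f)=M$. Since $x\in D$ was arbitrary, the conclusion holds throughout $D$.

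There is no real obstacle. The one point that needs care is that Theorem \ref{SMPr} requires $V$ to be regular and $g$ to be bounded and continuous. Regularity holds by the hypothesis on the exhaustion, and boundedness and continuity of $g$ hold because of the compact containment $V_n\Subset D$. No assumption on the other domains of the system is needed.
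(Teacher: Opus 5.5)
Your proposal is correct and follows the paper's proof essentially verbatim: choose one $V_n$ containing both $x$ and the maximizing point, set $g_n=\omega_\cdot^D(f)|_{\partial V_n}$ (continuous and bounded by compact containment), use nested compatibility to get $\omega_z^{V_n}(g_n)=\omega_z^D(f)$, and apply Theorem \ref{SMPr} on the regular domain $V_n$. Your remark that only the given exhaustion of $D$ is used, and no exhaustion of other domains in the system, matches the paper's closing observation.
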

\begin{proof}
Let $(V_n)$ be an increasing exhaustion of $D$ by regular
domains in the system with $\overline{V_n}$ compact and contained in $D$.
Suppose $\omega_{x_*}^D(f)=M=\sup_{x\in D}\omega_x^D(f)<\infty$.
For any $x\in D$, choose $n$ with $x,x_*\in V_n$.
The function $g_n(y)=\omega_y^D(f)$ on $\partial V_n$ is continuous
and bounded.  Compatibility gives
$\omega_z^{V_n}(g_n)=\omega_z^D(f)$ for $z\in V_n$.
This function attains its supremum $M$ at $x_*$, so Theorem
\ref{SMPr} gives $\omega_x^D(f)=M$.  Since this holds for every
$x$, constancy follows.  No exhaustion of any other domain in the system
is used.
\end{proof}

The corresponding minimum principles follow by applying these results
to $-f$, using linearity
$\omega_x^D(-f)=-\omega_x^D(f)$.  In particular, for bounded continuous
data on a regular domain,
\begin{equation*}
 \inf_{x\in D}\omega_x^D(f)=\inf_{\partial D}f.
\end{equation*}
If this infimum is attained in the interior, the function is constant.
For integrable data the same conclusion holds when an interior value equals
the finite essential infimum, or when a finite global minimum is attained
under the regular-exhaustion assumptions above.

\subsection{Harnack inequality and Harnack principle}\label{sec2.4}
\begin{defi}
For a nonempty subset $A\subset D$, define
\begin{equation*}
 H(A;D):=\sup_{\substack{x,y\in A\\0\le f\in L(\partial D;\omega_\cdot^D)\\
                         \omega_y^D(f)>0}}
                 \frac{\omega_x^D(f)}{\omega_y^D(f)}.
\end{equation*}
This is the \emph{Harnack index} of $A$ relative to $D$.
For $\alpha\in[1,\infty]$, write
\begin{equation*}
 D_\alpha:=\{A\subseteq D:A\ne\varnothing,\ H(A;D)=\alpha\}.
\end{equation*}
The sets in $D_\alpha$ are called \emph{Harnack subsets} with index $\alpha$.
Equivalence of the measures makes the positivity of the denominator
independent of the interior point.  Functions vanishing almost everywhere are
excluded.
\end{defi}

\begin{prop}\label{Harnack_basic}
One has $1\le H(A;D)\le\infty$.  If $A\subset V\Subset D$, where $V,D$ are
domains in the system, then
\begin{equation*}
 H(A;D)\le H(A;V).
\end{equation*}
The same conclusion holds for $V\subset D$
when $\omega_\cdot^D(f)|_{\partial V}$ is integrable for every
admissible boundary function $f$.  Moreover,
\begin{equation*}
 \omega_x^D(f)\le H(A;D)\omega_y^D(f)
 \quad(x,y\in A,\ f\ge0)
\end{equation*}
when $H(A;D)<\infty$.
There is no general assertion that $H(D;D)=\infty$: a family constant
in the interior point has $H(D;D)=1$.
\end{prop}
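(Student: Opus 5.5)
The statement has three parts, and I will prove them in order: the lower bound $1\le H(A;D)$, the monotonicity $H(A;D)\le H(A;V)$ under nesting, and the pointwise Harnack inequality.

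\textbf{Lower bound.} Since $\omega_x^D(\partial D)=1$, the constant $f\equiv1$ lies in $L(\partial D;\omega_\cdot^D)$ and has positive integral. Taking $x=y\in A$ (with $A$ nonempty) gives a quotient equal to $1$ in the supremum, so $H(A;D)\ge1$. The upper bound $\le\infty$ is trivial.

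\textbf{Monotonicity.} Fix $x,y\in A$ and an admissible $0\le f\in L(\partial D;\omega_\cdot^D)$ with $\omega_y^D(f)>0$. Define $g(z)=\omega_z^D(f)$ on $\partial V$. When $V\Subset D$, the proof of Proposition \ref{prop:nested-compatibility} shows that $g$ is continuous and bounded on the compact set $\partial V$. In the general case, integrability of $g$ is exactly the extra hypothesis. Moreover, $g\ge0$ because the measures are positive. Nested compatibility (Proposition \ref{prop:nested-compatibility}) then gives
\begin{equation*}
 \omega_x^D(f)=\omega_x^V(g),\qquad \omega_y^D(f)=\omega_y^V(g),
\end{equation*}
since $x,y\in A\subset V$. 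In particular $\omega_y^V(g)=\omega_y^D(f)>0$, so $g$ is an admissible competitor in the supremum defining $H(A;V)$. Hence
\begin{equation*}
 \frac{\omega_x^D(f)}{\omega_y^D(f)}=\frac{\omega_x^V(g)}{\omega_y^V(g)}\le H(A;V).
\end{equation*}
Taking the supremum over $x,y,f$ proves $H(A;D)\le H(A;V)$.

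\textbf{Pointwise Harnack inequality.} Let $x,y\in A$ and $f\ge0$ be admissible. If $\omega_y^D(f)>0$, the inequality is just the definition of the supremum, multiplied through by $\omega_y^D(f)$. If $\omega_y^D(f)=0$, then $f=0$ almost everywhere for $\omega_y^D$. By the equality of null sets established in the proposition following Definition \ref{pre_HM}, $f=0$ almost everywhere for $\omega_x^D$ as well, so $\omega_x^D(f)=0$ and the inequality holds trivially.

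\textbf{Closing remark.} If $\omega_x^D$ is independent of $x$, every quotient equals $1$, so $H(D;D)=1$; this justifies the remark that no general assertion $H(D;D)=\infty$ is made.

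\textbf{Main obstacle.} The only delicate point is the monotonicity step: one must check that the transferred function $g$ is admissible for $V$, meaning it is nonnegative, integrable, and has positive $\omega_y^V$-integral. Each of these follows from the identities above.
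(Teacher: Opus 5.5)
Your proposal is correct and follows the paper's proof essentially step by step. It uses the constant function $f=1$ for the lower bound, transfers $f$ to $g=\omega_\cdot^D(f)|_{\partial V}$ via nested compatibility for the monotonicity, and uses the common null sets to handle the zero-denominator case of the Harnack inequality. Your explicit check that $\omega_y^V(g)=\omega_y^D(f)>0$, so that $g$ is an admissible competitor for $H(A;V)$, is a small clarification the paper leaves implicit.
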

\begin{proof}
The constant function $f=1$ gives
$\omega_x^D(1)/\omega_y^D(1)=1$, so $H(A;D)\ge1$.
For monotonicity, let $0\le f\in L(\partial D;\omega_\cdot^D)$
with $\omega_y^D(f)>0$.  On $\partial V$, put
$g(z)=\omega_z^D(f)$, using the boundary convention if necessary.
For $V\Subset D$, continuity makes $g$ bounded on the compact
boundary; otherwise its integrability is part of the hypothesis.
Nested compatibility gives, for $x,y\in A$,
\begin{equation*}
 \frac{\omega_x^D(f)}{\omega_y^D(f)}
 =\frac{\int_{\partial V}g(z)\,d\omega_x^V(z)}
        {\int_{\partial V}g(z)\,d\omega_y^V(z)}
 \le H(A;V).
\end{equation*}
Taking the supremum of these ratios gives
$H(A;D)\le H(A;V)$.  Similarly, inclusion of the sets of pairs of interior points
gives $H(A;D)\le H(V;D)$.

If $H(A;D)<\infty$ and $\omega_y^D(f)>0$, multiplying the
defining ratio inequality by $\omega_y^D(f)$ proves the stated bound.
If $\omega_y^D(f)=0$, nonnegativity implies $f=0$ almost everywhere
for that measure.  Equivalence gives $\omega_x^D(f)=0$ at every
interior point, so the inequality still holds.  Finally, a family independent of
the interior point gives ratio one for every admissible function, showing why
$H(D;D)=\infty$ cannot be asserted in general.
\end{proof}

\begin{thm}[General Harnack inequality]\label{Ha}
For a constant $C\ge1$, the following conditions are equivalent:
\begin{enumerate}
\item $H(A;D)\le C$;
\item $\omega_x^D\le C\omega_y^D$ for every $x,y\in A$;
\item $\|d\omega_x^D/d\omega_y^D\|_{L^\infty(\omega_y^D)}\le C$
for every $x,y\in A$.
\end{enumerate}
\end{thm}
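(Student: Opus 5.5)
I will prove the cycle of implications $(1)\Rightarrow(2)\Rightarrow(3)\Rightarrow(1)$. Throughout I use that all $\omega_x^D$ have the same null sets (the proposition following Definition \ref{pre_HM}). Hence the Radon--Nikodym derivative $\rho_{x,y}:=d\omega_x^D/d\omega_y^D$ exists. It is $\sigma$-finiteness-free here because both measures are probability measures. It is nonnegative and lies in $L^1(\omega_y^D)$.

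\emph{$(1)\Rightarrow(2)$.} Fix $x,y\in A$ and a Borel set $E\subseteq\partial D$, and test with $f=\chi_E$. This function is bounded and Borel, hence belongs to $L(\partial D;\omega_\cdot^D)$, and it is nonnegative. If $\omega_y^D(E)>0$, the definition of $H(A;D)$ gives $\omega_x^D(E)\le H(A;D)\,\omega_y^D(E)\le C\omega_y^D(E)$. If $\omega_y^D(E)=0$, equivalence of null sets gives $\omega_x^D(E)=0$. This is exactly the last assertion of Proposition \ref{Harnack_basic}. Thus $\omega_x^D\le C\omega_y^D$ as measures; the completed sets follow by approximation with Borel sets.

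\emph{$(2)\Rightarrow(3)$.} Suppose $E_m=\{\rho_{x,y}>C+1/m\}$ had positive $\omega_y^D$-measure. Then
\begin{equation*}
 \omega_x^D(E_m)=\int_{E_m}\rho_{x,y}\,d\omega_y^D\ge (C+1/m)\,\omega_y^D(E_m)>C\omega_y^D(E_m),
\end{equation*}
which contradicts (2). Taking the union over $m$ gives $\rho_{x,y}\le C$ $\omega_y^D$-almost everywhere, which is the $L^\infty$ bound.

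\emph{$(3)\Rightarrow(1)$.} Let $0\le f\in L(\partial D;\omega_\cdot^D)$ with $\omega_y^D(f)>0$. The change-of-density formula for nonnegative measurable functions gives
\begin{equation*}
 \omega_x^D(f)=\int f\rho_{x,y}\,d\omega_y^D\le C\,\omega_y^D(f).
\end{equation*}
This formula is obtained by monotone convergence from simple functions. Dividing by $\omega_y^D(f)$ and taking the supremum over admissible $f$ and over $x,y\in A$ yields $H(A;D)\le C$.

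The only point needing care is the measure-theoretic bookkeeping rather than any analysis. One has to make sure indicator functions are admissible test functions in the definition of the Harnack index, and that the case $\omega_y^D(E)=0$ is handled via equivalence of null sets. One must also check that the Radon--Nikodym density integrates completed-measurable nonnegative $f$ correctly. I expect $(1)\Rightarrow(2)$, with its zero-denominator case, to be the step most worth stating carefully.
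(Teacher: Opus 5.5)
Your proof is correct and uses the same ingredients as the paper: indicator test functions with the zero-denominator case for $(1)\Rightarrow(2)$, the sets $E_m=\{d\omega_x^D/d\omega_y^D>C+1/m\}$ for $(2)\Rightarrow(3)$, and monotone convergence from simple functions to return to the Harnack index. The only difference is organizational. The paper proves $(1)\Leftrightarrow(2)$ and $(2)\Leftrightarrow(3)$ as separate pairs, passing from the measure inequality directly to $H(A;D)\le C$ and recovering the measure inequality from the density bound by integrating over $E$. Your cycle merges these two steps into the single step $(3)\Rightarrow(1)$ via the density formula.
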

\begin{proof}
Suppose first that $H(A;D)\le C$.  For any Borel
$E\subseteq\partial D$, the indicator $\chi_E$ is integrable.
Proposition \ref{Harnack_basic}, including its zero-denominator case,
gives
\begin{equation*}
 \omega_x^D(E)=\omega_x^D(\chi_E)
 \le C\omega_y^D(\chi_E)=C\omega_y^D(E).
\end{equation*}
Thus $\omega_x^D\le C\omega_y^D$.
Conversely, this measure inequality gives the same integral inequality
first for indicators, then for nonnegative simple functions, and finally
by monotone convergence for all nonnegative measurable functions.  In
particular, for every admissible $f$,
\begin{equation*}
 \omega_x^D(f)=\int_{\partial D}f\,d\omega_x^D
 \le C\int_{\partial D}f\,d\omega_y^D=C\omega_y^D(f).
\end{equation*}
Taking the supremum of the ratios proves $H(A;D)\le C$.

Let $k_{x,y}=d\omega_x^D/d\omega_y^D$.  If
$k_{x,y}\le C$ almost everywhere for $\omega_y^D$, then
\begin{equation*}
 \omega_x^D(E)=\int_E k_{x,y}\,d\omega_y^D\le C\omega_y^D(E).
\end{equation*}
Conversely, assume the measure inequality and put
$E_m=\{k_{x,y}>C+1/m\}$.  Then
\begin{equation*}
 (C+1/m)\omega_y^D(E_m)
 \le\int_{E_m}k_{x,y}\,d\omega_y^D
 =\omega_x^D(E_m)\le C\omega_y^D(E_m),
\end{equation*}
so $\omega_y^D(E_m)=0$.  Since $\{k_{x,y}>C\}=\bigcup_m E_m$,
we obtain the asserted essential bound.  The constant must be uniform
in $x,y\in A$; separate boundedness for each pair is not sufficient.
\end{proof}

\begin{cor}[Harnack inequality]\label{C_Ha}
If $\partial D$ is compact and the relative densities have a jointly
continuous version in $(x,y,z)\in D\times D\times\partial D$, then
\begin{equation*}
 H(A;D)<\infty,\qquad A\Subset D.
\end{equation*}
\end{cor}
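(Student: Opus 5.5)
The plan is to reduce the claim to condition (2) of Theorem \ref{Ha}, that is, to a uniform pointwise bound on the relative densities. Let $k(x,y,z)$ denote the jointly continuous version of $d\omega_x^D/d\omega_y^D$ at $z\in\partial D$. Fix $A\Subset D$ and put $K=\overline A$, which is compact in $D$. The set $K\times K\times\partial D$ is compact because $\partial D$ is compact. Continuity then gives a finite constant
\begin{equation*}
 C:=\max_{(x,y,z)\in K\times K\times\partial D}k(x,y,z)<\infty .
\end{equation*}
We may assume $C\ge1$ by replacing $C$ with $\max(C,1)$.

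For fixed $x,y\in A$, the function $z\mapsto k(x,y,z)$ is a version of the Radon--Nikodym derivative. So for every Borel $E\subseteq\partial D$,
\begin{equation*}
 \omega_x^D(E)=\int_E k(x,y,z)\,d\omega_y^D(z)\le C\,\omega_y^D(E).
\end{equation*}
This is exactly $\omega_x^D\le C\omega_y^D$ for all $x,y\in A$, with a constant uniform in the pair. The implication (2)$\Rightarrow$(1) of Theorem \ref{Ha} then yields $H(A;D)\le C<\infty$. Alternatively, one can cite (3) directly, since $\|k(x,y,\cdot)\|_{L^\infty(\omega_y^D)}\le\sup_z k(x,y,z)\le C$.

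The only delicate step is the meaning of ``jointly continuous version''. I read it as a single function on $D\times D\times\partial D$, continuous there, such that for each pair $(x,y)$ the slice $k(x,y,\cdot)$ is a Radon--Nikodym density; the density itself is only defined $\omega_y^D$-a.e. Under this reading the slice bound is a pointwise bound everywhere, so the essential bound is immediate. The remark after Theorem \ref{Ha} warns that pairwise bounds are insufficient. Uniformity here comes entirely from compactness of $K\times K\times\partial D$, which is why both hypotheses are needed: $A\Subset D$ and compactness of $\partial D$. If $K$ met $\partial D$, or if $\partial D$ were noncompact, the maximum could be infinite. No use of the system's compatibility or regularity is required.
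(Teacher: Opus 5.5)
Your proposal is correct and is essentially the paper's own argument: compactness of $\overline A\times\overline A\times\partial D$ bounds the continuous density by a uniform constant $C\ge1$, and Theorem \ref{Ha} turns this into $H(A;D)\le C$. The only cosmetic differences are that you go through the measure inequality (2), whereas the paper integrates a nonnegative $f$ against the density directly, and that the paper maximizes $|k|$ rather than $k$; neither changes anything, since densities are nonnegative almost everywhere.
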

\begin{proof}
Write $k(x,y,z)$ for the jointly continuous version of the
relative density.  Since $A\Subset D$, the product
$\overline A\times\overline A\times\partial D$ is compact.  Hence
\begin{equation*}
 C:=\max\left\{1,
 \max_{(x,y,z)\in\overline A\times\overline A\times\partial D}
 |k(x,y,z)|\right\}<\infty.
\end{equation*}
For $x,y\in A$ and $0\le f\in L(\partial D;\omega_\cdot^D)$,
\begin{equation*}
 \begin{aligned}
 \omega_x^D(f)&=\int_{\partial D}f(z)k(x,y,z)\,d\omega_y^D(z)\\
 &\le C\int_{\partial D}f(z)\,d\omega_y^D(z)
 =C\omega_y^D(f).
 \end{aligned}
\end{equation*}
Theorem \ref{Ha} therefore gives $H(A;D)\le C<\infty$.
Interchanging $x$ and $y$ also gives
$C^{-1}\omega_y^D(f)\le\omega_x^D(f)$.
\end{proof}

For a Euclidean system, translation--scaling invariance (TSCI) means
that every map $\nu(x)=a+tx$, $t>0$, carries every domain in the system to a domain in the same system and
$\omega_{\nu(x)}^{\nu(V)}=\nu_\#\omega_x^V$.
\begin{thm}[Harnack principle in $\mathbb{R}^n$]
Assume TSCI, $A\subset V$, and $H(A;V)<\infty$.
If $\nu(V)\Subset D$, then
\begin{equation*}
 H(\nu(A);D)\le H(A;V).
\end{equation*}
Consequently, for $x,y\in\nu(A)$ and nonnegative
$f\in L(\partial D;\omega_\cdot^D)$,
\begin{equation*}
 \omega_x^D(f)\le H(A;V)\omega_y^D(f).
\end{equation*}
The same conclusion holds for $\nu(V)\subset D$ if the functions $\omega_\cdot^D(f)|_{\partial\nu(V)}$
are integrable for the boundary data under consideration.
\end{thm}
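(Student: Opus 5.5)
The argument has two steps: transport the Harnack index through $\nu$ using TSCI, then apply the monotonicity in Proposition \ref{Harnack_basic} to the pair $\nu(A)\subset\nu(V)\Subset D$.

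\emph{Step 1: $H(\nu(A);\nu(V))=H(A;V)$.} TSCI says that $\nu(V)$ is a domain in the system and that
\begin{equation*}
 \omega_{\nu(x)}^{\nu(V)}=\nu_\#\omega_x^V .
\end{equation*}
Since $\nu$ is a homeomorphism of $\mathbb{R}^n$, it maps $\partial V$ onto $\partial\nu(V)$. The push-forward identity \eqref{eq:pushforward-integration} then gives, for every nonnegative boundary function $g$ on $\partial\nu(V)$,
\begin{equation*}
 \omega_{\nu(x)}^{\nu(V)}(g)=\omega_x^V(g\circ\nu).
\end{equation*}
Hence $g\in L(\partial\nu(V);\omega_\cdot^{\nu(V)})$ if and only if $g\circ\nu\in L(\partial V;\omega_\cdot^V)$. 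Positivity of the denominators also corresponds under $g\mapsto g\circ\nu$. The admissible pairs of points correspond through $x\mapsto\nu(x)$, so the sets of ratios in the two Harnack indices coincide. This is a direct bijection, and I expect no difficulty here beyond checking measurability of $g\circ\nu$, which holds because $\nu$ is a homeomorphism.

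\emph{Step 2: pass to $D$.} We have $\nu(A)\subset\nu(V)\Subset D$, and both are domains in the system. Proposition \ref{Harnack_basic} therefore gives
\begin{equation*}
 H(\nu(A);D)\le H(\nu(A);\nu(V))=H(A;V)<\infty .
\end{equation*}
The inequality
\begin{equation*}
 \omega_x^D(f)\le H(A;V)\,\omega_y^D(f),\qquad x,y\in\nu(A),\ f\ge0,
\end{equation*}
follows from the last assertion of Proposition \ref{Harnack_basic}, including its zero-denominator case, since $H(\nu(A);D)<\infty$. Alternatively it follows from Theorem \ref{Ha}.

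\emph{The non-compact case $\nu(V)\subset D$.} Here Proposition \ref{Harnack_basic} applies in its general-inclusion form. The integrability hypothesis on $\omega_\cdot^D(f)|_{\partial\nu(V)}$ is exactly the assumption stated, so nested compatibility (Proposition \ref{prop:nested-compatibility}) holds for the boundary data under consideration. Taking the supremum only over those data gives the claimed bound for them.

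The main point requiring care is that nested compatibility needs $\nu(V)$ itself to be a domain of the system. TSCI provides this, and this is where translation–scaling invariance is genuinely used.
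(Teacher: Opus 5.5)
Your proof is correct and follows the paper's route: the paper also first shows $H(\nu(A);\nu(V))=H(A;V)$ from $\omega_{\nu(s)}^{\nu(V)}=\nu_\#\omega_s^V$, and then compares with $D$ by nested compatibility applied to $g=\omega_\cdot^D(f)|_{\partial\nu(V)}$. The only difference is that you cite Proposition \ref{Harnack_basic} for the second step, whereas the paper repeats that monotonicity argument inline for $W=\nu(V)$; note also that TSCI is used in your Step 1 for the push-forward identity, not only to place $\nu(V)$ in the system.
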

\begin{proof}
Put $W=\nu(V)$.  Translation--scaling invariance means
$\omega_{\nu(s)}^W=\nu_\#\omega_s^V$.  Consequently, for every
nonnegative integrable boundary function $g$ on $\partial W$,
\begin{equation*}
 \omega_{\nu(s)}^W(g)
 =\int_{\partial V}g(\nu(z))\,d\omega_s^V(z)
 =\omega_s^V(g\circ\nu).
\end{equation*}
Composition with $\nu$ is a bijection between the admissible boundary
functions for the two domains; the inverse is composition with
$\nu^{-1}$.  Applying this identity at $s,t\in A$ and taking
suprema of the ratios proves
\begin{equation*}
 H(\nu(A);W)=H(A;V).
\end{equation*}
Now let $0\le f\in L(\partial D;\omega_\cdot^D)$, and set
$g(z)=\omega_z^D(f)$ on $\partial W$.  If $W\Subset D$,
this restriction is continuous and bounded; for the more general inclusion,
its integrability is assumed.  Nested compatibility and the preceding
identity give, for $x,y\in\nu(A)$,
\begin{equation*}
 \begin{aligned}
 \omega_x^D(f)&=\int_{\partial W}g(z)\,d\omega_x^W(z)\\
 &\le H(A;V)\int_{\partial W}g(z)\,d\omega_y^W(z)\\
 &=H(A;V)\omega_y^D(f).
 \end{aligned}
\end{equation*}
Taking the supremum over nonzero denominators yields
$H(\nu(A);D)\le H(A;V)$, as claimed.
In particular, Corollary \ref{C_Ha} ensures $H(A;V)<\infty$ if
$A\Subset V$, $\partial V$ is compact, and the relative densities
on $V\times V\times\partial V$ have a jointly continuous version.
It is the boundary of the inner domain $V$ that is used here.
\end{proof}
This is a comparison statement, not a monotone convergence theorem.
Neither TSCI nor the Euclidean topology is used in Section \ref{Sec4}.

\section{Harmonic functions and preliminary representation}\label{Sec3}

\subsection{Harmonic functions in a harmonic measure system}

Throughout this section, $\{\omega_\cdot^{D_\lambda}\}_{\lambda\in\Lambda}$
is a regular closed Radon harmonic measure system on a first-countable
Hausdorff space $X$.

\begin{defi}\label{def:system-harmonic}
Let $U\subseteq X$ be open.  A function $u:U\to\mathbb{R}$ is
\emph{harmonic in $U$} if it is continuous and
\begin{equation*}
 u(x)=\omega_x^V(u):=\int_{\partial V}u(y)\,\d\omega_x^V(y)
\end{equation*}
for every regular $V\in\{D_\lambda\}$ whose closure is compact and
contained in $U$, and every $x\in V$.  The notation $V\Subset U$
means that $\overline V$ is compact and contained in $U$.
\end{defi}

The compatibility and maximum principles in Section \ref{Sec2} immediately
give the following elementary facts.

\begin{prop}\label{prop:basic-harmonic-functions}\label{pro_ha}
Let $U\subseteq X$ be open.
\begin{enumerate}
\item[(1)] If $u$ is harmonic in $U$ and $V\subseteq U$ is open,
then $u|_V$ is harmonic in $V$.
\item[(2)] If $u,v$ are harmonic in $U$ and $a,b\in\mathbb{R}$,
then $au+bv$ is harmonic in $U$.
\item[(3)] If $D\in\{D_\lambda\}$ and
$f\in L(\partial D;\omega_\cdot^D)$, then
$\omega_\cdot^D(f)$ is harmonic in $D$.
\item[(4)] If $D\in\{D_\lambda\}$, $\overline D$ is compact,
$u,v\in C(\overline D)$ are harmonic in $D$, and $u=v$ on
$\partial D$, then $u=v$ in $D$.
\end{enumerate}
\end{prop}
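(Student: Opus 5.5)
Items (1) and (2) follow directly from Definition \ref{def:system-harmonic}. For (1), continuity passes to the restriction. Any regular $W\in\{D_\lambda\}$ with $\overline W$ compact and contained in $V$ also satisfies $\overline W\subset U$, so the mean-value identity for $u$ on $W$ is already among the identities required for $u$ on $U$. For (2), $au+bv$ is continuous. For each admissible regular $W$ and each $x\in W$, the function $u|_{\partial W}$ is continuous on the compact set $\partial W$, hence bounded, so it lies in $L(\partial W;\omega_\cdot^W)$ by Proposition \ref{R_point}, and likewise for $v$. Linearity of the integral against $\omega_x^W$ then gives $(au+bv)(x)=\omega_x^W(au+bv)$.

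For (3), continuity of $x\mapsto\omega_x^D(f)$ on $D$ is part of the definition of a harmonic measure system, since the measures are continuous harmonic measures. Given a regular $V\in\{D_\lambda\}$ with $V\Subset D$ and $x\in V$, Proposition \ref{prop:nested-compatibility} gives directly
\[
\omega_x^D(f)=\int_{\partial V}\omega_y^D(f)\,d\omega_x^V(y)=\omega_x^V\bigl(\omega_\cdot^D(f)\bigr),
\]
which is exactly the mean-value identity.

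For (4), I will put $w=u-v$. By (2), $w$ is harmonic in $D$, and $w\in C(\overline D)$ with $w=0$ on $\partial D$. I will show $w\le0$; applying the same argument to $-w$ then gives $w=0$. Suppose $M:=\max_{\overline D}w>0$; the maximum exists because $\overline D$ is compact. Then $M$ is attained at some $x_*\in D$, since $w=0$ on the boundary. Let $S=\{x\in D: w(x)=M\}$. This set is nonempty, and it is relatively closed in $D$ because $w$ is continuous.

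The main step is to show that $S$ is open. Fix $x\in S$. Since the system is regular closed, $D$ has an exhaustion by regular domains $V\Subset D$ in the system, so some such $V$ contains $x$. On $\partial V$ the function $g=w|_{\partial V}$ is continuous and bounded, and harmonicity of $w$ gives $\omega_z^V(g)=w(z)$ for $z\in V$. This function of $z$ is at most $M$ and equals $M$ at $x$. Theorem \ref{SMPr}, applied to the regular domain $V$, therefore shows that $g$ is constant, so $w\equiv M$ on $V$. Hence $V\subset S$, and $S$ is open.

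Connectedness of $D$ now gives $S=D$, so $w\equiv M$ on $D$. By continuity, $w=M$ on $\overline D$. Since $D$ is a proper domain with nonempty boundary and $\partial D\subset\overline D$, this gives $w=M>0$ at boundary points, contradicting $w=0$ on $\partial D$.

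The only delicate point in the whole argument is this final step: passing from interior constancy to a contradiction with the boundary values. It relies on continuity of $w$ up to $\overline D$ and on $\partial D\neq\varnothing$, so that $\overline D$ actually meets $\partial D$. Everything else is a direct citation of earlier results.
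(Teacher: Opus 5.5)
Your treatment of (1)--(3) matches the paper's. For (4) you take a different route from the paper, and it is correct.

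You prove a strong maximum principle for $w=u-v$ by an open--closed argument. You apply Theorem \ref{SMPr} on an exhausting regular domain $V\ni x$, then use connectedness of $D$, continuity on $\overline D$, and $\partial D\neq\varnothing$ to contradict $w|_{\partial D}=0$.

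The paper uses no maximum principle. It fixes a regular relatively compact exhaustion $(V_j)$. For $\varepsilon>0$ it observes that $K_\varepsilon=\{z\in\overline D:|w(z)|\ge\varepsilon\}$ is closed in the compact set $\overline D$ and misses $\partial D$. Hence $K_\varepsilon$ is a compact subset of $D$ and lies in $V_j$ for large $j$. Then $|w|<\varepsilon$ on $\partial V_j$, and the mean-value identity with $\omega_x^{V_j}(\partial V_j)=1$ gives $|w(x)|\le\varepsilon$.

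That argument needs only the exhaustion, the mean-value identities, and normalization. It does not use full support from regularity (Proposition \ref{H_A}, on which Theorem \ref{SMPr} rests), nor connectedness of $D$. The paper relies on this economy in Section \ref{Sec4}, where it remarks that (4) ``uses only a regular exhaustion and the mean-value identities.''

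In exchange, your route gives a stronger intermediate fact: a harmonic function on a domain of the system, not necessarily of the form $\omega_\cdot^D(f)$, that attains an interior maximum is constant.

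One small streamlining: the open--closed step is unnecessary. As in the proof of Theorem \ref{Str_MP_G}, a single exhaustion domain containing both $x_*$ and an arbitrary $x$ already gives $w(x)=M$ via Theorem \ref{SMPr}.
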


\begin{proof}
For (1), let $U'\subseteq U$ be open and let $V\Subset U'$
be a regular domain in the system.  Then $V\Subset U$, so the mean-value identity
for $u$ on $V$ also proves the identity for $u|_{U'}$.
Continuity is preserved by restriction.
For (2), if $u,v$ are harmonic and $a,b\in\mathbb{R}$, then
$au+bv$ is continuous, and for every such $V$ and $x\in V$,
\begin{equation*}
 \begin{aligned}
 \omega_x^V(au+bv)
 &=a\int_{\partial V}u(y)\,d\omega_x^V(y)
   +b\int_{\partial V}v(y)\,d\omega_x^V(y)\\
 &=au(x)+bv(x).
 \end{aligned}
\end{equation*}
The restrictions of $u,v$ to $\partial V$ are bounded, so the displayed integrals
are finite.  This proves closure under real linear combinations.

For (3), continuity of $x\mapsto\omega_x^D(f)$ is part of the
system assumptions.  Proposition \ref{special_compatible} gives
\begin{equation*}
 \omega_x^V\bigl(\omega_\cdot^D(f)\bigr)
 =\int_{\partial V}\omega_y^D(f)\,d\omega_x^V(y)
 =\omega_x^D(f),\qquad x\in V,
\end{equation*}
which is the required harmonicity.

For (4), put $w=u-v$.  By (2), $w$ is harmonic in $D$, and
$w=0$ on $\partial D$.  Take an increasing regular relatively
compact exhaustion $(V_j)$ of $D$.  For each $\varepsilon>0$,
\begin{equation*}
 K_\varepsilon=\{z\in\overline D:|w(z)|\ge\varepsilon\}
\end{equation*}
is a compact subset of $D$: it is closed in the compact space
$\overline D$ and contains no boundary point.  A finite subcover
from the increasing open cover $(V_j)$ shows that
$K_\varepsilon\subset V_j$ for all sufficiently large $j$.
Thus $|w|<\varepsilon$ on $\partial V_j$.
For a fixed $x\in D$, choose such a $j$ also containing $x$.
Then
\begin{equation*}
 |w(x)|=\left|\int_{\partial V_j}w(y)\,d\omega_x^{V_j}(y)\right|
 \le\int_{\partial V_j}|w(y)|\,d\omega_x^{V_j}(y)
 \le\varepsilon\omega_x^{V_j}(\partial V_j)=\varepsilon.
\end{equation*}
Letting $\varepsilon\downarrow0$ gives $w(x)=0$.
Since $x$ was arbitrary, $u=v$ in $D$.
\end{proof}

Fix $x_0\in D$ and write
\begin{equation*}
 K_D^{x_0}(x,y):=
 \frac{\d\omega_x^D}{\d\omega_{x_0}^D}(y),
 \qquad x\in D,\quad y\in\partial D.
\end{equation*}
For each fixed $x$, this density is defined up to a common null set.
Whenever a continuous version exists, that version will be used.
The identity $K_D^{x_0}(x_0,y)=1$ initially holds almost everywhere;
for regular $D$, full support and continuity make it pointwise.
The next proposition records a useful
sufficient condition under which this relative kernel is harmonic in its
first variable.

\begin{prop}\label{prop:relative-kernel-harmonic}\label{Kern_Harm}
Suppose $D\in\{D_\lambda\}$ is regular, $\partial D$ is compact, and
$K_D^{x_0}$ is jointly continuous on $D\times\partial D$.  Then, for each
$y\in\partial D$, $K_D^{x_0}(\cdot,y)$ is strictly positive and
harmonic in $D$, and $K_D^{x_0}(x_0,y)=1$.
\end{prop}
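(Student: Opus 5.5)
The argument has three parts. First the pointwise normalization $K_D^{x_0}(x_0,y)=1$. Then harmonicity in the first variable, via nested compatibility applied to indicator functions and a Fubini argument. Finally strict positivity, via the strong minimum principle.

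\emph{Normalization.} For every Borel $E\subseteq\partial D$ we have $\int_E K_D^{x_0}(x_0,y)\,d\omega_{x_0}^D(y)=\omega_{x_0}^D(E)$. Hence $K_D^{x_0}(x_0,\cdot)=1$ holds $\omega_{x_0}^D$-almost everywhere. The function $K_D^{x_0}(x_0,\cdot)$ is continuous on $\partial D$, and since $D$ is regular, Proposition \ref{H_A} gives $\operatorname{supp}\omega_{x_0}^D=\partial D$. If $K_D^{x_0}(x_0,\cdot)-1$ were nonzero at some point, it would be nonzero on a relatively open set of positive measure. Therefore the identity holds at every $y$. Nonnegativity $K_D^{x_0}\ge0$ everywhere follows by the same argument, since densities are nonnegative almost everywhere.

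\emph{Harmonicity.} Fix $y\in\partial D$ and put $u=K_D^{x_0}(\cdot,y)$. This function is continuous on $D$ by joint continuity. Let $V\Subset D$ be a regular domain of the system and let $x\in V$. For a Borel set $E\subseteq\partial D$, Proposition \ref{special_compatible} with $f=\chi_E$ gives
\begin{equation*}
 \int_E K_D^{x_0}(x,z)\,d\omega_{x_0}^D(z)=\omega_x^D(E)
 =\int_{\partial V}\omega_w^D(E)\,d\omega_x^V(w)
 =\int_E\int_{\partial V}K_D^{x_0}(w,z)\,d\omega_x^V(w)\,d\omega_{x_0}^D(z).
\end{equation*}
The last equality is Tonelli's theorem. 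It applies because the kernel is jointly continuous, hence Borel, and is bounded on the compact set $\partial V\times\partial D$.

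Since $E$ is arbitrary, it follows that
\begin{equation*}
 K_D^{x_0}(x,z)=\int_{\partial V}K_D^{x_0}(w,z)\,d\omega_x^V(w)
\end{equation*}
for $\omega_{x_0}^D$-almost every $z$. Both sides are continuous in $z$. The right side is continuous by dominated convergence, using uniform continuity of the kernel on $\partial V\times\partial D$. Full support of $\omega_{x_0}^D$ then upgrades the almost-everywhere identity to every $z$, in particular $z=y$. This is exactly the mean-value identity of Definition \ref{def:system-harmonic}.

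This step is the main obstacle. The density is defined only up to a null set that depends on $x$, and the mean-value identity is required at a specific boundary point $y$. Both continuity and full support are needed to pass from an identity between measures to a pointwise identity in $y$.

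\emph{Strict positivity.} Suppose $u(x_1)=0$ for some $x_1\in D$. Then $u\ge0$ attains its infimum $0$ at an interior point. Choose a regular $V\Subset D$ from the exhaustion containing both $x_1$ and $x_0$. On $V$, $u$ equals $\omega_\cdot^V(u|_{\partial V})$, where $u|_{\partial V}\in C(\partial V)$. By Theorem \ref{SMPr} applied to $-u$, this function is constant on $V$. Hence $u(x_0)=0$, contradicting $u(x_0)=1$. Therefore $u>0$ throughout $D$.
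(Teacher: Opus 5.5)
Your proof is correct. For normalization and harmonicity it is the paper's argument: indicators in nested compatibility, an interchange of integrals, and then continuity in the boundary variable together with full support (Proposition \ref{H_A}) to upgrade an almost-everywhere identity to one at every $y$.

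For strict positivity you take a different, equally valid route. You apply Theorem \ref{SMPr} to $-u|_{\partial V}$ on a regular $V$ containing $x_0$ and $x_1$. The paper argues directly instead: $0=u(x_1)=\int_{\partial V}u\,d\omega_{x_1}^V$ forces $u=0$ $\omega_{x_1}^V$-a.e.; mutual absolute continuity moves this to $\omega_{x_0}^V$; and then $1=u(x_0)=0$. The paper's version needs only null-set equivalence on $V$. Yours goes through the maximum principle, whose proof itself relies on full support of $\omega^V$.

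One justification should be tightened. ``Jointly continuous, hence Borel'' is not quite what Tonelli needs. In this first-countable but possibly non-metrizable setting, a Borel function on $\partial V\times\partial D$ need not be measurable for $\mathcal B(\partial V)\otimes\mathcal B(\partial D)$. The paper closes this with Stone--Weierstrass: on the compact product, the kernel is a uniform limit of finite sums $\sum_i g_i(w)h_i(z)$ with continuous factors, and these are product-measurable.
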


\begin{proof}
Fix $x_0\in D$, a regular domain $V\Subset D$ in the system, and
$x\in V$.  For every Borel set $A\subseteq\partial D$, nested
compatibility and the Radon--Nikodym identity give
\begin{equation*}
 \begin{aligned}
 \int_A K_D^{x_0}(x,y)\,d\omega_{x_0}^D(y)
 &=\omega_x^D(A)\\
 &=\int_{\partial V}\omega_z^D(A)\,d\omega_x^V(z)\\
 &=\int_{\partial V}\int_A K_D^{x_0}(z,y)\,
       d\omega_{x_0}^D(y)\,d\omega_x^V(z)\\
 &=\int_A\left(\int_{\partial V}K_D^{x_0}(z,y)\,
       d\omega_x^V(z)\right)d\omega_{x_0}^D(y).
 \end{aligned}
\end{equation*}
For the interchange, joint continuity on the compact Hausdorff
product $\partial V\times\partial D$ permits uniform approximation
by finite sums of products of continuous functions on its factors,
by Stone--Weierstrass.  The integrand is therefore measurable for the
product Borel sigma-algebra, and Tonelli's theorem applies.
The continuous versions are nonnegative everywhere: they are nonnegative almost everywhere as
densities, and full support excludes a negative value at any point.
Uniqueness of Radon--Nikodym derivatives now gives
\begin{equation*}
 K_D^{x_0}(x,y)=\int_{\partial V}K_D^{x_0}(z,y)\,d\omega_x^V(z)
\end{equation*}
for $\omega_{x_0}^D$-almost every $y$.
The right-hand side is continuous in $y$: joint continuity on the
compact product $\partial V\times\partial D$ implies that, as
$y\to y_0$,
\begin{equation*}
 \left|\int_{\partial V}
 [K_D^{x_0}(z,y)-K_D^{x_0}(z,y_0)]\,d\omega_x^V(z)\right|
 \le\sup_{z\in\partial V}|K_D^{x_0}(z,y)-K_D^{x_0}(z,y_0)|\to0.
\end{equation*}
The left-hand side is also continuous.  If the two sides differed at a
boundary point, they would differ on a nonempty relatively open set,
which has positive $\omega_{x_0}^D$-measure by Proposition \ref{H_A}.
This contradicts their almost-everywhere equality.  Thus the identity
holds at every $y\in\partial D$.  Since $V$ and $x$ were
arbitrary, it proves harmonicity of $K_D^{x_0}(\cdot,y)$.

For normalization, the continuous density at $x_0$ equals $1$ almost
everywhere, and full support makes this equality pointwise.
To prove strict positivity, fix $y\in\partial D$ and suppose
$K_D^{x_0}(x,y)=0$ at some $x\in D$.  Regular closedness supplies
a regular $V\Subset D$ containing $x$ and $x_0$.  The mean-value
identity just proved gives
\begin{equation*}
 0=K_D^{x_0}(x,y)
   =\int_{\partial V}K_D^{x_0}(z,y)\,\d\omega_x^V(z).
\end{equation*}
The integrand is nonnegative, so it vanishes almost everywhere for
$\omega_x^V$.  Mutual absolute continuity transfers this equality to
$\omega_{x_0}^V$.  Applying the same identity at $x_0$ yields
\begin{equation*}
 1=K_D^{x_0}(x_0,y)
   =\int_{\partial V}K_D^{x_0}(z,y)\,\d\omega_{x_0}^V(z)=0,
\end{equation*}
a contradiction.  This argument uses an inner regular exhaustion of
$D$ and the nested identities, without compatibility for arbitrary
intersecting domains.
\end{proof}

\begin{prop}[Boundary representation]\label{prop:continuous-boundary-representation}\label{simple_repre}
Suppose $D\in\{D_\lambda\}$ is regular, $\overline D$ is compact,
and $u\in C(\overline D)$ is harmonic in $D$.  Then
\begin{equation}\label{eq:continuous-boundary-representation}
 u(x)=\int_{\partial D}K_D^{x_0}(x,y)u(y)\,
       \d\omega_{x_0}^D(y),\qquad x\in D.
\end{equation}
Consequently $u$ is represented by the signed Borel measure
$\d\mu=u\,\d\omega_{x_0}^D$; if $u\geq0$, this measure is positive.
The formula holds for any Radon--Nikodym version at each fixed $x$;
joint continuity of the kernel is not required for this assertion.
\end{prop}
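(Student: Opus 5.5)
The plan is to reduce the statement to two ingredients: the uniqueness assertion in Proposition \ref{pro_ha}(4), and the definition of the relative kernel as a Radon--Nikodym density.

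\textbf{Step 1: identify $u$ with its harmonic-measure extension.} Since $\overline D$ is compact, $\partial D$ is a closed subset of $\overline D$ and hence compact. Therefore $u|_{\partial D}\in C(\partial D)$ is bounded. By Proposition \ref{R_point}(1) it lies in $L(\partial D;\omega_\cdot^D)$.

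Put $v(x)=\omega_x^D(u)$ for $x\in D$, and $v(y)=u(y)$ for $y\in\partial D$. By Proposition \ref{pro_ha}(3), $v$ is harmonic in $D$. Because $D$ is regular, every boundary point $\zeta$ satisfies $\omega_x^D(u)\to u(\zeta)$ as $D\ni x\to\zeta$, by \eqref{w1}. Hence $v\in C(\overline D)$; first countability lets us check this continuity along sequences. Now $u$ and $v$ are both continuous on $\overline D$, harmonic in $D$, and agree on $\partial D$. Proposition \ref{pro_ha}(4) then gives $u=v$ in $D$, that is,
\begin{equation*}
 u(x)=\int_{\partial D}u(y)\,\d\omega_x^D(y),\qquad x\in D.
\end{equation*}

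\textbf{Step 2: rewrite the integral with the kernel.} Fix $x\in D$. Since $\omega_x^D\ll\omega_{x_0}^D$, the Radon--Nikodym theorem applies. For any version $K_D^{x_0}(x,\cdot)$ of the density we have $\int f\,\d\omega_x^D=\int fK_D^{x_0}(x,\cdot)\,\d\omega_{x_0}^D$. This holds first for indicators, then for simple functions, then for nonnegative measurable $f$ by monotone convergence.

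We apply this to $u^+$ and $u^-$ separately; both are bounded, so both sides are finite. Subtracting gives \eqref{eq:continuous-boundary-representation}. Changing the version of the density only alters the integrand on an $\omega_{x_0}^D$-null set, so the formula holds for any version at each fixed $x$, and joint continuity of the kernel is never used. Reading the formula as integration of $K_D^{x_0}(x,\cdot)$ against $\d\mu=u\,\d\omega_{x_0}^D$ gives the representation by a finite signed Borel measure, which is positive when $u\ge0$.

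\textbf{Main obstacle.} The only delicate point is Step 1. There we must confirm that $v$ is genuinely continuous up to the boundary, so that Proposition \ref{pro_ha}(4) applies. This uses regularity at every boundary point together with bounded continuous data, through \eqref{w1}. Continuity of $v$ in the interior comes from continuity of the harmonic measure system.
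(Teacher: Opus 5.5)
Your proof is correct and follows the paper's argument. Both define $v(x)=\omega_x^D(u)$, use Proposition \ref{pro_ha}(3) and boundary regularity to get a harmonic $v\in C(\overline D)$ with boundary values $u$, invoke the uniqueness in Proposition \ref{pro_ha}(4), and rewrite the integral through the Radon--Nikodym density. Your separate treatment of $u^{\pm}$ and of the choice of density version only spells out the substitution the paper performs in one line.
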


\begin{proof}
Since $\partial D$ is compact and $u\in C(\overline D)$,
the boundary function $u|_{\partial D}$ is bounded and integrable.
Define, for $x\in D$,
\begin{equation*}
 v(x)=\omega_x^D(u|_{\partial D})
 =\int_{\partial D}u(y)\,d\omega_x^D(y)
 =\int_{\partial D}K_D^{x_0}(x,y)u(y)\,d\omega_{x_0}^D(y).
\end{equation*}
Proposition \ref{pro_ha}(3) shows that $v$ is harmonic in $D$.
Regularity gives $v(x)\to u(\zeta)$ as $D\ni x\to\zeta$, for
every $\zeta\in\partial D$.  Extending $v$ by $u$ on the
boundary therefore gives $v\in C(\overline D)$.
Both $u$ and $v$ are harmonic and have the same boundary values,
so Proposition \ref{pro_ha}(4) yields $u=v$ throughout $D$.
Finally define the signed Borel measure
\begin{equation*}
 \mu(A)=\int_Au(y)\,d\omega_{x_0}^D(y),
 \qquad A\subseteq\partial D\ \text{Borel}.
\end{equation*}
Its total variation satisfies
$ |\mu|(\partial D)=\int_{\partial D}|u|\,d\omega_{x_0}^D
\le\|u\|_{\partial D}<\infty$, and substitution gives the stated
representation.  If $u\ge0$, then $\mu(A)\ge0$ for every Borel
$A$, so $\mu$ is positive.
\end{proof}

\subsection{Positive harmonic functions}

\begin{defi}\label{def:minimal-harmonic}
A positive harmonic function $h$ on a domain $D$ is \emph{minimal} if
every nonnegative harmonic function $v$ satisfying $v\leq h$ is of
the form $v=ch$ for some $c\in[0,1]$.  If $K_D^{x_0}(\cdot,y)$ is
defined and harmonic for $y\in\partial D$, set
\begin{equation*}
 \partial_{\mathrm m}^{x_0}D
 :=\{y\in\partial D:K_D^{x_0}(\cdot,y)\text{ is minimal}\}.
\end{equation*}
\end{defi}

Let $\mathcal H_+(U)$ denote the cone of nonnegative harmonic
functions on $U$.  For a domain $U$ containing $x_0$, put
\begin{equation}\label{eq:normalized-positive-cone}
 \mathcal{H}_1(U):=
 \{h:h>0,\ h\text{ is harmonic in }U,\ h(x_0)=1\},
\end{equation}
equipped with the topology of locally uniform convergence.

\begin{lem}\label{lem:compact-normalized-cone}
Suppose $D=\bigcup_{m\geq1}D_m$, where
\begin{equation*}
 x_0\in D_1,\qquad
 D_m\Subset D_{m+1}\subset D,
\end{equation*}
each $D_m$ is a regular domain in the system, and
$K_{D_m}^{x_0}$ is jointly continuous on
$D_m\times\partial D_m$.  Then $\mathcal{H}_1(D)$ is compact and
metrizable.  Neither metrizability of $\partial D_m$ nor a separate
Harnack bound is assumed.
\end{lem}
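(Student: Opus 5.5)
The plan is to establish a local Harnack bound and local equicontinuity for $\mathcal H_1(D)$ from the kernels on the inner domains, and then apply Arzel\`a--Ascoli together with a diagonal argument. Compactness also needs closedness under locally uniform limits, including strict positivity of the limit. Metrizability will come from $\sigma$-compactness of $D$ via the exhaustion.

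\textbf{Representation on inner domains.} Fix $m$ and $h\in\mathcal H_1(D)$. Since $\overline{D_{m+1}}$ is compact and $D_{m+1}$ is regular, $h|_{\overline{D_{m+1}}}$ is continuous and harmonic in $D_{m+1}$. Proposition \ref{simple_repre} then gives
\begin{equation*}
 h(x)=\int_{\partial D_{m+1}}K_{D_{m+1}}^{x_0}(x,y)h(y)\,\d\omega_{x_0}^{D_{m+1}}(y),\qquad x\in D_{m+1}.
\end{equation*}
Setting $x=x_0$ and using $K^{x_0}_{D_{m+1}}(x_0,\cdot)=1$ (Proposition \ref{Kern_Harm}) shows that $\mu_h:=h\,\omega_{x_0}^{D_{m+1}}$ is a probability measure on $\partial D_{m+1}$.

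\textbf{Uniform bound and equicontinuity.} The kernel is jointly continuous on the compact set $\overline{D_m}\times\partial D_{m+1}$, since $\overline{D_m}\subset D_{m+1}$. Hence $\sup_{\overline{D_m}}h\le M_m:=\max K_{D_{m+1}}^{x_0}$ for every $h\in\mathcal H_1(D)$. The same compactness makes the kernel uniformly continuous in $x\in\overline{D_m}$, uniformly in $y$. Because $\mu_h$ has mass one,
\begin{equation*}
 |h(x)-h(x')|\le\sup_{y}|K(x,y)-K(x',y)|,
\end{equation*}
so the family is equicontinuous at each point of $D_m$. Uniform continuity on a compact subset of a possibly non-metrizable Hausdorff space should be read through the uniform structure, or as pointwise equicontinuity in $x$, which is all that Arzel\`a--Ascoli needs.

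\textbf{Lower bound.} The kernel is strictly positive (Proposition \ref{Kern_Harm}) and continuous on the same compact set, so $h\ge c_m:=\min K_{D_{m+1}}^{x_0}>0$ on $\overline{D_m}$.

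\textbf{Closedness, compactness, metrizability.} Let $h_j\to h$ locally uniformly. Then $h$ is continuous and $h(x_0)=1$. For each regular $V\Subset D$, $\overline V$ is compact and therefore lies in some $D_m$, because the $D_m$ form an open cover increasing to $D$. Uniform convergence on $\partial V$ and $\omega_x^V(\partial V)=1$ pass the mean-value identity to the limit. Positivity of $h$ follows from the lower bound $c_m$.

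For compactness, every compact $K\subset D$ lies in some $D_m$, so the topology of locally uniform convergence equals that of uniform convergence on each $\overline{D_m}$. Arzel\`a--Ascoli on each $C(\overline{D_m})$, using the bound $M_m$ and pointwise equicontinuity, gives relative compactness there. The map $h\mapsto(h|_{\overline{D_m}})_m$ embeds $\mathcal H_1(D)$ into the product $\prod_m C(\overline{D_m})$, with closed image by the closedness just proved. Tychonoff's theorem then yields compactness. The countable product of the metric spaces $(C(\overline{D_m}),\|\cdot\|_\infty)$ is metrizable, with metric $\sum_m 2^{-m}\min(1,\|h-g\|_{\overline{D_m}})$.

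\textbf{Main obstacle.} The hardest step is checking that Arzel\`a--Ascoli applies without metrizability of $\overline{D_m}$. I would use the version for compact Hausdorff spaces with pointwise equicontinuity and pointwise boundedness. Equicontinuity at $x$ comes from continuity of $x'\mapsto K(x',\cdot)$ into $C(\partial D_{m+1})$ with the sup norm, which follows from joint continuity and compactness of $\partial D_{m+1}$ by a tube-lemma argument.
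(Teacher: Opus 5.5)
Your proposal is correct and follows essentially the same route as the paper. It uses the representation of $h$ against the probability measure $h\,\omega_{x_0}^{D_{m+1}}$ on $\partial D_{m+1}$, the bounds $c_m\le h\le C_m$ on $\overline{D_m}$ from the minimum and maximum of the kernel, the equicontinuity estimate $\sup_y|K(x,y)-K(x',y)|$, the compact Hausdorff Arzel\`a--Ascoli theorem, closedness under locally uniform limits, and the same metric. The only difference is cosmetic: you obtain compactness by a closed embedding into $\prod_m C(\overline{D_m})$ and Tychonoff, with sequential limits sufficing because the countable product is metrizable, whereas the paper uses successive subsequence selections and a diagonal extraction.
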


\begin{proof}
Fix $m$.  Compact containment makes $\partial D_{m+1}$ compact.
Proposition \ref{prop:relative-kernel-harmonic} gives strict positivity
of the kernel.  Joint continuity on the compact product yields
\begin{equation*}
 c_m:=\min_{\overline{D_m}\times\partial D_{m+1}}
 K_{D_{m+1}}^{x_0}>0,\qquad
 C_m:=\max_{\overline{D_m}\times\partial D_{m+1}}
 K_{D_{m+1}}^{x_0}<\infty.
\end{equation*}
For $h\in\mathcal H_1(D)$, the mean-value and Radon--Nikodym
identities give
\begin{equation*}
 h(x)=\int_{\partial D_{m+1}}K_{D_{m+1}}^{x_0}(x,y)h(y)\,
       \d\omega_{x_0}^{D_{m+1}}(y),\qquad
 \int_{\partial D_{m+1}}h(y)\,\d\omega_{x_0}^{D_{m+1}}(y)=1.
\end{equation*}
Consequently
\begin{equation*}
 c_m\leq h(x)\leq C_m,\qquad
 x\in\overline{D_m},\quad h\in\mathcal H_1(D).
\end{equation*}
For $x,x'\in\overline{D_m}$, apply the mean-value property on
$D_{m+1}$ and use
\begin{equation*}
 \int_{\partial D_{m+1}}h\,\d\omega_{x_0}^{D_{m+1}}=h(x_0)=1.
\end{equation*}
It follows that
\begin{equation*}
\begin{split}
 |h(x)-h(x')|
 &\leq
 \int_{\partial D_{m+1}}h(y)
 \bigl|K_{D_{m+1}}^{x_0}(x,y)
       -K_{D_{m+1}}^{x_0}(x',y)\bigr|
 \,\d\omega_{x_0}^{D_{m+1}}(y)\\
 &\leq
 \sup_{y\in\partial D_{m+1}}
 \bigl|K_{D_{m+1}}^{x_0}(x,y)
       -K_{D_{m+1}}^{x_0}(x',y)\bigr|.
\end{split}
\end{equation*}
Joint continuity of the kernel and compactness of
$\partial D_{m+1}$ make the right-hand side tend to zero as
$x'\to x$, uniformly over the functions $h$.  Thus the restrictions
to $\overline{D_m}$ are equicontinuous and uniformly bounded.
The compact Hausdorff form of the Arzel\`a--Ascoli theorem shows that
their closure in $C(\overline{D_m})$, with the uniform norm, is compact.
Although $\overline{D_m}$ need not be metrizable, the uniform norm
is a metric on this function space.  Its compact subsets are therefore
sequentially compact.  Successive subsequence selections for
$m=1,2,\ldots$, followed by a diagonal extraction, give a subsequence
converging uniformly on every $\overline{D_m}$.  Every compact subset
of $D$ lies in some $D_m$, because the increasing open sets
$D_m$ cover $D$.  The selected subsequence therefore converges
locally uniformly on $D$.  The
limit is harmonic, is normalized at $x_0$, and is strictly positive by
the Harnack lower bounds.  Thus it remains in $\mathcal{H}_1(D)$.
More explicitly, if $h_j\to h$ locally uniformly and $V\Subset D$
is regular, then
\begin{equation*}
 \left|\int_{\partial V}(h_j-h)\,d\omega_x^V\right|
 \le\sup_{\partial V}|h_j-h|\longrightarrow0.
\end{equation*}
Passing to the limit in $h_j(x)=\omega_x^V(h_j)$ proves the required
mean-value identity for $h$.  The normalization follows from
$h_j(x_0)=1$, and the inequalities $h\ge c_m$ on
$\overline{D_m}$ prove strict positivity.
A metric inducing the locally uniform topology is
\begin{equation*}
 d_{\mathcal{H}}(f,g)=\sum_{m=1}^{\infty}2^{-m}
 \min\{1,\|f-g\|_{\overline{D_m}}\}.
\end{equation*}
Indeed, convergence for this metric is equivalent to uniform convergence
on every $\overline{D_m}$, and every compact subset of $D$ is
contained in one of these sets.  The sequential compactness proved above
therefore implies compactness in this metric.
\end{proof}

Lemma \ref{lem:derived-kernel-compactness} uses the same bounds in
Section \ref{Sec4} and also places kernels from the varying inner
domains in compatible compact sets.  This additional construction is
needed because $K_{D_n}(\cdot,y)$ is initially defined only on $D_n$.
Neither argument assumes a separate Harnack bound.

\begin{lem}\label{lem:minimal-extreme}
For any $h\in\mathcal{H}_1(D)$,
\begin{equation*}
 h\text{ is minimal}\quad\Longleftrightarrow\quad
 h\in\operatorname{Ext}\mathcal{H}_1(D).
\end{equation*}
\end{lem}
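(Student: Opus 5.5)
We prove the two implications separately. The only structural facts used are: positive cones of harmonic functions are closed under sums and nonnegative scalar multiples (Proposition \ref{pro_ha}(2)), $\mathcal H_1(D)$ is convex, and a nonnegative harmonic function vanishing at one point of $D$ vanishes identically. The last fact follows from the strong minimum principle: apply Theorem \ref{Str_MP_G} to the restriction to a regular $V\Subset D$, via the mean-value identity, exactly as in the positivity part of Proposition \ref{prop:relative-kernel-harmonic}. In particular, every nonzero $v\in\mathcal H_+(D)$ is strictly positive.

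\emph{Minimal $\Rightarrow$ extreme.} Let $h$ be minimal and suppose
$$h=t h_1+(1-t)h_2,\qquad h_1,h_2\in\mathcal H_1(D),\ t\in(0,1).$$
Put $v=t h_1$. Then $v$ is nonnegative and harmonic, and
$$v\le h,$$
because $(1-t)h_2\ge0$. Minimality gives $v=ch$. Evaluating at $x_0$ gives $c=t$, so $h_1=h$. In the same way $h_2=h$, and hence $h$ is extreme.

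\emph{Extreme $\Rightarrow$ minimal.} Let $h$ be extreme, and let $v$ be harmonic with $0\le v\le h$. If $v\equiv0$, take $c=0$. If $v=h$, take $c=1$. Otherwise, put $w=h-v$. Both $v$ and $w$ are nonnegative, harmonic, and not identically zero, so by the positivity fact both are strictly positive. Set $t=v(x_0)\in(0,1)$. Since $w(x_0)=1-t$, the normalized functions
$$h_1=\frac{v}{t},\qquad h_2=\frac{w}{1-t}$$
both lie in $\mathcal H_1(D)$. They satisfy
$$h=t h_1+(1-t)h_2.$$
Extremality gives $h_1=h$, that is, $v=th$ with $t\in[0,1]$. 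Hence $h$ is minimal.

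\emph{Main obstacle.} The delicate point is justifying that a nonzero nonnegative harmonic function on $D$ is strictly positive, since $\mathcal H_1(D)$ requires $h>0$. We argue as follows. Suppose $v(x)=0$ at some $x\in D$. Take a regular $V\Subset D$ containing $x$ from the regular closed exhaustion. The mean-value identity $0=\omega_x^V(v)$ with $v\ge0$ forces $v=0$ $\omega_x^V$-a.e. on $\partial V$. By mutual absolute continuity, $v=0$ a.e. for every $\omega_z^V$, so $v=0$ on $V$. Because $V$ is arbitrary in the exhaustion, connectedness of $D$ then gives $v\equiv0$ on $D$. Alternatively, the zero set is open by this argument and closed by continuity, which also gives $v\equiv0$.
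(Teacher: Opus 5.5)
Your proof is correct, and your direction ``minimal $\Rightarrow$ extreme'' is exactly the paper's. For ``extreme $\Rightarrow$ minimal'' you take a genuinely different route.

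You split $h=v+(h-v)$ and normalize the two pieces. This obliges you to prove that a nonzero function in $\mathcal H_+(D)$ is strictly positive, since otherwise $v/v(x_0)$ or $(h-v)/(1-v(x_0))$ need not lie in $\mathcal H_1(D)$.

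The paper avoids this entirely. With $a=v(x_0)$ and $q=v-ah$, the bounds $0\le v\le h$ and $0\le a\le 1$ give $|q|\le h$. Hence $h\pm q/2\ge h/2>0$, both functions are normalized at $x_0$, and $h$ is their midpoint; extremality gives $q=0$. Membership in $\mathcal H_1(D)$ is thus automatic. The lemma becomes a purely algebraic consequence of the mean-value definition of harmonicity, which is how the paper describes it when reusing it in Section~\ref{Sec4}.

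Your positivity step needs more. It requires $D$ to be covered by regular domains $V\Subset D$ from the family, which holds when $D$ belongs to the regular closed system or satisfies (A1), so your proof is valid in all the paper's applications. It also uses mutual absolute continuity of the $\omega_x^V$. Without such a cover the positivity fact can fail: in the setting of Example~\ref{ex:framework-local-to-global}, every continuous function on the punctured disk is harmonic in the mean-value sense. The paper's midpoint argument still goes through there.

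So your route gives the familiar decomposition of $h$ into normalized parts. Its positivity fact is needed elsewhere anyway, for instance in Lemma~\ref{lem:harmonic-lattice}. The paper's route buys independence from any exhaustion or minimum principle.
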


\begin{proof}
Suppose first that $h$ is minimal and
$h=tu+(1-t)v$, where $0<t<1$ and
$u,v\in\mathcal{H}_1(D)$.  Since $0\leq tu\leq h$, minimality gives
$tu=ch$.  Evaluation at $x_0$ gives $c=t$, and hence $u=h$;
similarly $v=h$.

Conversely, suppose $h$ is extreme, $0\le v\le h$ is harmonic,
and $a=v(x_0)\in[0,1]$.  Put $q=v-ah$.  Since $|q|\le h$,
the functions $h\pm q/2$ are strictly positive, harmonic, normalized
at $x_0$, and have midpoint $h$.  Indeed,
\begin{equation*}
 h\pm q/2\ge h/2>0,\qquad
 (h\pm q/2)(x_0)=1,\qquad
 h=\tfrac12(h+q/2)+\tfrac12(h-q/2).
\end{equation*}
Extremality gives $h+q/2=h-q/2=h$, so $q=0$ and $v=ah$.  No separate maximum principle is required for this argument.
\end{proof}

\subsection{A preliminary representation on starlike domains}

The next theorem gives a preliminary positive-harmonic representation on
starlike domains.  It does not assert uniqueness of the representing
measure or concentration on a minimal boundary.  Its translation-scaling
assumption is used only for the explicit boundary correspondence below;
Section \ref{Sec4} constructs its representing measures directly from
inner-boundary kernels without this assumption or an application of the
following theorem.

\begin{thm}\label{thm:starlike-representation}
Let $D\Subset\mathbb{R}^d$ be starlike about $x_0$.  For $t>1$, define
\begin{equation*}
 \phi_t(x):=x_0+\left(1-\frac1t\right)(x-x_0),
 \qquad D_t:=\phi_t(D).
\end{equation*}
Assume that $D,D_t\in\{D_\lambda\}$, every $D_t$ is regular with
$\overline{D_t}\subset D$, the harmonic measure system is invariant under
translations and positive scalings, and $K_D^{x_0}$ is jointly continuous
on $D\times\partial D$.  Then a function $u$ is positive and harmonic in
$D$ if and only if there is a finite nonzero positive Borel measure
$\mu$ on $\partial D$ such that
\begin{equation}\label{eq:starlike-representation}
 u(x)=\int_{\partial D}K_D^{x_0}(x,y)\,\d\mu(y),
 \qquad x\in D.
\end{equation}
\end{thm}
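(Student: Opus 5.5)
The plan has two directions; the sufficiency direction is short and the necessity direction is a weak-limit construction through the dilated domains $D_t$.

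\emph{Sufficiency.} Suppose $\mu$ is finite, nonzero and positive, and $u$ is given by \eqref{eq:starlike-representation}. By Proposition \ref{prop:relative-kernel-harmonic}, each $K_D^{x_0}(\cdot,y)$ is strictly positive and harmonic. (Regularity of $D$ is implicit here; otherwise I would repeat that proof, which uses only the nested identities and the inner regular domains $D_t$.) On a compact set $\overline V\times\partial D$, joint continuity bounds the kernel. Hence $u$ is continuous, and Tonelli's theorem, as in the proof of Proposition \ref{prop:relative-kernel-harmonic}, lets me interchange the integrals in $\omega_x^V(u)$. This gives the mean-value identity on every regular $V\Subset D$. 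Positivity of $u$ follows from strict positivity of the kernel together with $\mu\ne0$.

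\emph{Necessity.} Let $u>0$ be harmonic in $D$ and put $u_t:=u\circ\phi_t$ on $\overline D$.
\begin{itemize}
\item[Step 1.] By TSCI, $u_t$ is harmonic in $D$: for a regular $V\Subset D$, the image $\phi_t(V)\Subset D_t\subset D$ is regular, and $\omega_{\phi_t(x)}^{\phi_t(V)}=(\phi_t)_\#\omega_x^V$. Moreover $u_t\in C(\overline D)$, because $\overline{D_t}\subset D$.
\item[Step 2.] Proposition \ref{prop:continuous-boundary-representation} then gives
\begin{equation*}
 u_t(x)=\int_{\partial D}K_D^{x_0}(x,y)\,d\mu_t(y),\qquad d\mu_t:=u_t\,d\omega_{x_0}^D .
\end{equation*}
\item[Step 3.] The masses satisfy $\mu_t(\partial D)=u_t(x_0)=u(x_0)$, since $\phi_t(x_0)=x_0$. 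They are therefore uniformly bounded, and $\partial D$ is compact.
\item[Step 4.] Choose $t_j\to\infty$ with $\mu_{t_j}$ converging weakly to some $\mu$. If $\partial D$ is not metrizable, I would take a weak-$*$ cluster point via Banach--Alaoglu and Riesz--Markov instead, and use a subnet.
\end{itemize}

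\emph{Passing to the limit.} Fix $x\in D$. Then $\phi_{t_j}(x)\to x$, so $u_{t_j}(x)\to u(x)$ by continuity of $u$. Since $K_D^{x_0}(x,\cdot)\in C(\partial D)$, weak convergence gives
\begin{equation*}
 \int_{\partial D}K_D^{x_0}(x,y)\,d\mu_{t_j}(y)\longrightarrow\int_{\partial D}K_D^{x_0}(x,y)\,d\mu(y).
\end{equation*}
This yields \eqref{eq:starlike-representation}. The limit has total mass $\mu(\partial D)=u(x_0)>0$, computed against the constant function $1$, so $\mu$ is finite, nonzero and positive. The measure is Radon and Borel by Riesz--Markov.

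\emph{Main obstacle.} The delicate point is Step 1, namely that $u\circ\phi_t$ is harmonic on a neighborhood of $\overline D$, or at least on $D$ and continuous up to $\partial D$. This needs the pushforward form of TSCI, together with the fact that $\phi_t^{-1}$ of a regular $V\Subset D_t$-type domain is again a regular system domain compactly contained in $D$. The remaining care concerns the case where $\partial D$ is not metrizable. There I would replace sequences by a cluster point of the net $(\mu_t)$. That argument uses only continuity of $K_D^{x_0}(x,\cdot)$ and convergence of $u_t(x)$ along the net.
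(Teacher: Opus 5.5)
Your necessity construction is the paper's in different notation. Since $\phi_t(x_0)=x_0$, invariance gives $\omega_{x_0}^{D_t}=(\phi_t)_\#\omega_{x_0}^D$. So your $\mu_t=(u\circ\phi_t)\,\omega_{x_0}^D$ is exactly the paper's $(\phi_t^{-1})_\#(u\,\omega_{x_0}^{D_t})$. The paper moves $\phi_t^{-1}$ into the kernel, whereas you evaluate at $\phi_t(x)$. That only spares you the uniform convergence of $K_D^{x_0}(\phi_t^{-1}(x),\cdot)$.

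The gap is regularity of $D$. It is not a hypothesis, since only the $D_t$ are assumed regular, yet both results you cite require it. Proposition~\ref{prop:continuous-boundary-representation}, used in Step~2, needs $\omega_x^D(u_t)\to u_t(\zeta)$ at every boundary point before it invokes uniqueness. Proposition~\ref{prop:relative-kernel-harmonic}, used in the sufficiency direction, needs it as well. Your fallback of rerunning the latter proof with only the nested identities and the $D_t$ does not work as stated. That proof uses full support of $\omega_{x_0}^D$ on $\partial D$ (Proposition~\ref{H_A}) at three points:
\begin{itemize}
\item to make the continuous version nonnegative everywhere;
\item to upgrade the almost-everywhere mean-value identity in $y$ to every $y$;
\item to obtain $K_D^{x_0}(x_0,y)=1$ pointwise.
\end{itemize}
Without full support nothing in that argument controls the continuous version off $\operatorname{supp}\omega_{x_0}^D$. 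For $\mu=\delta_y$ with such $y$, your integral $K_D^{x_0}(\cdot,y)$ then need not be harmonic.

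The missing step is short, and it is exactly how the paper's proof opens. Let $f$ be bounded Borel on $\partial D$ and continuous at $\zeta$. Invariance gives $\omega_x^D(f)=\omega_{\phi_t(x)}^{D_t}(f\circ\phi_t^{-1})$, and this tends to $f(\zeta)$ as $D\ni x\to\zeta$ by regularity of $D_t$ at $\phi_t(\zeta)$. Hence $D$ is regular, and both propositions apply.

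The step you flag as the main obstacle, Step~1, can in fact be bypassed. Since $D_t\Subset D$ is a regular system domain and $u$ is harmonic in $D$, the definition of harmonicity and invariance give directly
\[
u_t(x)=u(\phi_t(x))=\omega_{\phi_t(x)}^{D_t}(u)=\omega_x^D(u\circ\phi_t).
\]
This is Step~2 without harmonicity of $u_t$ on $D$ and without regularity of $D$. Regularity, or at least full support, is then needed only for the converse direction.

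Finally, $\partial D\subset\mathbb{R}^d$ is compact and hence metrizable, so your subnet caveat is unnecessary.
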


\begin{proof}
First, $D$ is regular.  Fix $t>1$, let $f$ be bounded Borel on
$\partial D$ and continuous at $\zeta\in\partial D$, and use
translation-scaling invariance to write
\begin{equation*}
 \omega_x^D(f)
 =\omega_{\phi_t(x)}^{D_t}(f\circ\phi_t^{-1})
 \longrightarrow f(\zeta)\quad(D\ni x\to\zeta).
\end{equation*}
The limit follows from regularity of $D_t$.  Proposition
\ref{prop:relative-kernel-harmonic} now gives positivity, pointwise
normalization, and harmonicity of $K_D^{x_0}(\cdot,y)$.  Thus these
kernel properties need not be separate hypotheses of the theorem.

Let $u>0$ be harmonic.  For each integer $n\geq2$, define a measure on
$\partial D$ by
\begin{equation*}
 \mu_n:=(\phi_n^{-1})_\#
 \bigl(u\,\omega_{x_0}^{D_n}\bigr).
\end{equation*}
Proposition \ref{prop:continuous-boundary-representation}, applied to
$D_n\Subset D$, gives
\begin{equation*}
 \mu_n(\partial D)
 =\int_{\partial D_n}u(z)\,\d\omega_{x_0}^{D_n}(z)
 =u(x_0).
\end{equation*}
Since $\partial D$ is compact, a subsequence, still denoted by
$(\mu_n)$, converges weakly to a finite positive Borel measure $\mu$.

Fix $x\in D$.  For all sufficiently large $n$, $x\in D_n$, and the
same boundary representation gives
\begin{equation*}
\begin{split}
 u(x)
 &=\int_{\partial D_n}K_{D_n}^{x_0}(x,z)u(z)\,
       \d\omega_{x_0}^{D_n}(z)\\
 &=\int_{\partial D}
 K_D^{x_0}\bigl(\phi_n^{-1}(x),y\bigr)\,\d\mu_n(y).
\end{split}
\end{equation*}
The second equality follows from translation-scaling invariance.
Now $\phi_n^{-1}(x)\to x$, and joint continuity gives uniform convergence
of the integrands on the compact set $\partial D$.  In detail,
\begin{equation*}
 \begin{aligned}
 \left|u(x)-\int_{\partial D}K_D^{x_0}(x,y)\d\mu(y)\right|
 &\le u(x_0)\sup_{y\in\partial D}
 |K_D^{x_0}(\phi_n^{-1}(x),y)-K_D^{x_0}(x,y)|\\
 &\quad+\left|\int_{\partial D}K_D^{x_0}(x,y)\,d(\mu_n-\mu)(y)\right|
 \longrightarrow0.
 \end{aligned}
\end{equation*}
The first term tends to zero by continuity, and the second by weak
convergence with the fixed continuous test function
$K_D^{x_0}(x,\cdot)$.  Also, testing with the constant function one
gives $\mu(\partial D)=u(x_0)>0$.  This proves
\eqref{eq:starlike-representation} with a nonzero measure.

Conversely, suppose \eqref{eq:starlike-representation} holds.  Local
uniform continuity of $K_D^{x_0}$ and finiteness of $\mu$ imply that
$u$ is continuous.  If $V\Subset D$ is regular, Tonelli's theorem and
the harmonicity of $K_D^{x_0}(\cdot,y)$ yield
\begin{equation*}
\begin{split}
 \omega_x^V(u)
 &=\int_{\partial D}
   \left(\int_{\partial V}K_D^{x_0}(z,y)\,
                  \d\omega_x^V(z)\right)\d\mu(y)\\
 &=\int_{\partial D}K_D^{x_0}(x,y)\,\d\mu(y)=u(x).
\end{split}
\end{equation*}
Thus $u\ge0$ is harmonic and $u(x_0)=\mu(\partial D)>0$.
If $u(x)=0$ at some $x\in D$, choose a regular domain $D_n$ in the exhaustion containing $x$ and $x_0$.  Then
\begin{equation*}
 0=u(x)=\int_{\partial D_n}u\,d\omega_x^{D_n}
\end{equation*}
forces $u=0$ almost everywhere for $\omega_x^{D_n}$.
Equivalence transfers this equality to $\omega_{x_0}^{D_n}$, giving
$u(x_0)=\int u\,d\omega_{x_0}^{D_n}=0$, a contradiction.
Therefore $u>0$ throughout $D$.
\end{proof}

\section{Approximation and representation for harmonic measure systems on metric spaces}\label{Sec4}

Let $X$ be a metric space, let $D\subseteq X$
be a domain, and fix $x_0\in D$.  The regular closed Radon harmonic
measure systems of Section \ref{Sec3} provide the basic setting.  We
state explicitly the compatibility needed in this section.  It suffices
to have a family of continuous Radon harmonic measures, in the sense of
Definition \ref{H_m}, on domains.  Every domain in this family is required
to admit an increasing exhaustion by regular domains in the same family,
with compact closures contained in the domain being exhausted.  We also
require that, for domains $V\Subset U$ in this family with $V$ regular,
\begin{equation*}
 \omega_x^U(f)=\int_{\partial V}\omega_y^U(f)\,d\omega_x^V(y),
 \qquad x\in V,
 \qquad f\text{ bounded Borel on }\partial U.
 \tag{N}\label{eq:section-four-nested}
\end{equation*}
Here and below, regularity has the meaning in Definition \ref{H_m}.
Harmonicity is defined by the same mean-value identities as in Definition
\ref{def:system-harmonic}.  A regular closed system satisfies (N) by
Proposition \ref{prop:nested-compatibility}.

For clarity, the facts from Section \ref{Sec3} used below remain valid
under these stated properties.  Identity (N) makes the extension of a
bounded Borel datum harmonic.  The uniqueness argument in Proposition
\ref{prop:basic-harmonic-functions}(4) uses only a regular exhaustion
and the mean-value identities.  Proposition
\ref{prop:relative-kernel-harmonic} uses (N) only for indicators, and
Proposition \ref{prop:continuous-boundary-representation} then follows
from regularity and uniqueness.  Lemma \ref{lem:minimal-extreme} is an
algebraic consequence of the definition of harmonicity.  Thus no
compatibility assertion for arbitrary intersecting domains and unbounded
boundary data is needed for this section.  The definition of a harmonic
measure system in Section \ref{Sec2} is unchanged.

\begin{ex}[The domain family and the local-to-global property]
\label{ex:framework-local-to-global}
The family of domains in the standing framework need not be a
topological base, and harmonicity defined by its mean-value identities
need not have the local-to-global property.

For example, let $D$ be the unit disk and take only the classical
harmonic measures of the concentric disks $B(0,r)$, $0<r<1$.
These measures satisfy the nested identity (N), and each disk admits
a regular relatively compact exhaustion by smaller concentric disks.
Their positive continuous Poisson kernels verify (A2) for any strictly
increasing radial exhaustion satisfying (A1).  The domain family is
not a topological base.  Choose a continuous nonnegative function
$\varphi$ vanishing on $[0,1/4]$ with $\varphi(1/2)=1$, and set
$u(x)=\varphi(|x|)$.  On $B(0,1/4)$ the function is zero and hence
harmonic.  On the punctured disk it is harmonic in the mean-value sense
of Definition \ref{def:system-harmonic}, since no disk in the chosen
family is compactly contained in that open set.  These two open sets
cover $D$, but
\begin{equation*}
 u(0)=0\ne1=\omega_0^{B(0,1/2)}(u).
\end{equation*}
Thus $u$ is not harmonic on $D$ in that sense.  This distinguishes
the stated framework from the harmonic-sheaf axioms; it does not give a
new representation theorem for classical disk harmonic functions.
\end{ex}

Throughout this section, (A1)--(A2) are used together with the preceding
measure assumptions, nested identity (N), and the regular exhaustion of
every domain in the family.  In particular, (A1) alone does not supply
an inner exhaustion of each individual $D_n$.  Strict positivity and compactness
will be proved.  Boundary data, concentration conditions, and compact
refinements are introduced only for the conclusions that use them.
Thus an assertion under (A1)--(A2) always includes this standing
framework.  The joint continuity in (A2) supplies the local bounds and
equicontinuity used in the proof; these are not separate hypotheses.
The common refinement supplies its quotient maps by construction.
Boundary parametrization and concentration are used only to identify
the intrinsic minimal kernels with points of a chosen boundary.

A \emph{normalized measure} on a measurable space $Y$ means a positive Borel measure $\mu$ satisfying $\mu(Y)=1$.  All measure limits below are weak limits of finite measures.
Set
\begin{equation}\label{eq:extreme-boundary}
 \mathcal{E}(D):=\operatorname{Ext}\mathcal{H}_1(D).
\end{equation}
Here $\mathcal{H}_1(D)$ consists of the positive harmonic functions
normalized by $h(x_0)=1$.  A function $h\in\mathcal{H}_1(D)$ is an \emph{extreme point}
if
\begin{equation*}
 h=tu+(1-t)v,\qquad u,v\in\mathcal{H}_1(D),\quad 0<t<1,
\end{equation*}
implies $u=v=h$.  Lemma \ref{lem:minimal-extreme} identifies these
extreme points with the normalized minimal positive harmonic functions.
Thus $\mathcal{E}(D)$ is intrinsic to the harmonic structure.  It is
equipped with the Borel structure inherited from the topology of locally
uniform convergence.

For a regular $U\Subset D$ containing $x_0$, abbreviate
\begin{equation*}
 K_U(x,y):=K_U^{x_0}(x,y)
 =\frac{\d\omega_x^U}{\d\omega_{x_0}^U}(y),
 \qquad x\in U,\quad y\in\partial U.
\end{equation*}
The notation $\partial_{\mathrm m}U$ has the meaning in Definition
\ref{def:minimal-harmonic}.

\subsection{Approximation of refined boundary kernels by inner-boundary kernels}
\label{subsec:refined-kernel-approximation}

We first obtain kernel concentration and synchronized selection at a
prescribed boundary point.  The common refinement retains the two
boundary coordinates.  Selection at a specified refined point uses its
own neighborhood nonvanishing condition; concentration near a projection
fiber alone gives approximation to that fiber.

\subsubsection{Regular exhaustion and derived compactness}

The two standing analytic assumptions are:
\begin{enumerate}
\item[(A1)] There are connected regular domains $D_n$ of the above
family such that
\begin{equation*}
 x_0\in D_1,\qquad
 D_n\Subset D_{n+1},\qquad
 D=\bigcup_{n\geq1}D_n.
\end{equation*}
Here $D_n\Subset D_{n+1}$ means that $\overline{D_n}$ is compact
and contained in $D_{n+1}$.
\item[(A2)] For every $n$, the relative kernel $K_{D_n}$ has a
jointly continuous version on
$D_n\times\partial D_n$.
\end{enumerate}
Condition (A1) already makes $D$ locally compact and separable.
Indeed, each $x\in D$ belongs to some $D_n$ with compact closure
contained in $D$.  Also $D=\bigcup_n\overline{D_n}$, a countable
union of compact metric spaces; the union of countable dense subsets
of these compact spaces is dense in $D$.

Proposition \ref{prop:relative-kernel-harmonic}, applied to each
$D_n$, gives
\begin{equation*}
 K_{D_n}(x,y)>0,\qquad K_{D_n}(x_0,y)=1,
 \qquad K_{D_n}(\cdot,y)\text{ is harmonic in }D_n.
\end{equation*}
Its proof uses full support from regularity, (N) for indicators, and
an inner regular exhaustion of $D_n$.  All three are supplied by the
standing framework; no positivity assumption has been added to (A2).

We distinguish the following optional condition:
\begin{enumerate}
\item[(M)] Every boundary section $K_{D_n}(\cdot,y)$ is minimal; that is,
\begin{equation}\label{eq:inner-minimal-boundary}
 \partial_{\mathrm m}D_n=\partial D_n
 \qquad(n\geq1).
\end{equation}
\end{enumerate}
Condition (M) is needed only when the approximating points or measures
are required to lie on the minimal inner boundaries.

\begin{lem}[Compactness from the inner kernels]
\label{lem:derived-kernel-compactness}
Assume {\rm(A1)--(A2)} and put $L_m:=\overline{D_m}$.
Then $\mathcal{H}_1(D)$ is compact and metrizable in the topology of
locally uniform convergence.  Moreover, there are nonempty compact convex
sets $\mathcal{Q}_m\subset C(L_m)$, consisting of functions harmonic in
$D_m$, such that:
\begin{enumerate}
\item $g|_{L_m}\in\mathcal{Q}_m$ for every $g\in\mathcal{H}_1(D)$;
\item $K_{D_n}(\cdot,y)|_{L_m}\in\mathcal{Q}_m$ whenever
$n\geq m+2$ and $y\in\partial D_n$;
\item restriction defines continuous maps
$R_{\ell m}:\mathcal{Q}_\ell\to\mathcal{Q}_m$, $\ell>m$, and
\begin{equation}\label{eq:compact-model-inverse-limit}
 \mathcal{H}_1(D)\cong\varprojlim_m\mathcal{Q}_m .
\end{equation}
\end{enumerate}
No compactness of the full sets $\mathcal{H}_1(D_m)$ is assumed.
\end{lem}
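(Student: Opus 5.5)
The first assertion, compactness and metrizability of $\mathcal{H}_1(D)$, is exactly Lemma \ref{lem:compact-normalized-cone}: (A1) supplies the exhaustion with $x_0\in D_1$ and $D_m\Subset D_{m+1}$, and (A2) supplies joint continuity. The bounds in that proof are what I intend to use for $\mathcal{Q}_m$. Fix $m$ and put
\begin{equation*}
 c_m:=\min_{L_m\times\partial D_{m+1}}K_{D_{m+1}}>0,\qquad
 C_m:=\max_{L_m\times\partial D_{m+1}}K_{D_{m+1}}<\infty,
\end{equation*}
which are finite and positive by Proposition \ref{prop:relative-kernel-harmonic} and compactness. I will then define $\mathcal{Q}_m$ as the set of functions on $L_m$ of the form
\begin{equation*}
 x\longmapsto\int_{\partial D_{m+1}}K_{D_{m+1}}(x,z)\,d\nu(z),
\end{equation*}
where $\nu$ ranges over positive Borel measures on $\partial D_{m+1}$ with $\nu(\partial D_{m+1})=1$.

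First I will show that $\mathcal{Q}_m$ has the required structure. It is convex because the map $\nu\mapsto$ function is affine. It is nonempty, since $\nu=\omega_{x_0}^{D_{m+1}}$ gives the constant $1$ by Proposition \ref{prop:continuous-boundary-representation}. Its elements are continuous on $L_m$ by joint continuity. They are harmonic in $D_m$ by Tonelli together with the mean-value identity for $K_{D_{m+1}}(\cdot,z)$, as in the converse part of Theorem \ref{thm:starlike-representation}. For compactness, the map from normalized measures to $C(L_m)$ is continuous from the weak topology into the uniform norm: $K_{D_{m+1}}$ is uniformly continuous on $L_m\times\partial D_{m+1}$, so the family $\{K_{D_{m+1}}(\cdot,z)\}$ is equicontinuous. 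The normalized measures on the compact space $\partial D_{m+1}$ form a weak$^*$ compact set (Riesz--Markov and Banach--Alaoglu), so $\mathcal{Q}_m$ is compact as a continuous image. This avoids needing metrizability of $\partial D_{m+1}$.

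\emph{Item (1).} For $g\in\mathcal{H}_1(D)$, take $\nu=g\,\omega_{x_0}^{D_{m+1}}$. This is normalized because $g(x_0)=1$, and it reproduces $g$ on $L_m$ by Proposition \ref{prop:continuous-boundary-representation}.

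\emph{Item (2).} For $n\ge m+2$ and $y\in\partial D_n$, the function $k=K_{D_n}(\cdot,y)$ is positive and harmonic in $D_n$ with $k(x_0)=1$. Since $\overline{D_{m+1}}\subset D_{m+2}\subseteq D_n$, the function $k$ is continuous on $\overline{D_{m+1}}$. The same choice $\nu=k\,\omega_{x_0}^{D_{m+1}}$ then works. The hypothesis $n\ge m+2$ is exactly what guarantees $\overline{D_{m+1}}\subset D_n$.

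\emph{Item (3).} I first need $R_{\ell m}(\mathcal{Q}_\ell)\subseteq\mathcal{Q}_m$. An element $q\in\mathcal{Q}_\ell$ with $\ell>m$ is harmonic in $D_\ell\supseteq\overline{D_{m+1}}$, is continuous there, and has $q(x_0)=1$. It is nonnegative and $\le C_\ell$. So the same representation through $\partial D_{m+1}$ applies, with $\nu=q\,\omega_{x_0}^{D_{m+1}}$. Restriction is $1$-Lipschitz in the uniform norm, hence continuous.

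For the inverse limit in \eqref{eq:compact-model-inverse-limit}, consider the map $g\mapsto(g|_{L_m})_m$. It is injective and continuous. It is a homeomorphism onto its image because the topology on $\mathcal{H}_1(D)$ is given by $d_{\mathcal H}$, which is precisely the product-of-restrictions topology. For surjectivity, take a compatible thread $(q_m)$. The $q_m$ glue to a function $g$ on $D$ that is harmonic on each $D_m$. Harmonicity on $D$ follows since any regular $V\Subset D$ lies in some $D_m$. Moreover $g(x_0)=1$ and $g\ge c_m>0$ on $L_m$, the lower bound holding because $q$ integrates $K_{D_{m+1}}\ge c_m$ against a probability measure.

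The step I expect to be most delicate is showing that elements of $\mathcal{Q}_\ell$ lie in $\mathcal{Q}_m$. This requires $q$ to be harmonic and continuous on a neighborhood of $\overline{D_{m+1}}$, which holds because $\overline{D_{m+1}}\subset D_{m+2}\subseteq D_\ell$ when $\ell\ge m+2$. For $\ell=m+1$, where $q$ is only known on $L_{m+1}$ and harmonic in $D_{m+1}$, I would instead apply Proposition \ref{prop:continuous-boundary-representation} on the regular domain $D_{m+1}$ itself, using continuity of $q$ on $L_{m+1}=\overline{D_{m+1}}$. If that is insufficient, I would compose restrictions through an intermediate index.
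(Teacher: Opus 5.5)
Your proof is correct, but it builds $\mathcal{Q}_m$ differently from the paper.

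\textbf{The paper's route.} The paper defines $\mathcal{Q}_m$ by closed constraints: the functions in $C(L_m)$ that are harmonic in $D_m$, normalized at $x_0$, and satisfy $a_i\le g\le b_i$ and $|g(s)-g(t)|\le\Omega_i(s,t)$ on $L_i$ at every level $i\le m$. Convexity is then immediate, and compactness comes from Arzel\`a--Ascoli. The restriction maps $R_{\ell m}$ are well defined by construction, because the lower-level constraints are built into the definition.

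\textbf{Your route.} You take $\mathcal{Q}_m$ to be the image of the probability measures on $\partial D_{m+1}$ under $\nu\mapsto\int K_{D_{m+1}}(\cdot,z)\,d\nu(z)$. This is in effect the closed convex hull of the kernel sections $K_{D_{m+1}}(\cdot,z)|_{L_m}$. What it buys you:
\begin{itemize}
\item convexity and compactness for free, as the continuous affine image of a weak$^*$ compact set;
\item an explicit representing probability measure for every element;
\item item (2) already for $n=m+1$, via $\nu=\delta_y$.
\end{itemize}
The price is that the restriction property is no longer automatic. You must prove it by re-representing through $\partial D_{m+1}$ with Proposition~\ref{prop:continuous-boundary-representation}, and you do so correctly. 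That re-representation also shows your sets lie inside the paper's sets, so the constants $a_m,b_m,\Omega_m$ used later remain available.

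\textbf{Two small remarks.}
\begin{itemize}
\item Your worry about $\ell=m+1$ is unnecessary. Your sentence ``harmonic in $D_\ell\supseteq\overline{D_{m+1}}$'' is literally false in that case, but every element of $\mathcal{Q}_\ell$ is the restriction of a function harmonic on all of $D_{\ell+1}\supseteq\overline{D_{m+1}}$. Your fallback argument is also valid.
\item You need not import Lemma~\ref{lem:compact-normalized-cone} from the Section~\ref{Sec3} framework. Your homeomorphism with the inverse limit already gives compactness and metrizability, since the inverse limit is a closed subset of the compact metrizable product $\prod_m\mathcal{Q}_m$. This is how the paper argues.
\end{itemize}
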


\begin{proof}
For each $m$, joint continuity and strict positivity give constants
\begin{equation*}
 a_m:=\min_{L_m\times\partial D_{m+1}}K_{D_{m+1}}>0,\qquad
 b_m:=\max_{L_m\times\partial D_{m+1}}K_{D_{m+1}}<\infty.
\end{equation*}
For $s,t\in L_m$, put
\begin{equation*}
 \Omega_m(s,t):=
 \max_{z\in\partial D_{m+1}}
 |K_{D_{m+1}}(s,z)-K_{D_{m+1}}(t,z)|.
\end{equation*}
The function $\Omega_m$ is continuous and tends to zero uniformly as
$d(s,t)\to0$.

Let $g$ be a positive normalized harmonic function on an open
neighborhood of $\overline{D_{m+1}}$.  Boundary representation gives
\begin{equation}\label{eq:derived-kernel-identity}
 g(x)=\int_{\partial D_{m+1}}K_{D_{m+1}}(x,z)g(z)\,
       \d\omega_{x_0}^{D_{m+1}}(z),\qquad x\in D_{m+1}.
\end{equation}
The mean-value identity gives $\int_{\partial D_{m+1}}g\,d\omega_{x_0}^{D_{m+1}}=g(x_0)=1$.  Hence
\begin{equation}\label{eq:derived-kernel-bounds}
 a_m\leq g(s)\leq b_m,\qquad
 |g(s)-g(t)|\leq\Omega_m(s,t)
 \quad(s,t\in L_m).
\end{equation}
These estimates apply to every $g\in\mathcal{H}_1(D)$, and also to
$g=K_{D_n}(\cdot,y)$ when $n\geq m+2$.

Define $\mathcal{Q}_m$ to consist of the functions $g\in C(L_m)$
which are harmonic in $D_m$, satisfy $g(x_0)=1$, and satisfy
\eqref{eq:derived-kernel-bounds} at every level $i\leq m$, with $m$
replaced by $i$ and $s,t\in L_i$.  These conditions are closed under uniform convergence on $L_m$.
Indeed, if $g_j\to g$ uniformly there, normalization and the bounds
pass to the limit pointwise.  For a regular $V\Subset D_m$,
\begin{equation*}
 \left|\int_{\partial V}(g_j-g)\,d\omega_x^V\right|
 \le\|g_j-g\|_{L_m}\longrightarrow0,
\end{equation*}
so $g_j(x)=\omega_x^V(g_j)$ implies $g(x)=\omega_x^V(g)$.
The modulus inequality passes to the limit for every pair $s,t$.
They are convex.  The estimates at level $m$ give uniform boundedness
and equicontinuity on $L_m$.  Arzel\`a--Ascoli therefore makes
$\mathcal{Q}_m$ compact in $C(L_m)$.
The constant function $1$ belongs to every $\mathcal{Q}_m$, and the
preceding estimates prove (1) and (2).

The definition at all levels $i\leq m$ ensures that restriction maps
$\mathcal{Q}_\ell$ into $\mathcal{Q}_m$.  These maps are continuous.
A compatible family $(g_m)$ patches to a unique continuous function
$g$ on $D$.  It is positive by the bounds $a_m>0$, normalized at
$x_0$, and harmonic because every regular $V\Subset D$ has compact
closure in some $D_m$.  Conversely, (1) places the restrictions of any
$g\in\mathcal{H}_1(D)$ in this inverse limit.  The resulting bijection
is a homeomorphism: every compact subset of $D$ lies in some $L_m$,
so uniform convergence on all $L_m$ is exactly local uniform
convergence.  The inverse limit is a closed subset of the countable
product of the compact metric spaces $\mathcal{Q}_m$.  This proves
\eqref{eq:compact-model-inverse-limit} and the compactness assertion.
\end{proof}

The extreme boundary $\mathcal{E}(D)$ is Borel in $\mathcal{H}_1(D)$.
Indeed, for a compatible metric $d_{\mathcal{H}}$, each set
\begin{equation*}
 \left\{\frac{g_1+g_2}{2}:
 g_1,g_2\in\mathcal{H}_1(D),\
 d_{\mathcal{H}}(g_1,g_2)\geq 1/j\right\}
\end{equation*}
is compact.  Their countable union is exactly the set of nonextreme
points.  Thus $\mathcal{E}(D)$ is a $G_\delta$ subset of a compact
metric space; here $G_\delta$ means a countable intersection of open sets.

\subsubsection{Common refinement of two compactifications}
\label{subsec:common-refinement}

We first construct the common refinement and its coordinate projections.
These topological facts require no harmonic-kernel assumptions.

Let $D^G$ and $D^K$ be two compact metrizable compactifications of
the same locally compact separable metric space $D$.  Define
\begin{equation}\label{eq:common-refinement}
 \widehat D=D^G\vee D^K
 :=\overline{\{(x,x):x\in D\}}^{\,D^G\times D^K}.
\end{equation}
We identify $D$ with its diagonal image and denote the coordinate
projections by
\begin{equation*}
 p_G:\widehat D\longrightarrow D^G,\qquad
 p_K:\widehat D\longrightarrow D^K.
\end{equation*}
Write $\widehat\partial D:=\widehat D\setminus D$,
$\partial^GD:=D^G\setminus D$, and $\partial^KD:=D^K\setminus D$.

\begin{prop}[Canonical common refinement]
\label{prop:common-refinement-projections}
The space $\widehat D$ is a compact metrizable compactification of $D$.
The maps $p_G$ and $p_K$ are continuous surjections, equal the identity
on $D$, and satisfy
\begin{equation*}
 p_G^{-1}(D)=p_K^{-1}(D)=D.
\end{equation*}
Both maps and their boundary restrictions
\begin{equation*}
 p_G:\widehat\partial D\longrightarrow\partial^GD,\qquad
 p_K:\widehat\partial D\longrightarrow\partial^KD
\end{equation*}
are quotient maps.  For a sequence $(x_n)$ in $D$ and
$\widehat\xi=(\xi_G,\xi_K)\in\widehat D$,
\begin{equation*}
 x_n\longrightarrow\widehat\xi\text{ in }\widehat D
 \quad\Longleftrightarrow\quad
 \begin{cases}
 x_n\longrightarrow\xi_G&\text{in }D^G,\\
 x_n\longrightarrow\xi_K&\text{in }D^K.
 \end{cases}
\end{equation*}
If a compact Hausdorff compactification $L$ of $D$ has continuous
maps $q_G:L\to D^G$ and $q_K:L\to D^K$, both equal to the identity
on $D$, then
\begin{equation*}
 Q:L\longrightarrow\widehat D,\qquad
 Q(z):=(q_G(z),q_K(z))
\end{equation*}
is a continuous surjection satisfying
$p_G\circ Q=q_G$ and $p_K\circ Q=q_K$.
\end{prop}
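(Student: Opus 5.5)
The plan is to verify the claims in order: topological facts first, then the sequence characterization, then the quotient property, and finally the universal map. Let $\iota:D\to D^G\times D^K$ be the diagonal embedding $x\mapsto(x,x)$. Since $D^G\times D^K$ is a product of two compact metrizable spaces, it is compact metrizable, and so is its closed subset $\widehat D$. The map $\iota$ is continuous. It is a homeomorphism onto its image because composing with the first coordinate projection recovers the embedding $D\hookrightarrow D^G$.

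Next I show that $\iota(D)$ is open in $\widehat D$. Since $D$ is open in $D^G$ (a locally compact space is open in any Hausdorff compactification), the set $p_G^{-1}(D)$ is open in $\widehat D$. I then prove $p_G^{-1}(D)=\iota(D)$. Take $(\xi_G,\xi_K)\in\widehat D$ with $\xi_G=x\in D$, and choose $x_n\in D$ with $(x_n,x_n)\to(\xi_G,\xi_K)$. Then $x_n\to x$ in $D^G$, hence in $D$ because $D$ carries the subspace topology. Consequently $x_n\to x$ in $D^K$, and since limits in the Hausdorff space $D^K$ are unique, $\xi_K=x$. The same argument gives $p_K^{-1}(D)=D$. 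So $\widehat D$ is a compactification in which $D$ is open and dense.

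The projections are continuous, and they are the identity on $D$. Their images are compact and contain the dense set $D$, so both maps are surjective. They are closed because they are continuous maps from a compact space to a Hausdorff space, and closed continuous surjections are quotient maps. For the boundary restrictions, the identity $p_G^{-1}(D)=D$ gives $p_G(\widehat\partial D)=\partial^GD$ and $\widehat\partial D=p_G^{-1}(\partial^GD)$. Both boundaries are closed, hence compact, since $D$ is open. The restriction is therefore again a continuous map from a compact space onto a Hausdorff space, hence closed and a quotient map. The case of $p_K$ is symmetric.

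The sequence characterization is convergence in the product topology restricted to $\widehat D$. Identifying $x_n$ with $(x_n,x_n)$, convergence in $\widehat D$ holds if and only if both coordinates converge.

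For the universal property, the map $Q=(q_G,q_K)$ is continuous into the product. On $D$ it equals $\iota$, and $D$ is dense in $L$, so $Q(L)\subset\overline{\iota(D)}=\widehat D$. The image $Q(L)$ is compact, hence closed, and it contains $\iota(D)$, so it equals $\widehat D$. The identities $p_G\circ Q=q_G$ and $p_K\circ Q=q_K$ hold by construction.

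The only step needing care is $p_G^{-1}(D)=D$. It uses two facts: $D$ is open in each compactification, and the topology of $D$ agrees with the subspace topology it inherits from both compactifications. I expect this to be the main point to write out carefully.
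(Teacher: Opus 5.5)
Your proposal is correct and follows essentially the same route as the paper: the diagonal-embedding compactness argument, the identity $p_G^{-1}(D)=p_K^{-1}(D)=D$ via a diagonal sequence and uniqueness of limits in $D^K$, quotient maps as continuous surjections from compact to Hausdorff spaces, coordinatewise convergence, and $Q(L)=\overline{Q(D)}=\widehat D$. The only difference is that you cite the fact that a locally compact $D$ is open in any Hausdorff compactification, whereas the paper proves it directly from a relatively compact neighborhood and density of $D$.
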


\begin{proof}
The diagonal map $x\mapsto(x,x)$ is continuous and injective.
Its inverse on its image is the restriction of either coordinate
projection, so it is an embedding.  The product $D^G\times D^K$
is compact and metrizable.  Its closed subset $\widehat D$ is
therefore compact and metrizable, with the diagonal copy of $D$
dense in it.  The images $p_G(\widehat D)$ and $p_K(\widehat D)$
are compact and contain $D$, so they equal $D^G$ and $D^K$,
respectively.

Suppose $p_G(z)=x\in D$.  Choose diagonal points $x_n\to z$
in $\widehat D$.  Then $x_n\to x$ in $D^G$, and hence in the
original topology of $D$.  Thus $x_n\to x$ also in $D^K$, and
$z=(x,x)$.  This proves $p_G^{-1}(D)=D$; interchanging the two
coordinates proves $p_K^{-1}(D)=D$.

For completeness, $D$ is open in any Hausdorff compactification $L$.
Given $x\in D$, choose an open neighborhood $U$ of $x$ in $D$
whose closure $C$ is compact and contained in $D$.  Write
$U=O\cap D$ with $O$ open in $L$.  Density of $D$ gives
$O\subseteq\overline{O\cap D}^{\,L}\subseteq C$, since $C$
is closed in $L$.  Hence $x\in O\subseteq D$.
The three boundaries above are consequently compact.  Their restricted
projections are onto by the identities for the preimages of $D$.
All the asserted quotient maps are continuous surjections from compact
spaces to Hausdorff spaces.  The convergence assertion follows from
coordinatewise convergence in a product.

Finally, $Q=(q_G,q_K)$ is continuous into $D^G\times D^K$.
Its image is compact and contains the diagonal copy of $D$.
Density of $D$ in $L$ and continuity give
$Q(L)=\overline{Q(D)}=\widehat D$.  This proves surjectivity and
the two projection identities.
\end{proof}

In particular, define equivalence relations on $\widehat\partial D$ by
\begin{equation*}
 \widehat\xi\sim_G\widehat\eta
 \ \Longleftrightarrow\ p_G(\widehat\xi)=p_G(\widehat\eta),
 \qquad
 \widehat\xi\sim_K\widehat\eta
 \ \Longleftrightarrow\ p_K(\widehat\xi)=p_K(\widehat\eta).
\end{equation*}
The boundary $\partial^GD$ is recovered by identifying the points
in each $p_G$-fiber, and $\partial^KD$ is recovered by identifying
the points in each $p_K$-fiber.  The two quotient spaces, with their
quotient topologies, are homeomorphic to the respective boundaries.
Thus the common refinement retains both kinds of boundary information.
The final assertion of the proposition also makes it the smallest
compactification dominating both $D^G$ and $D^K$.

\begin{rem}[More than two compactifications]
\label{rem:multiple-common-refinement}
For a nonempty finite or countable family $(D^{(i)})_{i\in I}$ of
compact metrizable compactifications of $D$, the same construction gives
\begin{equation}\label{eq:abstract-common-refinement}
 D^{\vee I}:=
 \overline{\{(x)_{i\in I}:x\in D\}}^{\,\prod_{i\in I}D^{(i)}}.
\end{equation}
The countable product is compact and metrizable.  The proof above
applies to each coordinate projection and gives the corresponding
boundary quotient maps and universal property.  Only the two-space
construction \eqref{eq:common-refinement} is needed below.
\end{rem}

\subsubsection{Boundary data and the approximation theorem}

For the approximation of one extreme point, only the following boundary
data are needed:
\begin{enumerate}
\item[$(K_\xi)$] A compact metrizable compactification $D^K$ of
$D$, a point $\xi\in D^K\setminus D$, and a function
$h_\xi\in\mathcal{E}(D)$.
\end{enumerate}
Write $h:=h_\xi$; when convenient, $K_D(\cdot,\xi)$ denotes this
single function.  The superscript $K$ labels the chosen compactification;
it does not denote a power.  No parametrization of all of $\mathcal{E}(D)$ is
required here.

By the mean-value property and $h(x_0)=1$,
\begin{equation}\label{eq:nu-n}
 \d\nu_n^\xi(y):=h(y)\,\d\omega_{x_0}^{D_n}(y)
\end{equation}
is a normalized measure on $\partial D_n$, viewed as a measure on
$D^K$ through $\partial D_n\subset D$.
We impose the following nonvanishing condition:
\begin{enumerate}
\item[$(C_\xi)$] For every open neighborhood $V$ of $\xi$ in $D^K$,
\begin{equation}\label{eq:neighborhood-nonvanishing}
 \liminf_{n\to\infty}\nu_n^\xi(V)>0 .
\end{equation}
\end{enumerate}
The positive lower bound may depend on $V$.  It suffices to check
\eqref{eq:neighborhood-nonvanishing} on one countable neighborhood base.
The stronger condition
\begin{equation}\label{eq:weak-concentration}
 \nu_n^\xi\Longrightarrow\delta_\xi
 \quad\text{weakly on }D^K
\end{equation}
implies $(C_\xi)$, because it gives
$\nu_n^\xi(V)\to1$ for every such $V$.
Condition $(C_\xi)$ connects the prescribed compactification topology
with the harmonic measures; it is not implied by metricity.

\begin{thm}[Synchronized point and kernel approximation]
\label{thm:synchronized-kernel}
Assume {\rm(A1)--(A2)}, $(K_\xi)$, and $(C_\xi)$.
Then there exist $n_0\geq1$ and $\xi_n\in\partial D_n$, $n\geq n_0$,
such that
\begin{align}
 \xi_n&\longrightarrow\xi
 &&\text{in }D^K,\label{eq:point-convergence}\\
 K_{D_n}(\cdot,\xi_n)&\longrightarrow h_\xi
 &&\text{locally uniformly in }D.\label{eq:kernel-convergence}
\end{align}
If {\rm(M)} also holds, then
$\xi_n\in\partial_{\mathrm m}D_n=\partial D_n$.
\end{thm}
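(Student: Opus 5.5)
The plan is to use minimality of $h=h_\xi$ to show that, for every neighborhood $V$ of $\xi$, almost all of the $\nu_n^\xi$-mass in $V$ sits on boundary points whose kernels are close to $h$. A diagonal choice over a countable neighborhood base then produces the sequence $(\xi_n)$.

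\textbf{Step 1 (sub-averages over $V$ converge to $h$).} Fix an open neighborhood $V$ of $\xi$ in $D^K$, and a Borel set $A_n\subseteq V\cap\partial D_n$ with $\liminf_n\nu_n^\xi(A_n)>0$. For $x\in D_n$ put
\begin{equation*}
 v_n(x):=\int_{A_n}K_{D_n}(x,y)h(y)\,\d\omega_{x_0}^{D_n}(y).
\end{equation*}
By Proposition \ref{prop:continuous-boundary-representation} on $D_n$, we have $0\le v_n\le h$ on $D_n$, $v_n$ is harmonic in $D_n$, and $v_n(x_0)=\nu_n^\xi(A_n)$. The estimates \eqref{eq:derived-kernel-bounds} from Lemma \ref{lem:derived-kernel-compactness} give uniform bounds and equicontinuity on each $L_m$ for $n\ge m+2$. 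A diagonal argument then yields a locally uniform limit $v$ along any subsequence. This limit is harmonic on $D$ (pass to the limit in the mean-value identities, as in that lemma), and satisfies $0\le v\le h$ with $v(x_0)=\lim\nu_n^\xi(A_n)>0$. Minimality (Lemma \ref{lem:minimal-extreme}) forces $v=ch$ with $c=v(x_0)$. Hence the normalized averages $v_n/\nu_n^\xi(A_n)$ converge to $h$ along every such subsequence. By $(C_\xi)$, this applies in particular to $A_n=V\cap\partial D_n$, which is therefore nonempty for large $n$.

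\textbf{Step 2 (no mass far from $h$).} Fix $m$ and $\varepsilon>0$. Let $A_n$ be the set of $y\in V\cap\partial D_n$ whose kernels are $\varepsilon$-far from $h$ in the metric $d_{\mathcal H}$ truncated at level $m$, that is, $\|K_{D_n}(\cdot,y)-h\|_{L_i}\ge\varepsilon$ for some $i\le m$; it is Borel by joint continuity. I claim $\nu_n^\xi(A_n)\to0$. Suppose instead $\nu_n^\xi(A_n)\ge\delta>0$ along a subsequence. Push the normalized measures $\nu_n^\xi|_{A_n}/\nu_n^\xi(A_n)$ forward under
\begin{equation*}
 y\mapsto\bigl(K_{D_n}(\cdot,y)|_{L_i}\bigr)_{i\le n-2}
\end{equation*}
into $\prod_i\mathcal Q_i$, completing the remaining coordinates by the constant $1$. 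Take a weak limit $\rho$. Compatibility of the first $n-2$ coordinates shows that $\rho$ is carried by the inverse limit $\cong\mathcal H_1(D)$. It is also carried by the closed set $C_\varepsilon=\{g:\|g-h\|_{L_i}\ge\varepsilon\text{ for some }i\le m\}$. Evaluating at points of $D$ shows the barycenter of $\rho$ equals $\lim v_n/\nu_n^\xi(A_n)$, which is $h$ by Step 1. Since $h$ is an extreme point of the compact convex metrizable set $\mathcal H_1(D)$, the only probability measure with barycenter $h$ is $\delta_h$ (Bauer's characterization; equivalently, Milman's converse applied to $\overline{\mathrm{co}}\,C_\varepsilon$). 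But $h\notin C_\varepsilon$, a contradiction. Consequently, for large $n$ the set $(V\cap\partial D_n)\setminus A_n$ is nonempty.

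\textbf{Step 3 (diagonal selection).} Take a decreasing countable neighborhood base $(V_k)$ of $\xi$ in the metrizable space $D^K$, and put $\varepsilon_k=1/k$ and $m=k$. By Step 2 there are $N_1<N_2<\cdots$ such that for $n\ge N_k$ some $y\in V_k\cap\partial D_n$ satisfies $\|K_{D_n}(\cdot,y)-h\|_{L_k}<1/k$. Set $n_0=N_1$. For $N_k\le n<N_{k+1}$ choose $\xi_n$ to be such a point for index $k$. Then \eqref{eq:point-convergence} holds, and \eqref{eq:kernel-convergence} holds because every compact subset of $D$ lies in some $L_m$. Under (M) the membership $\xi_n\in\partial_{\mathrm m}D_n$ is automatic.

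The main obstacle is Step 2. The kernels $K_{D_n}(\cdot,y)$ live only on $D_n$, so the barycentric argument must be run in the compact models $\mathcal Q_m$. Verifying that the limit measure lives on the inverse limit, and that its barycenter is identified with the locally uniform limit of Step 1, needs care. I would test barycenters against point evaluations, which are continuous affine functions that separate points of $\mathcal H_1(D)$.
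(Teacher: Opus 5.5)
Your proof is correct and follows the paper's route. You push the inner-boundary measures into $P=\prod_m\mathcal Q_m$, show that every weak limit is carried by $\mathcal H_1(D)$ and, by extremality of $h$, equals $\delta_h$, and then combine this concentration with $(C_\xi)$ and a block-by-block selection over $N_k\le n<N_{k+1}$.

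The only difference is that you localize to $V$ before concentrating. That is why you need Step 1 (minimality via $0\le v\le h$) to identify the barycenter of the restricted measures. The paper instead proves the global statement $\nu_n^\xi(A_{n,m})\to1$. There the barycenter is exactly $h$ by the inner representation $h=\int K_{D_n}(\cdot,y)\,d\nu_n^\xi(y)$, and $\rho=\delta_h$ is obtained by splitting $\rho$ directly rather than by citing Bauer's theorem. The paper then concludes with $\nu_n^\xi(A_{n,m}\cap V_m)\ge\nu_n^\xi(V_m)-\nu_n^\xi(A_{n,m}^c)$, so your Step 1 is dispensable.
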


\begin{ex}[Failure of synchronized approximation without boundary compatibility]
\label{ex:failure-boundary-compatibility}
Conditions {\rm(A1)--(A2)}, {\rm(M)}, and $(K_\xi)$ do not by
 themselves imply the conclusion of Theorem \ref{thm:synchronized-kernel}.
More precisely, kernel approximation can hold while synchronized
approximation at the prescribed boundary point fails.
\end{ex}

\begin{proof}
Let $D=\mathbb{D}\subset\mathbb{C}$, $x_0=0$, and
$D_n=r_n\mathbb{D}$, where $0<r_n\uparrow1$ strictly.  Take the
usual Euclidean compactification $D^K=\overline{\mathbb{D}}$, and put
\begin{equation*}
 \xi=-1,\qquad h_\xi(z)=h(z):=\frac{1-|z|^2}{|1-z|^2}.
\end{equation*}
The function $h$ is the normalized minimal positive harmonic function
associated with the circle point $1$, and $h(0)=1$.  Here it is
 deliberately assigned to the different prescribed point $\xi=-1$.
This assignment is permitted by $(K_\xi)$, which specifies an extreme
function and a boundary point but imposes no compatibility between them.

The disks $D_n$ are connected regular domains, their closures are
compactly contained in $D_{n+1}$, and their union is $D$.  Thus
(A1) holds.  For $y\in\partial D_n$, the relative kernel normalized
at $0$ is
\begin{equation*}
 K_{D_n}(z,y)=\frac{r_n^2-|z|^2}{|y-z|^2},\qquad z\in D_n.
\end{equation*}
It is strictly positive and jointly continuous on
$D_n\times\partial D_n$, and $K_{D_n}(0,y)=1$.
Each boundary section is a minimal positive harmonic function, by the
classical Poisson representation on a disk.  Hence (A2) and (M) also
hold, and the specified data satisfy $(K_\xi)$.

Suppose that $y_n\in\partial D_n$ and $y_n\to\xi=-1$.
Fix a nonempty compact set $L\Subset\mathbb{D}$, and write
$\rho:=\max_{z\in L}|z|<1$.  For all sufficiently large $n$,
\begin{equation*}
 |y_n-z|\ge r_n-\rho\ge\frac{1-\rho}{2}
 \qquad(z\in L).
\end{equation*}
Moreover,
\begin{equation*}
 \sup_{z\in L}\bigl||y_n-z|^2-|-1-z|^2\bigr|
 \le (r_n+1+2\rho)|y_n+1|\longrightarrow0.
\end{equation*}
The numerators converge uniformly on $L$, and both limiting and
approximating denominators are bounded away from zero there.  Therefore
\begin{equation*}
 K_{D_n}(\cdot,y_n)\longrightarrow
 \frac{1-|\cdot|^2}{|1+\cdot|^2}
 \quad\hbox{locally uniformly in }\mathbb{D}.
\end{equation*}
This limit is different from $h$.  Indeed, $1/2\in D_n$ for all
sufficiently large $n$, and
\begin{equation*}
 \lim_{n\to\infty}K_{D_n}(1/2,y_n)
 =\frac{1-1/4}{(1+1/2)^2}=\frac13,
 \qquad
 h(1/2)=\frac{1-1/4}{(1-1/2)^2}=3.
\end{equation*}
Consequently, no such sequence can also satisfy
$K_{D_n}(\cdot,y_n)\to h$ locally uniformly.

Kernel approximation alone nevertheless holds.  Taking $y_n=r_n$
on the positive real axis gives
\begin{equation*}
 K_{D_n}(z,r_n)=\frac{r_n^2-|z|^2}{|r_n-z|^2}
 \longrightarrow\frac{1-|z|^2}{|1-z|^2}=h(z)
 \quad\hbox{locally uniformly in }\mathbb{D}.
\end{equation*}
The same lower bound on the denominators justifies local uniform
convergence.  These points converge to $1$, rather than to $\xi$.

Finally, we verify explicitly that $(C_\xi)$ fails.  Use the measure
from \eqref{eq:nu-n}, namely
\begin{equation*}
 d\nu_n^\xi(y)=h(y)\,d\omega_0^{D_n}(y),\qquad
 \nu_n^\xi(\partial D_n)=h(0)=1,
\end{equation*}
and choose the open neighborhood
\begin{equation*}
 V:=\{z\in\overline{\mathbb{D}}:|z+1|<1/2\}
\end{equation*}
of $\xi$.  For $y\in V\cap\partial D_n$,
\begin{equation*}
 |1-y|\ge2-|y+1|>\frac32,
 \qquad
 h(y)=\frac{1-r_n^2}{|1-y|^2}\le\frac49(1-r_n^2).
\end{equation*}
It follows that
\begin{equation*}
 \begin{aligned}
 0\le\nu_n^\xi(V)
 &=\int_{V\cap\partial D_n}h(y)\,d\omega_0^{D_n}(y)\\
 &\le\frac49(1-r_n^2)\,
       \omega_0^{D_n}(V\cap\partial D_n)\\
 &\le\frac49(1-r_n^2)\longrightarrow0.
 \end{aligned}
\end{equation*}
Thus $\liminf_n\nu_n^\xi(V)=0$, as asserted.
\end{proof}

This example shows that the analytic assumptions and the single-point
data alone are insufficient: some condition linking the prescribed
boundary topology to the target harmonic function is needed for a general
synchronized approximation theorem.  It does not show that $(C_\xi)$
is necessary, or that it is the weakest possible sufficient condition.
The assignment used here differs from the natural boundary-kernel
correspondence on the disk.  In contrast, the split-boundary construction
in Subsection \ref{subsec:split-disk} shows that $(C_\xi)$ can hold
without weak convergence of $\nu_n^\xi$ to $\delta_\xi$.

\subsubsection{Exact representation and concentration in kernel space}

The following measure construction uses only {\rm(A1)--(A2)}.
For a minimal function it gives kernel concentration.  The boundary
point and condition $(C_\xi)$ enter only in the subsequent selection
step.

\begin{lem}[Common representing measures]\label{lem:common-measures}
For every $n$ and $z\in D_n$,
\begin{equation}\label{eq:inner-exact-representation}
 h(z)=\int_{\partial D_n}K_{D_n}(z,y)\,\d\nu_n^\xi(y).
\end{equation}
\end{lem}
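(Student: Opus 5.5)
The plan is to read \eqref{eq:inner-exact-representation} directly off Proposition \ref{prop:continuous-boundary-representation}. We apply it on the regular domain $D_n$ to the restriction of $h$, then rewrite the integrand using the definition \eqref{eq:nu-n} of $\nu_n^\xi$.

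\textbf{Step 1: check the hypotheses.} By (A1), $D_n$ is a regular domain of the family, and $\overline{D_n}$ is compact and contained in $D_{n+1}\subset D$. Since $h=h_\xi\in\mathcal{H}_1(D)$ is continuous on $D$, its restriction to $\overline{D_n}$ lies in $C(\overline{D_n})$. By Proposition \ref{prop:basic-harmonic-functions}(1), that restriction is harmonic in $D_n$. Proposition \ref{prop:continuous-boundary-representation} therefore gives, for $z\in D_n$,
\begin{equation*}
 h(z)=\int_{\partial D_n}K_{D_n}(z,y)\,h(y)\,\d\omega_{x_0}^{D_n}(y).
\end{equation*}
As noted at the start of Section \ref{Sec4}, this proposition uses only regularity, identity (N) for bounded Borel data, and the uniqueness argument of Proposition \ref{prop:basic-harmonic-functions}(4). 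The exhaustion for that uniqueness argument is supplied by the standing framework, namely the regular exhaustion of each domain in the family.

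\textbf{Step 2: rewrite the integrand.} Since $h\ge0$ is bounded on the compact set $\partial D_n$, the measure $\d\nu_n^\xi=h\,\d\omega_{x_0}^{D_n}$ is a finite positive measure. For the nonnegative measurable integrand $K_{D_n}(z,\cdot)$, the standard density identity
\begin{equation*}
 \int K_{D_n}(z,y)\,h(y)\,\d\omega_{x_0}^{D_n}(y)=\int K_{D_n}(z,y)\,\d\nu_n^\xi(y)
\end{equation*}
holds. This follows by monotone convergence from the case of indicators. Taking $z=x_0$ and using $K_{D_n}(x_0,\cdot)=1$ recovers the normalization $\nu_n^\xi(\partial D_n)=1$.

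\textbf{Main point of care.} The only delicate issue is which version of the kernel is used. Proposition \ref{prop:continuous-boundary-representation} holds for any Radon--Nikodym version at each fixed $z$. We want the jointly continuous version from (A2), and it is in particular such a version, so the identity holds pointwise for every $z\in D_n$. One should also note that $n\ge1$ is arbitrary: no condition of the form $n\ge m+2$ is needed, since $h$ is harmonic on a neighborhood of $\overline{D_n}$.
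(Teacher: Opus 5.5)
Your proof is correct and is essentially the paper's argument: first obtain $h(z)=\int_{\partial D_n}K_{D_n}(z,y)h(y)\,\d\omega_{x_0}^{D_n}(y)$, then absorb $h$ into $\nu_n^\xi$. The only difference is that the paper gets the first identity directly from the definition of harmonicity of $h$ in $D$: since $D_n$ is a regular domain of the family with $\overline{D_n}$ compact in $D$, the identity $h(z)=\omega_z^{D_n}(h)$ holds outright. Your detour through Proposition \ref{prop:continuous-boundary-representation} (restriction, harmonic extension, and the uniqueness argument via an inner exhaustion of $D_n$) is valid but unnecessary.
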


\begin{proof}
Fix $n$ and $z\in D_n$.  Since $\overline{D_n}\Subset D$,
the function $h$ is continuous and harmonic on a neighborhood of
$\overline{D_n}$.  Its restriction to $\partial D_n$ is bounded, and the mean-value
identity and Radon--Nikodym formula give
\begin{equation*}
 \begin{aligned}
 h(z)&=\omega_z^{D_n}(h)
 =\int_{\partial D_n}h(y)\,d\omega_z^{D_n}(y)\\
 &=\int_{\partial D_n}h(y)K_{D_n}(z,y)\,d\omega_{x_0}^{D_n}(y)\\
 &=\int_{\partial D_n}K_{D_n}(z,y)\,d\nu_n^\xi(y).
 \end{aligned}
\end{equation*}
Thus the same measure $\nu_n^\xi$ represents the value of $h$ at
every $z\in D_n$.  Evaluating the defining integral at the reference
point gives
\begin{equation*}
 \nu_n^\xi(\partial D_n)
 =\int_{\partial D_n}h\,d\omega_{x_0}^{D_n}=h(x_0)=1.
\end{equation*}
\end{proof}

We use one compact product for kernel approximation and representation.
Write $H:=\mathcal H_1(D)$ and
$P:=\prod_{m\geq1}\mathcal Q_m$, with the compact sets of Lemma
\ref{lem:derived-kernel-compactness}.  Identify $H$ with the closed
subset of compatible sequences in $P$.  Define the continuous maps
\begin{equation}\label{eq:full-inner-kernel-map}
 T_n:\partial D_n\longrightarrow P,\qquad
 T_n(y)_m:=
 \begin{cases}
 K_{D_n}(\cdot,y)|_{L_m},&m\leq n-2,\\
 1|_{L_m},&m>n-2.
 \end{cases}
\end{equation}
Each fixed coordinate is the actual inner kernel for all sufficiently
large $n$.  Let $\widehat D$ be a compact metrizable compactification
of $D$; no boundary parametrization is needed at this stage.  For
$u\in\mathcal H_+(D)$, put
\begin{equation}\label{eq:joint-inner-measures}
 \d\lambda_n^u(y):=u(y)\,\d\omega_{x_0}^{D_n}(y),\qquad
 \rho_n^u:=(T_n)_\#\lambda_n^u,\qquad
 \Lambda_n^u:=(\operatorname{id},T_n)_\#\lambda_n^u.
\end{equation}
Thus $\rho_n^u$ is a measure on $P$, and $\Lambda_n^u$ is a
measure on $\widehat D\times P$ recording position and kernel together.

\begin{lem}[Joint limits of inner-boundary measures]
\label{lem:joint-inner-limits}
Assume {\rm(A1)--(A2)} and fix $u\in\mathcal H_+(D)$.
The sequence $(\Lambda_n^u)$ has weakly convergent subsequences on
$\widehat D\times P$.  Every limit $\Lambda$ is carried by
$(\widehat D\setminus D)\times H$ and satisfies
\begin{equation}\label{eq:joint-limit-representation}
 \Lambda(\widehat D\times P)=u(x_0),\qquad
 u(x)=\int_{(\widehat D\setminus D)\times H}
          g(x)\,\d\Lambda(\widehat\eta,g).
\end{equation}
Every weak subsequential limit $\sigma$ of $(\rho_n^u)$ is carried
by $H$ and satisfies
\begin{equation*}
 u(x)=\int_Hg(x)\,\d\sigma(g),\qquad x\in D.
\end{equation*}
For every finite positive measure $\tau$ on $H$ and Borel
$A\subseteq H$, the function
\begin{equation*}
 u_A(x):=\int_A g(x)\,\d\tau(g)
\end{equation*}
is continuous and harmonic, and $u_A(x_0)=\tau(A)$.
It is strictly positive if $\tau(A)>0$ and is zero otherwise.
\end{lem}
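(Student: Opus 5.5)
The plan is to derive everything from compactness of $\widehat D\times P$ together with the exact finite-$n$ representation, and then to remove the truncation in $T_n$ coordinatewise.

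\textbf{Mass and compactness.} First, by the mean-value identity on $D_n$ (as in Lemma \ref{lem:common-measures}), the total mass is
\begin{equation*}
 \Lambda_n^u(\widehat D\times P)=\lambda_n^u(\partial D_n)=\int_{\partial D_n}u\,\d\omega_{x_0}^{D_n}=u(x_0)
\end{equation*}
for every $n$. Here $\widehat D\times P$ is a countable product of compact metric spaces, hence compact metric. Sequential weak compactness of finite measures with bounded mass therefore gives convergent subsequences, and testing with the constant $1$ shows $\Lambda(\widehat D\times P)=u(x_0)$. The same holds for $\rho_n^u$, which is the second marginal of $\Lambda_n^u$. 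Measurability of $T_n$ and $(\operatorname{id},T_n)$ follows from continuity of $y\mapsto K_{D_n}(\cdot,y)|_{L_m}$ into $C(L_m)$, which is uniform joint continuity on compacta.

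\textbf{Support.} Next, to show $\Lambda$ is carried by $(\widehat D\setminus D)\times H$, I treat the two factors separately. For the position factor: any compact $L\subset D$ lies in some $D_m$, so $\partial D_n\cap L=\varnothing$ for $n>m$. For an open $O\Subset D$, which is open in $\widehat D$ (D is open in its compactification, as shown in Proposition \ref{prop:common-refinement-projections}), the portmanteau theorem gives $\Lambda(O\times P)\le\liminf\Lambda_n^u(O\times P)=0$. Since $D$ is a countable union of such sets, $\Lambda(D\times P)=0$. For the kernel factor: $H$ is the closed set of compatible sequences, i.e. $R_{m+1,m}g_{m+1}=g_m$ for all $m$. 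Let $F_N$ be the closed set where these relations hold for $m\le N$. For $n\ge N+3$, $T_n$ maps into $F_N$, so weak limits are carried by $F_N$, and hence by $\bigcap_NF_N=H$.

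\textbf{Representation.} Fix $x\in D$ and choose $m$ with $x\in D_m$. For $n\ge m+2$, Proposition \ref{prop:continuous-boundary-representation} on $D_n$ gives
\begin{equation*}
 u(x)=\int_{\partial D_n}K_{D_n}(x,y)u(y)\,\d\omega_{x_0}^{D_n}(y)=\int\Phi_x\,\d\Lambda_n^u,
\end{equation*}
where $\Phi_x(\widehat\eta,g):=g_m(x)$. This test function is continuous and bounded on $\widehat D\times P$, because evaluation is continuous on $C(L_m)$ and $\mathcal Q_m$ is bounded. Passing to the limit yields $u(x)=\int\Phi_x\,\d\Lambda$. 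On $H$, $g_m(x)=g(x)$ for the patched function, so \eqref{eq:joint-limit-representation} follows from the support step. The statement for $\sigma$ is identical with $\widehat D$ suppressed; alternatively, $\sigma$ is the $P$-marginal of a joint limit along a further subsequence.

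\textbf{The functions $u_A$.} Finally, for $u_A$: the bounds in Lemma \ref{lem:derived-kernel-compactness} give $a_m\le g\le b_m$ on $L_m$ and the uniform modulus $\Omega_m$. Hence $x\mapsto\int_Ag(x)\,\d\tau$ is continuous, with $|u_A(s)-u_A(t)|\le\tau(A)\Omega_m(s,t)$, and satisfies $a_m\tau(A)\le u_A\le b_m\tau(A)$. This gives strict positivity when $\tau(A)>0$ and $u_A=0$ when $\tau(A)=0$; also $u_A(x_0)=\tau(A)$. Harmonicity follows from Tonelli on $\partial V\times A$ for regular $V\Subset D$, applying each $g$'s mean-value identity. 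The needed joint measurability of $(z,g)\mapsto g(z)$ comes from the joint continuity of evaluation on $L_m\times H$.

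The main obstacle I expect is the support step: the truncated coordinates $1|_{L_m}$ prevent $T_n(y)$ from lying in $H$. This must be handled by the closed approximating sets $F_N$ rather than by $H$ directly, and the position-factor argument must use openness of $D$ in $\widehat D$.
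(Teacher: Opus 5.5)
Your proposal is correct and follows the paper's proof essentially step for step. Both arguments rest on the exact mass identity, compactness of $\widehat D\times P$, and support from conditions that hold eventually along the sequence. Your open sets $O\Subset D$ and sets $F_N$ are reformulations of the paper's closed sets $\widehat D\setminus D_m$ and relations $R_{\ell m}g_\ell=g_m$. Like the paper, you then pass to the limit with the test function $g_m(x)$, and treat $u_A$ via the bounds $a_m,b_m,\Omega_m$ of Lemma \ref{lem:derived-kernel-compactness} together with Tonelli.
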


\begin{proof}
The mean-value and Radon--Nikodym identities give
\begin{equation}\label{eq:joint-inner-exact}
 \lambda_n^u(\partial D_n)=u(x_0),\qquad
 \int_{\partial D_n}K_{D_n}(x,y)\,\d\lambda_n^u(y)=u(x)
 \quad(x\in D_n).
\end{equation}
Positive measures with this fixed value on the compact metric space
$\widehat D\times P$ form a compact metrizable space in the weak
topology.  Hence subsequences with a weak limit exist.
For fixed $m$, the position coordinates eventually belong to the
closed set $\widehat D\setminus D_m$.  For fixed $\ell>m$, the
kernel coordinates eventually satisfy the closed condition
$R_{\ell m}g_\ell=g_m$.  These conditions pass to the limit, and
their countable intersection shows that $\Lambda$ is carried by
$(\widehat D\setminus D)\times H$.
Integration of $1$ gives the first identity in
\eqref{eq:joint-limit-representation}.  For $x\in L_m$, the map
$(\widehat\eta,(g_i))\mapsto g_m(x)$ is continuous and bounded
on the compact product.  Its integral against $\Lambda_n^u$ is
$u(x)$ once $n\geq m+2$.  Passing to the limit proves the second
identity.  If $\rho_{n_j}^u\Longrightarrow\sigma$, take a further
subsequence along which the joint measures converge.  Their function
projection is $\sigma$, giving the representation by $\sigma$.

Finally, let $\tau$ be a
finite positive measure on $H$ and let $A\subseteq H$ be Borel.  Then
\begin{equation*}
 u_A(x):=\int_Ag(x)\,\d\tau(g)
\end{equation*}
is finite and continuous by the local bounds and moduli of Lemma
\ref{lem:derived-kernel-compactness}.  For regular $V\Subset D$,
Tonelli's theorem gives
\begin{equation*}
 \int_{\partial V}u_A(y)\,\d\omega_x^V(y)
 =\int_A\left(\int_{\partial V}g(y)\,\d\omega_x^V(y)\right)
                \d\tau(g)=u_A(x).
\end{equation*}
Thus $u_A\in\mathcal H_+(D)$ and $u_A(x_0)=\tau(A)$.
The bounds $a_m>0$ also prove strict positivity when $\tau(A)>0$.
\end{proof}

Use the compact sets $\mathcal{Q}_m\subset C(L_m)$ of Lemma
\ref{lem:derived-kernel-compactness}.  For $n\geq m+2$, define
\begin{equation*}
 T_{n,m}:\partial D_n\longrightarrow\mathcal{Q}_m,\qquad
 T_{n,m}(y):=K_{D_n}(\cdot,y)|_{L_m},
\end{equation*}
and
\begin{equation}\label{eq:rho-pushforward}
 \rho_n^{(m)}:=(T_{n,m})_\#\nu_n^\xi.
\end{equation}
Joint continuity on the compact set $L_m\times\partial D_n$ makes
$T_{n,m}$ continuous.  Lemma \ref{lem:common-measures} gives
\begin{equation}\label{eq:rho-integral-identity}
 \int_{\mathcal{Q}_m}g(z)\,\d\rho_n^{(m)}(g)=h(z),
 \qquad z\in L_m .
\end{equation}

\begin{lem}[Kernel-space concentration]\label{lem:kernel-space-concentration}
For every fixed $m$,
\begin{equation}\label{eq:kernel-space-convergence}
 \rho_n^{(m)}\Longrightarrow\delta_{h|_{L_m}}
 \quad\text{weakly on }\mathcal{Q}_m .
\end{equation}
Moreover, $\rho_n^h\Longrightarrow\delta_h$ on $P$.
\end{lem}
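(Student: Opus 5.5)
The plan is to identify every weak subsequential limit of $\rho_n^h$ on $P$ with $\delta_h$, and then read off \eqref{eq:kernel-space-convergence} by projecting to the $m$-th coordinate. First note that $\nu_n^\xi=\lambda_n^h$ in the notation of \eqref{eq:joint-inner-measures}. Moreover, for $n\ge m+2$ the $m$-th coordinate of $T_n$ is exactly $T_{n,m}$. Hence, writing $\pi_m:P\to\mathcal Q_m$ for the coordinate projection, $\rho_n^{(m)}=(\pi_m)_\#\rho_n^h$ for all large $n$. Since $\pi_m$ is continuous, it suffices to prove $\rho_n^h\Longrightarrow\delta_h$ on $P$. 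The measures $\rho_n^h$ all have mass $h(x_0)=1$ on the compact metric space $P$. A standard subsequence argument therefore reduces the claim to showing that every subsequential weak limit $\sigma$ equals $\delta_h$.

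Let $\sigma$ be such a limit. By Lemma \ref{lem:joint-inner-limits} with $u=h$, $\sigma$ is carried by $H$, has mass $1$, and satisfies $h(x)=\int_H g(x)\,d\sigma(g)$. Next I use minimality, which holds because $h\in\mathcal E(D)$ and Lemma \ref{lem:minimal-extreme} applies. For a Borel $A\subseteq H$, the last part of Lemma \ref{lem:joint-inner-limits} says that $h_A(x):=\int_A g(x)\,d\sigma(g)$ is nonnegative and harmonic. Also $h_A\le h$, since $h-h_A=h_{H\setminus A}\ge0$. Minimality therefore gives $h_A=c\,h$, and evaluation at $x_0$ yields $c=\sigma(A)$. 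Thus
\begin{equation*}
 \int_A g(x)\,d\sigma(g)=\sigma(A)\,h(x),\qquad x\in D,\ A\subseteq H\ \text{Borel}.
\end{equation*}
In words, for every Borel $A$ with $\sigma(A)>0$, the normalized restriction $\sigma|_A/\sigma(A)$ has barycenter $h$.

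The step I expect to need the most care is converting this barycenter identity into $\operatorname{supp}\sigma=\{h\}$. My plan is to use neighborhoods that are convex in the function values. Take $g_0\in\operatorname{supp}\sigma$, fix $m$ and $\varepsilon>0$, and let
\begin{equation*}
 A=\{g\in H:\|g-g_0\|_{L_m}<\varepsilon\}.
\end{equation*}
This set is open in the locally uniform topology, so $\sigma(A)>0$. For each $x\in L_m$ and each $g\in A$ we have $|g(x)-g_0(x)|<\varepsilon$. Averaging this inequality against $\sigma|_A/\sigma(A)$ and using the displayed identity gives $|h(x)-g_0(x)|\le\varepsilon$. Since $x\in L_m$, $\varepsilon>0$ and $m$ are arbitrary, $g_0=h$ on $D$. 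Hence $\operatorname{supp}\sigma\subseteq\{h\}$, and together with $\sigma(H)=1$ this gives $\sigma=\delta_h$. Note that no measurability issue arises here: the only integrals used are of the continuous evaluations $g\mapsto g(x)$ over open sets.

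The limit is therefore independent of the subsequence, so the whole sequence satisfies $\rho_n^h\Longrightarrow\delta_h$. Pushing forward by $\pi_m$, and using $\pi_m(h)=h|_{L_m}$ (available because Lemma \ref{lem:derived-kernel-compactness}(1) places $h|_{L_m}$ in $\mathcal Q_m$), yields \eqref{eq:kernel-space-convergence}. As a consistency check, the barycenter identity \eqref{eq:rho-integral-identity} is compatible with this limit, though the argument above does not need it.
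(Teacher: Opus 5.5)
Your proof is correct and follows essentially the paper's route. You reduce to identifying subsequential limits of $\rho_n^h$ on $P$, take the representing measure carried by $H$ from Lemma \ref{lem:joint-inner-limits}, and use that $h$ is extreme (via minimality, equivalent by Lemma \ref{lem:minimal-extreme}) to show every normalized restriction $\sigma|_A/\sigma(A)$ has barycenter $h$; then you project to $\mathcal{Q}_m$. The only difference is the last step: you show each point of $\operatorname{supp}\sigma$ equals $h$ using sup-norm balls on $L_m$, while the paper shows that the sets $\{g:g(q_r)>h(q_r)\}$ and $\{g:g(q_r)<h(q_r)\}$ over a countable dense set $\{q_r\}\subset D$ are $\sigma$-null; both arguments are valid.
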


\begin{proof}
The restriction $h|_{L_m}$ need not be extreme in $\mathcal Q_m$.
Use instead the measures $\rho_n^h=(T_n)_\#\nu_n^\xi$ on $P$.
Every subsequence has a further weakly convergent subsequence, and
Lemma \ref{lem:joint-inner-limits} shows that its limit $\rho$ is
carried by $H=\mathcal H_1(D)$, satisfies $\rho(H)=1$, and gives
\begin{equation}\label{eq:global-integral-identity}
 \int_H g(z)\,\d\rho(g)=h(z),\qquad z\in D.
\end{equation}
We show directly that $\rho=\delta_h$.
If $A\subseteq\mathcal{H}_1(D)$ is Borel and
$0<\rho(A)<1$, define
\begin{equation*}
 b_A(z):=\frac{1}{\rho(A)}\int_A g(z)\,\d\rho(g),
 \qquad
 b_{A^c}(z):=\frac{1}{\rho(A^c)}\int_{A^c}g(z)\,\d\rho(g).
\end{equation*}
The last assertion of Lemma \ref{lem:joint-inner-limits} shows
that $b_A,b_{A^c}\in\mathcal H_1(D)$.  Since
\begin{equation*}
 h=\rho(A)b_A+(1-\rho(A))b_{A^c}
\end{equation*}
and $h$ is extreme, we obtain $b_A=b_{A^c}=h$.

Choose a countable dense set $\{q_r:r\geq1\}\subset D$.
For each $r$, apply this conclusion to the sets
$\{g:g(q_r)>h(q_r)\}$ and $\{g:g(q_r)<h(q_r)\}$.
For the set $A=\{g:g(q_r)>h(q_r)\}$, if $0<\rho(A)<1$,
then
\begin{equation*}
 b_A(q_r)-h(q_r)
 =\frac1{\rho(A)}\int_A[g(q_r)-h(q_r)]\d\rho(g)>0,
\end{equation*}
contradicting $b_A=h$.  If $\rho(A)=1$, the same strictly positive
integral contradicts \eqref{eq:global-integral-identity}.  Applying the argument
to $h(q_r)-g(q_r)$ treats the set with the reverse inequality.  Both sets therefore have measure zero.
Continuity and a countable intersection imply $g=h$ for
$\rho$-almost every $g$.  Hence $\rho=\delta_h$.  Every subsequential limit on $P$ is the
same, so $\rho_n^h\Longrightarrow\delta_h$ on $P$.
For $n\geq m+2$, its $m$th marginal is $\rho_n^{(m)}$.
Continuity of coordinate projection proves
\eqref{eq:kernel-space-convergence}.
\end{proof}

\begin{cor}[Preliminary kernel approximation]
\label{cor:preliminary-kernel-approximation}
For any $L\Subset D$ and $\varepsilon>0$,
\begin{equation*}
 \lim_{n\to\infty}\nu_n^\xi\left(
 \left\{y\in\partial D_n:
 \sup_{z\in L}|K_{D_n}(z,y)-h(z)|<\varepsilon\right\}\right)=1.
\end{equation*}
In particular, there exists $n_0\geq1$ such that, for every $n\geq n_0$,
\begin{equation*}
 \left\{y\in\partial D_n:
 \sup_{z\in L}|K_{D_n}(z,y)-h(z)|<\varepsilon\right\}\ne\varnothing.
\end{equation*}
\end{cor}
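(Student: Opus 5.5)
The plan is to read the corollary off Lemma \ref{lem:kernel-space-concentration} by testing the weak convergence $\rho_n^{(m)}\Longrightarrow\delta_{h|_{L_m}}$ against an open set of $\mathcal Q_m$.

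First I fix the level. Given $L\Subset D$, the sets $D_m$ are increasing, open and cover $D$, so the compact set $\overline L$ lies in some $D_m$. Hence $L\subseteq L_m$. Next I choose the test set
\begin{equation*}
 U:=\bigl\{g\in\mathcal Q_m:\sup_{z\in L}|g(z)-h(z)|<\varepsilon\bigr\}.
\end{equation*}
The map $g\mapsto\sup_{z\in L}|g(z)-h(z)|$ is $1$-Lipschitz for the uniform norm on $L_m$, so $U$ is open in $\mathcal Q_m$. By Lemma \ref{lem:derived-kernel-compactness}(1), the restriction $h|_{L_m}$ belongs to $\mathcal Q_m$, and it clearly lies in $U$.

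Then I identify the measure of $U$. For $n\ge m+2$, the definition of $T_{n,m}$ gives
\begin{equation*}
 T_{n,m}^{-1}(U)=\Bigl\{y\in\partial D_n:\sup_{z\in L}|K_{D_n}(z,y)-h(z)|<\varepsilon\Bigr\}.
\end{equation*}
Therefore $\rho_n^{(m)}(U)$ equals $\nu_n^\xi$ of the set in the statement. The portmanteau theorem on the compact metric space $\mathcal Q_m$, applied to the open set $U$, gives
\begin{equation*}
 \liminf_n\rho_n^{(m)}(U)\ge\delta_{h|_{L_m}}(U)=1.
\end{equation*}
Each $\nu_n^\xi$ is normalized by Lemma \ref{lem:common-measures}, so $\rho_n^{(m)}(U)\le 1$. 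Together these give the limit $1$.

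For the second assertion, I take $n_0\ge m+2$ with $\rho_n^{(m)}(U)>1/2$ for all $n\ge n_0$. A set of positive measure is nonempty, so the set in the statement is nonempty for every such $n$. No step here is a real obstacle; the only points needing care are that $U$ is open and that the preimage identity holds only once $n\ge m+2$.
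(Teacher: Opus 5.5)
Your proposal is correct and follows the paper's proof essentially step for step. It uses the same choice of a level $m$ with $L\subseteq L_m$ and the same open set $U\subseteq\mathcal Q_m$, applies the portmanteau theorem to Lemma~\ref{lem:kernel-space-concentration}, and identifies $\rho_n^{(m)}(U)$ with the $\nu_n^\xi$-measure of the stated set once $n\ge m+2$.
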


\begin{proof}
Choose $m$ with $L\subseteq L_m$, and put
\begin{equation*}
 U=\{g\in\mathcal{Q}_m:\sup_{z\in L}|g(z)-h(z)|<\varepsilon\}.
\end{equation*}
The map $g\mapsto\sup_L|g-h|$ is continuous in the uniform norm
on $L_m$, because the difference of its values at $g_1,g_2$ is
at most $\|g_1-g_2\|_{L_m}$.  Thus $U$ is open and contains
$h|_{L_m}$.  Lemma \ref{lem:kernel-space-concentration} and
Portmanteau give
\begin{equation*}
 1\ge\limsup_n\rho_n^{(m)}(U)
 \ge\liminf_n\rho_n^{(m)}(U)
 \ge\delta_{h|_{L_m}}(U)=1.
\end{equation*}
For $n\ge m+2$, the push-forward definition identifies
\begin{equation*}
 \rho_n^{(m)}(U)=\nu_n^\xi\left(
 \left\{y\in\partial D_n:\sup_{z\in L}|K_{D_n}(z,y)-h(z)|
 <\varepsilon\right\}\right).
\end{equation*}
This value tends to one, so it is positive, and the set is nonempty,
for every sufficiently large $n$.
\end{proof}

\subsubsection{Synchronized approximation on the common refinement}

\begin{proof}[Proof of Theorem \ref{thm:synchronized-kernel}]
Fix a compatible metric $d_K$ on $D^K$, put
$V_m:=\{z\in D^K:d_K(z,\xi)<1/m\}$, and let
\begin{equation*}
 A_{n,m}:=\left\{y\in\partial D_n:
 \sup_{z\in L_m}|K_{D_n}(z,y)-h(z)|<1/m\right\}.
\end{equation*}
For each fixed $m$, Corollary
\ref{cor:preliminary-kernel-approximation} gives
\begin{equation}\label{eq:good-kernel-measure}
 \nu_n^\xi(A_{n,m})\longrightarrow1 .
\end{equation}
Condition $(C_\xi)$ allows a choice of $c_m>0$ such that
$\nu_n^\xi(V_m)>c_m$ for all sufficiently large $n$.
For each $m$, choose an integer threshold beyond which both
inequalities below hold.  Inductively enlarge it to an integer $N_m$
with $N_m\ge m+2$ and $N_m>N_{m-1}$.  Then
\begin{equation*}
 \nu_n^\xi(V_m)>c_m,\qquad
 \nu_n^\xi(A_{n,m}^c)<c_m/2
 \qquad(n\geq N_m).
\end{equation*}
Then
\begin{equation*}
 \nu_n^\xi(A_{n,m}\cap V_m)
 \geq\nu_n^\xi(V_m)-\nu_n^\xi(A_{n,m}^c)>c_m/2 .
\end{equation*}
For $N_m\leq n<N_{m+1}$, select
\begin{equation}\label{eq:synchronized-choice}
 \xi_n\in A_{n,m}\cap V_m\cap\partial D_n .
\end{equation}
The block index $m$ tends to infinity, so $d_K(\xi_n,\xi)<1/m$
proves \eqref{eq:point-convergence}.  Every compact $L\Subset D$
is contained in $L_m$ for all large $m$; therefore
\begin{equation*}
 \sup_{z\in L}|K_{D_n}(z,\xi_n)-h(z)|<1/m
\end{equation*}
on those blocks.  This proves \eqref{eq:kernel-convergence}.
Condition (M), when imposed, identifies every selected point as minimal.
\end{proof}

For the following corollaries, choose a second compact metrizable
compactification $D^G$ of $D$ and form the common refinement
$\widehat D=D^G\vee D^K$ in \eqref{eq:common-refinement}.
Proposition \ref{prop:common-refinement-projections} gives the
continuous surjection
\begin{equation}\label{eq:ambient-kernel-projection}
 p_K:\widehat D\longrightarrow D^K,\qquad p_K|_D=\operatorname{id}_D,
 \qquad p_K^{-1}(D)=D.
\end{equation}
These properties follow from the construction and are not additional
hypotheses on the harmonic measures.  Taking $D^G=D^K$ gives the
identity refinement.  More generally, a prescribed compact metrizable
refinement $L$ of $D^K$ is recovered, up to the natural homeomorphism,
by taking $D^G=L$: the diagonal closure is the graph of its projection
to $D^K$.

\begin{cor}[Passage to a compact refinement]
\label{cor:refinement-approximation}
Under the hypotheses of Theorem \ref{thm:synchronized-kernel}, use the
common refinement \eqref{eq:common-refinement} and put
\begin{equation}\label{eq:kernel-fiber}
 F_\xi:=p_K^{-1}(\{\xi\}).
\end{equation}
Every sequence selected in that theorem satisfies
\begin{equation}\label{eq:fiber-distance}
 \operatorname{dist}_{\widehat D}(\xi_n,F_\xi)\longrightarrow0 .
\end{equation}
It has a subsequence converging in $\widehat D$ to a point of $F_\xi$.
If $F_\xi=\{\widehat\xi\}$, then
\begin{equation*}
 \xi_n\longrightarrow\widehat\xi\quad\text{in }\widehat D.
\end{equation*}
\end{cor}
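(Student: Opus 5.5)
The plan is a purely topological argument on the compact metrizable space $\widehat D$, using only the continuity of $p_K$ and the convergence $\xi_n\to\xi$ in $D^K$ from Theorem \ref{thm:synchronized-kernel}. The points $\xi_n\in\partial D_n\subset D$ are regarded as points of $\widehat D$ through the diagonal identification of Proposition \ref{prop:common-refinement-projections}. Since $p_K$ is the identity on $D$, we have $p_K(\xi_n)=\xi_n\to\xi$ in $D^K$. Fix a compatible metric on $\widehat D$. The fiber $F_\xi=p_K^{-1}(\{\xi\})$ is closed, as the preimage of a point under a continuous map. It is nonempty because $p_K$ is surjective.

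To prove \eqref{eq:fiber-distance} we argue by contradiction. Suppose there are $\varepsilon>0$ and a subsequence with $\operatorname{dist}_{\widehat D}(\xi_{n_j},F_\xi)\ge\varepsilon$. Compactness of $\widehat D$ gives a further subsequence converging to some $\widehat\zeta\in\widehat D$. Continuity of $p_K$ then yields $p_K(\widehat\zeta)=\lim p_K(\xi_{n_j})=\xi$, so $\widehat\zeta\in F_\xi$. On the other hand, the distance to a closed set is continuous, so $\operatorname{dist}(\widehat\zeta,F_\xi)\ge\varepsilon$, which is a contradiction.

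The same argument gives the second assertion. Any subsequence of $(\xi_n)$ has a convergent subsequence in the compact space $\widehat D$, and its limit lies in $F_\xi$ by continuity of $p_K$. We may add that the limit lies in $\widehat\partial D$, because $p_K^{-1}(D)=D$ and $\xi\notin D$.

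When $F_\xi=\{\widehat\xi\}$, every subsequence of $(\xi_n)$ has a further subsequence converging to $\widehat\xi$. In a metric space this forces $\xi_n\to\widehat\xi$. Alternatively, this follows directly from \eqref{eq:fiber-distance}, since the distance to $F_\xi$ is then $d(\xi_n,\widehat\xi)$.

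There is no real obstacle here. The only point needing care is that the selected points are interior points of $D$, viewed in $\widehat D$ through the diagonal embedding. This is what lets us apply $p_K|_D=\operatorname{id}_D$ and transfer the convergence $\xi_n\to\xi$ from $D^K$ to the images under $p_K$.
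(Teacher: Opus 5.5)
Your proposal is correct and follows essentially the same argument as the paper: compactness of $\widehat D$ together with continuity of $p_K$ and $p_K|_D=\operatorname{id}_D$ places every subsequential limit in $F_\xi$; the distance assertion is proved by contradiction; and the singleton case follows from the subsequence principle. The only difference is cosmetic. You close the distance contradiction using continuity of $\operatorname{dist}(\cdot,F_\xi)$, whereas the paper uses the bound $\operatorname{dist}(\xi_{n_j},F_\xi)\le d(\xi_{n_j},z)\to0$.
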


\begin{proof}
Since $p_K$ is continuous and onto, $F_\xi$ is a nonempty
closed subset of the compact metric space $\widehat D$.
Suppose the distance assertion fails.  Then there are
$\varepsilon>0$ and a subsequence $(\xi_{n_j})$ such that
$\operatorname{dist}_{\widehat D}(\xi_{n_j},F_\xi)\ge\varepsilon$
for every $j$.  Compactness gives a further subsequence converging
to some $z\in\widehat D$.  Continuity and the already proved point
convergence imply
\begin{equation*}
 p_K(z)=\lim_jp_K(\xi_{n_j})=\lim_j\xi_{n_j}=\xi.
\end{equation*}
Hence $z\in F_\xi$, whereas
$\operatorname{dist}(\xi_{n_j},F_\xi)\le d(\xi_{n_j},z)\to0$.
This contradiction proves \eqref{eq:fiber-distance}.

Compactness also ensures that the original sequence has a convergent
subsequence, and the same projection calculation puts every such limit
in $F_\xi$.  If $F_\xi=\{z\}$, failure of convergence to $z$
would give a subsequence outside some ball about $z$.  A convergent
further subsequence would then have a limit different from $z$, which
is impossible.  Thus the full sequence converges in the singleton case.
\end{proof}

The measures in \eqref{eq:nu-n} may also be regarded as measures on
$\widehat D$.  Since $p_K|_D=\operatorname{id}_D$,
\begin{equation}\label{eq:nu-kernel-pushforward}
 (p_K)_\#\nu_n^\xi=\nu_n^\xi
 \quad\text{as measures on }D^K .
\end{equation}
For every open $W\subseteq\widehat D$ containing $F_\xi$, the set
$V:=D^K\setminus p_K(\widehat D\setminus W)$ is an open neighborhood
of $\xi$ with $p_K^{-1}(V)\subseteq W$.
This proves that $(C_\xi)$ is equivalent to
$\liminf_n\nu_n^\xi(W)>0$ for every such $W$; for the converse take
$W=p_K^{-1}(V)$.  The same argument shows that the stronger
\eqref{eq:weak-concentration} is equivalent to
\begin{equation}\label{eq:fiber-concentration}
 \nu_n^\xi(W)\longrightarrow1
 \quad\text{for every open }W\subseteq\widehat D
 \text{ containing }F_\xi .
\end{equation}

For a prescribed $\widehat\xi\in F_\xi$, the pulled-back kernel at
that point is $\widehat K_D(\cdot,\widehat\xi):=h_\xi$.
If
\begin{equation}\label{eq:refined-point-nonvanishing}
 \liminf_{n\to\infty}\nu_n^\xi(\widehat V)>0
 \quad\text{for every open neighborhood }\widehat V
 \text{ of }\widehat\xi\text{ in }\widehat D,
\end{equation}
where the inner-boundary measures are regarded as measures on
$\widehat D$, Theorem \ref{thm:synchronized-kernel}, applied to this
compactification and the same minimal function, gives
\begin{equation*}
 y_n\in\partial D_n,\qquad y_n\to\widehat\xi\text{ in }\widehat D,
 \qquad K_{D_n}(\cdot,y_n)\to\widehat K_D(\cdot,\widehat\xi)
 \text{ locally uniformly in }D.
\end{equation*}
This selection uses every sufficiently late inner domain.  A
nonvanishing condition only on neighborhoods of $\xi$ in $D^K$
provides the fiber conclusion above; it does not by itself prescribe
which point of $F_\xi$ is approached.  The representation argument in
Subsection \ref{subsec:refined-representation} will instead provide
subsequence approximation at almost every refined boundary point for
the measure it constructs.

\begin{cor}[Positive continuous interpolation]
\label{cor:positive-continuous-interpolation}
Under the hypotheses of Theorem \ref{thm:synchronized-kernel}, let
$(\xi_n)$ be selected as there.  For every fixed $x\in D$, there is
a positive $f_x\in C(D^K)$ such that, for all sufficiently large $n$,
\begin{equation}\label{eq:interpolation-values}
 f_x(\xi_n)=K_{D_n}(x,\xi_n),\qquad f_x(\xi)=h_\xi(x).
\end{equation}
On the common refinement,
\begin{equation}\label{eq:refined-interpolation}
 \widehat f_x:=f_x\circ p_K\in C(\widehat D)
\end{equation}
satisfies $\widehat f_x>0$ on $\widehat D$ and
\begin{equation*}
 \widehat f_x(\xi_n)=K_{D_n}(x,\xi_n)\quad(n\text{ sufficiently large}),
 \qquad
 \widehat f_x|_{F_\xi}\equiv h_\xi(x).
\end{equation*}
\end{cor}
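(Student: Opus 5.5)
The plan is a Tietze extension argument on the compact metric space $D^K$, applied to the closed set $S:=\{\xi\}\cup\{\xi_n:n\ge n_1\}$. Fix $x\in D$. By (A1), $x\in D_n$ for all large $n$, and by \eqref{eq:point-convergence} the selected points converge to $\xi$. Choose $n_1\ge n_0$ so large that $x\in D_n$ for $n\ge n_1$.

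First, the prescribed values define a function $\varphi$ on $S$:
\[
 \varphi(\xi_n):=K_{D_n}(x,\xi_n),\qquad \varphi(\xi):=h_\xi(x).
\]
Each $\xi_n$ lies in $\partial D_n\subset D$, whereas $\xi\notin D$, so $\xi_n\ne\xi$. The one point needing care is that different indices may select the same point, $\xi_n=\xi_{n'}$ with $n\ne n'$, possibly with different kernel values. Two observations handle this.

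\begin{enumerate}
\item For fixed $y\in D$, the index set $\{n:\xi_n=y\}$ is finite. Otherwise $y$ would be a limit of the $\xi_n$, which forces $y=\xi\notin D$.
\item One cannot have $\xi_n=\xi_{n'}$ whenever $n'\ge n+1$ is large enough that $\overline{D_n}\Subset D_{n'}$. Indeed $\xi_{n'}\in\partial D_{n'}$ does not meet $\overline{D_n}$ once $n'\ge n+1$, because $\overline{D_n}\subset D_{n+1}\subseteq D_{n'}$, which is open and disjoint from $\partial D_{n'}$.
\end{enumerate}

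By (2), the sets $\partial D_n$ are pairwise disjoint: for $n<n'$ we have $\partial D_n\subset\overline{D_n}\subset D_{n'}$. Hence $n\ne n'$ implies $\xi_n\ne\xi_{n'}$, and $\varphi$ is well defined.

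Next, $S$ is compact, since it is a convergent sequence together with its limit. The function $\varphi$ is continuous on $S$, because every $\xi_n$ is isolated in $S$ and the only continuity to check is at $\xi$. There, the kernel convergence \eqref{eq:kernel-convergence} on the compact set $\{x\}$ gives $K_{D_n}(x,\xi_n)\to h_\xi(x)$.

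Positivity comes from Proposition~\ref{prop:relative-kernel-harmonic}, which makes each $K_{D_n}(x,\xi_n)$ positive, and $h_\xi(x)>0$ as well. Because $\varphi$ is continuous on a compact set, it has a positive minimum $c>0$. Tietze's theorem on the metric space $D^K$ gives a continuous extension $F$. Setting $f_x:=\max\{F,c\}$ keeps continuity, preserves the values on $S$ (where $\varphi\ge c$), and makes $f_x\ge c>0$. This yields \eqref{eq:interpolation-values} for all $n\ge n_1$.

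For the refined statement, $\widehat f_x=f_x\circ p_K$ is continuous and positive because $p_K$ is continuous by Proposition~\ref{prop:common-refinement-projections}. Since $p_K|_D=\operatorname{id}_D$ and $\xi_n\in D$, we get $\widehat f_x(\xi_n)=f_x(\xi_n)=K_{D_n}(x,\xi_n)$. Finally, on $F_\xi=p_K^{-1}(\{\xi\})$ we have $\widehat f_x=f_x(\xi)=h_\xi(x)$.

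The only genuinely delicate step is the well-definedness of $\varphi$, which is exactly the disjointness of the boundaries $\partial D_n$ obtained from $D_n\Subset D_{n+1}$; the remaining steps are routine.
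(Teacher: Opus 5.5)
Your proposal is correct and follows the paper's proof essentially step for step: the $\xi_n$ are pairwise distinct because $\partial D_n\subset D_k$ for $n<k$, the set $S$ is compact with isolated sequence points, continuity at $\xi$ comes from the kernel convergence evaluated at $x$, Tietze extends the values, and composing with $p_K$ gives the refined statement. The only cosmetic difference is how positivity is enforced: you truncate a plain Tietze extension via $\max\{F,c\}$, whereas the paper uses the range-preserving form of Tietze with an interval $[a_x,b_x]\subset(0,\infty)$ containing the range. Your observation (1) is redundant once you have shown the boundaries are disjoint, but it is harmless.
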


\begin{proof}
Choose $n(x)$ so that $x\in D_n$ and $\xi_n$ is defined for
$n\geq n(x)$.  By (A1), the points $\xi_n$ are pairwise distinct:
if $n<k$, then $\partial D_n\subset D_k$, whereas
$\xi_k\in\partial D_k$.  The set
\begin{equation*}
 S_x:=\{\xi\}\cup\{\xi_n:n\geq n(x)\}
\end{equation*}
is a convergent sequence together with its limit and is compact in
$D^K$.  The prescribed values in \eqref{eq:interpolation-values}
define a continuous positive function $g_x$ on $S_x$, by
\eqref{eq:kernel-convergence}.  Write $a=h_\xi(x)>0$.  For all sufficiently large $n$, the
prescribed values lie between $a/2$ and $3a/2$.  The finitely many
remaining values are positive and finite.  Taking the minimum and maximum
of those values together with $a/2$ and $3a/2$ gives numbers
$0<a_x\le b_x<\infty$ containing the whole range of $g_x$.
All sequence points are isolated in $S_x$, and convergence of the
values proves continuity at its only possible accumulation point $\xi$.
The range-preserving Tietze extension theorem \cite{Munkres00} gives
$f_x\in C(D^K)$ with values in this interval and $f_x|_{S_x}=g_x$.
On the common refinement, continuity of $p_K$ proves the asserted properties
of $f_x\circ p_K$.
\end{proof}

\begin{rem}\label{rem:interpolation-not-canonical}
The function $f_x$ depends on $x$ and on the selected sequence.
It is an interpolating function; no kernel on all of $D^K$ is
being asserted.  At $\xi_n\in D$ its prescribed value is the
inner-domain quantity $K_{D_n}(x,\xi_n)$.
The pullback $\widehat f_x$ on the common refinement is likewise an
interpolating function on the refinement.
\end{rem}

\begin{rem}[Scope of the selection condition]
Condition $(C_\xi)$ is a convenient sufficient condition, not a necessary
one.  The proof of Theorem \ref{thm:synchronized-kernel} only needs, for
each fixed $m$ and all sufficiently large $n$,
\begin{equation*}
 \nu_n^\xi(V_m)>\nu_n^\xi(A_{n,m}^{c}).
\end{equation*}
This inequality still gives a nonempty intersection $V_m\cap A_{n,m}$,
even if both measures of the indicated sets tend to zero.  Without a quantitative concentration
estimate in kernel space, mere positivity of $\nu_n^\xi(V_m)$ does not
ensure this intersection.  Minimality is used in kernel-space concentration;
(M) is used only to identify the selected inner points as minimal.
\end{rem}

\subsection{Positive harmonic representation on the common refinement}
\label{subsec:refined-representation}

We now construct representing measures from the inner-boundary kernels.
The joint-limit lemma from the approximation argument is reused here.
Finite decomposition gives concentration on minimal functions and uniqueness.  The representation theorem identifies the kernel from the
refined boundary position and gives simultaneous subsequence
approximation at almost every point for the resulting measure.
The final order-theoretic consequence is independent of the gluing step.

\subsubsection{Finite decomposition and minimal kernels}
\label{subsec:joint-kernel-limits}

The joint measures and their limits were constructed in Lemma
\ref{lem:joint-inner-limits}.  We now prove that their function
projection is concentrated on minimal functions and is unique.  No
boundary parametrization or concentration condition is used in this step.

\begin{lem}[Finite decomposition, minimal kernels, and uniqueness]
\label{lem:finite-minimal-concentration}
Assume {\rm(A1)--(A2)}.  For each $u>0$ harmonic, there is a unique
finite positive measure $\sigma_u$ carried by $\mathcal E(D)$ such that
\begin{equation}\label{eq:finite-minimal-representation}
 u(x)=\int_{\mathcal E(D)}g(x)\,\d\sigma_u(g).
\end{equation}
Moreover,
\begin{equation}\label{eq:all-kernel-measures-converge}
 \rho_n^u\Longrightarrow\sigma_u\quad\text{on }P.
\end{equation}
Every joint limit in Lemma \ref{lem:joint-inner-limits} is therefore
carried by $(\widehat D\setminus D)\times\mathcal E(D)$.
Condition {\rm(M)} is not required.
\end{lem}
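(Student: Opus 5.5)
Existence comes from Lemma \ref{lem:joint-inner-limits}: any subsequential weak limit $\sigma$ of $(\rho_n^u)$ on $P$ is carried by $H=\mathcal H_1(D)$, has mass $u(x_0)$, and represents $u$. It then remains to show two things. First, every such $\sigma$ is carried by $\mathcal E(D)$. Second, any measure carried by $\mathcal E(D)$ that represents $u$ is unique. Once uniqueness is known, all subsequential limits of $(\rho_n^u)$ coincide, and compactness of the space of measures of mass $u(x_0)$ on $P$ gives the full convergence \eqref{eq:all-kernel-measures-converge}. The statement about joint limits follows because their $P$-marginal is such a limit, hence carried by $\mathcal E(D)$. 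Condition (M) is never used, since minimality is proved for the limit, not for the inner kernels.

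\textbf{Concentration on $\mathcal E(D)$.} I would use the maximal-measure criterion from Choquet theory \cite{Phelps01}. The set $H$ is a compact convex metrizable subset of the locally convex space $C(D)$ with the locally uniform topology, and it has barycenter map $\tau\mapsto\int g\,d\tau$. A Bishop--de Leeuw type statement says that a measure maximal in the Choquet order is carried by $\operatorname{Ext}H$ (a $G_\delta$, as noted after Lemma \ref{lem:derived-kernel-compactness}). So the key step is to show that the normalized limit $\sigma/u(x_0)$ is maximal, that is, $\int\phi\,d\sigma\ge\int\phi\,d\tau$ for every continuous convex $\phi$ on $H$ and every $\tau$ dominating $\sigma$. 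This follows the abstract's hint about finite decompositions and convex functions. My plan for this step is as follows.
\begin{enumerate}
\item Partition $H$ into finitely many Borel sets $A_i$ of small diameter.
\item Decompose $u=\sum_i u_{A_i}$ as in Lemma \ref{lem:joint-inner-limits}, with $u_{A_i}$ harmonic.
\item Compare $\int\phi\,d\rho_n^{u}$ with $\int\phi\,d\rho_n^{u_{A_i}}$ summed over $i$. The key observation is that $\rho_n^u$ is affine in $u$, since $\lambda_n^u$ is linear in $u$. Therefore for $u=\sum u_i$ we get $\rho_n^u=\sum\rho_n^{u_i}$, and each $\rho_n^{u_i}$ has barycenter near $u_i/u_i(x_0)$.
\end{enumerate}
Jensen's inequality on each piece, combined with the fact that $\sigma$ itself restricted to the small sets $A_i$ nearly equals a point mass at the barycenter, should show that $\sigma$ dominates any competitor in the convex order. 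This comparison is the step I expect to be the main obstacle.

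As a fallback, I would instead mimic Lemma \ref{lem:kernel-space-concentration} fiberwise. Disintegrate the limit over extreme decompositions: take any representing $\tau$ carried by $\mathcal E(D)$, which exists by Choquet's theorem, and compare. For each $h\in\mathcal E(D)$, Lemma \ref{lem:kernel-space-concentration} shows $\rho_n^h\to\delta_h$. Linearity in $u$ together with Tonelli then gives $\rho_n^u=\int\rho_n^h\,d\tau(h)$ whenever $u=\int h\,d\tau$. Dominated convergence then yields $\rho_n^u\to\int\delta_h\,d\tau(h)=\tau$. This would give concentration and convergence simultaneously, but it invokes Choquet existence, which the paper says it avoids.

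\textbf{Uniqueness.} Suppose $\tau$ is carried by $\mathcal E(D)$ and represents $u$. The identity $\rho_n^u=\int_{\mathcal E(D)}\rho_n^h\,d\tau(h)$ holds by linearity of $u\mapsto\lambda_n^u$ and Tonelli, using joint measurability of $(h,y)\mapsto h(y)$. Lemma \ref{lem:kernel-space-concentration} gives $\rho_n^h\Rightarrow\delta_h$ for each extreme $h$; its proof only used extremality of $h$, not $(C_\xi)$. Bounded convergence against $f\in C(P)$, with masses equal to $1$, then shows $\rho_n^u\Rightarrow\tau$. Hence $\tau$ equals every subsequential limit, which proves both uniqueness and \eqref{eq:all-kernel-measures-converge}. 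Combined with the existence of one representing measure carried by $\mathcal E(D)$ from the concentration step, this completes the proof.
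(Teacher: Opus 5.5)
Your proposal proves the lemma, but only through its fallback, and at the concentration step that fallback differs from the paper's route. Your uniqueness paragraph is exactly the paper's final step. Fubini gives $\int F\,\d\rho_n^u=\int_{\mathcal E(D)}\bigl(\int F\,\d\rho_n^g\bigr)\d\tau(g)$. The kernel-space concentration lemma gives $\rho_n^g\Longrightarrow\delta_g$ for every $g\in\mathcal E(D)$, and its proof uses only extremality. Dominated convergence then gives $\rho_n^u\Longrightarrow\tau$.

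Take $\tau=u(x_0)\tau'$, where $\tau'$ comes from Choquet's theorem on the compact convex metrizable set $\mathcal H_1(D)$. Point evaluations are continuous linear functionals for the locally uniform topology, so the barycenter identity for $u/u(x_0)$ is the required representation. This single argument then gives existence, concentration of every subsequential limit on $\mathcal E(D)$, uniqueness, and full-sequence convergence, and the joint-limit claim follows from the $P$-marginal. It is shorter than the paper's proof and still realizes $\sigma_u$ as the limit of $\rho_n^u$. What it gives up is the paper's aim of producing a measure on $\mathcal E(D)$ from the inner-boundary limits instead of from Choquet's existence theorem.

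Your primary maximality route does not close as sketched, and the missing idea is which measure gets decomposed. The paper fixes an arbitrary representing measure $\tau$ on $H$, not $\sigma$. It partitions $H=\bigcup_iA_i$ and sets $a_i=\tau(A_i)$ and $u_i=\int_{A_i}g\,\d\tau(g)$. Linearity gives $\rho_n^u=\sum_i\rho_n^{u_i}$. Along a further subsequence $\rho_{n_j}^{u_i}\Longrightarrow\sigma_i$, so $\sigma=\sum_i\sigma_i$ with each $\sigma_i$ representing $u_i$.

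Jensen applied to $\sigma_i$ gives $\int\varphi\,\d\sigma_i\ge a_i\varphi(u_i/a_i)$. It is $\tau$ restricted to $A_i$, not $\sigma$, that is nearly a point mass, and the pieces $\sigma_i$ need not be localized on $A_i$ at all. Suppose each $A_i$ lies in a convex neighborhood on whose closure $\varphi$ oscillates by less than $\varepsilon$. Small diameter alone does not suffice, because $u_i/a_i$ is only known to lie in the closed convex hull of $A_i$. Then $\bigl|a_i\varphi(u_i/a_i)-\int_{A_i}\varphi\,\d\tau\bigr|\le\varepsilon a_i$.

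Summing gives $\int\varphi\,\d\tau\le\int\varphi\,\d\sigma$ for every representing $\tau$. So the canonical limit is the greatest representing measure in the convex order, which is stronger than the maximality you aimed for. The maximal-measure criterion then yields concentration on $\mathcal E(D)$. The paper still relies on that criterion from Choquet theory, but it replaces the existence of a maximal representing measure with the explicit identification of the inner-boundary limit as the greatest one.
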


\begin{proof}
Fix any subsequence with $\rho_{n_j}^u\Longrightarrow\sigma$.
Lemma \ref{lem:joint-inner-limits} shows that $\sigma$ represents
$u$ on $H$.  We will compare it with any other representing measure
on $H$; the existence of such measures has thus already been proved.

If a normalized measure $\theta$ on $H$ satisfies
$h(x)=\int_Hg(x)\,\d\theta(g)$, then
\begin{equation*}
 F(h)=\int_HF\,\d\theta,\qquad
 \varphi(h)\leq\int_H\varphi\,\d\theta
\end{equation*}
for every continuous affine $F$ and every continuous convex $\varphi$.
Indeed, approximate $\theta$ weakly by finitely supported normalized
measures.  Their finite averages belong to the compact set $H$, and
point evaluation shows that every convergent subsequence has limit
$h$.  The averages therefore converge to $h$; finite affinity and
finite convexity pass to the limit and give the two assertions.

Let $\tau$ be any finite positive measure representing $u$ on $H$,
and let $H=\bigcup_{i=1}^s A_i$ be a finite Borel partition.  Set
\begin{equation*}
 a_i:=\tau(A_i),\qquad
 u_i(x):=\int_{A_i}g(x)\,\d\tau(g).
\end{equation*}
Then $u=\sum_i u_i$, $u_i(x_0)=a_i$, and, for every $n$,
\begin{equation}\label{eq:finite-inner-additivity}
 \rho_n^u=\sum_{i=1}^s\rho_n^{u_i}.
\end{equation}
Only finitely many functions occur.  By taking a further subsequence
of the already chosen $(n_j)$, we may therefore assume
\begin{equation*}
 \rho_{n_j}^{u_i}\Longrightarrow\sigma_i
 \quad(1\leq i\leq s).
\end{equation*}
Lemma \ref{lem:joint-inner-limits} gives
\begin{equation*}
 \sigma=\sum_{i=1}^s\sigma_i,\qquad
 \sigma_i(H)=a_i,\qquad
 u_i(x)=\int_Hg(x)\,\d\sigma_i(g).
\end{equation*}
For a continuous convex $\varphi$ on $H$, Jensen's inequality yields
\begin{equation}\label{eq:finite-convex-comparison}
 \int_H\varphi\,\d\sigma
 =\sum_i\int_H\varphi\,\d\sigma_i
 \geq\sum_{a_i>0}a_i\varphi(u_i/a_i).
\end{equation}
Terms with $a_i=0$ vanish.  Given $\varepsilon>0$, choose a finite
cover of $H$ by intersections with convex neighborhoods in the ambient
locally convex space, whose closures have oscillation of $\varphi$
on $H$ less than $\varepsilon$.  Compactness and continuity give
such a cover.  Choose the partition subordinate to it.  The finite
approximation argument above places $u_i/a_i$ in the closed convex
hull of $A_i$.  Hence
\begin{equation*}
 \left|\sum_{a_i>0}a_i\varphi(u_i/a_i)
       -\int_H\varphi\,\d\tau\right|
 \leq\varepsilon\tau(H).
\end{equation*}
Let $\varepsilon\downarrow0$.  We obtain
\begin{equation}\label{eq:greatest-convex-representing-measure}
 \int_H\varphi\,\d\tau\leq\int_H\varphi\,\d\sigma
 \quad\text{for every continuous convex }\varphi.
\end{equation}
The further subsequence may depend on the partition; this is harmless,
since the original limit $\sigma$ is unchanged throughout the argument.

For clarity, write $\tau\preceq\sigma$ for comparison of integrals
of all continuous convex functions as above.  This is an order on finite
positive measures.  Antisymmetry follows because differences of continuous
convex functions form a vector lattice containing constants and separating
points, and are dense in $C(H)$ by the lattice Stone--Weierstrass theorem.
We use the classical criterion that, on a compact metrizable convex set,
a measure is maximal for this order if and only if it is carried by the
extreme points \cite{Phelps01}.  This criterion is applied to the measures
already constructed, not as a representation existence theorem.
All measures in this comparison have the same value $u(x_0)>0$ on
$H$; dividing by this value gives the normalized form of the criterion.

If $\sigma\preceq\theta$, then $\theta$ has the same affine
integrals as $\sigma$, because both $F$ and $-F$ are convex.  It
therefore represents $u$.  Equation
\eqref{eq:greatest-convex-representing-measure} gives
$\theta\preceq\sigma$, hence $\theta=\sigma$.  The criterion
shows that $\sigma$ is carried by $\mathcal E(D)$.
Denote this measure by $\sigma_u$.

Uniqueness and convergence of the full kernel sequence follow directly
from the concentration proved in the approximation step.  Let $\tau$
be any finite positive measure carried by $\mathcal E(D)$ and
representing $u$.  For $F\in C(P)$, Fubini's theorem gives
\begin{equation*}
 \begin{aligned}
 \int_PF\,\d\rho_n^u
 &=\int_{\partial D_n}F(T_n(y))u(y)\,
                            \d\omega_{x_0}^{D_n}(y)\\
 &=\int_{\mathcal E(D)}
       \left(\int_{\partial D_n}F(T_n(y))g(y)\,
                            \d\omega_{x_0}^{D_n}(y)\right)\d\tau(g)\\
 &=\int_{\mathcal E(D)}\left(\int_PF\,\d\rho_n^g\right)\d\tau(g).
 \end{aligned}
\end{equation*}
For fixed $n$ the inner integral is a Borel function of $g$:
point evaluation is continuous on $D\times H$, and $\partial D_n$
is a compact subset of $D$.  Its absolute value is at most
$\|F\|_\infty$, because $\omega_{x_0}^{D_n}(g)=1$.
Lemma \ref{lem:kernel-space-concentration} gives
$\rho_n^g\Longrightarrow\delta_g$ for every $g\in\mathcal E(D)$.
Dominated convergence therefore yields
\begin{equation*}
 \int_PF\,\d\rho_n^u\longrightarrow
 \int_{\mathcal E(D)}F(g)\,\d\tau(g).
\end{equation*}
Taking $\tau=\sigma_u$ proves
\eqref{eq:all-kernel-measures-converge}.  Taking any other such
$\tau$ gives the same weak limit and hence $\tau=\sigma_u$.
The function projection of every joint limit is consequently
$\sigma_u$, proving the final assertion.
\end{proof}

The two lemmas construct the limiting measures directly from the
inner-boundary kernels.  Finite decomposition proves minimality of the function projection,
and the kernel concentration from the approximation step proves its
uniqueness, before any prescribed boundary representation is used.  This convex-comparison
method has classical precedents in constructions from harmonic measures
\cite{Loeb76,Loeb82,Loeb19}.  We now identify the function coordinate
from the position coordinate, obtain the representation on the common
refinement, and then apply the gluing map.

\subsubsection{Representation by refined boundary kernels}
\label{subsec:refined-representation-descent}

To express the joint-limit representation on a chosen boundary, introduce the
following boundary parametrization data, denoted by (B) for reference.
They are not assumptions for intrinsic representation.  Their Borel
and bijectivity requirements apply when a boundary formula and its
uniqueness are asserted:
\begin{enumerate}
\item[(B)] A compact metrizable compactification $D^K$, a Borel set
$\partial_{\mathrm m}^{K}D\subseteq D^K\setminus D$, and a Borel
bijection
\begin{equation}\label{eq:kappa-map}
 \kappa:\partial_{\mathrm m}^{K}D\longrightarrow\mathcal{E}(D),
 \qquad K_D(x,\eta):=\kappa(\eta)(x).
\end{equation}
\end{enumerate}
For every $\eta\in\partial_{\mathrm m}^{K}D$, the function
$K_D(\cdot,\eta)=\kappa(\eta)$ is positive and minimal, and
$K_D(x_0,\eta)=1$.  Both spaces in \eqref{eq:kappa-map} are standard
Borel spaces, so $\kappa^{-1}$ is automatically Borel
\cite[Proposition~3.2.5]{Berberian88}.  Also $K_D$ is jointly Borel:
it is the composition of $(x,\eta)\mapsto(x,\kappa(\eta))$ with the
continuous evaluation map on $D\times C(D)$.

Choose any second compact metrizable compactification $D^G$, form
$\widehat D=D^G\vee D^K$ as above, and set
\begin{equation}\label{eq:lifted-minimal-boundary}
 \widehat{\partial}_{\mathrm m}D
 :=p_K^{-1}(\partial_{\mathrm m}^{K}D).
\end{equation}
This is a Borel subset of the boundary of $\widehat D$.

\begin{enumerate}
\item[(C)] For every $\eta\in\partial_{\mathrm m}^{K}D$, put
$\nu_n^\eta:=K_D(\cdot,\eta)\,\omega_{x_0}^{D_n}$ and assume
\begin{equation}\label{eq:global-concentration}
 \nu_n^\eta\Longrightarrow\delta_\eta
 \quad\text{weakly on }D^K.
\end{equation}
\end{enumerate}
For existence of a refined representation by limits of inner-boundary
measures, we use the weaker requirement
\begin{equation*}
 \begin{gathered}
 \text{there exist }n_1<n_2<\cdots\text{ such that}\\
 K_D(\cdot,\eta)\,\omega_{x_0}^{D_{n_j}}
 \Longrightarrow\delta_\eta\text{ on }D^K
 \quad\text{for every }\eta\in\partial_{\mathrm m}^KD.
 \end{gathered}
 \tag{$C'$}\label{eq:common-subsequence-concentration}
\end{equation*}
The sequence $(n_j)$ is the same for every $\eta$.  Replacing
$(D_n)$ by $(D_{n_j})$ preserves (A1)--(A2).  Thus $(C')$ becomes
(C) after this replacement, without any additional assumption.

Condition (C) is stronger than the single-point nonvanishing condition
$(C_\xi)$.  It identifies the position and the function coordinates
of the canonical limiting measures.  It does not assert that all
kernel limits are minimal, and it is not a representation hypothesis.
The common refinement provides $p_K$ without an additional assumption;
one may take $D^G=D^K$, giving the identity refinement.

\begin{thm}[Positive harmonic representation in a metric space]
\label{prop:direct-refined-representation}
Assume {\rm(A1)--(A2)} and $(C')$, use the boundary data
{\rm(B)}, and let $\widehat D=D^G\vee D^K$.
Fix a common subsequence from $(C')$ and relabel its domains as
$(D_n)$ throughout this theorem and its gluing corollary.
If {\rm(C)} holds, retain the original sequence.
For $u>0$ harmonic, regard
\begin{equation*}
 \d\lambda_n^u(y)=u(y)\,\d\omega_{x_0}^{D_n}(y)
 \quad(y\in\partial D_n)
\end{equation*}
as measures on $\widehat D$.  They have weakly convergent
subsequences.  Every such limit $\widehat\mu_u$ satisfies
\begin{equation}\label{eq:direct-refined-support}
 \widehat\mu_u(\widehat D)=u(x_0),\qquad
 \widehat\mu_u(\widehat D\setminus
              \widehat\partial_{\mathrm m}D)=0
\end{equation}
and
\begin{equation}\label{eq:direct-refined-integral}
 u(x)=\int_{\widehat\partial_{\mathrm m}D}
      \widehat K_D(x,\widehat\eta)\,\d\widehat\mu_u(\widehat\eta),
 \qquad
 \widehat K_D(x,\widehat\eta):=K_D(x,p_K(\widehat\eta)).
\end{equation}
For $\widehat\mu_u$-almost every $\widehat\eta$,
there are $n_j\uparrow\infty$ and $y_j\in\partial D_{n_j}$ such that
\begin{equation}\label{eq:refined-synchronized-approximation}
 y_j\longrightarrow\widehat\eta\text{ in }\widehat D,\qquad
 K_{D_{n_j}}(\cdot,y_j)\longrightarrow
       \widehat K_D(\cdot,\widehat\eta)
 \text{ locally uniformly in }D.
\end{equation}
The indices may be chosen from the subsequence defining
$\widehat\mu_u$.  Under {\rm(M)}, every approximating kernel is
minimal on its inner domain.  No uniqueness of $\widehat\mu_u$
is asserted.
\end{thm}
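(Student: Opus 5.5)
The plan is to obtain everything from the joint measures $\Lambda_n^u=(\operatorname{id},T_n)_\#\lambda_n^u$ of Lemma \ref{lem:joint-inner-limits}, taken on $\widehat D\times P$. Let $\widehat\mu_u$ be a weak limit of $\lambda_{n_j}^u$ on $\widehat D$. Passing to a further subsequence, I assume $\Lambda_{n_j}^u\Longrightarrow\Lambda$. The position marginal of $\Lambda$ is then $\widehat\mu_u$, and Lemma \ref{lem:joint-inner-limits} gives $\Lambda(\widehat D\times P)=u(x_0)$. By Lemma \ref{lem:finite-minimal-concentration}, $\Lambda$ is carried by $(\widehat D\setminus D)\times\mathcal E(D)$, and its function marginal is the unique $\sigma_u$. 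It therefore suffices to show that $\Lambda$ is carried by the Borel "graph"
\begin{equation*}
 \Gamma:=\{(\widehat\eta,g):p_K(\widehat\eta)\in\partial_{\mathrm m}^KD,\ g=\kappa(p_K(\widehat\eta))\}.
\end{equation*}
Once this is known, \eqref{eq:direct-refined-support} follows by projecting to the first coordinate. The identity \eqref{eq:direct-refined-integral} follows from \eqref{eq:joint-limit-representation} by substituting $g(x)=K_D(x,p_K(\widehat\eta))$ on $\Gamma$.

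\textbf{Step 1: identify the limit (main obstacle).} The main obstacle is the identification $(p_K\times\operatorname{id})_\#\Lambda=\int\delta_{(\kappa^{-1}(g),g)}\,d\sigma_u(g)$. I would disintegrate the inner measures as in the uniqueness part of Lemma \ref{lem:finite-minimal-concentration}. Since $u=\int_{\mathcal E(D)}g\,d\sigma_u(g)$, Tonelli's theorem gives $\lambda_n^u=\int\nu_n^g\,d\sigma_u(g)$, where $\nu_n^g=g\,\omega_{x_0}^{D_n}$. Hence, for $F\in C(D^K\times P)$,
\begin{equation*}
 \int F\,d\bigl((p_K\times\operatorname{id})_\#\Lambda_n^u\bigr)
 =\int_{\mathcal E(D)}\Bigl(\int_{\partial D_n}F(y,T_n(y))\,d\nu_n^g(y)\Bigr)d\sigma_u(g).
\end{equation*}
Here I use that $p_K$ is the identity on $\partial D_n\subset D$. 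Fix $g=\kappa(\eta)$. Condition (C) gives $\nu_n^g\Longrightarrow\delta_\eta$ on $D^K$, and Lemma \ref{lem:kernel-space-concentration} gives $(T_n)_\#\nu_n^g\Longrightarrow\delta_g$ on $P$. When both marginals of a probability measure on a product converge to Dirac masses, the measures converge to the product Dirac mass; this follows from tightness and uniqueness of the limit. So the inner integral tends to $F(\eta,g)$. It is bounded by $\|F\|_\infty$, so dominated convergence yields the claim. For this step $\kappa$ must be surjective onto $\mathcal E(D)$, which carries $\sigma_u$, and $\kappa^{-1}$ must be Borel, as noted after (B). The pushed measure is therefore carried by the graph of $\kappa^{-1}$, and consequently $\Lambda(\Gamma^c)=0$.

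\textbf{Step 2: almost-everywhere synchronized approximation.} The space $\widehat D\times P$ is compact and metrizable, so $\Lambda$ is carried by $\operatorname{supp}\Lambda\cap\Gamma$. Take $(\widehat\eta,g)$ in this set, and let $W_k$ be a decreasing base of neighborhoods of it. By the Portmanteau theorem, $\liminf_j\Lambda_{n_j}^u(W_k)\ge\Lambda(W_k)>0$. Hence, for $j$ beyond a threshold $J_k$, there is $y_j\in\partial D_{n_j}$ with $(y_j,T_{n_j}(y_j))\in W_k$. I would then apply the same block-diagonal choice as in the proof of Theorem \ref{thm:synchronized-kernel}, with increasing thresholds $J_k$. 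This gives $y_j\to\widehat\eta$ in $\widehat D$ and $T_{n_j}(y_j)\to g$ in $P$.

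Convergence in $P$ means convergence in every coordinate $C(L_m)$. Since every compact subset of $D$ lies in some $L_m$, this is local uniform convergence of $K_{D_{n_j}}(\cdot,y_j)$ to $g=\widehat K_D(\cdot,\widehat\eta)$. The set of $\widehat\eta$ obtained this way has full $\widehat\mu_u$-measure, because it is the first-coordinate projection of a set of full $\Lambda$-measure. The indices come from the subsequence defining $\widehat\mu_u$, as claimed.

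\textbf{Remaining claims.} Under (M), each $y_j\in\partial D_{n_j}=\partial_{\mathrm m}D_{n_j}$, so every approximating kernel is minimal on its inner domain. For the relabeling under $(C')$: (A1)--(A2) pass to subsequences, and $(C')$ turns into (C) for the relabeled sequence. Existence of weakly convergent subsequences is immediate from compactness and the fixed total mass $u(x_0)$.
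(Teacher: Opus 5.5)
Your proposal is correct and follows essentially the same route as the paper: you disintegrate $\lambda_n^u=\int\nu_n^g\,d\sigma_u(g)$ and combine (C) with kernel-space concentration to get $(p_K\times\operatorname{id})_\#\Lambda=\int\delta_{(\kappa^{-1}(g),g)}\,d\sigma_u(g)$. From this you conclude that every joint limit is carried by the Borel graph $\Gamma$, and you then select approximating inner points from neighborhoods of points of $\operatorname{supp}\Lambda\cap\Gamma$. The differences are only cosmetic: you obtain the product-Dirac limit from uniqueness of limits rather than a neighborhood estimate, and your block-diagonal selection even gives convergence along the whole further subsequence. Also, your set of good $\widehat\eta$ is Borel because it equals the paper's set $\{\widehat\eta\in\widehat\partial_{\mathrm m}D:(\widehat\eta,\kappa(p_K(\widehat\eta)))\in\operatorname{supp}\Lambda\}$; this holds since $\Gamma$ is a graph over the first coordinate.
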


\begin{proof}
After the stated relabeling, (C) holds.  Apply the preceding lemmas
to this exhaustion, using the compact product
$\widehat D\times P$ and the joint measures of Lemma
\ref{lem:joint-inner-limits}.  Lemma
\ref{lem:finite-minimal-concentration} constructs a measure
$\sigma_u$ on $\mathcal E(D)$ from the inner-boundary measures,
with
\begin{equation*}
 u(y)=\int_{\mathcal E(D)}g(y)\,\d\sigma_u(g),
 \qquad \sigma_u(\mathcal E(D))=u(x_0).
\end{equation*}
This follows from the joint-limit construction and finite decomposition;
it is not an appeal to a representation theorem on
$\partial_{\mathrm m}^{K}D$.

For $g\in\mathcal E(D)$, write $\eta_g:=\kappa^{-1}(g)$.
By (C), the position measures
$g\,\omega_{x_0}^{D_n}$ converge to $\delta_{\eta_g}$ on $D^K$.
By Lemma \ref{lem:kernel-space-concentration}, their kernel-coordinate
measures converge to $\delta_g$ on $P$.  Consequently
\begin{equation}\label{eq:joint-minimal-point-convergence}
 (\operatorname{id},T_n)_\#
      (g\,\omega_{x_0}^{D_n})
 \Longrightarrow\delta_{(\eta_g,g)}
 \quad\text{on }D^K\times P.
\end{equation}
Indeed, for neighborhoods $U$ of $\eta_g$ and $V$ of $g$, the
measure outside $U\times V$ is at most the sum of the two marginal
measures outside $U$ and $V$, which tends to zero.  Uniform continuity
of a continuous test function on the compact product proves weak
convergence.

For $F\in C(D^K\times P)$, Fubini's theorem gives
\begin{equation*}
 \begin{aligned}
 &\int_{\partial D_n}F(y,T_n(y))u(y)\,\d\omega_{x_0}^{D_n}(y)\\
 &\quad=\int_{\mathcal E(D)}
       \left(\int_{\partial D_n}F(y,T_n(y))g(y)\,
                    \d\omega_{x_0}^{D_n}(y)\right)\d\sigma_u(g).
 \end{aligned}
\end{equation*}
The absolute value of the inner integral is at most
$\|F\|_\infty$, since $\omega_{x_0}^{D_n}(g)=1$.
Equation \eqref{eq:joint-minimal-point-convergence} and dominated
convergence yield
\begin{equation}\label{eq:joint-geometric-limit}
 (\operatorname{id},T_n)_\#\lambda_n^u
 \Longrightarrow
 (g\mapsto(\eta_g,g))_\#\sigma_u
 \quad\text{on }D^K\times P.
\end{equation}
The map on the right is Borel by (B).

Now take a subsequence for which
$\lambda_{n_j}^u\Longrightarrow\widehat\mu_u$ on $\widehat D$.
It exists because $\lambda_n^u(\widehat D)=u(x_0)$.
Take a further subsequence so that $\Lambda_{n_j}^u$ converges
on $\widehat D\times P$ to $\Lambda$.  The position projection of
$\Lambda$ is $\widehat\mu_u$.  Its projection under
$(\widehat\eta,g)\mapsto(p_K(\widehat\eta),g)$ is the measure
on the right of \eqref{eq:joint-geometric-limit}, because $p_K$
is continuous and equals the identity on $D$.
The latter measure is carried by the Borel graph
\begin{equation*}
 \{(\eta,\kappa(\eta)):
       \eta\in\partial_{\mathrm m}^{K}D\}.
\end{equation*}
Thus, for $\Lambda$-almost every $(\widehat\eta,g)$,
\begin{equation}\label{eq:joint-graph-identification}
 p_K(\widehat\eta)\in\partial_{\mathrm m}^{K}D,
 \qquad g=\kappa(p_K(\widehat\eta)).
\end{equation}
This proves the support assertion in
\eqref{eq:direct-refined-support}.  Evaluation at $x$ is bounded
and continuous on a sufficiently high kernel coordinate, so
\eqref{eq:joint-inner-exact} gives
\begin{equation*}
 \begin{aligned}
 u(x)&=\int_{(\widehat D\setminus D)\times H}g(x)\,\d\Lambda(\widehat\eta,g)\\
 &=\int_{\widehat\partial_{\mathrm m}D}
       K_D(x,p_K(\widehat\eta))\,\d\widehat\mu_u(\widehat\eta).
 \end{aligned}
\end{equation*}
Hence \eqref{eq:direct-refined-integral} has been obtained on the
refinement before passing its representing measure through $p_K$.
For the simultaneous approximation, consider the Borel set
\begin{equation*}
 A:=\{\widehat\eta\in\widehat\partial_{\mathrm m}D:
 (\widehat\eta,\kappa(p_K(\widehat\eta)))
       \in\operatorname{supp}\Lambda\}.
\end{equation*}
It is Borel because the displayed map is Borel and
$\operatorname{supp}\Lambda$ is closed in $\widehat D\times P$.
Equation \eqref{eq:joint-graph-identification} and the fact that the
support has full measure give $\widehat\mu_u(A)=u(x_0)$.
Fix $\widehat\eta\in A$ and put
$g=\kappa(p_K(\widehat\eta))$.  Choose a decreasing neighborhood
base $(W_k)$ at $(\widehat\eta,g)$ in $\widehat D\times P$.
Since this point belongs to $\operatorname{supp}\Lambda$, every
$W_k$ has positive $\Lambda$-measure.  The open-set inequality for
weak convergence gives
\begin{equation*}
 \liminf_{j\to\infty}\Lambda_{n_j}^u(W_k)
 \geq\Lambda(W_k)>0.
\end{equation*}
Choose successively $j_k>j_{k-1}$ and
$y_k\in\partial D_{n_{j_k}}$ such that
\begin{equation*}
 (y_k,T_{n_{j_k}}(y_k))\in W_k.
\end{equation*}
Then $y_k\to\widehat\eta$.  For every fixed $m$, the $m$th
coordinate of $T_{n_{j_k}}(y_k)$ is the restriction of the actual
inner kernel once $n_{j_k}\geq m+2$.  Consequently,
\begin{equation*}
 \sup_{x\in L_m}
 |K_{D_{n_{j_k}}}(x,y_k)-K_D(x,p_K(\widehat\eta))|
 \longrightarrow0.
\end{equation*}
Every compact subset of $D$ is contained in some $L_m$, proving
\eqref{eq:refined-synchronized-approximation} after relabeling.
Condition (M), when imposed, makes each selected kernel minimal on
its own inner domain.
\end{proof}

\subsubsection{Lattice structure as a supplementary consequence}

The representation and its uniqueness have already been proved by finite
decomposition.  We retain the following independent order property of
the harmonic functions, with its direct proof from harmonic measures.

\begin{lem}[The harmonic cone is a lattice cone]
\label{lem:harmonic-lattice}
Assume {\rm(A1)--(A2)} and use the cone $\mathcal H_+(D)$ defined
in Section \ref{Sec3}.  For every $u\in\mathcal{H}_+(D)\setminus\{0\}$,
\begin{equation*}
 u(x)>0,\qquad x\in D.
\end{equation*}
The vector space $\mathcal{V}(D):=\mathcal{H}_+(D)-\mathcal{H}_+(D)$,
ordered by this cone, is a vector lattice: every pair has a least upper
bound and a greatest lower bound within $\mathcal{V}(D)$.
Consequently the compact normalized base $\mathcal{H}_1(D)$ is a
Choquet simplex, meaning here that every function in $\mathcal{H}_1(D)$ has a unique representing
Borel measure $\mu$ carried by its extreme points with $\mu(\mathcal{E}(D))=1$.
\end{lem}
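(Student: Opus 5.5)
Strict positivity comes first, by the argument already used in Proposition \ref{prop:relative-kernel-harmonic} and Theorem \ref{thm:starlike-representation}. Let $u\in\mathcal H_+(D)$ and suppose $u(x)=0$ for some $x\in D$. Choose $n$ with $x,x_0\in D_n$ and $n$ large. The mean-value identity $0=u(x)=\int_{\partial D_n}u\,\d\omega_x^{D_n}$ forces $u=0$ for $\omega_x^{D_n}$-almost every boundary point. Mutual absolute continuity transfers this to $\omega_z^{D_n}$ for every $z\in D_n$, so $u\equiv0$ on $D_n$. Since the $D_n$ exhaust $D$, $u\equiv0$ on $D$. Hence every nonzero element of the cone is strictly positive, and $u/u(x_0)\in\mathcal H_1(D)$.

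For the lattice property I would not build suprema directly by Perron envelopes, which the framework does not supply. Instead I would use the unique representation of Lemma \ref{lem:finite-minimal-concentration}. That lemma gives a map $u\mapsto\sigma_u$ from $\mathcal H_+(D)$ to finite positive measures on $\mathcal E(D)$. I would first check that this map is additive and positively homogeneous. If $\sigma_u$ and $\sigma_v$ represent $u$ and $v$ and are carried by $\mathcal E(D)$, then $\sigma_u+\sigma_v$ is carried by $\mathcal E(D)$ and represents $u+v$, so uniqueness gives $\sigma_{u+v}=\sigma_u+\sigma_v$; homogeneity is the same argument. Conversely, the last assertion of Lemma \ref{lem:joint-inner-limits} shows that every finite positive measure $\tau$ on $\mathcal E(D)$ produces $u_\tau\in\mathcal H_+(D)$, and uniqueness gives $\sigma_{u_\tau}=\tau$. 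So $u\mapsto\sigma_u$ is a cone isomorphism onto all finite positive Borel measures on $\mathcal E(D)$.

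Next I would show that this isomorphism preserves the order: $v\le u$ in the harmonic order, meaning $u-v\in\mathcal H_+(D)$, holds if and only if $\sigma_v\le\sigma_u$ setwise. The forward direction follows from $\sigma_u=\sigma_v+\sigma_{u-v}$. For the converse, $u-v=u_{\sigma_u-\sigma_v}\ge0$. By linearity the map extends to $\mathcal V(D)$, sending $u-v$ to $\sigma_u-\sigma_v$. It is well defined by additivity and is an order isomorphism onto finite signed measures on $\mathcal E(D)$. Finite signed measures form a vector lattice with $\mu\vee\mu'=\mu+(\mu'-\mu)^+$, so $\mathcal V(D)$ is a vector lattice. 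Explicitly, $u\vee v$ is the harmonic function represented by $\sigma_u\vee\sigma_v$, and $u\wedge v$ is defined analogously.

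The Choquet simplex statement is then immediate. For $h\in\mathcal H_1(D)$, $\sigma_h$ is a positive measure carried by $\mathcal E(D)$ with total mass $h(x_0)=1$, and it represents $h$. Uniqueness among representing measures carried by $\mathcal E(D)$ is exactly Lemma \ref{lem:finite-minimal-concentration}. A representing measure on $H$ in the barycentric sense of the Choquet order is the same as a representing measure in the pointwise-evaluation sense, since point evaluations are continuous affine functionals that separate points of $H$.

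I expect the main obstacle to be the bookkeeping in the order isomorphism on $\mathcal V(D)$. One must check that the extension to differences is well defined and that its image is all signed measures, so that the lattice operations computed there pull back into $\mathcal V(D)$. Surjectivity onto positive measures, via $u_\tau$ from Lemma \ref{lem:joint-inner-limits}, is the point I would verify most carefully, including measurability of $\mathcal E(D)$, which is a $G_\delta$ set as shown after Lemma \ref{lem:derived-kernel-compactness}.
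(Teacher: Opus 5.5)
Your proof is correct. The strict-positivity step and the simplex assertion are essentially the paper's; the paper also reads off the simplex property from Lemma \ref{lem:finite-minimal-concentration} rather than from the lattice structure. For the lattice property, however, you take a genuinely different route.

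The paper does not use the representation at that point. It sets $w_n(x)=\int_{\partial D_n}\max\{u,v\}\,d\omega_x^{D_n}$ and uses the mean-value identities and (N) to get $\max\{u,v\}\le w_n\le w_{n+1}\le u+v$. It then obtains locally uniform convergence of the full sequence from the bounds $a_m,b_m,\Omega_m$ of Lemma \ref{lem:derived-kernel-compactness} and Arzel\`a--Ascoli, and shows the limit is the least harmonic majorant. The infimum is $u+v-(u\vee_{\mathcal H}v)$, and differences are handled by adding a common nonnegative harmonic function. So your remark that the framework does not supply Perron-type envelopes is not accurate: the inner harmonic measures supply exactly such an envelope.

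Your route instead transports everything through $u\mapsto\sigma_u$. Your checks of additivity, surjectivity via the last assertion of Lemma \ref{lem:joint-inner-limits}, well-definedness on differences, and positivity in both directions are complete. There is no circularity, because Lemma \ref{lem:finite-minimal-concentration} is proved without the lattice lemma.

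What you gain is brevity and more structure. $\mathcal V(D)$ becomes order isomorphic to the space of finite signed Borel measures on $\mathcal E(D)$, hence even Dedekind complete, with $u\vee v$ represented by $\sigma_u\vee\sigma_v$.

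What you give up is independence. Your lattice property rests on the maximal-measure criterion and on Lemma \ref{lem:kernel-space-concentration}, so it reverses the direction of the ``consequently'' in the statement. The paper's construction keeps the order structure as a separate fact derived from the harmonic measures alone. Through the Choquet--Meyer theorem, that fact would independently yield uniqueness of representing measures carried by $\mathcal E(D)$. It also identifies $u\vee v$ concretely as a limit of inner Dirichlet solutions.
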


\begin{proof}
First let $a\ge0$ be harmonic and vanish at a point $x\in D$.
For any $z\in D$, choose $D_j$ containing both points.
The mean-value identity at $x$ gives
\begin{equation*}
 0=a(x)=\int_{\partial D_j}a(y)\,\d\omega_x^{D_j}(y).
\end{equation*}
Since $a\ge0$, its restriction to $\partial D_j$ vanishes almost
everywhere for $\omega_x^{D_j}$.  Absolute continuity transfers this
equality to $\omega_z^{D_j}$, and the mean-value identity at $z$ gives
\begin{equation*}
 a(z)=\int_{\partial D_j}a(y)\,\d\omega_z^{D_j}(y)=0.
\end{equation*}
Thus a nonzero function in $\mathcal{H}_+(D)$ is strictly positive and has a unique
normalization at $x_0$.

For $u,v\in\mathcal{H}_+(D)$, set
\begin{equation}\label{eq:harmonic-lattice-envelope}
 w_n(x):=\int_{\partial D_n}\max\{u(y),v(y)\}\,d\omega_x^{D_n}(y),
 \qquad x\in D_n.
\end{equation}
These functions are harmonic in $D_n$ by nested compatibility.  Positivity
and the mean-value identities for $u,v$ give
\begin{equation*}
 \max\{u,v\}\le w_n\le u+v\quad\text{on }D_n.
\end{equation*}
Since $w_{n+1}\ge\max\{u,v\}$ on $\partial D_n$, another use of
nested compatibility gives $w_n\le w_{n+1}$ on $D_n$.
Thus $w(x):=\lim_nw_n(x)$ is defined and finite for every $x\in D$.

We verify local uniform convergence, rather than infer harmonicity from
pointwise convergence alone.  Put $M=u(x_0)+v(x_0)$.  For fixed $m$
and $n\ge m+2$, represent $w_n$ on $D_{m+1}$.  The boundary
integral satisfies $\int_{\partial D_{m+1}}w_n\,d\omega_{x_0}^{D_{m+1}}=w_n(x_0)\le M$.
The notation of Lemma \ref{lem:derived-kernel-compactness} therefore gives
\begin{equation*}
 0\le w_n(s)\le M b_m,\qquad
 |w_n(s)-w_n(t)|\le M\Omega_m(s,t),\quad s,t\in L_m.
\end{equation*}
Arzel\`a--Ascoli gives uniformly convergent subsequences on $L_m$.
Every such limit equals the pointwise limit $w$.  If uniform convergence
of the whole sequence failed, there would be an $\varepsilon>0$ and
a subsequence with $\|w_n-w\|_{L_m}\ge\varepsilon$; a uniformly
convergent further subsequence would contradict this inequality.
Thus $w_n\to w$ uniformly on each $L_m$, and $w$ is continuous.
For a regular domain $V\Subset D$ in the system, choose $m$ with
$\overline V\subset D_m$.  For all sufficiently large $n$,
\begin{equation*}
 w_n(x)=\int_{\partial V}w_n(y)\,\d\omega_x^V(y),\qquad x\in V,
\end{equation*}
and
\begin{equation*}
 \left|\int_{\partial V}(w_n-w)(y)\,\d\omega_x^V(y)\right|
 \le\|w_n-w\|_{L_m}\longrightarrow0.
\end{equation*}
Taking limits proves $w(x)=\omega_x^V(w)$, hence harmonicity of $w$.

If a harmonic function $q$ majorizes both $u$ and $v$, then
\begin{equation*}
 w_n(x)\le\int_{\partial D_n}q\,d\omega_x^{D_n}=q(x).
\end{equation*}
Hence $w\le q$: it is their least harmonic majorant, denoted
$u\vee_{\mathcal{H}}v$.  This need not equal the pointwise maximum.
Also $u+v-(u\vee_{\mathcal{H}}v)$ is their greatest lower bound.
Indeed, for any harmonic lower bound $r$, the function $u+v-r$
majorizes both $u,v$; the preceding minimality proves the assertion.

For two functions in $\mathcal{V}(D)$, add a common nonnegative harmonic
function to make both nonnegative, perform these operations, and subtract
it.  For clarity, if $f=u_1-v_1$ and $g=u_2-v_2$, with all four
functions nonnegative and harmonic, take $a=v_1+v_2$.  Then
$f+a=u_1+v_2\ge0$ and $g+a=u_2+v_1\ge0$.  The function
\begin{equation*}
 q=((f+a)\vee_{\mathcal{H}}(g+a))-a
\end{equation*}
majorizes $f,g$.  If $r$ is any harmonic majorant, $r+a$
majorizes $f+a,g+a$, so $q\le r$.  Thus $q$ is their least
upper bound independently of the choice of $a$.  The greatest lower
bound is $f+g-q$.
Thus $\mathcal{V}(D)$ is a vector lattice with positive cone
$\mathcal{H}_+(D)$.
Finally $\mathcal{H}_1(D)$ is a compact metrizable base of this cone by
Lemma \ref{lem:derived-kernel-compactness}.  More precisely, every
$u\in\mathcal{H}_+(D)\setminus\{0\}$ is uniquely written as
\begin{equation*}
 u=u(x_0)\,\frac{u}{u(x_0)},\qquad
 u(x_0)>0,\qquad \frac{u}{u(x_0)}\in\mathcal{H}_1(D).
\end{equation*}
The order used above is the order determined by nonnegative harmonic
functions; the lattice operations are taken within $\mathcal{V}(D)$,
not within the space of all continuous functions.  Finally, Lemma
\ref{lem:finite-minimal-concentration} already gives, for each
$h\in\mathcal H_1(D)$, a unique measure $\sigma_h$ carried by
$\mathcal E(D)$ with
\begin{equation*}
 h(x)=\int_{\mathcal E(D)}g(x)\,\d\sigma_h(g),\qquad
 \sigma_h(\mathcal E(D))=h(x_0)=1.
\end{equation*}
This is the stated simplex property.  The lattice construction above
is independent of the representation argument and is not used as an
additional representation hypothesis.
\end{proof}

\subsection{Gluing to a boundary parametrizing minimal harmonic functions}
\label{subsec:glued-representation}

Having obtained the representation on $\widehat D$, we pass its measure
through $p_K$ to $\partial_{\mathrm m}^KD$.  The resulting measure is
unique although a representing measure on the refinement need not be.
We first record the general measure identities, then apply them to the
refined representation, and finally discuss the roles of the assumptions.

\subsubsection{Harmonic measures and integral representations under a quotient}
\label{subsubsec:gluing-measures}

The quotient principle also applies directly to harmonic measures.

\begin{prop}[Harmonic measures under a gluing map]
\label{prop:harmonic-measure-gluing}
Let $B$ and $B_q$ be topological boundary spaces, let
$q:B\to B_q$ be a continuous surjection and quotient map, and let
$\omega_\cdot^D=\{\omega_x^D:x\in D\}$ be a harmonic measure on $B$.
Define
\begin{equation}\label{eq:glued-harmonic-measure}
 \omega_x^{D,q}:=q_\#\omega_x^D,
 \qquad x\in D.
\end{equation}
Then $\omega_\cdot^{D,q}$ is a harmonic measure on $B_q$.  If
$\omega_\cdot^D$ is continuous, then so is
$\omega_\cdot^{D,q}$.  More precisely, for every integrable function
$\varphi$ on $B_q$,
\begin{equation}\label{eq:gluing-integral-identity}
 \int_{B_q}\varphi\,\d\omega_x^{D,q}
 =
 \int_B\varphi\circ q\,\d\omega_x^D.
\end{equation}
\end{prop}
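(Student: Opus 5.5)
The plan is to verify the defining properties of a harmonic measure in Definitions \ref{pre_HM} and \ref{H_m} one at a time for the push-forward family, and to read off the integral identity \eqref{eq:gluing-integral-identity} from the general push-forward rule \eqref{eq:pushforward-integration}. Since $q$ is continuous, it is Borel measurable. Hence $q_\#\omega_x^D$ is a Borel measure on $B_q$ for each $x\in D$, and it is positive and countably additive.

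\textbf{Pre-harmonic properties.} Surjectivity of $q$ gives $q^{-1}(B_q)=B$, so $\omega_x^{D,q}(B_q)=\omega_x^D(B)=1$. For absolute continuity, take $x,y\in D$ and a Borel set $A\subseteq B_q$ with $\omega_y^{D,q}(A)=0$. Then $\omega_y^D(q^{-1}(A))=0$. Since $\omega_x^D\ll\omega_y^D$, this gives $\omega_x^D(q^{-1}(A))=0$, that is, $\omega_x^{D,q}(A)=0$. Thus the pushed family is pre-harmonic, and the completed family is a harmonic measure. Any Radon property the original family carries is not claimed for the pushed family, since it is not part of the statement.

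\textbf{Integral identity.} Formula \eqref{eq:pushforward-integration} gives \eqref{eq:gluing-integral-identity} for nonnegative Borel $\varphi$. Splitting $\varphi=\varphi^+-\varphi^-$ then shows that $\varphi\in L^1(\omega_x^{D,q})$ if and only if $\varphi\circ q\in L^1(\omega_x^D)$. This holds for every $x$, so $\varphi\in L(B_q;\omega_\cdot^{D,q})$ if and only if $\varphi\circ q\in L(B;\omega_\cdot^D)$, and the identity then holds for all such $\varphi$.

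\textbf{Completion null sets.} This is the step I expect to be the main subtlety, because integrability for the completed measure allows functions that are measurable only up to null sets. Suppose $N\subseteq B_q$ is contained in a Borel null set $A$. Then $q^{-1}(N)\subseteq q^{-1}(A)$, and $q^{-1}(A)$ is a Borel null set for $\omega_x^D$. So $q^{-1}$ carries completed-measurable sets to completed-measurable sets, and $\varphi\circ q$ is measurable for the completion of $\omega_x^D$. The identity therefore extends to completion-measurable $\varphi$: change $\varphi$ on a null set to a Borel function, note that $\varphi\circ q$ changes only on a null set, and apply the Borel case.

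\textbf{Continuity.} Assume $\omega_\cdot^D$ is continuous and let $\varphi\in L(B_q;\omega_\cdot^{D,q})$. By the integrability equivalence, $\varphi\circ q$ lies in $L(B;\omega_\cdot^D)$. Continuity of the original family makes $x\mapsto\omega_x^D(\varphi\circ q)$ continuous on $D$. By \eqref{eq:gluing-integral-identity} this function equals $x\mapsto\omega_x^{D,q}(\varphi)$, which proves continuity of the glued family.

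The quotient-map hypothesis is not needed beyond continuity and surjectivity. It matters only for the interpretation of $B_q$ as the glued boundary.
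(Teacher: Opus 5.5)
Your proposal is correct and follows the paper's proof essentially step for step. The pushed measures keep total mass one, null sets transfer through $q^{-1}$ and absolute continuity, the identity comes from the push-forward rule with integrability equivalence read off from $|\varphi|$, and continuity transfers through that identity. Your paragraph on completion-measurable functions is a careful detail the paper leaves implicit. One small quibble: $q^{-1}(B_q)=B$ holds for any map, so surjectivity is not needed for the normalization.
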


\begin{proof}
The push-forward in \eqref{eq:glued-harmonic-measure} is a positive Borel
measure and
\begin{equation*}
 \omega_x^{D,q}(B_q)=\omega_x^D(B)=1.
\end{equation*}
If $\omega_y^{D,q}(A)=0$, then
$\omega_y^D(q^{-1}(A))=0$.  Mutual absolute continuity of the original
harmonic measures gives $\omega_x^D(q^{-1}(A))=0$, and hence
$\omega_x^{D,q}(A)=0$.  Thus the required measure classes do not depend
on the interior point.  For an indicator $\varphi=\chi_A$, the integration formula reads
$\omega_x^{D,q}(A)=\omega_x^D(q^{-1}(A))$, which is the definition.
Linearity proves it for simple functions, and monotone convergence proves
it for nonnegative measurable functions.  Applying this to $|\varphi|$
shows that integrability is equivalent to integrability of
$\varphi\circ q$; positive and negative parts then give
\eqref{eq:gluing-integral-identity} for every integrable $\varphi$.
Since $\varphi\circ q$ is integrable on $B$, continuity of
$x\mapsto\int_B\varphi\circ q\,\d\omega_x^D$ proves the final assertion.
\end{proof}

\begin{cor}[Integral representations under a quotient]
\label{cor:representation-under-gluing}
Let $q:B\to B_q$ be as in Proposition
\ref{prop:harmonic-measure-gluing}, let
$K_q:D\times B_q\to[0,\infty]$ be measurable, and set
\begin{equation*}
 K(x,z):=K_q(x,q(z)),
 \qquad x\in D,\quad z\in B.
\end{equation*}
If a finite positive measure $\mu$ on $B$ represents a function $u$
by
\begin{equation*}
 u(x)=\int_B K(x,z)\,\d\mu(z),
\end{equation*}
then
\begin{equation}\label{eq:quotient-kernel-representation}
 u(x)=\int_{B_q}K_q(x,\zeta)\,
       \d(q_\#\mu)(\zeta).
\end{equation}
Conversely, if $\nu$ is a finite positive Radon measure on $B_q$ and
$\widehat\nu$ is a finite positive Radon lift satisfying
$q_\#\widehat\nu=\nu$, then $K$ and $\widehat\nu$ give the
corresponding representation on $B$.
\end{cor}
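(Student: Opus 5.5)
The forward direction is the push-forward integration identity \eqref{eq:pushforward-integration} applied to the nonnegative measurable function $K_q(x,\cdot)$ on $B_q$. I first check measurability. For fixed $x$, the section $\zeta\mapsto K_q(x,\zeta)$ is Borel, being a section of a measurable function on $D\times B_q$. Since $q$ is continuous, it is Borel measurable, so $z\mapsto K(x,z)=K_q(x,q(z))$ is Borel on $B$. Positivity of $K_q$ means the identity holds in $[0,\infty]$ with no integrability hypothesis. Then
\begin{equation*}
 \int_{B_q}K_q(x,\zeta)\,\d(q_\#\mu)(\zeta)
 =\int_B K_q(x,q(z))\,\d\mu(z)=\int_BK(x,z)\,\d\mu(z)=u(x),
\end{equation*}
which is \eqref{eq:quotient-kernel-representation}. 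To be explicit rather than cite, I will verify the first equality for indicators $\chi_A$, where it is the definition $q_\#\mu(A)=\mu(q^{-1}(A))$. It then extends to simple functions by linearity, and to nonnegative measurable functions by monotone convergence along an increasing sequence of simple functions. This is the same argument used in Proposition \ref{prop:harmonic-measure-gluing}. Finiteness of $q_\#\mu$ follows from $q_\#\mu(B_q)=\mu(B)$.

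For the converse, I take $\nu$ and a lift $\widehat\nu$ with $q_\#\widehat\nu=\nu$. Suppose $u(x)=\int_{B_q}K_q(x,\zeta)\,\d\nu(\zeta)$. The same identity, read from right to left with $\widehat\nu$ in place of $\mu$, gives
\begin{equation*}
 \int_BK(x,z)\,\d\widehat\nu(z)=\int_{B_q}K_q(x,\zeta)\,\d\nu(\zeta)=u(x).
\end{equation*}
So $K$ and $\widehat\nu$ represent the same function on $B$. The Radon property of the lift is part of the hypothesis and plays no role in the computation; no existence of lifts is claimed.

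The only point requiring care is measurability when $B,B_q$ are general topological spaces. I will note that the Borel $\sigma$-algebra is used throughout, and that joint measurability of $K_q$ is needed only to make each section $K_q(x,\cdot)$ Borel. If completed measures are in play, I will remark that a $q_\#\mu$-null set pulls back to a $\mu$-null set. Hence modifying $K_q(x,\cdot)$ on a null set changes neither side, and the identity persists for completions. I expect no genuine obstacle beyond this bookkeeping.
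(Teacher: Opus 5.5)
Your proposal is correct and follows the paper's proof. In both directions you apply the push-forward integration identity \eqref{eq:pushforward-integration} to the nonnegative measurable section $K_q(x,\cdot)$, first with $\mu$ and then with the lift $\widehat\nu$; your measurability remarks and the indicator, simple-function, monotone-convergence check spell out what the paper simply cites.
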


\begin{proof}
For each fixed $x\in D$, the function
$\zeta\mapsto K_q(x,\zeta)$ is nonnegative and measurable.
The push-forward integration identity therefore gives
\begin{equation*}
 \begin{aligned}
 \int_{B_q}K_q(x,\zeta)\,d(q_\#\mu)(\zeta)
 &=\int_B K_q(x,q(z))\d\mu(z)\\
 &=\int_B K(x,z)\d\mu(z)=u(x).
 \end{aligned}
\end{equation*}
This proves the first assertion without any change to the kernel values.
Conversely, if $q_\#\widehat\nu=\nu$, then
\begin{equation*}
 \int_B K(x,z)\d\widehat\nu(z)
 =\int_B K_q(x,q(z))\d\widehat\nu(z)
 =\int_{B_q}K_q(x,\zeta)\d\nu(\zeta).
\end{equation*}
Thus a representation by $\nu$ lifts to the corresponding one by
$\widehat\nu$.  Under the additional compact Hausdorff assumptions,
Lemma \ref{lem:compact-measure-lifting} provides existence of such a
Radon lift; the integral calculation itself does not imply uniqueness.
\end{proof}

\begin{cor}[Harmonic measures on the common refinement]
\label{cor:refined-harmonic-measures}
Under the hypotheses of Theorem \ref{prop:direct-refined-representation},
let $\widehat\mu_1$ be a refined representing measure obtained there
for the constant function $1$.  For Borel
$A\subseteq\widehat\partial D$, define
\begin{equation}\label{eq:refined-harmonic-measures}
 \widehat\omega_x^D(A):=
 \int_{A\cap\widehat\partial_{\mathrm m}D}
       \widehat K_D(x,\widehat\eta)\,\d\widehat\mu_1(\widehat\eta).
\end{equation}
Then $\{\widehat\omega_x^D:x\in D\}$ is a continuous Radon harmonic
measure on $\widehat\partial D$, with
$\widehat\omega_{x_0}^D=\widehat\mu_1$.  Its integrable boundary
functions are exactly $L^1(\widehat\mu_1)$, and their integrals are
harmonic functions of $x$.  Both projections give continuous harmonic
measures on their respective boundaries:
\begin{equation*}
 \omega_x^{D,G}:=(p_G)_\#\widehat\omega_x^D,
 \qquad \omega_x^{D,K}:=(p_K)_\#\widehat\omega_x^D.
\end{equation*}
No boundary regularity is asserted here.
\end{cor}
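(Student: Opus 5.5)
The plan is to verify the items of Definition \ref{H_m} for the family \eqref{eq:refined-harmonic-measures}, and then invoke Proposition \ref{prop:harmonic-measure-gluing} for the two projections.

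\emph{Normalization and the reference point.} Theorem \ref{prop:direct-refined-representation} applied to $u\equiv1$ gives
\begin{equation*}
 1=\int_{\widehat\partial_{\mathrm m}D}\widehat K_D(x,\widehat\eta)\,\d\widehat\mu_1(\widehat\eta),\qquad x\in D,
\end{equation*}
so $\widehat\omega_x^D(\widehat\partial D)=1$. Also $\widehat\mu_1(\widehat D)=1$, and $\widehat\mu_1$ is carried by $\widehat\partial_{\mathrm m}D\subseteq\widehat\partial D$. Since $K_D(x_0,\eta)=1$, we get $\widehat\omega_{x_0}^D=\widehat\mu_1$. The density $\widehat K_D(x,\cdot)=\kappa(p_K(\cdot))(x)$ is Borel by (B), so each $\widehat\omega_x^D$ is a finite positive Borel measure. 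On the compact metrizable space $\widehat\partial D$, finite Borel measures are automatically Radon.

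\emph{Equivalence of measure classes and the $L^1$ identification.} For $x\in D$, choose $m$ with $x\in L_m$. Every $g\in\mathcal E(D)\subseteq\mathcal H_1(D)$ then satisfies $a_m\le g(x)\le b_m$ by \eqref{eq:derived-kernel-bounds}. Hence
\begin{equation*}
 a_m\widehat\mu_1\le\widehat\omega_x^D\le b_m\widehat\mu_1,
\end{equation*}
which gives mutual absolute continuity. By Proposition \ref{R_point}(2), it also gives $L^1(\widehat\omega_x^D)=L^1(\widehat\mu_1)$ for every $x$, so the intersection $L(\widehat\partial D;\widehat\omega_\cdot^D)$ is exactly $L^1(\widehat\mu_1)$.

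\emph{Continuity and harmonicity.} Let $f\in L^1(\widehat\mu_1)$. Then
\begin{equation*}
 \widehat\omega_x^D(f)=\int f(\widehat\eta)\,\widehat K_D(x,\widehat\eta)\,\d\widehat\mu_1(\widehat\eta).
\end{equation*}
For continuity in $x$, take $x,x'\in L_m$. The modulus bound $|g(x)-g(x')|\le\Omega_m(x,x')$, uniform over $g\in\mathcal H_1(D)$, gives
\begin{equation*}
 |\widehat\omega_x^D(f)-\widehat\omega_{x'}^D(f)|\le\Omega_m(x,x')\,\|f\|_{L^1(\widehat\mu_1)},
\end{equation*}
which tends to zero as $x'\to x$.

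For harmonicity, fix a regular $V\Subset D$ and split $f=f^+-f^-$. Tonelli's theorem, together with the mean-value identity for each $\kappa(p_K(\widehat\eta))$, gives $\omega_x^V(\widehat\omega_\cdot^D(f^\pm))=\widehat\omega_x^D(f^\pm)$. Both values are finite by the bound $b_m$, so subtracting is legitimate. This is the same computation as the last part of Lemma \ref{lem:joint-inner-limits}, with $\tau=|f|\widehat\mu_1$ pushed to $H$.

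The measurability needed for Tonelli's theorem is the one point requiring care. The map $(z,\widehat\eta)\mapsto\kappa(p_K(\widehat\eta))(z)$ is jointly Borel, because it is evaluation composed with a Borel map, as noted after (B). This settles the step.

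\emph{Projections.} The maps $p_G$ and $p_K$ restricted to $\widehat\partial D$ are continuous surjective quotient maps onto $\partial^GD$ and $\partial^KD$ by Proposition \ref{prop:common-refinement-projections}. Proposition \ref{prop:harmonic-measure-gluing} then shows that $(p_G)_\#\widehat\omega_x^D$ and $(p_K)_\#\widehat\omega_x^D$ are continuous harmonic measures. They are Radon because the targets are compact metrizable. No boundary regularity is claimed, so nothing further needs to be checked.
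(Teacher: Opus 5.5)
Your proof is correct and follows the paper's argument essentially step for step. You get normalization from the representation of $1$, equal null sets and integrability classes from the bounds $a_m\widehat\mu_1\le\widehat\omega_x^D\le b_m\widehat\mu_1$, continuity from the modulus $\Omega_m$, harmonicity from Tonelli plus the mean-value identity of each kernel, and the projections from Proposition \ref{prop:harmonic-measure-gluing}. The paper adds only two small details, both of which you leave implicit: take $x$ in the open set $D_m\subseteq L_m$ so that nearby $x'$ stay in $L_m$, and handle completed-measurable data by passing to a Borel representative, since the null sets are common.
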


\begin{proof}
The representation of $1$ and the normalization of the kernels give
\begin{equation*}
 \widehat\omega_x^D(\widehat\partial D)=
 \int_{\widehat\partial_{\mathrm m}D}
       \widehat K_D(x,\widehat\eta)\,\d\widehat\mu_1(\widehat\eta)=1,
 \qquad \widehat\omega_{x_0}^D=\widehat\mu_1.
\end{equation*}
The kernels are strictly positive, so these measures have the same
null sets.  They are Radon because they are finite Borel measures on a
compact metric space.  The bounds in Lemma
\ref{lem:derived-kernel-compactness} give, for $x\in L_m$,
\begin{equation*}
 a_m\widehat\mu_1\le\widehat\omega_x^D\le b_m\widehat\mu_1.
\end{equation*}
Thus integrability for any one of these measures is equivalent to
integrability for $\widehat\mu_1$.  For $f\in L^1(\widehat\mu_1)$
and $s,t\in L_m$, the same lemma gives
\begin{equation*}
 |\widehat\omega_s^D(f)-\widehat\omega_t^D(f)|
 \le\Omega_m(s,t)\int_{\widehat\partial D}|f|\,\d\widehat\mu_1.
\end{equation*}
Since every point of $D$ lies in the interior of some $L_m$, this
proves continuity on $D$.

Let $V\Subset D$ be regular.  The kernel bound on the compact set
$\partial V$ justifies interchanging the integrals for $|f|$ by
Tonelli, and then for $f$ by positive and negative parts.  Harmonicity
of each kernel gives
\begin{equation*}
 \begin{aligned}
 \int_{\partial V}\widehat\omega_y^D(f)\,\d\omega_x^V(y)
 &=\int_{\widehat\partial_{\mathrm m}D}f(\widehat\eta)
   \left(\int_{\partial V}\widehat K_D(y,\widehat\eta)
                    \,\d\omega_x^V(y)\right)
                         \d\widehat\mu_1(\widehat\eta)\\
 &=\int_{\widehat\partial_{\mathrm m}D}
       f(\widehat\eta)\widehat K_D(x,\widehat\eta)
                         \,\d\widehat\mu_1(\widehat\eta)
 =\widehat\omega_x^D(f).
 \end{aligned}
\end{equation*}
For completed measurable data, a Borel representative gives the same
integrals, since the null sets are common.  This proves harmonicity.
Finally, Proposition \ref{prop:common-refinement-projections} supplies
the two boundary quotient maps, and Proposition
\ref{prop:harmonic-measure-gluing} applies to the family just constructed.
\end{proof}

The preceding corollary constructs the refined harmonic measures before
their push-forwards are taken.  For a general kernel representation,
Corollary \ref{cor:representation-under-gluing} applies when the kernel
factors through a measurable kernel on the chosen quotient.  A lifted
representing measure need not be unique.  The next lemma supplies a
Radon lift for every finite positive Radon measure on a compact quotient.

We shall use the following purely measure-theoretic lifting fact.

\begin{lem}[Lifting through a compact surjection]
\label{lem:compact-measure-lifting}
Let $K$ and $L$ be compact Hausdorff spaces and let
$p:K\to L$ be a continuous surjection.  For every finite positive Radon
measure $\mu$ on $L$, there is a finite positive Radon measure
$\widehat\mu$ on $K$ such that
\begin{equation}\label{eq:compact-measure-lifting}
 p_\#\widehat\mu=\mu,
 \qquad \widehat\mu(K)=\mu(L).
\end{equation}
No uniqueness of $\widehat\mu$ is asserted.
\end{lem}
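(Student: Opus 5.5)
The plan is to construct the lift by a Hahn--Banach and Riesz--Markov argument on $C(K)$.

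\textbf{Step 1 (the pullback subspace).} Pullback $f\mapsto f\circ p$ embeds $C(L)$ isometrically as a closed subspace $E:=\{f\circ p:f\in C(L)\}$ of $C(K)$. Isometry holds because $p$ is surjective, so $\|f\circ p\|_K=\|f\|_L$. This also makes each element of $E$ the pullback of a unique $f$. Define on $E$ the functional $\Phi(f\circ p):=\int_L f\,\d\mu$. It is linear and well defined. It is also positive, since $f\circ p\ge0$ forces $f\ge0$ by surjectivity.

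\textbf{Step 2 (extension).} Extend $\Phi$ to a positive functional on all of $C(K)$. I would use the sublinear functional
\begin{equation*}
 P(g):=\int_L \overline g\,\d\mu,\qquad \overline g(\ell):=\max_{p^{-1}(\ell)}g.
\end{equation*}
The difficulty is that $\overline g$ is only upper semicontinuous. It is bounded and Borel, so $P$ is still well defined. It is sublinear, and $P(f\circ p)=\Phi(f\circ p)$.

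Apply Hahn--Banach to obtain a linear $\Psi$ on $C(K)$ with $\Psi\le P$ and $\Psi|_E=\Phi$. Positivity follows from the bound: if $g\le0$, then $\overline g\le0$, so $\Psi(g)\le P(g)\le0$.

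\textbf{Step 3 (representation and identification).} Riesz--Markov gives a finite positive Radon measure $\widehat\mu$ on $K$ with $\Psi(g)=\int_K g\,\d\widehat\mu$. For $f\in C(L)$,
\begin{equation*}
 \int_L f\,\d(p_\#\widehat\mu)=\int_K f\circ p\,\d\widehat\mu=\int_L f\,\d\mu .
\end{equation*}
Now $p_\#\widehat\mu$ is a finite Borel measure on the compact Hausdorff space $L$. It is Radon because $p$ is continuous and $\widehat\mu$ is inner regular by compacts: for Borel $A\subseteq L$, compact subsets of $p^{-1}(A)$ map to compact subsets of $A$. Uniqueness in Riesz--Markov for Radon measures therefore gives $p_\#\widehat\mu=\mu$. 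Testing with $f=1$ gives $\widehat\mu(K)=\mu(L)$.

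\textbf{Main obstacle.} The delicate point is Step 2, specifically the measurability of $\overline g$ and the positivity of the extension. An alternative that sidesteps semicontinuity is the M.~Riesz extension theorem for positive functionals. Its hypothesis is that $E$ is cofinal in $C(K)$, which holds because constants lie in $E$. That yields a positive extension directly, and I would use it as the cleaner route.
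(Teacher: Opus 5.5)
Your proposal is correct and follows the same scheme as the paper's proof. Both define a functional on $p^*C(L)$, extend it to $C(K)$ by Hahn--Banach, apply Riesz--Markov, and identify $p_\#\widehat\mu$ by uniqueness. The paper gets positivity from a norm-preserving extension via $\widehat\Lambda(1)=\|\widehat\Lambda\|=\mu(L)$. You instead build positivity into the dominating functional $P$, which is upper semicontinuous because $p$ is a closed map, or use the M.~Riesz extension theorem, which applies since $E$ contains the constants; both work. Your explicit check that $p_\#\widehat\mu$ is Radon is left implicit in the paper, and it is what justifies the appeal to Riesz--Markov uniqueness when $L$ is not metrizable.
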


\begin{proof}
Because $p$ is onto, $f\circ p=g\circ p$ implies $f=g$,
and $\|f\circ p\|_{C(K)}=\|f\|_{C(L)}$.  Hence
\begin{equation*}
 \Lambda(f\circ p):=\int_L f\d\mu
\end{equation*}
is a well-defined linear functional on $p^*C(L)$.  It satisfies
\begin{equation*}
 |\Lambda(f\circ p)|\le\mu(L)\|f\circ p\|_{C(K)},\qquad
 \Lambda(1)=\mu(L),
\end{equation*}
so $\|\Lambda\|=\mu(L)$.  The real Hahn--Banach theorem extends
it to $\widehat\Lambda:C(K)\to\mathbb{R}$ with the same norm.
For $0\le g\le1$,
\begin{equation*}
 \widehat\Lambda(g)=\widehat\Lambda(1)-\widehat\Lambda(1-g)
 \ge\mu(L)-\|\widehat\Lambda\|\|1-g\|_\infty\ge0.
\end{equation*}
If $g\ge0$ is arbitrary and nonzero, apply this inequality to
$g/\|g\|_\infty$; if $g=0$, positivity is immediate.
Thus $\widehat\Lambda$ is positive.  Riesz--Markov gives a finite
positive Radon measure $\widehat\mu$ on $K$ such that
$\widehat\Lambda(g)=\int_Kg\d\widehat\mu$.
For every $f\in C(L)$,
\begin{equation*}
 \int_Lf\,d(p_\#\widehat\mu)
 =\int_Kf\circ p\d\widehat\mu
 =\widehat\Lambda(f\circ p)=\Lambda(f\circ p)=\int_Lf\d\mu.
\end{equation*}
Uniqueness in Riesz--Markov yields $p_\#\widehat\mu=\mu$.
Taking $f=1$ also gives $\widehat\mu(K)=\mu(L)$.
The extension need not be unique, so this argument asserts existence only.
\end{proof}

\subsubsection{Representation on the minimal boundary and projected uniqueness}
\label{subsubsec:glued-minimal-representation}

\begin{cor}[Representation and uniqueness under a gluing map]
\label{thm:common-refinement-representation}
\label{thm:metric-positive-representation}
\label{cor:inner-boundary-realization}
Under the hypotheses of Theorem \ref{prop:direct-refined-representation},
let $u>0$ be harmonic and let $\widehat\mu_u$ be any weak limit
constructed there, with the same relabeling of the exhaustion.  Then
\begin{equation}\label{eq:direct-glued-integral}
 \mu_u:=(p_K)_\#\widehat\mu_u
\end{equation}
is the unique finite positive Borel measure on
$\partial_{\mathrm m}^KD$ satisfying
\begin{equation}\label{eq:metric-kernel-representation}
 u(x)=\int_{\partial_{\mathrm m}^KD}K_D(x,\eta)\,\d\mu_u(\eta),
 \qquad \mu_u(\partial_{\mathrm m}^KD)=u(x_0).
\end{equation}
Conversely, every finite nonzero positive Borel measure on
$\partial_{\mathrm m}^KD$ defines a positive harmonic function by
this integral.

A finite positive Radon measure $\widehat\nu$ on $\widehat D$,
carried by $\widehat\partial_{\mathrm m}D$, represents $u$ through
$\widehat K_D$ if and only if
\begin{equation}\label{eq:unique-kernel-pushforward}
 (p_K)_\#\widehat\nu=\mu_u.
\end{equation}
In particular, all weak limits of $(\lambda_n^u)$ on $\widehat D$
have the same push-forward.  The projected sequence satisfies
\begin{equation}\label{eq:lambda-weak}
 (p_K)_\#\lambda_n^u\Longrightarrow\mu_u
 \quad\text{weakly on }D^K.
\end{equation}
Since $p_K|_D=\operatorname{id}_D$, this also reads
$\lambda_n^u\Longrightarrow\mu_u$ when the inner-boundary measures
are regarded as measures on $D^K$.  On the refinement the full
sequence need not converge, and representing measures need not be
unique.  The exact identities
\begin{equation}\label{eq:lambda-integral-identity}
 \lambda_n^u(\widehat D)=u(x_0),\qquad
 B_n(x):=\int_{\partial D_n}K_{D_n}(x,y)\,\d\lambda_n^u(y)=u(x)
 \quad(x\in D_n)
\end{equation}
give $B_n\to u$ locally uniformly on $D$.
\end{cor}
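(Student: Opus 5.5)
The plan is to reduce everything to the unique measure $\sigma_u$ on $\mathcal E(D)$ from Lemma \ref{lem:finite-minimal-concentration}, transported to $\partial_{\mathrm m}^KD$ through the Borel bijection $\kappa$ of (B). Put $\mu^\ast:=(\kappa^{-1})_\#\sigma_u$, a finite positive Borel measure on $\partial_{\mathrm m}^KD$ with total mass $u(x_0)$.

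\textbf{Step 1: the push-forward of a weak limit equals $\mu^\ast$.} By Theorem \ref{prop:direct-refined-representation}, $\widehat\mu_u$ is carried by $\widehat\partial_{\mathrm m}D$ and represents $u$ through $\widehat K_D=K_D(\cdot,p_K(\cdot))$. Corollary \ref{cor:representation-under-gluing}, with $q=p_K$ and $K_q=K_D$, then shows that $\mu_u=(p_K)_\#\widehat\mu_u$ represents $u$ on $\partial_{\mathrm m}^KD$; the mass follows by evaluating at $x_0$. More directly, the proof of that theorem identifies $(p_K\times\mathrm{id})_\#\Lambda$ with $(g\mapsto(\eta_g,g))_\#\sigma_u$. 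Its first marginal is $\mu^\ast$, so $\mu_u=\mu^\ast$ independently of the chosen subsequence.

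\textbf{Step 2: uniqueness and the converse.} Let $\mu$ be any finite positive Borel measure on $\partial_{\mathrm m}^KD$ representing $u$. Then $\kappa_\#\mu$ is carried by $\mathcal E(D)$ and represents $u$ by (\ref{eq:pushforward-integration}). The uniqueness part of Lemma \ref{lem:finite-minimal-concentration} gives $\kappa_\#\mu=\sigma_u$, and injectivity of $\kappa$ gives $\mu=\mu^\ast$. For the converse, given $\mu\ne0$, set $\tau=\kappa_\#\mu$ on $H$. The last assertion of Lemma \ref{lem:joint-inner-limits} with $A=H$ yields a continuous, harmonic, strictly positive function.

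\textbf{Step 3: the refined characterization.} If $\widehat\nu$ is carried by $\widehat\partial_{\mathrm m}D$ and represents $u$ via $\widehat K_D$, Corollary \ref{cor:representation-under-gluing} shows that $(p_K)_\#\widehat\nu$ represents $u$ on $\partial_{\mathrm m}^KD$. Step 2 then forces $(p_K)_\#\widehat\nu=\mu_u$. Conversely, (\ref{eq:unique-kernel-pushforward}) and the lifting identity of the same corollary give the representation. Applying this to two weak limits shows that they have the same push-forward.

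\textbf{Step 4: full-sequence convergence.} For (\ref{eq:lambda-weak}), note that $(p_K)_\#\lambda_n^u$ is just $\lambda_n^u$ viewed on $D^K$, and these measures have constant mass. Take any subsequence and extract a further one converging on $\widehat D$. By Step 1 and continuity of $p_K$, its $D^K$-limit is $\mu_u$. A subsequence-of-subsequence argument then gives convergence of the full sequence. Alternatively, one can take the first marginal in (\ref{eq:joint-geometric-limit}) directly.

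\textbf{Step 5: local uniform convergence of $B_n$.} Identity (\ref{eq:lambda-integral-identity}) follows from (\ref{eq:joint-inner-exact}), so $B_n=u$ on $D_n$. Local uniform convergence is then trivial, since every compact set lies in $D_n$ for large $n$.

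The main obstacle is Step 1's subsequence independence, namely ensuring that $(p_K)_\#\widehat\mu_u$ is identified without appealing to the particular joint limit. Routing through uniqueness in Step 2 resolves it. The one remaining point is that Borel measurability of $\kappa^{-1}$ on standard Borel spaces is what makes $\kappa_\#$ and $(\kappa^{-1})_\#$ legitimate.
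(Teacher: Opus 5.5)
Your proposal is correct and follows essentially the paper's own proof. It transports any representing measure through $\kappa$ to $\sigma_u$ and inverts with the Borel map $\kappa^{-1}$ (you rightly note that this, not mere injectivity, is what is needed), characterizes refined representations via the push-forward integration identity, obtains the converse from the last assertion of Lemma~\ref{lem:joint-inner-limits}, and reads off $(p_K)_\#\lambda_n^u\Longrightarrow\mu_u$ by projecting \eqref{eq:joint-geometric-limit}; your subsequence-of-subsequences argument is just an equivalent alternative for that last step.
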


\begin{proof}
Theorem \ref{prop:direct-refined-representation} first supplies
$\widehat\mu_u$ and its representation of $u$.  Since it is carried
by $p_K^{-1}(\partial_{\mathrm m}^KD)$, its push-forward is carried
by $\partial_{\mathrm m}^KD$.  The push-forward integration identity
gives
\begin{equation*}
 \begin{aligned}
 u(x)
 &=\int_{\widehat\partial_{\mathrm m}D}
       K_D(x,p_K(\widehat\eta))\,\d\widehat\mu_u(\widehat\eta)\\
 &=\int_{\partial_{\mathrm m}^KD}K_D(x,\eta)\,\d\mu_u(\eta).
 \end{aligned}
\end{equation*}
Evaluation at $x_0$ gives the normalization in
\eqref{eq:metric-kernel-representation}.

If $\nu$ is another finite positive measure giving this representation,
then, for every $x\in D$,
\begin{equation*}
 \int_{\mathcal E(D)}g(x)\,\d(\kappa_\#\nu)(g)
 =\int_{\partial_{\mathrm m}^KD}K_D(x,\eta)\,\d\nu(\eta)=u(x).
\end{equation*}
Lemma \ref{lem:finite-minimal-concentration} therefore gives
\begin{equation*}
 \kappa_\#\nu=\sigma_u=\kappa_\#\mu_u.
\end{equation*}
Applying the Borel inverse $\kappa^{-1}$ proves $\nu=\mu_u$.
The same calculation shows that any refined representing measure has
push-forward $\mu_u$.  Conversely, if
$(p_K)_\#\widehat\nu=\mu_u$, the first displayed calculation,
read in the opposite direction, proves its refined representation.
This establishes \eqref{eq:unique-kernel-pushforward} without imposing
uniqueness within any fiber of $p_K$.

For the converse, let $\nu$ be a finite nonzero positive Borel
measure on $\partial_{\mathrm m}^KD$.  Its push-forward
$\tau=\kappa_\#\nu$ is a finite measure on $H$, carried by
$\mathcal E(D)$, and
\begin{equation*}
 v(x):=\int_{\partial_{\mathrm m}^KD}K_D(x,\eta)\,\d\nu(\eta)
      =\int_Hg(x)\,\d\tau(g).
\end{equation*}
The integral assertion of Lemma \ref{lem:joint-inner-limits},
with $A=H$, shows that $v$ is finite, continuous, strictly positive,
and harmonic, with $v(x_0)=\nu(\partial_{\mathrm m}^KD)$.

Equation \eqref{eq:joint-geometric-limit}, projected onto $D^K$,
already gives
\begin{equation*}
 (p_K)_\#\lambda_n^u\Longrightarrow
 (\kappa^{-1})_\#\sigma_u=\mu_u.
\end{equation*}
This proves \eqref{eq:lambda-weak} along the chosen sequence,
and along the original full sequence when (C) holds.  Finally,
\eqref{eq:lambda-integral-identity} is the exact inner-boundary
identity \eqref{eq:joint-inner-exact}.  Every compact subset of $D$
is contained in $D_n$ for all sufficiently large $n$, so the asserted
local uniform convergence of $B_n$ follows.
\end{proof}

\subsubsection{Concentration variants and the scope of the conclusions}
\label{subsubsec:representation-scope}

\begin{rem}[The concentration required for canonical limits]
\label{rem:common-subsequence-concentration}
Condition $(C')$ suffices for the refined representation and projected
uniqueness, with canonical limits taken along its common subsequence.
Under (C), that subsequence can be the full sequence.  No conclusion
about all weak limits of the original sequence is made under $(C')$
alone.  Allowing a different subsequence for each $\eta$ does not
justify the dominated-convergence step for an arbitrary $u$.

There is also a necessary condition for the stronger conclusion.
Under (A1)--(A2) and (B), suppose that, for every positive harmonic
$u$, every weak limit of the original $(\lambda_n^u)$ on
$\widehat D$ gives its representation through $\widehat K_D$ on
$\widehat\partial_{\mathrm m}D$.  Fix
$\eta\in\partial_{\mathrm m}^KD$ and take $u=K_D(\cdot,\eta)$.
For any such limit $\widehat\mu$, the measure
$\kappa_\#(p_K)_\#\widehat\mu$ represents the minimal function
$\kappa(\eta)$ on $\mathcal E(D)$.  Lemma
\ref{lem:finite-minimal-concentration} therefore gives
\begin{equation*}
 \kappa_\#(p_K)_\#\widehat\mu=\delta_{\kappa(\eta)},
 \qquad (p_K)_\#\widehat\mu=\delta_\eta.
\end{equation*}
Every subsequence of $(\lambda_n^u)$ has a further weak limit on
$\widehat D$.  Since $p_K$ is continuous, all projected limits are
$\delta_\eta$.  Compactness now yields
\begin{equation*}
 K_D(\cdot,\eta)\,\omega_{x_0}^{D_n}
 \Longrightarrow\delta_\eta\quad\text{on }D^K,
\end{equation*}
which is (C).  Thus (C) cannot be removed while retaining the stated
canonical-limit conclusion for every $u$ and the full sequence.
\end{rem}

\begin{rem}\label{rem:direct-route-assumptions}
The minimality condition (M) concerns the approximating kernels.
Concentration on the minimal kernels of $D$ follows instead from
Lemma \ref{lem:finite-minimal-concentration}.  Condition (C)
identifies their position in $D^K$, while the common refinement retains the additional
boundary information.  Thus (M) does not replace (C).
The restriction of $p_K$ to the full compact boundaries is a closed
surjection.  Its restriction over the Borel minimal boundary is also
a closed surjection.  Indeed, a relatively closed set in
$p_K^{-1}(\partial_{\mathrm m}^KD)$ is $F\cap
p_K^{-1}(\partial_{\mathrm m}^KD)$ for a closed boundary set $F$,
and its image is $p_K(F)\cap\partial_{\mathrm m}^KD$.
This is relatively closed because $p_K(F)$ is compact.
The restricted map is therefore a quotient map and defines the push-forward in \eqref{eq:direct-glued-integral};
compactness of that minimal boundary is not assumed.
\end{rem}

\begin{rem}[Existence without concentration]
\label{rem:representation-without-concentration}
Corollary \ref{thm:common-refinement-representation} follows the
refined-limit construction under $(C')$.  Its existence and uniqueness assertions, without the conclusions
about canonical position limits or synchronized approximation, require
only (A1)--(A2) and the boundary data (B).  Indeed, Lemma
\ref{lem:finite-minimal-concentration} constructs $\sigma_u$ from the
inner-kernel measures.  Set $\mu_u=(\kappa^{-1})_\#\sigma_u$.
Then
\begin{equation*}
 \int_{\partial_{\mathrm m}^KD}K_D(x,\eta)\,\d\mu_u(\eta)
 =\int_{\mathcal E(D)}g(x)\,\d\sigma_u(g)=u(x).
\end{equation*}
The uniqueness and converse calculations in the corollary use neither
(C) nor $(C')$.  Extending $\mu_u$ by zero to $D^K$, Lemma
\ref{lem:compact-measure-lifting} supplies a positive Radon lift
$\widehat\nu$ to $\widehat D$, and
\begin{equation*}
 \widehat\nu(\widehat D\setminus\widehat\partial_{\mathrm m}D)
 =\mu_u(D^K\setminus\partial_{\mathrm m}^KD)=0.
\end{equation*}
The push-forward identity then gives its refined representation.
This existence assertion does not identify the lift as a weak limit
of $(\lambda_n^u)$.
\end{rem}

For a fixed positive harmonic function $u$, it suffices to impose
(C), along the sequence being used, for
$\mu_u$-almost every $\eta$, where the measure $\mu_u$ is supplied
by Remark \ref{rem:representation-without-concentration}.  Indeed,
$\sigma_u$-almost every $g$ then satisfies the position concentration
used in \eqref{eq:joint-minimal-point-convergence}.  The integrands
in the proof of \eqref{eq:joint-geometric-limit} are bounded by
$\|F\|_\infty$, so the same dominated-convergence argument gives
the graph identification and all the conclusions for this $u$.
For a statement applying to every positive harmonic function, $(C')$
is imposed for all minimal boundary points along one common sequence.
The stronger (C) is used only for the conclusions along the original
full sequence.

Within the standing harmonic measure framework of this section, the
analytic assumptions, boundary data, and constructions have the following
distinct roles.  In particular, (A1)--(A2)
are not assumptions on an arbitrary unrelated family of measures.
\begin{center}
\small
\begin{tabular}{@{}p{0.23\textwidth}p{0.70\textwidth}@{}}
\toprule
Assumptions or data & Conclusions or purpose\\
\midrule
(A1)--(A2) & Compactness, lattice structure, and unique intrinsic representation.\\
$(K_\xi),(C_\xi)$ & Together with (A1)--(A2), synchronized point and kernel approximation.\\
Boundary data (B) & A formula on a boundary parametrizing the normalized minimal functions.\\
Common refinement & Two quotient maps and lifting of representing measures; supplied by construction.\\
(C) & The canonical-limit conclusions along the original full sequence in the chosen compactification.\\
$(C')$ & Refined representation from canonical limits along one common subsequence, as in Theorem \ref{prop:direct-refined-representation}.\\
(M) & Identification of selected inner-boundary points as minimal.\\
\bottomrule
\end{tabular}
\end{center}
Subsection \ref{subsec:split-disk} shows that $(C_\xi)$ can hold even
when concentration at $\xi$ fails in the same compactification.

The obstruction in Example \ref{ex:failure-boundary-compatibility}
persists even under (B).  For $D=(0,1)$, $x_0=1/2$,
$D^K=[0,1]$, and
$D_n=(\varepsilon_n,1-\varepsilon_n)$, with
$0<\varepsilon_n\downarrow0$ strictly and $\varepsilon_1<1/2$,
use the usual affine harmonic functions but set
\begin{equation*}
 \kappa(1)(x)=2(1-x),\qquad \kappa(0)(x)=2x.
\end{equation*}
These labels give the Borel bijection required in (B), while
\begin{equation*}
 \nu_n^1=(1-\varepsilon_n)\delta_{\varepsilon_n}
          +\varepsilon_n\delta_{1-\varepsilon_n}
 \Longrightarrow\delta_0.
\end{equation*}
A boundary sequence tending to $1$ must eventually equal
$1-\varepsilon_n$; its kernels tend to $2x$, whereas
$\kappa(1)=2(1-x)$.  Thus (B) preserves unique representation but
does not supply the topological compatibility required for selection.

\section{Verification of the hypotheses and examples}
\label{Sec5}

Section \ref{Sec4} established the abstract approximation and representation
results, together with their behavior under refinements and quotient maps.
We now identify concrete harmonic measure systems and verify their assumptions.
For the classical Laplacian we verify the nested framework,
{\rm(A1)--(A2)}, and the additional conditions {\rm(M)} and {\rm(C)}.
Known Martin boundary identification and convergence results verify
the boundary data {\rm(B)} and concentration {\rm(C)}.  Applying the
two-space common refinement then recovers Martin representation and
gives the stated approximation and measure-limit conclusions.  The Laplacian
and finite metric graph examples verify (C), hence also $(C')$; the
applications below therefore retain the original exhaustion, and their
full-sequence conclusions are unchanged.  The split-disk example
separates neighborhood nonvanishing from point concentration.  We then verify all the abstract hypotheses for finite weighted metric graphs and
illustrate boundary splitting and gluing.  The final subsection discusses
the basic harmonic-measure properties of a class of degenerate elliptic
operators.

\subsection{Verification for the classical Laplacian and Martin theory in
\texorpdfstring{$\mathbb{R}^n$}{R\^{}n}}

For the Laplacian, take the family of bounded connected smooth
domains.  Dirichlet solutions give Radon harmonic measures satisfying
$\omega_x^U(\partial U)=1$; the positive Poisson kernels imply
mutual absolute continuity at different interior points.  Smooth boundary points
are regular, and every such domain has a regular smooth inner exhaustion;
see \cite{Helm69}.  On compact subsets of $U$, the Poisson kernel
relative to a fixed reference point is bounded above and below by positive
constants.  Thus every integrable boundary function has a harmonic
extension, continuous in the interior.  More explicitly, truncate its
positive and negative parts; the relative-kernel upper bound controls
the integral of the truncation error uniformly on each compact subset.
This proves the continuity required in Definition \ref{H_m}.

We verify the nested identity (N) used in Section \ref{Sec4}.
Let $V\Subset U$ be two such domains and first take
$f\in C(\partial U)$.  The function
$v(x)=\omega_x^U(f)$ is classically harmonic on $U$ and continuous
on $\overline V$.  Uniqueness of the continuous Dirichlet problem
on $V$ gives
\begin{equation*}
 \omega_x^U(f)=\int_{\partial V}\omega_y^U(f)\,d\omega_x^V(y),
 \qquad x\in V.
\end{equation*}
For fixed $x\in V$, define a measure on $\partial U$ by
\begin{equation*}
 \rho_x(A):=\int_{\partial V}\omega_y^U(A)\,d\omega_x^V(y),
 \qquad A\subseteq\partial U\text{ Borel}.
\end{equation*}
The Poisson formula makes the integrand Borel, and monotone convergence
proves countable additivity.  The displayed Dirichlet identity gives
$\rho_x(f)=\omega_x^U(f)$ for every continuous $f$.
Both are finite Borel measures on the compact metric space
$\partial U$, so uniqueness in the Riesz representation theorem gives
$\rho_x=\omega_x^U$.  Integration first for simple functions and
then for bounded Borel functions proves (N) in its stated form.
It also gives the identity for any absolutely integrable datum whenever
the integrals are defined, by Tonelli's theorem and positive and negative
parts.  In particular, all the nested identities used below have been
verified directly.

These local domains induce classical harmonicity: classical harmonic
functions satisfy their mean-value identities, and conversely the
identities on balls give the classical mean-value property.  Thus this
family satisfies the standing framework of Section \ref{Sec4}.  The
verification does not invoke uniqueness for unbounded solutions on
arbitrary intersections of domains.

We now verify the assumptions of Section \ref{Sec4}.
Let $D\subseteq\mathbb{R}^d$, $d\geq2$, be a Greenian domain and fix
$x_0\in D$.  Let $\overline D^{\,E}$ be the closure of $D$ in
the one-point compactification of $\mathbb{R}^d$; for bounded $D$,
this is its ordinary Euclidean closure.  Write $D^M$ for the Martin
compactification and set
\begin{equation}\label{eq:euclidean-martin-common-refinement}
 \widehat D=\overline D^{\,E\vee M}
 :=
 \overline{\{(x,x):x\in D\}}^{\,\overline D^{\,E}\times D^M}.
\end{equation}
The two coordinate projections are
\begin{equation*}
 p_E:\widehat D\longrightarrow\overline D^{\,E},\qquad
 p_M:\widehat D\longrightarrow D^M.
\end{equation*}
Proposition \ref{prop:common-refinement-projections} gives boundary
quotient maps from $\widehat\partial D$ onto
$\partial^ED:=\overline D^{\,E}\setminus D$ and
$\partial^MD:=D^M\setminus D$.  Identifying points with the same
$p_E$-image recovers the Euclidean boundary (including the point at
infinity when it occurs); identifying points with the same $p_M$-image
recovers the Martin boundary.  In the abstract notation take
$D^G:=\overline D^{\,E}$ and $p_G:=p_E$, together with
\begin{equation}\label{eq:kernel-martin-identification}
 D^K:=D^M,\qquad
 \partial_{\mathrm m}^{K}D:=\partial_{\mathrm m}^{M}D,\qquad
 p_K:=p_M,
\end{equation}
and let $K_D$ be the normalized Martin kernel.
Thus the abstract compactification topology is the Martin topology in
this example, and
$\widehat{\partial}_{\mathrm m}D
=p_M^{-1}(\partial_{\mathrm m}^{M}D)$.

\begin{prop}[Verification of the analytic and boundary assumptions]
\label{prop:euclidean-analytic-boundary-data}
There is an exhaustion of $D$ by bounded connected $C^2$ domains
containing $x_0$ which satisfies {\rm(A1)--(A2)} and {\rm(M)}.
The identifications in \eqref{eq:kernel-martin-identification}
give the boundary data {\rm(B)}; the common refinement
\eqref{eq:euclidean-martin-common-refinement} supplies its two projections.  In particular,
$\mathcal{H}_1(D)$ is compact and metrizable by Lemma
\ref{lem:derived-kernel-compactness}.
\end{prop}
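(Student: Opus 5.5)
The plan is to build the exhaustion explicitly, check (A1)--(A2) and (M) from classical facts about the Dirichlet problem on smooth bounded domains, and then take (B) from the Martin theory of Greenian domains. Compactness of $\mathcal H_1(D)$ will then follow formally from Lemma \ref{lem:derived-kernel-compactness}.

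\textbf{Step 1: the exhaustion (A1).} Choose a smooth exhaustion function of $D$. Concretely, take a $C^\infty$ function $\phi$ that tends to $+\infty$ at $\partial^ED$; for an unbounded $D$ this includes the point at infinity. By Sard's theorem there are regular values $t_1<t_2<\cdots\uparrow\infty$ with $t_1>\phi(x_0)$. Each sublevel set $\{\phi<t_n\}$ is a bounded open set with $C^\infty$ boundary, relatively compact in $D$. Let $D_n$ be its component containing $x_0$. Connectedness of $D$ and path compactness show $\bigcup_n D_n=D$: every compact path from $x_0$ to a point lies in $\{\phi<t_n\}$ for large $n$. The same argument, with the values $t_n$ strictly increasing, gives $\overline{D_n}\subset\{\phi\le t_n\}\subset\{\phi<t_{n+1}\}$. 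Since $\overline{D_n}$ is connected and contains $x_0$, it lies in $D_{n+1}$. Smooth boundary points are regular, and the earlier verification places these domains in the standing framework, so (A1) holds.

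\textbf{Step 2: the kernel conditions (A2) and (M).} On a bounded $C^2$ domain $U$, the harmonic measure is $\omega_x^U=P_U(x,y)\,d\sigma(y)$, where $P_U=-\partial_{\nu_y}G_U(x,y)$. This Poisson kernel is jointly continuous and strictly positive on $U\times\partial U$, by the Hopf lemma and standard boundary regularity of Green's function. Hence
\begin{equation*}
K_U(x,y)=\frac{P_U(x,y)}{P_U(x_0,y)}
\end{equation*}
is a jointly continuous version of the relative density, which gives (A2).

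For (M), I would use that for $C^2$ (indeed Lipschitz) domains the Martin boundary coincides with the Euclidean boundary, every boundary point is minimal, and the Martin kernel is $K_U$. This is the Hunt--Wheeden/Dahlberg result cited in the introduction \cite{Hunt70,Dahl79}. Hence $\partial_{\mathrm m}D_n=\partial D_n$.

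This is the step I expect to carry the real analytic content. It is a citation rather than a proof, and its hypotheses must be matched to the paper's definition of minimality. That definition compares with nonnegative harmonic functions in the system sense, and the matching holds because system harmonicity equals classical harmonicity, as established above.

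\textbf{Step 3: the boundary data (B) and compactness.} Classical Martin theory for Greenian $D$ gives the following facts:
\begin{itemize}
\item $D^M$ is a compact metrizable compactification of $D$.
\item The minimal Martin boundary $\partial_{\mathrm m}^MD$ is a $G_\delta$, hence Borel.
\item $\eta\mapsto K_D(\cdot,\eta)$ is continuous and injective from $\partial^MD$ into $\mathcal H_1(D)$.
\item This map sends $\partial_{\mathrm m}^MD$ onto the normalized minimal functions, which by Lemma \ref{lem:minimal-extreme} are exactly $\mathcal E(D)$.
\end{itemize}
A continuous injection restricted to a Borel set is a Borel bijection onto its image. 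Thus $\kappa(\eta)=K_D(\cdot,\eta)$ satisfies (B).

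The common refinement and its projections come directly from Proposition \ref{prop:common-refinement-projections}, applied with $D^G=\overline D^{\,E}$; this is a compact metrizable space because of the one-point compactification. Finally, (A1)--(A2) together with Lemma \ref{lem:derived-kernel-compactness} give that $\mathcal H_1(D)$ is compact and metrizable.
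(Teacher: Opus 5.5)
Your proposal is correct and follows essentially the same route as the paper. Like the paper, you use a smooth proper exhaustion function cut at regular values for (A1), Poisson-kernel quotients $P_n(x,y)/P_n(x_0,y)$ for (A2), and the classical identification of the minimal Martin boundary of each smooth $D_n$ with $\partial D_n$ for (M). You then take (B) from classical Martin theory and conclude with Proposition \ref{prop:common-refinement-projections} and Lemma \ref{lem:derived-kernel-compactness}. Your direct observation that $\overline{D_n}\subset\{\phi\le t_n\}\subset\{\phi<t_{n+1}\}$ places the connected set $\overline{D_n}$ inside $D_{n+1}$ is a slightly cleaner replacement for the paper's passage to a subsequence.
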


\begin{proof}
Choose a proper smooth exhaustion function on $D$, take regular
values tending to infinity, and retain the component containing
$x_0$.  Connectedness of $D$ ensures that any compact path joining
$x_0$ to a prescribed point lies in one sufficiently large sublevel
set.  Passing to a subsequence gives bounded connected smooth domains
with $\overline{D_n}\subset D_{n+1}$ and union $D$.  This is (A1).

For each $D_n$, let $P_n(x,y)$ denote its classical Poisson
kernel relative to surface measure.  Smooth boundary regularity gives
strict positivity and joint continuity.  Consequently,
\begin{equation*}
 d\omega_x^{D_n}(y)=P_n(x,y)\,dS(y),\qquad
 K_{D_n}(x,y)=\frac{P_n(x,y)}{P_n(x_0,y)}.
\end{equation*}
The denominator is strictly positive for every $y\in\partial D_n$.
Thus the quotient is positive and jointly continuous, and
$K_{D_n}(x_0,y)=1$.  This verifies (A2).
Classical Poisson--Martin theory identifies the minimal Martin boundary
of $D_n$ with $\partial D_n$, so every boundary section is minimal.
This verifies (M).

For a Greenian Euclidean domain, Martin theory gives a compact
metrizable compactification $D^M$.  Its minimal boundary is Borel,
and the normalized minimal Martin kernels give a Borel bijection onto
$\operatorname{Ext}\mathcal{H}_1(D)$; see
\cite{Mart41,Helm69,Doob84}.  These facts verify (B) under
\eqref{eq:kernel-martin-identification}.  The topology of the chosen
$D^K$ is therefore precisely the Martin topology.

Finally, apply Proposition \ref{prop:common-refinement-projections}
with $D^G=\overline D^{\,E}$ and $D^K=D^M$.  It makes
$\overline D^{\,E\vee M}$ a compact metrizable common refinement,
with the two continuous quotient maps $p_E$ and $p_M$ and their
boundary restrictions.  Both maps equal the identity on $D$.
The compactness of $\mathcal H_1(D)$ is a consequence of
(A1)--(A2), rather than a further condition to check.
\end{proof}

The stronger full-sequence concentration condition (C) also holds in
this setting, so no passage to a common subsequence is needed before
applying the representation theorem.  Its verification is the only argument in the paper that uses the
probabilistic interpretation of harmonic measure.

\begin{lem}[Euclidean concentration at a minimal Martin point]
\label{lem:euclidean-minimal-concentration}
Let $(D_n)$ be an exhaustion of a Greenian domain
$D\subseteq\mathbb{R}^d$ by bounded connected $C^2$ domains satisfying
{\rm(A1)}.  Fix $\xi\in\partial_{\mathrm m}^{M}D$, put
$h=K_D(\cdot,\xi)$, and define a measure on $\partial D_n$ by
\begin{equation}\label{eq:euclidean-nu}
 \d\nu_n^\xi(y):=h(y)\,\d\omega_{x_0}^{D_n}(y).
\end{equation}
Then $\nu_n^\xi$ is a finite positive measure satisfying $\nu_n^\xi(\partial D_n)=1$, and
\begin{equation}\label{eq:euclidean-h5}
 \nu_n^\xi\Longrightarrow\delta_\xi
 \qquad\text{weakly on }D^M.
\end{equation}
Consequently, the identification $D^K=D^M$ in
\eqref{eq:kernel-martin-identification} verifies the global condition
{\rm(C)}, hence $(C')$ along the full sequence and $(C_\xi)$ at
every minimal boundary point.
When
$\nu_n^\xi$ is regarded as a measure on $\widehat D$, it satisfies
the fiber-concentration property \eqref{eq:fiber-concentration}.
\end{lem}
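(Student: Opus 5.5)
The plan is to prove normalization from the mean-value identity and weak convergence by the classical Martin-boundary probabilistic argument: the conditioned Brownian motion ($h$-transform). Normalization is immediate. Since $\overline{D_n}\subset D$ is compact and $h$ is harmonic and continuous there, the mean-value identity on the regular domain $D_n$ gives $\nu_n^\xi(\partial D_n)=\omega_{x_0}^{D_n}(h)=h(x_0)=K_D(x_0,\xi)=1$. Positivity and finiteness follow from $h>0$ and boundedness on $\partial D_n$.

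For \eqref{eq:euclidean-h5}, interpret $\omega_{x_0}^{D_n}$ as the exit distribution of Brownian motion $B_t$ from $D_n$ started at $x_0$, with exit time $\tau_n$. Then
\begin{equation*}
 \int f\,\d\nu_n^\xi=\mathbb E_{x_0}\bigl[f(B_{\tau_n})h(B_{\tau_n})\bigr]
 =\mathbb E^h_{x_0}\bigl[f(B_{\tau_n})\bigr],
\end{equation*}
where $\mathbb P^h_{x_0}$ is Doob's $h$-transformed law. This law is well defined on $\mathcal F_{\tau_n}$ for each $n$, because $h(B_{t\wedge\tau_n})$ is a bounded martingale with mean $h(x_0)=1$. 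The laws are consistent in $n$ since $\tau_n\uparrow\zeta$, the lifetime in $D$, so they define a single $h$-process up to $\zeta$.

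The key input, and the step I expect to be the main obstacle, is the Martin convergence theorem (Doob; see \cite{Doob84}). It says that for a minimal $\xi$, the $h$-process with $h=K_D(\cdot,\xi)$ converges almost surely in the Martin topology to $\xi$ as $t\uparrow\zeta$. I would cite this rather than reprove it, and check carefully that the minimality of $\xi$ is what makes the limit equal to $\xi$ almost surely, rather than merely $\nu$-distributed for some representing measure. This is where minimality enters.

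Granting this, $B_{\tau_n}\to\xi$ almost surely in $D^M$ under $\mathbb P^h_{x_0}$, because $\tau_n<\zeta$ and $\tau_n\uparrow\zeta$ as the $D_n$ exhaust $D$. For $f\in C(D^M)$, bounded convergence then gives $\int f\,\d\nu_n^\xi\to f(\xi)$, which is \eqref{eq:euclidean-h5}.

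The consequences then follow from the earlier results. Since $\xi\in\partial_{\mathrm m}^MD$ was arbitrary, (C) holds with $D^K=D^M$. As noted after $(C')$, this gives $(C')$ along the full sequence, and by the remark after \eqref{eq:weak-concentration} it gives $(C_\xi)$ at every minimal point. Finally, regarding $\nu_n^\xi$ as measures on $\widehat D$ with $(p_M)_\#\nu_n^\xi=\nu_n^\xi$, the equivalence established after \eqref{eq:nu-kernel-pushforward} turns weak convergence to $\delta_\xi$ on $D^K$ into the fiber-concentration property \eqref{eq:fiber-concentration}. Concretely, take any open $W\supseteq F_\xi$ and set $V=D^M\setminus p_M(\widehat D\setminus W)$. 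Then $\nu_n^\xi(W)\ge\nu_n^\xi(V)\to1$.
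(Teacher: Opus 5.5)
Your proposal is correct and takes essentially the same route as the paper. Both arguments use normalization from the mean-value identity on $D_n$, the stopped Doob $h$-transform identity $\int\varphi\,d\nu_n^\xi=\mathbb{E}^h_{x_0}[\varphi(X_{\tau_n})]$, Doob's almost-sure convergence of minimal $h$-paths to $\xi$ in the Martin topology, $\tau_n\uparrow\zeta$, and bounded convergence. The paper additionally checks $\mathbb{P}^h_{x_0}(\tau_n<\zeta)=1$ through the survival indicator and proves $\tau_n\uparrow\zeta$ by compactness of path segments, whereas you build these into your construction of the $h$-law or assert them. Your derivation of fiber concentration via $V=D^M\setminus p_M(\widehat D\setminus W)$ is the equivalence the paper records after the push-forward identity.
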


\begin{proof}
Since $h$ is harmonic on a neighborhood of $\overline{D_n}$ and
$h(x_0)=1$, the harmonic-measure representation gives
\begin{equation*}
 \nu_n^\xi(\partial D_n)
 =\int_{\partial D_n}h(y)\,\d\omega_{x_0}^{D_n}(y)
 =h(x_0)=1.
\end{equation*}

Let $(X_t)$ be Brownian motion killed upon leaving $D$, and let
$\mathbb{P}_{x_0}^{h}$ denote its Doob $h$-transform, whose lifetime is
denoted by $\zeta$.  If
\begin{equation*}
 \tau_n:=\inf\{t>0:X_t\notin D_n\},
\end{equation*}
then the stopped $h$-transform formula gives, for every bounded Borel
function $\varphi$ on $\partial D_n$,
\begin{equation}\label{eq:stopped-h-transform}
 \mathbb{E}_{x_0}^{h}[\varphi(X_{\tau_n})]
 =\frac{1}{h(x_0)}
   \mathbb{E}_{x_0}[h(X_{\tau_n})\varphi(X_{\tau_n})]
 =\int_{\partial D_n}\varphi(y)\,\d\nu_n^\xi(y).
\end{equation}
For completeness, $\tau_n<\zeta$ almost surely under the transformed
law.  The stopping formula with the survival indicator gives
\begin{equation*}
 \mathbb{P}_{x_0}^h(\tau_n<\zeta)
 =\frac1{h(x_0)}\int_{\partial D_n}h(y)\,d\omega_{x_0}^{D_n}(y)=1.
\end{equation*}
Here $h$ is bounded on $\overline{D_n}$, so the stopped change
of measure is justified on this relatively compact domain.
Consequently $X_{\tau_n}$ is defined and its distribution under
$\mathbb{P}_{x_0}^{h}$ is $\nu_n^\xi$.

Because $h=K_D(\cdot,\xi)$ is minimal, Doob's convergence theorem for
minimal $h$-paths \cite[Theorem~5.1]{Doob57} gives
\begin{equation*}
 X_t\longrightarrow\xi
 \quad\text{in }D^M\quad(t\uparrow\zeta),
 \qquad \mathbb{P}_{x_0}^{h}\text{-almost surely}.
\end{equation*}
The exit times increase because $D_n\subset D_{n+1}$.
For every $T<\zeta$, continuity makes $\{X_t:0\le t\le T\}$
a compact subset of $D$.  It is contained in some $D_N$, so
$\tau_n>T$ for all $n\ge N$.  Thus $\lim_n\tau_n\ge T$
for every $T<\zeta$, while $\tau_n\le\zeta$.
It follows that $\tau_n\uparrow\zeta$, and hence
\begin{equation*}
 X_{\tau_n}\longrightarrow\xi
 \quad\mathbb{P}_{x_0}^{h}\text{-almost surely}.
\end{equation*}
For $F\in C(D^M)$, the function $F$ is bounded because $D^M$ is
compact.  Therefore \eqref{eq:stopped-h-transform} and the bounded
convergence theorem yield
\begin{equation*}
 \int_{D^M}F(y)\,\d\nu_n^\xi(y)
 =\mathbb{E}_{x_0}^{h}[F(X_{\tau_n})]
 \longrightarrow F(\xi).
\end{equation*}
This is precisely \eqref{eq:euclidean-h5}.
\end{proof}

\begin{cor}[Verification of the full abstract assumptions]
\label{cor:euclidean-full-hypotheses}
For the exhaustion and boundary data above,
{\rm(A1)--(A2)}, {\rm(M)}, and {\rm(C)} hold, the boundary
parametrization is as in {\rm(B)}, and the two projections are
supplied by the common-refinement construction.
For every $\xi\in\partial_{\mathrm m}^{M}D$, the choice
$h_\xi=K_D(\cdot,\xi)$ satisfies $(K_\xi)$ and $(C_\xi)$.
Moreover, $\mathcal{H}_1(D)$ is a Choquet simplex.
\end{cor}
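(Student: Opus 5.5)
This corollary is an assembly step: it collects results already proved and checks that each abstract hypothesis is supplied by the Euclidean setting. I will go through the conclusions in the order they are stated.

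\textbf{The framework, (A1)--(A2), (M), and (B).} First I recall the verification preceding Proposition \ref{prop:euclidean-analytic-boundary-data}. It shows that bounded connected smooth domains, with their Dirichlet harmonic measures, form a family satisfying the standing measure assumptions, the regular inner exhaustion of each domain, and the nested identity (N). Proposition \ref{prop:euclidean-analytic-boundary-data} then supplies an exhaustion $(D_n)$ by bounded connected $C^2$ domains satisfying (A1)--(A2) and (M). It also identifies the data (B), taking $D^K=D^M$, $\partial_{\mathrm m}^KD=\partial_{\mathrm m}^MD$ and the normalized Martin kernel. The same proposition gives the two projections $p_E,p_M$ through Proposition \ref{prop:common-refinement-projections}; these come from the construction itself and require no check.

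\textbf{Condition (C), and $(K_\xi)$, $(C_\xi)$.} Lemma \ref{lem:euclidean-minimal-concentration}, applied to this same exhaustion, gives $\nu_n^\eta\Longrightarrow\delta_\eta$ on $D^M$ for every $\eta\in\partial_{\mathrm m}^MD$. This is exactly (C) along the full sequence. Now fix $\xi\in\partial_{\mathrm m}^MD$. Since $\kappa$ maps into $\mathcal E(D)$, the function $h_\xi=K_D(\cdot,\xi)=\kappa(\xi)$ lies in $\mathcal E(D)$, and $\xi\in D^M\setminus D$. This is $(K_\xi)$. Condition $(C_\xi)$ then follows from the weak convergence just obtained: for every open neighborhood $V$ of $\xi$, Portmanteau gives $\nu_n^\xi(V)\to1$.

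\textbf{The simplex property.} I invoke Lemma \ref{lem:harmonic-lattice}, whose only hypotheses are (A1)--(A2), and these are now verified. It yields that $\mathcal H_1(D)$ is a Choquet simplex in the stated sense: each $h\in\mathcal H_1(D)$ has a unique representing measure carried by $\mathcal E(D)$, which comes from Lemma \ref{lem:finite-minimal-concentration}.

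\textbf{Where care is needed.} No step is a genuine obstacle. The one point to watch is consistency: Lemma \ref{lem:euclidean-minimal-concentration} must be applied to the very exhaustion produced in Proposition \ref{prop:euclidean-analytic-boundary-data}. This is legitimate, because that lemma holds for any $C^2$ exhaustion satisfying (A1). It is also why the full-sequence conclusions hold without passing to a subsequence.
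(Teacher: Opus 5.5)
Your proposal is correct and follows essentially the same route as the paper. It takes (A1)--(A2), (M), and (B) from Proposition \ref{prop:euclidean-analytic-boundary-data}, the projections from Proposition \ref{prop:common-refinement-projections}, and (C) from Lemma \ref{lem:euclidean-minimal-concentration}, with Portmanteau giving $(C_\xi)$; then $(K_\xi)$ follows from $K_D(\cdot,\xi)$ being a normalized minimal function, and the simplex property from Lemma \ref{lem:harmonic-lattice} using only (A1)--(A2).
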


\begin{proof}
The exhaustion and relative kernels in Proposition
\ref{prop:euclidean-analytic-boundary-data} satisfy (A1)--(A2) and (M).
The Martin boundary identification gives the Borel parametrization (B),
and Proposition \ref{prop:common-refinement-projections} supplies
the two projections from the common refinement.
For a fixed minimal point $\xi$, Lemma
\ref{lem:euclidean-minimal-concentration} gives
$\nu_n^\xi\Longrightarrow\delta_\xi$, which is (C).
If $V$ is an open neighborhood of $\xi$, Portmanteau gives
\begin{equation*}
 1\ge\limsup_n\nu_n^\xi(V)
 \ge\liminf_n\nu_n^\xi(V)\ge\delta_\xi(V)=1.
\end{equation*}
Hence $\nu_n^\xi(V)\to1$, in particular $(C_\xi)$.
The minimality and normalization of $K_D(\cdot,\xi)$ give
$(K_\xi)$.  Finally Lemma \ref{lem:harmonic-lattice}, applied with
(A1)--(A2), proves the simplex assertion independently of (C).
\end{proof}

\begin{cor}[Classical representation and synchronized approximation]
\label{cor:classical-martin-representation}
\label{cor:classical-synchronized}
\label{cor:euclidean-measure-realization}
For a Greenian domain $D\subseteq\mathbb{R}^d$, use the exhaustion
and compactifications above.  Let $u>0$ be harmonic and put
\begin{equation}\label{eq:explicit-euclidean-lambda}
 d\lambda_n=u\,d\omega_{x_0}^{D_n}.
\end{equation}
Then the following hold.
\begin{enumerate}
\item The measures $\lambda_n$, regarded as measures on $\widehat D$,
have weakly convergent subsequences.  Every limit $\widehat\mu_u$
is carried by $\widehat\partial_{\mathrm m}D$ and satisfies
\begin{equation}\label{eq:euclidean-refined-representation}
 u(x)=\int_{\widehat\partial_{\mathrm m}D}
 K_D(x,p_M(\widehat\eta))\,d\widehat\mu_u(\widehat\eta),
 \qquad \widehat\mu_u(\widehat D)=u(x_0).
\end{equation}
For $\widehat\mu_u$-almost every $\widehat\eta$, a subsequence of
the indices defining this limit and points $y_j\in\partial D_{n_j}$
can be chosen such that
\begin{equation*}
 y_j\longrightarrow\widehat\eta\text{ in }\widehat D,\qquad
 K_{D_{n_j}}(\cdot,y_j)\longrightarrow K_D(\cdot,p_M(\widehat\eta))
 \text{ locally uniformly in }D.
\end{equation*}
No uniqueness of $\widehat\mu_u$ is asserted.
\item The gluing map defines $\mu_u=(p_M)_\#\widehat\mu_u$.
This is the unique finite positive Borel measure on
$\partial_{\mathrm m}^MD$ such that
\begin{equation}\label{eq:classical-martin-representation}
 u(x)=\int_{\partial_{\mathrm m}^MD}K_D(x,\eta)\,d\mu_u(\eta),
 \qquad \mu_u(\partial_{\mathrm m}^MD)=u(x_0).
\end{equation}
Conversely, every finite nonzero positive Borel measure on this boundary
defines a positive harmonic function by this integral.  Moreover,
\begin{equation*}
 \lambda_n(D^M)=u(x_0),\qquad
 \lambda_n\Longrightarrow\mu_u\quad\text{weakly on }D^M,
\end{equation*}
and
\begin{equation}\label{eq:euclidean-exact-inner-representation}
 u(x)=\int_{\partial D_n}K_{D_n}(x,y)\,d\lambda_n(y),\quad x\in D_n.
\end{equation}
\item For each prescribed $\xi\in\partial_{\mathrm m}^MD$, one can
select $\xi_n\in\partial D_n=\partial_{\mathrm m}D_n$ for all
sufficiently large $n$ so that
\begin{equation*}
 \xi_n\to\xi\text{ in }D^M,\qquad
 K_{D_n}(\cdot,\xi_n)\to K_D(\cdot,\xi)
 \text{ in }C_{\mathrm{loc}}^k(D)\quad(k\ge0).
\end{equation*}
On the common refinement, their distance to $p_M^{-1}(\xi)$ tends to
zero.  A subsequence converges to a point of that fiber; the full
sequence does so when the fiber is a singleton.  The positive continuous
interpolation of Corollary \ref{cor:positive-continuous-interpolation}
also holds.  This assertion does not prescribe an arbitrary point of
the fiber as the limit of the full sequence.
\end{enumerate}
\end{cor}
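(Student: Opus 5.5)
Most of this corollary is an instantiation of the abstract results in Section \ref{Sec4}, so the plan is to check hypotheses once and then read off each part. By Corollary \ref{cor:euclidean-full-hypotheses}, the exhaustion of Proposition \ref{prop:euclidean-analytic-boundary-data} satisfies (A1)--(A2), (M) and the full-sequence condition (C); the Martin data give (B) with $D^K=D^M$, $p_K=p_M$, and $D^G=\overline D^{\,E}$. Since (C) holds along the original sequence, Theorem \ref{prop:direct-refined-representation} applies without relabeling. Part (1) is then exactly that theorem with $\widehat K_D(x,\widehat\eta)=K_D(x,p_M(\widehat\eta))$. This covers existence of weak limits, the carrier $\widehat\partial_{\mathrm m}D$, the representation \eqref{eq:euclidean-refined-representation}, and simultaneous approximation at $\widehat\mu_u$-almost every refined point along indices from the defining subsequence.

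For part (2), I invoke Corollary \ref{thm:common-refinement-representation}. It gives that $\mu_u=(p_M)_\#\widehat\mu_u$ is the unique finite positive Borel measure on $\partial_{\mathrm m}^MD$ representing $u$, together with the converse statement. Since (C) holds, it also gives $(p_M)_\#\lambda_n\Longrightarrow\mu_u$ along the full sequence. Because $p_M|_D=\mathrm{id}$ and $\partial D_n\subset D$, this is $\lambda_n\Longrightarrow\mu_u$ on $D^M$. The mass identity $\lambda_n(D^M)=u(x_0)$ and formula \eqref{eq:euclidean-exact-inner-representation} are the exact inner identities \eqref{eq:lambda-integral-identity}. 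Classically, these also follow from the Poisson representation on the $C^2$ domain $D_n$ together with $K_{D_n}=P_n/P_n(x_0,\cdot)$.

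For part (3), take $h_\xi=K_D(\cdot,\xi)$. Corollary \ref{cor:euclidean-full-hypotheses} supplies $(K_\xi)$ and $(C_\xi)$, so Theorem \ref{thm:synchronized-kernel} yields $\xi_n\in\partial D_n$ with $\xi_n\to\xi$ in $D^M$ and $K_{D_n}(\cdot,\xi_n)\to K_D(\cdot,\xi)$ locally uniformly. Condition (M) places each $\xi_n$ in $\partial_{\mathrm m}D_n=\partial D_n$. The fiber-distance statement, the subsequence converging into $p_M^{-1}(\xi)$, and full convergence for a singleton fiber all come from Corollary \ref{cor:refinement-approximation}. The interpolation statement comes from Corollary \ref{cor:positive-continuous-interpolation}.

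The one genuinely new ingredient, and the expected main obstacle, is upgrading locally uniform convergence to $C^k_{\mathrm{loc}}$ convergence. The plan is to use interior estimates for classical harmonic functions. Fix a compact $L\subset D$ and $r>0$ with $L_r:=\{x:\operatorname{dist}(x,L)\le r\}\subset D$. For large $n$, $L_r\subset D_n$, and $w_n:=K_{D_n}(\cdot,\xi_n)-K_D(\cdot,\xi)$ is harmonic on a neighborhood of $L_r$, because $K_{D_n}(\cdot,y)=P_n(\cdot,y)/P_n(x_0,y)$ is harmonic in $D_n$. The Cauchy estimates $\sup_L|\partial^\alpha w_n|\le C_{\alpha,d}\,r^{-|\alpha|}\sup_{L_r}|w_n|$ then convert uniform convergence on $L_r$ into convergence of all derivatives up to order $k$ on $L$. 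The only care needed is to identify the abstract harmonicity of Section \ref{Sec4} with classical harmonicity, which the paper already records for this family of smooth domains.
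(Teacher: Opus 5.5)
Your proposal is correct and follows the paper's proof essentially step for step. You verify the hypotheses via Corollary~\ref{cor:euclidean-full-hypotheses} and then read off part (1) from Theorem~\ref{prop:direct-refined-representation} without relabeling, part (2) from Corollary~\ref{thm:common-refinement-representation}, and part (3) from Theorem~\ref{thm:synchronized-kernel} together with Corollaries~\ref{cor:refinement-approximation} and~\ref{cor:positive-continuous-interpolation}. The upgrade to $C^k_{\mathrm{loc}}$ convergence comes from interior derivative estimates applied to the classically harmonic differences, as in the paper. The only cosmetic difference is that you write the interior estimate explicitly on the closed $r$-neighborhood $L_r$, with constant $C_{\alpha,d}r^{-|\alpha|}$, where the paper uses a generic open set $U$ with $L\Subset U\Subset D$.
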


\begin{proof}
For the first assertion, take $D^K=D^M$ and
$\partial_{\mathrm m}^KD=\partial_{\mathrm m}^MD$.
The verified boundary data (B), conditions (A1)--(A2), (C), and (M),
and the common-refinement construction allow us to apply Theorem
\ref{prop:direct-refined-representation} with the original exhaustion:
(C) supplies its weaker condition $(C')$ with $n_j=j$.  It gives every
weak limit $\widehat\mu_u$, its support, and
\eqref{eq:euclidean-refined-representation}.  The simultaneous
approximation is the almost-everywhere subsequence assertion of that
theorem; (M) makes its inner kernels minimal.

Only after this refined representation has been obtained do we put
$\mu_u=(p_M)_\#\widehat\mu_u$.  The push-forward integration identity
gives
\begin{equation*}
 \begin{aligned}
 u(x)
 &=\int_{\widehat\partial_{\mathrm m}D}
       K_D(x,p_M(\widehat\eta))\,d\widehat\mu_u(\widehat\eta)\\
 &=\int_{\partial_{\mathrm m}^MD}K_D(x,\eta)\,d\mu_u(\eta).
 \end{aligned}
\end{equation*}
Corollary \ref{thm:common-refinement-representation} proves uniqueness
of this projected measure, the converse harmonicity assertion, and
$\lambda_n\Longrightarrow\mu_u$ on $D^M$.  In particular, all
refined limits have the same push-forward.  Normalization and the exact
inner formula can also be read directly from
\begin{equation*}
 \mu_u(\partial_{\mathrm m}^MD)=u(x_0)
 =\int_{\partial D_n}u\,d\omega_{x_0}^{D_n},\qquad
 \int_{\partial D_n}K_{D_n}(x,y)\,d\lambda_n(y)
 =\omega_x^{D_n}(u)=u(x).
\end{equation*}

For the third assertion, Lemma
\ref{lem:euclidean-minimal-concentration} gives $(C_\xi)$ at every
prescribed minimal Martin point.  Theorem
\ref{thm:synchronized-kernel} therefore supplies points on every
sufficiently late inner boundary with local uniform kernel convergence.
To obtain convergence of derivatives, fix $L\Subset U\Subset D$
with $U$ open.  For all sufficiently large $n$, the difference
$v_n=K_{D_n}(\cdot,\xi_n)-K_D(\cdot,\xi)$ is harmonic on a
neighborhood of $\overline U$.  Interior derivative estimates give,
for every multi-index $\alpha$,
\begin{equation*}
 \sup_L|\partial^\alpha v_n|
 \le C_{L,U,\alpha}\sup_{\overline U}|v_n|\longrightarrow0.
\end{equation*}
This proves the claimed local $C^k$ convergence.  Corollaries
\ref{cor:refinement-approximation} and
\ref{cor:positive-continuous-interpolation} give the fiber and
interpolation assertions.  This selection at a prescribed Martin point
is distinct from the almost-everywhere subsequence selection on the
refined boundary in the first assertion.
\end{proof}

\subsection{Splitting a disk boundary point}
\label{subsec:split-disk}
The following example separates neighborhood nonvanishing from point
concentration, and lifted representation from uniqueness after projection.
All computations use the classical disk Poisson kernel.

Let $D=\mathbb{D}$, $x_0=0$, and $D_n=r_n\mathbb{D}$, where
$0<r_n\uparrow1$ strictly.  Put
\begin{equation*}
 h(z)=\frac{1-|z|^2}{|1-z|^2},\qquad
 a(z)=\arctan\frac{\operatorname{Im}z}{1-\operatorname{Re}z},\qquad
 j(z)=(z,a(z)).
\end{equation*}
The function $h$ is the normalized minimal function associated with
$1\in\partial\mathbb{D}$.  Define
\begin{equation*}
 \widehat D=\overline{j(D)}
 \subset\overline{\mathbb{D}}\times[-\pi/2,\pi/2],\qquad p(z,t)=z.
\end{equation*}
The graph embedding identifies $D$ with an open dense subset of the
compact metric space $\widehat D$.  Away from $1$, $a$ extends
continuously to the circle, so each boundary fiber of $p$ is a singleton
except possibly
\begin{equation*}
 F=p^{-1}(1)=\{1\}\times[-\pi/2,\pi/2].
\end{equation*}
The sequences constructed below verify this equality, including its
endpoints.  Write $\xi_\alpha=(1,\alpha)$ and assign
$h_{\xi_\alpha}=h$ for every $\alpha\in[-\pi/2,\pi/2]$.

\begin{prop}[Nonvanishing without point concentration]
\label{prop:split-disk}
The disk exhaustion satisfies {\rm(A1)--(A2)} and {\rm(M)}.
For each $\xi_\alpha$, take $D^K=\widehat D$ in $(K_\xi)$.
Then $(C_{\xi_\alpha})$ holds and synchronized approximation is possible
along every sufficiently late inner circle.  Nevertheless, the measures
\begin{equation*}
 \widehat\nu_n=j_\#\bigl(h\,\omega_0^{D_n}\bigr)
\end{equation*}
satisfy
\begin{equation*}
 \widehat\nu_n\Longrightarrow\lambda_F\quad\text{weakly on }\widehat D,
 \qquad \lambda_F\ne\delta_{\xi_\alpha}\quad(\xi_\alpha\in F),
\end{equation*}
where $\lambda_F$ is normalized length measure on $F$.
Representing measures $\widehat\mu$ for $h$ on $F$ are nonunique,
but all satisfy $p_\#\widehat\mu=\delta_1$ on the original minimal boundary.
\end{prop}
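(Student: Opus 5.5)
Our plan is to compute the distribution of the angle coordinate $a(y)$ under $h\,\omega_0^{D_n}$ explicitly, and then derive each assertion from that computation together with the abstract results of Section \ref{Sec4}.

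\textbf{Hypotheses.} Conditions (A1)--(A2) and (M) were already verified for concentric disks in Example \ref{ex:failure-boundary-compatibility}.

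\textbf{Angle distribution.} Fix $n$ and write $y=r_ne^{i\theta}$. Then $\omega_0^{D_n}$ is $d\theta/2\pi$. As in Example \ref{ex:failure-boundary-compatibility},
\begin{equation*}
 h(y)=\frac{1-r_n^2}{|1-r_ne^{i\theta}|^2},
\end{equation*}
so $h\,\omega_0^{D_n}$ is the Poisson measure of $\mathbb{D}$ at the point $r_n$. The key observation is that $a(z)$ is the angle at the vertex $1$ between the ray from $1$ to $z$ and the ray from $1$ toward the center. For the disk, harmonic measure seen from $r_n$ pulled back through this angle should become uniform. Indeed, map $\mathbb{D}$ conformally to the upper half-plane by a Möbius map $\psi$ sending $1\mapsto\infty$ and $r_n\mapsto i$. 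The inscribed angle subtended at $1$ by a circle point then relates linearly to the argument of $\psi(y)$ viewed from the Cauchy point.

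I would first try to verify this directly: show that $\theta\mapsto a(r_ne^{i\theta})$ pushes the Poisson measure at $r_n$ to a measure on $(-\pi/2,\pi/2)$ converging to $dt/\pi$. There is a small subtlety: $y$ lies on the circle of radius $r_n$, not on $\partial\mathbb{D}$. I would handle it by writing $y=r_ne^{i\theta}$ and rescaling $\theta=(1-r_n)s$. In this scaling the Poisson measure converges to the Cauchy law $ds/(\pi(1+s^2))$ on $\mathbb{R}$, and
\begin{equation*}
 a\bigl(r_ne^{i(1-r_n)s}\bigr)\to\arctan s .
\end{equation*}
Since $\arctan$ pushes the Cauchy law to $dt/\pi$ on $(-\pi/2,\pi/2)$, this gives the limit. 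The mass away from $\theta=0$ tends to $0$ because $h\to0$ uniformly there. Hence
\begin{equation*}
 \widehat\nu_n\Longrightarrow\lambda_F ,
\end{equation*}
using that $p$-coordinates converge to $1$ and that $a$-coordinates have the stated law. This also shows that every point of $F$, including the endpoints, lies in the closure, which confirms $F=\{1\}\times[-\pi/2,\pi/2]$. Since $\lambda_F$ is diffuse, $\lambda_F\ne\delta_{\xi_\alpha}$.

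\textbf{Nonvanishing and selection.} Any open neighborhood $\widehat V$ of $\xi_\alpha$ contains a relatively open interval of $F$ of positive length, including the one-sided case at the endpoints. By Portmanteau,
\begin{equation*}
 \liminf_n\widehat\nu_n(\widehat V)\ge\lambda_F(\widehat V)>0,
\end{equation*}
which is $(C_{\xi_\alpha})$ with $D^K=\widehat D$ and $h_{\xi_\alpha}=h$. Theorem \ref{thm:synchronized-kernel} then gives synchronized approximation on every sufficiently late inner circle.

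\textbf{Nonuniqueness and projection.} For any probability measure $\widehat\mu$ on $F$,
\begin{equation*}
 \int_F h\,d\widehat\mu=h ,
\end{equation*}
because the kernel $h\circ p$ is constant on $F$. Hence $\lambda_F$ and every $\delta_{\xi_\alpha}$ all represent $h$, so representing measures on $F$ are nonunique. Any representing measure carried by $F$ pushes forward under $p$ to $\delta_1$. This is also consistent with the uniqueness statement of Corollary \ref{thm:common-refinement-representation} for the Euclidean/Martin disk.

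\textbf{Main obstacle.} The main obstacle is the rigorous asymptotic computation of the angle law, that is, controlling the discrepancy between $r_ne^{i\theta}$ and the unit circle in $a$. I expect uniform convergence of $a(r_ne^{i(1-r_n)s})$ to $\arctan s$ on compact $s$-sets, together with tightness from the Cauchy tails, to settle it.
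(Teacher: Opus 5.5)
Your proposal is correct. The main difference from the paper is how the limit law is computed.

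\textbf{The limit law.} The paper uses the exact substitution $t=\frac{1+r}{1-r}\tan\frac{\theta}{2}$. Under it, $h\,\omega_0^{D_n}$ is \emph{exactly} the Cauchy law $dt/(\pi(1+t^2))$ for every $n$. The paper then needs only the pointwise limit $j(r_ne^{i\theta_{r_n}(t)})\to(1,\arctan t)$ and dominated convergence against one fixed density, with no tightness step. Your rescaling $\theta=(1-r_n)s$ agrees with this to first order, but the density becomes $r_n$-dependent, so the tightness step you anticipate is genuinely required. The cleanest way to do it is the Jordan-type bound
\[
1-2r\cos\theta+r^2=(1-r)^2+4r\sin^2\tfrac{\theta}{2}\ge(1-r)^2\bigl(1+4rs^2/\pi^2\bigr),\qquad|\theta|\le\pi .
\]
It dominates the rescaled densities by $C/(1+s^2)$ for $r\ge1/2$; Scheffé's lemma also works.

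\textbf{The fiber and the selection.} You read off $F=\{1\}\times[-\pi/2,\pi/2]$ from the support of the weak limit. This is valid if you compute the limit in the ambient product $\overline{\mathbb D}\times[-\pi/2,\pi/2]$ and then use that every $\widehat\nu_n$ is carried by the closed set $\widehat D$. The paper instead exhibits explicit sequences, using $\theta_n=\pm\sqrt{1-r_n}$ at the endpoints. Those sequences double as explicit synchronized selections. You get selection from Theorem \ref{thm:synchronized-kernel} once $(C_{\xi_\alpha})$ is established, which is equally legitimate.

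\textbf{A caution about your heuristic.} The continuous extension of $a$ to $\partial\mathbb D\setminus\{1\}$ is $a(e^{i\phi})=\pi/2-\phi/2$ for $0<\phi<2\pi$. So pushing the Poisson measure at $r_n$ forward by $a$ on the \emph{unit} circle gives $\tfrac12(\delta_{-\pi/2}+\delta_{\pi/2})$ in the limit, not the uniform law. What produces $\lambda_F$ is that $y$ lies on the circle of radius $r_n$, at distance $1-r_n$ from $1$. That is exactly the scale at which $h\,\omega_0^{D_n}$ concentrates. It is not a small subtlety, and no argument that controls the discrepancy between $a(r_ne^{i\theta})$ and $a(e^{i\theta})$ can succeed. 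Your actual computation $a\bigl(r_ne^{i(1-r_n)s}\bigr)\to\arctan s$ uses the true radius and is fine; just drop the perturbative framing of the "main obstacle."

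\textbf{A notational slip.} In the nonuniqueness step, the refined kernel is $\widehat K(z,\eta)=K(z,p(\eta))$, which equals $h(z)$ for $\eta\in F$. Writing it as "$h\circ p$" is not right.
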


\begin{proof}
The inner circles are smooth and their normalized Poisson kernels are
strictly positive, jointly continuous, and minimal.  Thus (A1)--(A2) and
(M) hold.  Write $y=re^{i\theta}$, with $r=r_n$.  Since
$d\omega_0^{D_n}=d\theta/(2\pi)$, the inner measure has density
\begin{equation*}
 d\nu_n(y)=\frac{1-r^2}{1-2r\cos\theta+r^2}\frac{d\theta}{2\pi}.
\end{equation*}
The substitution
\begin{equation*}
 t=\frac{1+r}{1-r}\tan\frac\theta2,\qquad
 \theta_r(t)=2\arctan\left(\frac{1-r}{1+r}t\right)
\end{equation*}
can be checked by writing $b=(1-r)/(1+r)$.  Then
\begin{equation*}
 \tan(\theta/2)=bt,\quad
 d\theta=\frac{2b}{1+b^2t^2}\,dt,\quad
 1-2r\cos\theta+r^2
 =\frac{(1-r)^2(1+t^2)}{1+b^2t^2}.
\end{equation*}
Substitution gives the exact identity
\begin{equation*}
 \frac{1-r^2}{1-2r\cos\theta+r^2}\frac{d\theta}{2\pi}
 =\frac{dt}{\pi(1+t^2)}.
\end{equation*}
For each fixed real $t$, $re^{i\theta_r(t)}\to1$, and
\begin{equation*}
 \frac{r\sin\theta_r(t)}{1-r\cos\theta_r(t)}
 =\frac{2rt/(1+r)}{1+\frac{1-r}{1+r}t^2}\longrightarrow t.
\end{equation*}
Hence $j(re^{i\theta_r(t)})\to(1,\arctan t)$.
For $\varphi\in C(\widehat D)$, dominated convergence yields
\begin{equation}\label{eq:split-disk-limit}
 \begin{aligned}
 \int\varphi\d\widehat\nu_n
 &\longrightarrow\int_{\mathbb{R}}
 \varphi(1,\arctan t)\frac{dt}{\pi(1+t^2)}\\
 &=\frac1\pi\int_{-\pi/2}^{\pi/2}\varphi(1,\alpha)\,d\alpha
 =\int\varphi\,d\lambda_F.
 \end{aligned}
\end{equation}
Every open neighborhood $V$ of any $\xi_\alpha$ has
$\lambda_F(V)>0$.  Portmanteau therefore gives
$\liminf_n\widehat\nu_n(V)\ge\lambda_F(V)>0$, proving
$(C_{\xi_\alpha})$.  On the other hand,
$\lambda_F\ne\delta_{\xi_\alpha}$ for every $\alpha$.

For explicit synchronized selection, if $|\alpha|<\pi/2$, set
\begin{equation*}
 \theta_n=2\arctan\left(\frac{1-r_n}{1+r_n}\tan\alpha\right),
 \qquad y_n=r_ne^{i\theta_n}.
\end{equation*}
Then $j(y_n)\to\xi_\alpha$.  At the two endpoints take instead
$\theta_n=\pm\sqrt{1-r_n}$.  Since
\begin{equation*}
 \frac{r_n\sin\theta_n}{1-r_n\cos\theta_n}\longrightarrow\pm\infty,
\end{equation*}
these sequences tend to $\xi_{\pm\pi/2}$.  They also prove the asserted
fiber description.  In every case $y_n\to1$, and
\begin{equation}\label{eq:split-disk-kernels}
 K_{D_n}(z,y_n)=\frac{r_n^2-|z|^2}{|y_n-z|^2}
 \longrightarrow\frac{1-|z|^2}{|1-z|^2}=h(z)
\end{equation}
locally uniformly in $D$, because the denominators are bounded away
from zero on each fixed compact subset for all sufficiently large $n$.

Finally, on the refined boundary define
\begin{equation*}
 \widehat K(z,\eta)=\frac{1-|z|^2}{|p(\eta)-z|^2}.
\end{equation*}
This equals $h(z)$ on $F$.  Thus every positive Borel measure
$\sigma$ carried by $F$ with $\sigma(F)=1$ represents $h$:
\begin{equation*}
 h(z)=\int_F\widehat K(z,\eta)\d\sigma(\eta),\qquad
 p_\#\sigma=\delta_1.
\end{equation*}
In particular, different Dirac measures on $F$ and $\lambda_F$ give
different lifted representations.  On the original circle, the disk
kernels supply the boundary data (B), and the projected measures converge
to $\delta_1$, as follows either from the calculation above or from
Lemma \ref{lem:euclidean-minimal-concentration}.  Corollary
\ref{thm:common-refinement-representation}, with this circle as the
minimal boundary and $p$ as the gluing map, proves uniqueness of the
projected representation.

The set of representing measures on $F$ must be distinguished from the
limits of the specified inner-boundary measures.  For the radial
exhaustion used here, the full sequence
$\widehat\nu_n\Longrightarrow\lambda_F$, so every convergent
subsequence has that same limit.  In particular,
$\delta_{\xi_\alpha}$ represents $h$ but is not a subsequential limit
of this sequence, since $\lambda_F\ne\delta_{\xi_\alpha}$.
Thus nonuniqueness of refined representation does not imply
nonuniqueness of the canonical limit for a fixed exhaustion.
\end{proof}

The full refined boundary does not satisfy the injectivity in (B): every
point of $F$ has the same kernel.  This causes no conflict with Theorem
\ref{thm:synchronized-kernel}, which uses only the single-point data
$(K_\xi)$.  For representation, (B) is used on the original circle,
and the split compactification is the common refinement of itself
and the original circle compactification, with projection $p$.  In the original compactification
one still has $p_\#\widehat\nu_n\Longrightarrow\delta_1$.
Thus the distinction between nonvanishing and point concentration occurs
in the same refined compactification, whereas uniqueness belongs to the
projected minimal boundary.

\subsection{Verification for finite weighted metric graphs}
\label{subsec:finite-metric-graphs}

We give a finite graph model in which all the assumptions of Section
\ref{Sec4}, including global concentration (C), can be verified directly.
The underlying space includes the edge interiors.  The topology on a
finite vertex set alone would be discrete, and its adjacency boundary
would not be a topological boundary.

\subsubsection{Metric realization and the Dirichlet problem}

Let $G=(V,E)$ be a finite connected graph with positive symmetric edge
weights $\mu_e$.  Realize each edge $e$ as an interval of length
$\ell_e:=\mu_e^{-1}$, and identify its endpoints according to $G$.
Write $X$ for the resulting compact metric graph, equipped with its
path metric.  Subdivision of edges is allowed.  A function is harmonic
on an open subset of $X$ if it is continuous, affine on each edge
segment, and satisfies the Kirchhoff condition at every vertex in that
open set:
\begin{equation}\label{eq:graph-kirchhoff}
 \sum_{e\sim v}\partial_e u(v)=0.
\end{equation}
Here $\partial_e u(v)$ is the derivative along $e$, directed away from
$v$, and incident edge ends are counted separately.  For an entire edge
$e=vw$, its contribution is
\begin{equation*}
 \partial_e u(v)=\frac{u(w)-u(v)}{\ell_e}
 =\mu_e\bigl(u(w)-u(v)\bigr).
\end{equation*}
Thus \eqref{eq:graph-kirchhoff} is exactly the weighted discrete
harmonicity condition used in the original vertex description.  At a
vertex of degree one which lies in the open set it is the zero-derivative
condition.

The length convention also explains subdivision.  If an edge of
conductance $\mu_e$ is divided into lengths
$\lambda\ell_e$ and $(1-\lambda)\ell_e$, the new conductances are
$\mu_e/\lambda$ and $\mu_e/(1-\lambda)$.  For
\begin{equation*}
 u(x_\lambda)=(1-\lambda)u(v)+\lambda u(w)
\end{equation*}
we have
\begin{equation*}
 \frac{\mu_e}{\lambda}\bigl(u(v)-u(x_\lambda)\bigr)
 =\mu_e\bigl(u(v)-u(w)\bigr).
\end{equation*}
This identity proves preservation of the vertex equation, including
the case $u(v)=u(w)$; no division by a difference of function values
is needed.

\begin{lem}[Finite graph Dirichlet problem]\label{lem:metric-graph-dirichlet}
Let $U$ be a connected open subset of $X$ with finite nonempty
topological boundary.  Every function $f:\partial U\to\mathbb{R}$
has a unique continuous extension to $\overline U$ which is harmonic
in $U$.  The solution preserves order and constants, satisfies the
maximum principle, and attains its boundary data continuously.
\end{lem}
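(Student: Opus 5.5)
The plan is to reduce the problem to a finite linear system and prove that it has exactly one solution.

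\emph{Reduction.} Since $X$ is a finite metric graph and $\partial U$ is finite, subdivide edges by adding every point of $\partial U$ as a vertex. As noted above, subdivision preserves the Kirchhoff conditions, so nothing changes for harmonicity. After subdivision, $\overline U$ is a finite union of closed edge segments. The endpoints of these segments are either boundary points or vertices lying in $U$; an edge segment cannot end inside $U$ without a vertex there. A function that is continuous on $\overline U$, affine on each edge segment and harmonic in $U$ is then determined by its values at the finitely many vertices $v_1,\dots,v_k$ in $U$ and at the boundary points. On each segment we interpolate linearly. The Kirchhoff condition \eqref{eq:graph-kirchhoff} at $v_i$ becomes
\begin{equation*}
 \sum_{e=v_iw}\mu_e\bigl(u(w)-u(v_i)\bigr)=0,
\end{equation*}
where $\mu_e$ is the subdivided conductance. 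This is a square linear system $Au=b(f)$ in the unknowns $u(v_1),\dots,u(v_k)$. The right-hand side $b(f)$ is linear in $f$ with nonnegative coefficients.

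\emph{Uniqueness via the maximum principle.} It is enough to show that a solution with $f=0$ vanishes, since then $A$ is invertible and existence follows from dimension counting. Take such a solution and suppose its maximum $M>0$ over the vertex set is attained at some $v_i$. The Kirchhoff equation expresses $u(v_i)$ as a weighted average, with positive weights, of its neighbours' values. Hence every neighbour also has value $M$. Propagate this along segments. Because $U$ is connected and $\partial U$ is nonempty, some path inside $\overline U$ leads from $v_i$ to a boundary point, so $M=f=0$, a contradiction. Arguing the same way with the minimum gives $u=0$.

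I expect the main care to be needed in justifying that connectivity of $U$ together with $\partial U\neq\varnothing$ produces such a path through vertices and segments of $\overline U$. The point to check is that $U$ cannot be an entire component of $X$, because it has nonempty boundary and $X$ is connected. So, starting from any vertex, the closure of $U$ must eventually reach a boundary point.

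\emph{Remaining properties.} The averaging property gives the maximum principle directly:
\begin{equation*}
 \min f\le u\le\max f.
\end{equation*}
Order preservation follows by applying this to $f_1-f_2\ge0$, which gives $u_1-u_2\ge0$ by linearity. Constants are preserved by uniqueness, since a constant function is harmonic. Finally, continuous attainment of the boundary data holds because $u$ is affine on each closed segment ending at a boundary point $\zeta$ and takes the value $f(\zeta)$ there.

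Degree-one vertices in $U$ fit the same scheme: there the Kirchhoff condition is the zero-derivative condition, which forces the vertex value to equal its single neighbour's value. The averaging argument therefore still applies at such vertices.
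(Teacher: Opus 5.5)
Your proposal is correct, and it takes a genuinely different route to invertibility. You share the paper's reduction to a finite Kirchhoff system for the interior vertex values. The paper writes that system as $Az=y$ with $A_{ii}=k_i+\sum_j a_{ij}$ and $A_{ij}=-a_{ij}$. It proves $A$ positive definite from the energy identity $z^TAz=\sum_i k_iz_i^2+\frac12\sum_{i,j}a_{ij}(z_i-z_j)^2$, using that every component of the interior vertex network is joined to the boundary. It then proves the maximum principle separately by propagating an interior maximum along incident edges.

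You instead prove that the square matrix is injective by the discrete maximum principle and obtain existence by dimension counting. A single propagation argument thus gives you uniqueness, the maximum principle, and order preservation together. Your argument uses only positivity of the weights, not the symmetry $a_{ij}=a_{ji}$, so it would survive nonsymmetric coefficients. The paper's argument yields the stronger structural fact that $A$ is symmetric positive definite, which is a Dirichlet-energy statement, but it then needs the maximum principle as a second step.

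Both routes rest on the same connectivity input, which you rightly flagged as the delicate point. One small thing to make explicit in your averaging step: at a vertex all of whose incident ends are loops, the Kirchhoff equation is vacuous, because loops contribute zero, as the paper notes. After you subdivide at the boundary points, connectedness of $X$ together with $\partial U\neq\varnothing$ rules this case out, so every interior vertex has at least one non-loop neighbour with a positive weight.
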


\begin{proof}
Cut the finitely many edges at the boundary points and, when several
edge ends approach the same boundary vertex, treat those ends separately
and prescribe the same value $f$ on each of them.  Subdivide further
if necessary.  There are finitely many unknown interior vertex values.
After linear interpolation on each edge segment, the Kirchhoff equations
have the form $Az=y$, where
\begin{equation*}
 A_{ii}=k_i+\sum_j a_{ij},\qquad A_{ij}=-a_{ij}\quad(i\ne j).
\end{equation*}
Here, for $i\ne j$, $a_{ij}=a_{ji}\geq0$ is the total conductance
between interior vertices $i,j$; put $a_{ii}=0$.  An edge whose two
ends are the same vertex has zero contribution to its vertex equation.
The number $k_i\geq0$ is the total conductance from vertex
$i$ to prescribed boundary vertices.  For every real vector $z$,
\begin{equation}\label{eq:graph-energy-identity}
 z^TAz=\sum_i k_i z_i^2+
       \frac12\sum_{i,j}a_{ij}(z_i-z_j)^2.
\end{equation}
Every connected component of the interior vertex network is joined to
the boundary.  If the right-hand side vanishes, then $z_i=z_j$ whenever
$a_{ij}>0$, and $z_i=0$ whenever $k_i>0$.  Each component has
a path to a vertex with $k_i>0$; equality along that path forces
all its entries to be zero.  Thus $z=0$.
The matrix $A$ is positive definite, so the system has a unique
solution.  A segment with no interior vertex is determined directly by
its endpoint values.

An interior maximum of a harmonic function propagates along incident
edges: each outgoing derivative is nonpositive, and their sum is zero,
so all are zero.  Connectedness then propagates the maximum to the whole
domain and to its boundary.  This proves the maximum principle.
If two boundary data satisfy $f\le g$, their solution difference
has nonpositive boundary values.  A positive interior maximum would
propagate to the boundary, a contradiction; hence their solutions are
ordered.  A constant function satisfies every edge and vertex equation
and has the prescribed constant boundary data, so uniqueness proves
preservation of constants.  The affine extensions reach the assigned
boundary values, proving regularity.
\end{proof}

For such $U$ and $b\in\partial U$, denote by $H_b^U$ the solution
with boundary values $\chi_{\{b\}}$.  Define
\begin{equation}\label{eq:graph-harmonic-measure}
 \omega_x^U:=\sum_{b\in\partial U}H_b^U(x)\,\delta_b,\qquad x\in U.
\end{equation}
The maximum principle gives
\begin{equation*}
 H_b^U(x)>0,\qquad \sum_{b\in\partial U}H_b^U(x)=1,
 \qquad x\in U.
\end{equation*}
Indeed, an interior zero of a nonnegative solution forces it to vanish
identically, contradicting its boundary value at $b$.

\begin{prop}[The induced harmonic measure system]
\label{prop:metric-graph-system}
The measures \eqref{eq:graph-harmonic-measure}, for connected open sets
with finite nonempty boundary, form a regular closed Radon harmonic
measure system.  The induced harmonic functions of Definition
\ref{def:system-harmonic} are precisely the continuous edgewise affine
functions satisfying \eqref{eq:graph-kirchhoff}.
\end{prop}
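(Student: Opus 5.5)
We must verify, for the family of connected open sets $U\subseteq X$ with finite nonempty boundary: the pre-harmonic measure axioms; continuity; regularity of each domain; the compatibility of Definition \ref{def:harmonic-measure-system}; regular closedness; and the identification of induced harmonic functions. The measure axioms come straight from \eqref{eq:graph-harmonic-measure}. Each $\omega_x^U$ is a finite sum of Dirac masses with strictly positive weights $H_b^U(x)$ summing to one, so all $\omega_x^U$ have the same null sets: exactly the sets missing $\partial U$. Being finite Borel measures on a finite set, they are Radon. Since $\partial U$ is finite, $L(\partial U;\omega_\cdot^U)$ consists of all real functions on $\partial U$. Moreover $\omega_x^U(f)=\sum_b f(b)H_b^U(x)$ is the Dirichlet solution of Lemma \ref{lem:metric-graph-dirichlet}, by linearity and uniqueness. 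Hence it is continuous in $x$, which gives continuity of the measure. Regularity follows because that solution attains its boundary data continuously. For a bounded Borel $f$ continuous at $\zeta$, only the values of $f$ on the finite set $\partial U$ enter, and $H_b^U(x)\to\delta_{b\zeta}$ as $x\to\zeta$.

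For compatibility, let $D,\widetilde D$ intersect and let $f,\widetilde f$ satisfy the two matching conditions. Put $w=\omega_\cdot^D(f)-\omega_\cdot^{\widetilde D}(\widetilde f)$ on each connected component $W$ of $D\cap\widetilde D$. Since $\partial W\subseteq(\partial D\cap\overline{\widetilde D})\cup(\partial\widetilde D\cap\overline D)$, $W$ is open with finite boundary. The function $w$ is edgewise affine and Kirchhoff in $W$, and it extends continuously to $\overline W$ with zero boundary values. To check this at a point of $\partial D\cap\widetilde D$, use that $\omega^D(f)$ tends to $f$ there while $\omega^{\widetilde D}(\widetilde f)$ is continuous with value $f$ by hypothesis. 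Points of $\partial W$ lying on both boundaries are handled by regularity on both sides together with the second matching condition. Uniqueness in Lemma \ref{lem:metric-graph-dirichlet} applied to $W$ gives $w=0$. The degenerate case where $W$ has empty boundary cannot occur: then $W$ would be open and closed in the connected space $X$, so $W=X=D$, contradicting $\partial D\ne\varnothing$.

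For regular closedness, exhaust a given $U$ by removing closed $1/k$-neighbourhoods of $\partial U$ (in the path metric), taking the component containing a fixed point, and restricting to large $k$. Each such set is connected and open with finite boundary, since it is cut at finitely many points on edges. Its closure is compact and lies in $U$, the sets increase, and every point of the connected set $U$ eventually belongs to one of them. Regularity of these sets was proved above.

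For the identification of harmonic functions, suppose first $u$ is continuous, edgewise affine and Kirchhoff on an open $O$, and $V\Subset O$ is in the family. Then $u|_{\overline V}$ solves the Dirichlet problem on $V$ with data $u|_{\partial V}$, so $u(x)=\omega_x^V(u)$ by uniqueness. Conversely, suppose $u$ satisfies the mean-value identities. For an edge point $x$, apply the identity on a small open subinterval $V\ni x$, whose two-point harmonic measure is linear interpolation; this makes $u$ affine near $x$. At a vertex $v$, apply it on a small star neighbourhood, whose Dirichlet solution is Kirchhoff at $v$. The main obstacle is the compatibility step, specifically the careful boundary bookkeeping at points of $\partial W$ lying on both boundaries and at edge ends meeting a single boundary vertex. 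I would handle these by the cutting convention from the proof of Lemma \ref{lem:metric-graph-dirichlet}.
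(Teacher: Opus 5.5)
Your proposal is correct and follows essentially the same route as the paper. That route is: finite positive atomic measures for the axioms, the Dirichlet lemma for continuity and regularity, uniqueness on each component of $D\cap\widetilde D$ for compatibility, an exhaustion that cuts short terminal segments at the boundary points, and intervals and vertex stars for identifying the harmonic functions. Your version supplies more detail than the paper's terse proof: the three types of points in $\partial W$, the exclusion of $\partial W=\varnothing$, and taking the component containing a fixed point so that connectedness of the truncations need not be proved. You invoke Dirichlet uniqueness where the paper cites the maximum principle, and the two are equivalent here.
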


\begin{proof}
Each measure satisfies $\omega_x^U(\partial U)=1$ and $\omega_x^U(\{b\})>0$ for every
boundary point.  Consequently its null sets do not depend on the
interior point.  All boundary functions are integrable, and their
extensions are continuous by Lemma \ref{lem:metric-graph-dirichlet}.
The measures are Radon because the boundary is finite.

For compatibility, the two extensions in Definition
\ref{def:harmonic-measure-system} agree on the boundary of every component
of the intersection of their domains.  Their difference is harmonic
there and has zero boundary values.  The maximum principle makes it
zero.  Regularity was proved in Lemma \ref{lem:metric-graph-dirichlet}.
To exhaust any such domain, split its boundary into incident edge ends
and remove successively shorter terminal segments.  Sufficiently short
truncations are connected, their closures are compactly contained in
the domain, and their union is the domain.  Each is regular, so the
system is regular closed.

Every Kirchhoff harmonic function satisfies the local mean-value
identities by uniqueness of the Dirichlet problem.  Conversely, apply
those identities first on intervals inside edges: they imply affine
interpolation.  Apply them next on a small star about each vertex:
let the incident arms have lengths $\ell_i$ and endpoint values
$a_i$.  The Dirichlet solution has central value
\begin{equation*}
 c=\frac{\sum_i a_i/\ell_i}{\sum_i1/\ell_i},
\end{equation*}
because $\sum_i(a_i-c)/\ell_i=0$.  The mean-value identity at the
center gives $u(v)=c$, so
$\sum_i(a_i-u(v))/\ell_i=0$, exactly
\eqref{eq:graph-kirchhoff}.  This proves the
claimed agreement of harmonic structures.
\end{proof}

\subsubsection{Verification of the assumptions of Section \ref{Sec4}}

Fix a nonempty set $B$ of degree-one vertices of $X$, assume that
$D:=X\setminus B$ is connected, and fix $x_0\in D$.  Set
\begin{equation*}
 D^K:=X,\qquad
 H_b:=H_b^D,\qquad
 K_D(x,b):=\frac{H_b(x)}{H_b(x_0)}\quad(b\in B).
\end{equation*}
Choose pairwise disjoint terminal edge segments incident with the points
of $B$, avoiding $x_0$ and all other vertices.  Let
$\varepsilon_n\downarrow0$ be strictly decreasing and smaller than all
their lengths.  Delete the closed segment of length $\varepsilon_n$
at each $b\in B$, and call the remaining open set $D_n$.  Denote the
cut point on the edge incident with $b$ by $b_n$.  Then
\begin{equation}\label{eq:graph-exhaustion}
 x_0\in D_n,\qquad D_n\Subset D_{n+1},\qquad
 \bigcup_nD_n=D,\qquad
 \partial D_n=\{b_n:b\in B\}.
\end{equation}

\begin{prop}[Complete verification for finite metric graphs]
\label{prop:graph-all-assumptions}
The exhaustion \eqref{eq:graph-exhaustion} satisfies {\rm(A1)--(A2)}
and {\rm(M)}.  The choice
\begin{equation*}
 D^K=X,\qquad \partial_{\mathrm m}^KD=B,\qquad
 \kappa(b)=K_D(\cdot,b)
\end{equation*}
gives the boundary data {\rm(B)}.  Moreover, for every $b\in B$,
\begin{equation}\label{eq:graph-concentration}
 \nu_n^b:=K_D(\cdot,b)\,\omega_{x_0}^{D_n}
 \Longrightarrow\delta_b
 \quad\text{weakly on }X.
\end{equation}
Thus {\rm(C)}, and consequently $(C_b)$, hold.  Taking $D^G=X$
gives the identity common refinement $\widehat D=X$, with
$p_K=\operatorname{id}_X$.  Any other compact metrizable
compactification $D^G$ gives $\widehat D=D^G\vee X$.
\end{prop}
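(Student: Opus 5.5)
The verification proceeds in four steps: (A1)--(A2) and (M), then the description of $\mathcal E(D)$ needed for (B), then the concentration (C), and finally the common refinement. The main obstacle is the concentration step.

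\textbf{(A1).} Each $D_n$ is $D$ with short closed terminal segments removed. It is therefore connected and open, and its boundary $\partial D_n=\{b_n:b\in B\}$ is finite and nonempty. Hence $D_n$ is regular by Lemma \ref{lem:metric-graph-dirichlet}. Because $\varepsilon_n$ decreases strictly, $\overline{D_n}=D_n\cup\{b_n\}$ is compact and contained in $D_{n+1}$, and the union of the $D_n$ is $D$.

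\textbf{(A2).} The boundary is finite, so $\omega_x^{D_n}=\sum_b H^{D_n}_{b_n}(x)\delta_{b_n}$. The relative kernel is $K_{D_n}(x,b_n)=H^{D_n}_{b_n}(x)/H^{D_n}_{b_n}(x_0)$. It is positive, and joint continuity reduces to continuity in $x$ on the discrete finite boundary.

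\textbf{(M).} A nonnegative harmonic $v\le K_{D_n}(\cdot,b_n)$ extends continuously to $\overline{D_n}$, by affinity on the terminal segments. By Proposition \ref{prop:continuous-boundary-representation} it is a combination of the $H^{D_n}_{c_n}$. Comparing boundary values at each $c_n\neq b_n$ forces the corresponding coefficients to vanish. Hence $v$ is a multiple of the kernel, and every $K_{D_n}(\cdot,b_n)$ is minimal.

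\textbf{The extreme set and (B).} The same argument is applied on $D$ itself. Each positive harmonic $u$ on $D$ is affine on every terminal edge near $b$, so it has a finite limit $u(b)$. Uniqueness in Lemma \ref{lem:metric-graph-dirichlet} then gives
\[
u=\sum_{b\in B}u(b)\,H_b .
\]
Thus $\mathcal H_+(D)$ is the simplicial cone spanned by the finitely many $H_b$. These are linearly independent, because their boundary values are the indicators $\chi_{\{b\}}$. Consequently $\mathcal E(D)=\{K_D(\cdot,b):b\in B\}$, and $\kappa$ is a bijection from the finite discrete set $B$. A bijection between finite sets is trivially Borel, so (B) holds.

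\textbf{Concentration (C).} This step carries the real content. Write $h=K_D(\cdot,b)$. Then
\[
\nu_n^b(\{c_n\})=h(c_n)\,\omega_{x_0}^{D_n}(\{c_n\}).
\]
For $c\neq b$, the point $c_n$ lies at distance $\varepsilon_n$ from $c$ on the terminal edge, and $h$ is affine there with $h(c)=0$. Hence $h(c_n)\le C\varepsilon_n\to0$, while the harmonic measure is at most $1$. The total mass is $1$, by Lemma \ref{lem:common-measures}, so
\[
\nu_n^b(\{b_n\})\to1 .
\]
Since $b_n\to b$ in $X$, testing against any $F\in C(X)$ gives $\nu_n^b\Longrightarrow\delta_b$. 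This is (C), and $(C_b)$ follows by the Portmanteau remark after \eqref{eq:weak-concentration}.

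\textbf{Common refinement.} Take $D^G=X$. The diagonal closure in $X\times X$ is then the diagonal, so $\widehat D\cong X$ and $p_K=\operatorname{id}_X$. For a general compact metrizable compactification $D^G$, Proposition \ref{prop:common-refinement-projections} applies directly and gives $\widehat D=D^G\vee X$.
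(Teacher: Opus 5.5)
Your proposal is correct and follows essentially the same route as the paper: the boundary-value expansion $u=\sum_{b\in B}u(b)H_b$ identifies $\mathcal E(D)$ and gives (B), the vanishing $h(c_n)\to h(c)=0$ at the other cut points gives (C), and Proposition \ref{prop:common-refinement-projections} handles the refinement. The only cosmetic difference is in (M): you check minimality directly from its definition by comparing boundary values on $\partial D_n$, whereas the paper repeats its extreme-ray argument on $D_n$, and the two are equally short.
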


\begin{proof}
Lemma \ref{lem:metric-graph-dirichlet} and
\eqref{eq:graph-exhaustion} give (A1).  For $b_n\in\partial D_n$,
\begin{equation*}
 K_{D_n}(x,b_n)
 =\frac{H_{b_n}^{D_n}(x)}{H_{b_n}^{D_n}(x_0)}.
\end{equation*}
The numerator and denominator are strictly positive in the interior,
and the numerator is continuous in $x$.  Since $\partial D_n$ is
finite and discrete, this kernel is jointly continuous on
$D_n\times\partial D_n$.  This proves (A2).

Every nonnegative harmonic function on $D$ extends continuously to
$X$: it is affine on the terminal edge approaching each $b\in B$,
and therefore has a finite, nonnegative endpoint limit.  Uniqueness in
Lemma \ref{lem:metric-graph-dirichlet} gives
\begin{equation}\label{eq:graph-boundary-expansion}
 u(x)=\sum_{b\in B}u(b)H_b(x).
\end{equation}
The map taking a nonnegative harmonic function to its boundary vector
$(u(b))_{b\in B}$ is linear, injective by uniqueness, and onto
$\mathbb{R}_+^B$ by the Dirichlet problem and order preservation.
A vector with two positive coordinates splits into two nonproportional
nonnegative vectors, whereas a vector with just one positive coordinate
can only split into multiples of itself.  Thus the extreme rays of
$\mathbb{R}_+^B$, and consequently of the harmonic cone, are precisely
those generated by the coordinate vectors and by $H_b$, respectively.
Every nonzero nonnegative harmonic function on $D$ is strictly positive in $D$.
Consequently
\begin{equation*}
 \operatorname{Ext}\mathcal{H}_1(D)
 =\{K_D(\cdot,b):b\in B\}.
\end{equation*}
These functions are distinct, as is seen from their boundary values.
The map $\kappa$ is therefore a Borel bijection, giving the boundary parametrization (B).
The same argument on $D_n$, whose cut endpoints all have degree one
in $\overline{D_n}$, proves (M).

Fix $b\in B$ and put $h_b:=K_D(\cdot,b)$.
The mean-value identity and normalization give
\begin{equation*}
 \nu_n^b(\partial D_n)
 =\int_{\partial D_n}h_b\,\d\omega_{x_0}^{D_n}
 =h_b(x_0)=1.
\end{equation*}
If $c\in B\setminus\{b\}$, then $c_n\to c$ and
$h_b(c_n)\to h_b(c)=0$.  Hence
\begin{equation*}
 0\leq\nu_n^b(\{c_n\})
 =h_b(c_n)\omega_{x_0}^{D_n}(\{c_n\})
 \leq h_b(c_n)\longrightarrow0.
\end{equation*}
There are finitely many such points, so
$\nu_n^b(\{b_n\})\to1$.  More explicitly,
$\nu_n^b(\{b_n\})=1-\sum_{c\ne b}\nu_n^b(\{c_n\})\to1$.
For $F\in C(X)$,
\begin{equation*}
 \left|\int_XF\,d\nu_n^b-F(b)\right|
 \le |F(b_n)-F(b)|
   +2\|F\|_\infty\sum_{c\ne b}\nu_n^b(\{c_n\})\longrightarrow0.
\end{equation*}
Equivalently, for every $F\in C(X)$,
\begin{equation*}
 \int_XF\,\d\nu_n^b
 =\sum_{c\in B}F(c_n)\nu_n^b(\{c_n\})
 \longrightarrow F(b).
\end{equation*}
This is \eqref{eq:graph-concentration}.  The assertions about the
common refinement follow from Proposition
\ref{prop:common-refinement-projections}, including the identity
case $D^G=X$.
\end{proof}

\begin{cor}[Representation and approximation on a finite metric graph]
\label{cor:graph-representation-approximation}
Let $u>0$ be harmonic on $D$, choose a compact metrizable
compactification $D^G$, and set $\widehat D=D^G\vee X$, with
projection $p_K:\widehat D\to X$.  The canonical measures
\begin{equation*}
 \lambda_n:=u\,\omega_{x_0}^{D_n}
\end{equation*}
have weakly convergent subsequences on $\widehat D$.  Every limit
$\widehat\mu_u$ is carried by $p_K^{-1}(B)$ and gives
\begin{equation}\label{eq:graph-refined-representation}
 u(x)=\int_{p_K^{-1}(B)}K_D(x,p_K(\widehat\eta))\,
                  d\widehat\mu_u(\widehat\eta),\qquad
 \widehat\mu_u(\widehat D)=u(x_0).
\end{equation}
At $\widehat\mu_u$-almost every refined boundary point, inner-boundary
points and their kernels admit simultaneous subsequence approximation
as in \eqref{eq:refined-synchronized-approximation}.

The push-forward $\mu_u=(p_K)_\#\widehat\mu_u$ is the unique
measure representing $u$ on $B$, and it is explicitly
\begin{equation}\label{eq:graph-representation}
 u(x)=\int_BK_D(x,b)\,d\mu_u(b),\qquad
 \mu_u=\sum_{b\in B}u(b)H_b(x_0)\,\delta_b,
 \qquad \mu_u(B)=u(x_0).
\end{equation}
The projected measures satisfy
\begin{equation*}
 \lambda_n\Longrightarrow\mu_u\quad\text{weakly on }X,
\end{equation*}
and the exact inner-boundary formula is
\begin{equation*}
 u(x)=\int_{\partial D_n}K_{D_n}(x,y)\,d\lambda_n(y)
 \qquad(x\in D_n).
\end{equation*}
For every $b\in B$, the actual cut points in
\eqref{eq:graph-exhaustion} satisfy
\begin{equation}\label{eq:graph-cut-kernel-convergence}
 b_n\longrightarrow b,\qquad
 K_{D_n}(\cdot,b_n)\longrightarrow K_D(\cdot,b)
 \quad\text{locally uniformly in }D.
\end{equation}
The positive continuous interpolation of Corollary
\ref{cor:positive-continuous-interpolation} holds for this sequence.
The measure $\widehat\mu_u$ need not be unique, but its push-forward
is independent of the chosen weakly convergent subsequence.
\end{cor}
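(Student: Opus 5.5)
The plan is to read off almost everything from the abstract results, using the verifications already collected in Proposition \ref{prop:graph-all-assumptions}. That proposition gives (A1)--(A2), (M), the boundary data (B) with $\partial_{\mathrm m}^KD=B$ and $\kappa(b)=K_D(\cdot,b)$, and the full-sequence concentration (C). Hence $(C')$ holds with $n_j=j$, and Theorem \ref{prop:direct-refined-representation} applies to $\widehat D=D^G\vee X$ with the original exhaustion.

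\textbf{Refined representation.} The theorem directly supplies the following:
\begin{itemize}
\item weakly convergent subsequences of $\lambda_n$ on $\widehat D$;
\item the support statement $\widehat\mu_u(\widehat D\setminus p_K^{-1}(B))=0$;
\item the refined representation \eqref{eq:graph-refined-representation};
\item the almost-everywhere simultaneous subsequence approximation.
\end{itemize}

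\textbf{Projected representation and uniqueness.} Corollary \ref{thm:common-refinement-representation} then gives that $\mu_u=(p_K)_\#\widehat\mu_u$ is the unique measure on $B$ representing $u$. The same corollary shows this push-forward is independent of the subsequence, and that $\lambda_n=(p_K)_\#\lambda_n\Longrightarrow\mu_u$ on $X$ along the full sequence, using (C). The exact inner formula is \eqref{eq:joint-inner-exact}, that is, the mean-value identity combined with the Radon--Nikodym identity on $D_n$.

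\textbf{Explicit coefficients.} Uniqueness lets me identify $\mu_u$ without computing the limit. I would invoke the expansion \eqref{eq:graph-boundary-expansion}, $u=\sum_b u(b)H_b$, and rewrite it as
\[
u(x)=\sum_b u(b)H_b(x_0)\,K_D(x,b).
\]
So the measure $\sum_b u(b)H_b(x_0)\delta_b$ represents $u$, and uniqueness identifies it with $\mu_u$. Its mass is $\sum_b u(b)H_b(x_0)=u(x_0)$. As a check, the weak limit of $\lambda_n$ can be computed directly: $\lambda_n(\{b_n\})=u(b_n)H^{D_n}_{b_n}(x_0)$, so this requires $H^{D_n}_{b_n}(x_0)\to H_b(x_0)$. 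That follows from the maximum principle, because $H_b-H^{D_n}_{b_n}$ is harmonic on $D_n$ with boundary values $H_b(c_n)\to0$ for $c\ne b$ and $H_b(b_n)-1\to0$. This check is not needed given uniqueness.

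\textbf{Cut-point kernel convergence.} This is the one step not literally contained in the abstract theorems, since Theorem \ref{thm:synchronized-kernel} only produces \emph{some} selected sequence, whereas \eqref{eq:graph-cut-kernel-convergence} is about the actual cut points $b_n$. I would argue directly, in two steps.
\begin{enumerate}
\item The maximum-principle estimate above gives $H^{D_n}_{b_n}\to H_b$ uniformly on $D_n$. The boundary error is at most $\max_{c\ne b}H_b(c_n)+|1-H_b(b_n)|$, which tends to $0$ by continuity of $H_b$ on $X$.
\item Dividing by the values at $x_0$, which converge to $H_b(x_0)>0$, then gives local uniform convergence of $K_{D_n}(\cdot,b_n)$ to $K_D(\cdot,b)$. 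Trivially $b_n\to b$ in $X$.
\end{enumerate}

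\textbf{Interpolation.} With $\xi_n=b_n$ playing the role of the selected sequence (pairwise distinct, converging, with converging kernels), the proof of Corollary \ref{cor:positive-continuous-interpolation} applies verbatim, since it only used these properties. The main obstacle is therefore only the cut-point convergence, handled by the elementary maximum-principle comparison.
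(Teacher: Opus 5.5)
Your proposal is correct and follows the paper's skeleton: Proposition \ref{prop:graph-all-assumptions}, then Theorem \ref{prop:direct-refined-representation} with $n_j=j$, then Corollary \ref{thm:common-refinement-representation}. Two sub-steps, however, go differently.

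\textbf{Coefficients.} The paper does not exhibit a candidate measure. It evaluates the glued representation $u(x)=\sum_b K_D(x,b)\mu_u(\{b\})$ at each endpoint $c\in B$ and uses $H_b(c)=\delta_{bc}$ to read off $\mu_u(\{c\})=u(c)H_c(x_0)$. Your route, the expansion \eqref{eq:graph-boundary-expansion} followed by uniqueness, is the dual computation and equally short.

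\textbf{Cut points.} The paper does obtain these from Theorem \ref{thm:synchronized-kernel}. Any selected sequence $\xi_n\to b$ eventually lies in a small neighborhood of $b$ inside its terminal edge. That neighborhood meets $\partial D_n$ only in $b_n$, so $\xi_n=b_n$ for large $n$ and the selection is forced. Your remark that the abstract theorem yields only \emph{some} sequence therefore overlooks this discreteness.

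Your maximum-principle comparison is nonetheless valid, and in one respect stronger. It gives the explicit bound $\sup_{D_n}|H^{D_n}_{b_n}-H_b|\le\max\{\max_{c\neq b}H_b(c_n),\,|1-H_b(b_n)|\}$, uniform on all of $D_n$. It needs neither $(C_b)$ nor kernel-space concentration, and it also yields your direct check that $\lambda_n\Longrightarrow\mu_u$.

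What the paper's route buys is that the graph statement becomes a literal specialization of the abstract selection theorem. Corollary \ref{cor:positive-continuous-interpolation} then applies to $(b_n)$ as stated, whereas you have to note that its proof uses only distinctness, point convergence, and kernel convergence.
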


\begin{proof}
Proposition \ref{prop:graph-all-assumptions} verifies (A1)--(A2),
(C), and (M), with the boundary data (B).  In particular, $(C')$ holds
with $n_j=j$, so we keep the original exhaustion in Theorem
\ref{prop:direct-refined-representation}.  It first supplies the refined
limits, \eqref{eq:graph-refined-representation}, and the simultaneous
subsequence approximation.  Applying the gluing map $p_K$ and
Corollary \ref{thm:common-refinement-representation} then gives the
unique representing measure $\mu_u$ on $B$ and convergence of the
projected sequence.

To compute this measure, use the continuous endpoint values and
$H_b(c)=\delta_{bc}$.  Since $B$ is finite, evaluating its
representation at each endpoint $c\in B$ gives
\begin{equation*}
 u(c)=\sum_{b\in B}\frac{H_b(c)}{H_b(x_0)}\mu_u(\{b\})
 =\frac{\mu_u(\{c\})}{H_c(x_0)}.
\end{equation*}
Hence $\mu_u(\{c\})=u(c)H_c(x_0)$, which proves the explicit
formula in \eqref{eq:graph-representation}.  Evaluation at $x_0$
gives $\mu_u(B)=u(x_0)$.  These calculations identify the coefficients
of the measure already obtained by gluing.  The exact inner formula
follows from the mean-value identity:
\begin{equation*}
 \int_{\partial D_n}K_{D_n}(x,y)\,d\lambda_n(y)
 =\int_{\partial D_n}u(y)\,d\omega_x^{D_n}(y)=u(x).
\end{equation*}

For a fixed $b$, Theorem \ref{thm:synchronized-kernel} selects
$\xi_n\in\partial D_n$ converging to $b$ with locally uniform
kernel convergence.  Choose a neighborhood of $b$ inside its terminal
edge, disjoint from all the other terminal segments.  For all large
$n$, its intersection with $\partial D_n$ consists of the single
point $b_n$.  Since $\xi_n\to b$, eventually $\xi_n=b_n$.
Thus the actual cut points satisfy \eqref{eq:graph-cut-kernel-convergence}.
Corollary \ref{cor:positive-continuous-interpolation}, applied to this
sequence, supplies the positive continuous interpolating functions.
\end{proof}

\subsubsection{Boundary splitting and gluing}

A geometric boundary vertex can be approached along several incident
edges.  To obtain the preceding model for a connected open subgraph,
separate those incident ends into degree-one boundary vertices, retaining
all interior identifications.  The resulting completion $D^K$ carries
the independent endpoint values of harmonic functions.  Identifying
the copies of each geometric boundary vertex gives a compact quotient
$D^G$ and a map
\begin{equation*}
 q:D^K\longrightarrow D^G,\qquad q|_D=\operatorname{id}_D.
\end{equation*}
The common refinement $D^G\vee D^K$ is the graph of $q$, hence is
canonically homeomorphic to $D^K$.  Indeed, this graph is compact
and the points $(q(x),x)$, $x\in D$, are dense in it.
Proposition \ref{prop:harmonic-measure-gluing} pushes harmonic measures
to the geometric boundary.  A representing kernel need not descend
through $q$, because distinct ends in a fiber may carry distinct
minimal kernels.

\begin{ex}[A punctured circle]\label{ex:punctured-circle}
Let $D$ be a circle with one point $p$ removed, parametrized as
$(0,1)$, and put $x_0=1/2$.  The split completion is
$D^K=[0,1]$, with $B=\{0,1\}$.  Its harmonic measures and normalized
minimal kernels are
\begin{equation*}
 \omega_t^D=(1-t)\delta_0+t\delta_1,\qquad
 K_D(t,0)=2(1-t),\qquad K_D(t,1)=2t.
\end{equation*}
For $u(t)=a(1-t)+bt$, where $a,b\geq0$ and $a+b>0$, the unique
representing measure is
\begin{equation*}
 \mu_u=\frac a2\,\delta_0+\frac b2\,\delta_1.
\end{equation*}
If $D_n=(\varepsilon_n,1-\varepsilon_n)$ with
$0<\varepsilon_n\downarrow0$ and $\varepsilon_n<1/2$, then
\begin{equation*}
 \omega_{1/2}^{D_n}
 =\tfrac12\delta_{\varepsilon_n}
  +\tfrac12\delta_{1-\varepsilon_n}.
\end{equation*}
In particular,
\begin{equation*}
 \nu_n^0=(1-\varepsilon_n)\delta_{\varepsilon_n}
            +\varepsilon_n\delta_{1-\varepsilon_n}
 \Longrightarrow\delta_0,
\end{equation*}
and the analogous formula gives $\nu_n^1\Longrightarrow\delta_1$.

The quotient identifying $0$ and $1$ restores the circle, whose
geometric boundary is $\{p\}$.  Its pushed-forward harmonic measure
is $\delta_p$ for every interior point.  The relative kernel on that one-point
boundary is therefore $1$; it cannot parametrize the two extreme
points $2(1-t)$ and $2t$.  Thus the split completion provides the boundary data (B),
but fails if one uses the single geometric boundary point instead.
This distinguishes the gluing of harmonic measures from the descent
of a kernel representation.
\end{ex}

This finite-dimensional model verifies all the hypotheses of Section
\ref{Sec4} without a separate boundary convergence theorem.  In particular,
Theorem \ref{prop:direct-refined-representation} applies directly:
the canonical inner-boundary measures first give a representation on the
split completion or its refinement, and the projection to the boundary
parametrizing the minimal kernels gives the unique representing measure.
The further geometric gluing in Example \ref{ex:punctured-circle}
does not preserve these kernels and is not the minimal-boundary projection.  No claim
about arbitrary infinite weighted graphs is needed.

\subsection{Scope limitation for degenerate elliptic operators}
\label{subsec:degenerate-scope}
This example concerns the existence and interior properties of harmonic
measure; it does not verify all the approximation hypotheses of Section
\ref{Sec4}.  We use the global setting of \cite{Davi21}.
Let $\Gamma\subset\mathbb{R}^n$ be a nonempty closed, globally
$d$-Ahlfors regular set, $0<d<n-1$: with
$\sigma=\mathcal{H}^d|_\Gamma$, assume
\begin{equation*}
 C_0^{-1}r^d\le\sigma(B(q,r))\le C_0r^d
 \qquad(q\in\Gamma,\ r>0).
\end{equation*}
In particular $\Gamma$ is unbounded.  Set
$D=\mathbb{R}^n\setminus\Gamma$ and
$w(x)=\operatorname{dist}(x,\Gamma)^{d+1-n}$.
Let $L=-\operatorname{div}A\nabla$, where $A$ is a real measurable
matrix satisfying, almost everywhere,
\begin{equation*}
 |A(x)\xi\cdot\eta|\le C_1w(x)|\xi||\eta|,
 \qquad A(x)\xi\cdot\xi\ge C_1^{-1}w(x)|\xi|^2.
\end{equation*}
The theory of boundary values for weighted Sobolev functions, extension
operators, and the Dirichlet problem in \cite{Davi21}
provides, for $g\in C_c(\Gamma)$, a positive linear solution operator
$g\mapsto Ug$, continuous up to $\Gamma$ with boundary values $g$.
Its representing measures satisfy
\begin{equation*}
 Ug(x)=\int_\Gamma g\,d\omega_x^D,\qquad
 \omega_x^D(\Gamma)=1.
\end{equation*}
The same reference proves that $x\mapsto\omega_x^D(E)$ is a
nonnegative weak solution for every Borel $E\subset\Gamma$.
Harnack chains give, for compact $K\subset D$ and a fixed $x_0\in D$,
\begin{equation*}
 C_K^{-1}\omega_{x_0}^D\le\omega_x^D\le C_K\omega_{x_0}^D
 \qquad(x\in K).
\end{equation*}
Thus the measures have common null sets and common integrability classes.
Approximation of integrable data by bounded simple functions, with these
bounds controlling the error uniformly on compact sets, gives continuity
of $x\mapsto\omega_x^D(f)$.  This verifies the continuous harmonic
measure properties of Definition \ref{H_m} for the single domain $D$.

This verifies the single-domain measure properties, not the full
approximation theorem.  Its application would additionally require a
compatible inner-domain system for the same equation, an exhaustion with
the boundary-continuous positive kernels in (A2), the boundary data (B),
and concentration along a common subsequence as in $(C')$.  Conclusions
for all canonical weak limits along the original exhaustion require
the stronger (C).  None of these boundary concentration assertions is
established here.  Interior Harnack and
H\"older estimates do not by themselves establish these boundary properties.
The comparison above comes from the elliptic equation; a fixed measurable
coefficient field need not satisfy TSCI.  We make no synchronized
approximation or specific minimal-boundary representation claim for this
general coefficient class.  Compact $\Gamma$ or different conditions at
infinity would require a separate formulation.

\section{Acknowledgments}
This research work is partly supported by the National Natural Science Foundation of China (NSFC 12371096). We thank Professors Jun Geng and Sibei Yang for their helpful suggestions.





\renewcommand{\baselinestretch}{1}
\bibliographystyle{plain}

\end{document}